\edef\restoreparindent{\parindent=\the\parindent\relax}
\numberwithin{equation}{section}
\theoremstyle{plain}
\newtheorem{theorem}{Theorem}[section]
\newtheorem{lemma}[theorem]{Lemma}
\newtheorem{corollary}[theorem]{Corollary}
\newtheorem{proposition}[theorem]{Proposition}
\newtheorem{conjecture}[theorem]{Conjecture}
\theoremstyle{definition}
\newtheorem{definition}[theorem]{Definition}
\newtheorem{remark}[theorem]{Remark}
\DeclareMathOperator{\real}{Re}
\def\spp{z} 
\newcommand{\bulletsmall}{~\begin{picture}(-1,1)(-1,-3)\circle*{3}\end{picture}\!\!\quad} 
\def\ag{\mathcal{G}r} 
\def\CCp{\mathbb{C}_{\real>0}} 
\def\fm{f_P} 
\def\fmt{f_{P,t}} 
\def\gammam{\gamma_P} 
\def\gammamt{\gamma_{P,t}} 
\def\pim{\pi_{P}} 
\def\FFF{\mathcal{V}} 
\def\UU{\mathcal{U}} 
\def\DDD{\mathbb{D}} 
\def\co{\mathbb{X}_{\bullet}} 
\def\character{\chi} 
\def\liecharacter{\psi} 
\def\ol{\overline}
\def\mirrormap{\tau} 
\def\iii{\mathbf{i}} 
\def\ttp{\theta^+} 
\def\ttm{\theta^-} 
\def\XXp{\mathbb{X}^+} 
\def\bundlez{\mathcal{E}_0} 
\def\bundle{\mathcal{E}} 
\def\Xp{(X_P)_{>0}} 
\def\X{X_P} 
\def\Xti{X_{P,t,i}}
\def\Xt{X_{P,t}} 
\def\Xtp{(X_{P,t})_{>0}} 
\def\Xtpp{(X_{P,t'})_{>0}} 
\def\vol{\omega_{\X}} 
\def\ZL{Z(L_P)} 
\def\ZLpp{Z(L_P)_{\real >0}}
\def\ZLp{Z(L_P)_{>0}} 
\def\GPd{G^{\vee}/P^{\vee} } 
\def\GBd{G^{\vee}/B^{\vee} } 
\def\ia{\mathcal{I}_A} 
\def\sa{s_A} 
\def\ib{\mathcal{I}_B} 
\def\sb{s_B} 
\def\Amodule{ QH_{T^{\vee}\times\mathbb{G}_m}^{\bullet}(G^{\vee}/P^{\vee} )[q_i^{-1}|~i\in I\setminus I_P]} 
\def\Bmodule{ G_0(\X,\fm,\gammam,\pim)} 
\def\Mir{\Phi_{mir}} 
\def\EO{E} 
\DeclareMathOperator{\jac}{Jac}
\def\jacobi{\jac(\X,\fm,\gammam,\pim)} 
\def\UUU{\mathbb{D}_{\lambda}} 
\def\BF{B^-_{F}} 
\def\BFt{\widetilde{B}^-_{F}} 
\def\UF{U^-_{F}} 
\def\Ga{{\mathbb G}_a}
\def\SSS{\sym^{\bullet}(\mathfrak{t})}
\DeclareMathOperator{\critical}{Crit}
\def\crit{\critical(\fm/\ZL)}
\def\holosection{\mathcal{O}_{hol}}
\DeclareMathOperator{\pic}{Pic}
\DeclareMathOperator{\mcf}{mc} 
\DeclareMathOperator{\pr}{pr}
\DeclareMathOperator{\coker}{coker}
\DeclareMathOperator{\ev}{ev}
\DeclareMathOperator{\id}{id}
\DeclareMathOperator{\sym}{Sym}
\DeclareMathOperator{\spec}{Spec}
\DeclareMathOperator{\conv}{Conv}
\DeclareMathOperator{\End}{End}
\DeclareMathOperator{\lie}{Lie}
\DeclareMathOperator{\pd}{PD}
\DeclareMathOperator{\pt}{pt}
\DeclareMathOperator{\image}{Im}
\begin{document}
\title{Gamma conjecture I for flag varieties}

\author{Chi Hong Chow}
\address{\parbox{\linewidth}{
Max Planck Institute for Mathematics, 53111 Bonn, Germany\\ Department of Mathematics, Virginia Tech, Blacksburg, VA 24061, USA}}
\email{chow@vt.edu}

\begin{abstract} 
We prove Gamma conjecture I for all flag varieties by following a strategy proposed by Galkin and Iritani. The main new ingredient is showing that the totally positive part of the Rietsch mirror is mirror to the $\widehat{\Gamma}$-class and contains the critical point of the superpotential that corresponds to the Perron-Frobenius eigenvalue on the A-side.
\end{abstract}


\maketitle
\section{Introduction}\label{Section-introduction}
\subsection{Main results}\label{Subsection-mainresults}
\textit{Gamma conjecture I}, proposed by Galkin, Golyshev and Iritani \cite{GGI}, asserts that for every Fano manifold $F$, the limit of a normalization of Givental's $J$-function $J_F(s)$ is equal to a multiplicative characteristic class $\widehat{\Gamma}_F\in H^{\bullet}(F)$ defined in terms of the gamma function $\Gamma$. 

\begin{conjecture}\label{GammaconjI} (Gamma conjecture I \cite{GGI}) For every Fano manifold $F$, we have
\[  \lim_{\mathbb{R}_{>0}\ni ~s\to +\infty} \frac{J_F(s)}{\langle \pd [\pt], J_F(s)\rangle}= \widehat{\Gamma}_F\in H^{\bullet}(F),\]
where $\widehat{\Gamma}_F:=\prod_{i=1}^{\dim F}\Gamma(1+\delta_i)$ and $\delta_1,\ldots,\delta_{\dim F}$ are the Chern roots of the tangent bundle of $F$.
\end{conjecture}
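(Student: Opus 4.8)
The plan is to follow the Galkin--Iritani strategy, which, combined with the reductions of \cite{GGI}, turns Conjecture~\ref{GammaconjI} for a flag variety $F$ into two statements: \emph{Property~$\mathcal{O}$} for $F$, namely that the spectral radius $T_0$ of quantum multiplication by $c_1(F)$ on $H^\bullet(F)$ is a simple eigenvalue dominating every other eigenvalue of the same modulus up to roots of unity; and the identification of the resulting \emph{principal asymptotic class} $A_F\in H^\bullet(F)$ --- the normalized limit on the left-hand side, which exists once Property~$\mathcal{O}$ holds --- with $\widehat{\Gamma}_F$. For Property~$\mathcal{O}$ I would invoke the Rietsch mirror $(\X,\fm,\gammam,\pim)$: on the totally positive part $\Xp$ the superpotential $\fm$ is a sum of positive monomials, hence proper and bounded below with a single nondegenerate critical point $p_{>0}$, and --- through the mirror presentation of $QH^\bullet(F)$ --- its value $\fm(p_{>0})$ is the Perron--Frobenius eigenvalue $T_0$, which is therefore simple. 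For flag varieties this follows from known properties of the totally positive critical locus of the Rietsch mirror, and in any case can be checked directly from total positivity.

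The next step is to realize the left-hand side of Conjecture~\ref{GammaconjI} on the mirror. For $s\in\mathbb{R}_{>0}$ and a mirror form $\omega_\alpha$ representing a basis element $\phi_\alpha$ of $H^\bullet(F)$, the oscillatory integral $I_\alpha(s):=\int_{\Xp}e^{-s\fm}\,\omega_\alpha$ converges, because $\fm|_{\Xp}$ is proper and bounded below, and as $s\to+\infty$ it has a Laplace expansion governed entirely by the unique critical point $p_{>0}$. By the cohomological mirror theorem for $F$ --- the isomorphism between the equivariant quantum $D$-module of $F$ and the Gauss--Manin system of $(\X,\fm,\gammam,\pim)$ --- $J_F$ is identified, after the usual normalization, with the cohomology-valued oscillatory integral over some cycle, and its asymptotics (controlled, via Property~$\mathcal{O}$, by the Perron--Frobenius eigenvalue $T_0$) force that cycle to pass through the critical point $p_{>0}$, hence to be $\Xp$. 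Therefore the $\phi_\alpha$-component of $\langle\pd[\pt],J_F(s)\rangle^{-1}\,J_F(s)$ is the normalization-independent ratio $I_\alpha(s)/I_{[\pt]}(s)$, so the left-hand side of Conjecture~\ref{GammaconjI} is computed by the limits $\lim_{s\to+\infty}I_\alpha(s)/I_{[\pt]}(s)$.

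It then remains to prove the statement highlighted in the abstract: the mirror image of the totally positive cycle $[\Xp]$ is $\widehat{\Gamma}_F$; equivalently, $\lim_{s\to+\infty}I_\alpha(s)/I_{[\pt]}(s)=\langle\widehat{\Gamma}_F,\phi_\alpha\rangle$ for every $\alpha$. Here I would pick positive coordinates on $\Xp$ adapted to $p_{>0}$ in which $\fm$ takes a normal form reflecting the combinatorics of the Rietsch mirror, and show that --- after a Mellin transform, i.e.\ after rescaling each coordinate by $s$ --- each $I_\alpha(s)$ becomes an iterated one-variable integral whose individual factors are of the Gamma type $\int_0^{\infty}x^{a}e^{-x}\,dx/x=\Gamma(a)$; the product of the resulting Gamma factors, with the exponents $a$ matched --- through the weights occurring in $\fm$ --- to the Chern roots $\delta_i$ of the tangent bundle, assembles at the level of asymptotic expansions into $\widehat{\Gamma}_F=\prod_i\Gamma(1+\delta_i)$. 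As an independent check, and in order to organize the exponent matching uniformly in the group $G$, I would also use a toric degeneration of $F$ --- for instance one attached to a string polytope --- apply the known toric computation of the $\widehat{\Gamma}$-integral structure, and verify that $\widehat{\Gamma}_F$ and the totally positive thimble are both flat across the degeneration.

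I expect the main obstacle to be precisely this last computation. Unlike in the toric case, the Rietsch mirror of a general flag variety is not an algebraic torus but a nontrivial subvariety of a Langlands-dual Bruhat cell, so the oscillatory integral over $\Xp$ does not manifestly factor into Gamma integrals; extracting its leading asymptotics and, above all, matching the exponents \emph{exactly} with the Chern roots of the tangent bundle --- with the right integral normalization and no spurious scalar --- requires genuine input from the theory of total positivity and from the explicit representation-theoretic form of $\fm$, and is the technical heart of the argument.
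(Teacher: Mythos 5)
Your high-level framework is the same as the paper's: invoke Galkin--Iritani to reduce the conjecture to Property~$\mathcal{O}$ plus the identification of the principal asymptotic class with $\widehat{\Gamma}_{G^\vee/P^\vee}$, use the Rietsch mirror, take the cycle to be the totally positive part $\Xp$, and control asymptotics by stationary phase at the totally positive critical point. That much is correct, and your remarks on Property~$\mathcal{O}$ are roughly how the paper handles it (via the Schubert positive point and a Lam--Rietsch result that it maps into $\Xp$ under the mirror isomorphism). The gap is in the step you flag as the ``technical heart,'' and it is larger than you estimate.

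You propose to establish $\lim_{s\to+\infty}I_\alpha(s)/I_{[\pt]}(s)=\langle\widehat{\Gamma}_F,\phi_\alpha\rangle$ by rescaling by $s$ and factoring the oscillatory integral into one-variable Gamma integrals, with the exponents matching the Chern roots. This cannot work as stated. The Gamma integral factorization $\int_{\mathbb{R}_{>0}^\ell}e^{-(a_1+\cdots+a_\ell)/\hslash}\prod a_k^{\beta_k^\vee(h)/\hslash-1}da$ only occurs in the degeneration limit $t\to 0$ (equivalently $q\to 0$), where the cross terms $\sum_i\alpha_i(t)P_i(\mathbf{a})$ of the superpotential vanish; at $q=1$, or along the anticanonical direction as $s\to+\infty$, the superpotential does not decouple and the integral over $\Xp$ is governed instead by the critical point via stationary phase, which produces the exponential growth $e^{-\EO^{q}/\hslash}$, not the Gamma product. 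You also need the equivariant parameter $h$ to produce nontrivial Gamma factors at all: the nonequivariant integral over $\mathbb{R}_{>0}^\ell$ would give $\prod\Gamma(0)$, which diverges. The paper instead proves an \emph{exact equality} of flat sections --- $S(\hslash,h,\tau(t))(\hslash^{-\mu}\hslash^{c_1}\widehat{\Gamma})=\hslash^{-\ell/2}\sum_v\bigl(\int_{\Xtp}e^{-\fmt/\hslash}\gammamt^{h/\hslash}\Mir^{-1}(\sigma^v)\bigr)\sigma_v$ --- valid for all $h$ in a neighbourhood of $0$ and all $t$, and only afterwards sets $h=0$, $t=1$ and applies stationary phase. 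That flat section identity is proved by Frobenius-type uniqueness at the regular singularity $t=0$: one compares the leading coefficients of the two flat sections, which after localization are indexed by the $T^\vee$-fixed points $wP^\vee$. The identity-term coefficient is exactly a Gamma integral on both sides (this is where your Gamma computation lives, but at $t\to 0$ with generic $h$), and the coefficients for $w\ne e$ are reduced to the identity one using the $G^\vee$-action on $\GPd$ on the A-side and --- crucially --- the Berenstein--Kazhdan rational Weyl group action on the geometric crystal on the B-side, which preserves $\Xp$, $\fm$, $\pim$ and twists $\gammam$ and $\vol$ in the right way. This Weyl group action is the key ingredient that your proposal omits entirely, and without it the leading-term comparison does not close. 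Your suggestion that ``its asymptotics force that cycle to pass through $p_{>0}$, hence to be $\Xp$'' also does not determine the cycle: many Lefschetz thimbles pass through $p_{>0}$, and asymptotics alone cannot distinguish them. Finally, the paper makes no use of toric degenerations or string polytopes.
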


Conjecture \ref{GammaconjI} is related to mirror symmetry as follows. Fano mirror symmetry, going back to Eguchi-Hori-Xiong \cite{EHX} and Givental \cite{GiventalICM}, is a duality between Fano manifolds $F$ and Landau-Ginzburg models $(F^{\vee},W)$. The \textit{$D$-module mirror conjecture} (e.g., \cite{KKP_JDG}) asserts that the quantum $D$-module of $F$ is isomorphic as $D$-modules to the Gauss-Manin system of $(F^{\vee},W)$. Conjecturally, the latter $D$-module is the de Rham realization of an exponential motive associated with $(F^{\vee},W)$, which is isomorphic to the Betti realization, induced by exponential period integrals. The latter has a natural integral structure, ideally generated by the Lefschetz thimbles. The \textit{mirror symmetric Gamma conjecture}, proposed independently by Iritani \cite{IZ} and by Katzarkov, Kontsevich and Pantev \cite{KKP}, asserts that under mirror symmetry, this integral structure corresponds to $\widehat{\Gamma}_F\cup ch(K_0(F))$ at the Frobenius limit\footnote{Here, the Chern character $ch$ is defined to be the sum of $e^{2\pi\sqrt{-1}\times\text{Chern root}}$.}. In this context, Gamma conjecture I, as explained by Galkin and Iritani \cite{GI}, reflects the following expectation:
\begin{conjecture}\label{MSGammaconj}
The element $\widehat{\Gamma}_F\cup ch([\mathcal{O}_F]) = \widehat{\Gamma}_F $ is mirror to the Lefschetz thimble associated with a particular critical point of $W$.
\end{conjecture}

More precisely, it asserts an identification between certain flat sections of the A/B-model $D$-modules, with the former defined in terms of $\widehat{\Gamma}_F$ and the fundamental solution $S(\hslash)$, and the latter defined in terms of oscillatory integrals over the given Lefschetz thimble $C$. When projected to a particular component, it takes the following form:
\[\int_F S(\hslash)\left(\hslash^{-\mu} \hslash^{c_1} \widehat{\Gamma}_{F}\right)= \hslash^{-\frac{1}{2}\dim F}\int_C e^{-W/\hslash}\omega_{F^{\vee}}\]
for some volume form $\omega_{F^{\vee}}$ on $F^{\vee}$.

In this paper, we focus on a class of Fano manifolds called \textit{flag varieties}, by which we mean complex projective varieties that are homogeneous under a simple algebraic group. We prove
\begin{theorem}\label{mainG}
Conjecture \ref{GammaconjI} holds for all flag varieties. 
\end{theorem}

\begin{theorem}\label{mainMS}
(Precise version given by Theorem \ref{outlineA=Bthm}) Conjecture \ref{MSGammaconj} holds for all flag varieties, with Lefschetz thimble replaced by the totally positive part of the Rietsch mirror. 
\end{theorem}

Conjecture \ref{GammaconjI} was proved for type $A$ Grassmannians by Galkin, Golyshev and Iritani \cite{GGI}; Fano threefolds of Picard rank one by Golyshev and Zagier \cite{GZ}; a class of toric Fano manifolds by Galkin and Iritani \cite{GI} (see the next paragraph); del Pezzo surfaces by Hu, Ke, Li and Yang \cite{HKLY}; and the twistor bundles of certain hyperbolic sixfolds\footnote{They are non-K\"ahler monotone symplectic manifolds. Nevertheless, Conjecture \ref{GammaconjI} can be formulated in the same way.} by Hugtenburg \cite{Kai}. 

For toric Fano manifolds, Conjecture \ref{MSGammaconj} was proved in full generality by Iritani \cite{IZ}, as a key step in their proof of the mirror symmetric Gamma conjecture. Furthermore, Galkin and Iritani \cite{GI} proved that Conjecture \ref{GammaconjI} is equivalent to a B-model analog of \textit{Property $\mathcal{O}$}, namely whether the critical point contained in the Lefschetz thimble from Conjecture \ref{MSGammaconj} (Iritani's theorem) is equal to the predicted one. While numerical experiments confirm a large number of examples satisfying this property, counterexamples have recently been found by Galkin, Hu, Iritani, Ke, Li and Su \cite{counterexample}. In light of this, they proposed two modifications of Conjecture \ref{GammaconjI} which become true for all toric Fano manifolds and are implied by the original conjecture, given (the A-model version of) \textit{Property $\mathcal{O}$} \cite[Definition 3.1.1]{GGI}.

\begin{remark}
Gamma conjecture I is one of the two conjectures proposed by Galkin, Golyshev and Iritani \cite{GGI} in their study of the relationship between the quantum connections of Fano manifolds and the associated Gamma classes. Gamma conjecture II, which is a refinement of Dubrovin's conjecture \cite{Dubrovin}, asserts that if the big quantum cup product of a Fano manifold $F$ is generically semi-simple\footnote{Sanda and Shamoto \cite{SS} later generalized the conjecture to include non semi-simple cases.}, then the fundamental solution basis of its quantum connection near the irregular singular point is given by $\{S(\hslash)(\hslash^{-\mu} \hslash^{c_1} \widehat{\Gamma}_F\cup ch([E_i]))\}_{i=1}^N$ for some full exceptional collection $\{E_1,\ldots, E_N\}$ of $D^b(F)$. The case of type A Grassmannians (resp. quadrics) was verified by these authors in the same paper (resp. Hu and Ke \cite{HK}). For toric Fano manifolds, a weaker version, where $D^b(F)$ is replaced by $K_0(F)$, was proved by Iritani \cite{IZ}, and the full version was recently proved by Fang and Zhou \cite{FZ}. 
\end{remark}
\begin{remark}
There are also versions of Gamma conjectures for Calabi-Yau manifolds, which assume the existence of mirrors. Hosono \cite{Hosono} conjectured an equality between the A/B-model central charges (hypergeometric series and periods) associated with mirror Calabi-Yau manifolds, where the Gamma class appears implicitly in the former expression. By taking the asymptotics of these central charges, Abouzaid, Ganatra, Iritani and Sheridan \cite{AGIS} formulated a Calabi-Yau version of the mirror symmetric Gamma conjecture in the same spirit as the Fano case. These conjectures could explain the appearance of the zeta values in the periods of Calabi-Yau manifolds--an observation made by many people in the development of mirror symmetry. See e.g., \cite{CY_ref_1, CY_ref_2, CY_ref_3, CY_ref_4, CY_ref_5, CY_ref_6, CY_ref_7, CY_ref_8, CY_ref_9} for related discussion.
\end{remark}
\begin{remark}
In a series of works \cite{TV1, TV2, TV4}, Tarasov and Varchenko constructed, for arbitrary type A flag variety, a basis of simultaneous solutions to the equivariant quantum differential equations and the limits of the $q$KZ equations, and proved that their asymptotics near the regular singular point are the products of $\widehat{\Gamma}$-class and the equivariant Chern characters of certain vector bundles. (Compare \cite[Section 3.5]{Iritani_shift}.) They also proved that these solutions can be expressed as integrals over what they called ``hypergeometric Landau-Ginzburg mirror symmetry model'', which is different from the Rietsch mirror. This may offer new insight into Galkin-Golyshev-Iritani's Gamma conjectures through analyzing the asymptotic behavior of these integrals near the irregular singular point. See \cite{CV, TV3} for a discussion on the case of projective spaces.
\end{remark}

\subsection{Outline of proof}
In what follows, we denote a flag variety by $\GPd$. Theorem \ref{mainMS} will be proved within the proof of Theorem \ref{mainG}, which we now sketch. Define
\[ \EO^{q=1}:=\max\{ |\lambda||~\lambda\text{ is an eigenvalue of }c_1(\GPd)\star_{q=1} -\}\]
and 
\[\quad\qquad
\mathcal{A}_{\GPd}:=  \left\{ s:\mathbb{R}_{>0}\rightarrow H^{\bullet}(\GPd)\left|
\begin{array}{l}
\nabla_{\partial_{\hslash}}s=0~\text{ and }\\ [.7em]
\exists~ m\in\mathbb{Z}~,~ \left|\left|e^{\frac{\EO^{q=1}}{\hslash}} s(\hslash)\right|\right|\overset{\hslash \to 0}{=\joinrel=}O(\hslash^{m})
\end{array}
\right.\right\},\]
where $\nabla_{\partial_{\hslash}}$ is the quantum connection of $\GPd$ in the $\hslash$-direction. By a result of Galkin, Golyshev and Iritani \cite{GGI}, Theorem \ref{mainG} follows from
\begin{proposition}\label{outlineO} $\EO^{q=1}$ is an eigenvalue of $c_1(\GPd)\star_{q=1}-$ with multiplicity one.
\end{proposition}
\begin{proposition}\label{outlineGGIthm}
(= Proposition \ref{propositionb}) $\mathcal{A}_{\GPd}$ contains $S(\hslash)\left(\hslash^{-\mu} \hslash^{c_1} \widehat{\Gamma}_{\GPd}\right)$, where $S(\hslash)\left(\hslash^{-\mu} \hslash^{c_1} -\right)$ is the fundamental solution of $\nabla_{\partial_{\hslash}}$ associated with the regular singular point $\hslash = \infty$.
\end{proposition}
Proposition \ref{outlineO} has been proved by Cheong and Li \cite{CheongLi}, based on a result of Rietsch \cite{RietschJAMS}. We prove Proposition \ref{outlineGGIthm} by adopting a general strategy of Galkin and Iritani \cite{GI}, which they used to handle the case of toric Fano manifolds. In fact, this is the point to which we referred in the earlier discussion of the relationship between Gamma conjecture I and mirror symmetry. It consists of the following steps:
\begin{enumerate}
\item Construct a Landau-Ginzburg (LG) model $(F^{\vee},W)$ and a middle-dimensional possibly non-compact cycle $C\subset F^{\vee}$.

\item Prove the $D$-module mirror conjecture for the pair $F$ and $(F^{\vee},W)$, and Conjecture \ref{MSGammaconj} for the cycle $C$.

\item Prove that $s(\hslash):=S(\hslash)\left(\hslash^{-\mu} \hslash^{c_1} \widehat{\Gamma}_{F}\right)$ satisfies the desired asymptotic growth by estimating the oscillatory integrals from Conjecture \ref{MSGammaconj} using the stationary phase approximation. 
\end{enumerate} 

\bigskip
\noindent\underline{\textit{Step (1)}} $~$Rietsch \cite{Rietsch} constructed a mirror of $\GPd$ consisting of the following data
\begin{itemize}
\item a smooth affine variety $\X$;

\item a regular function $\fm\in\mathcal{O}(\X)$;

\item a smooth morphism $\pim:\X\rightarrow\ZL$ onto a subtorus $\ZL$ of $T$;

\item a morphism $\gammam:\X\rightarrow T$; and

\item a fiberwise volume form $\vol\in\Omega^{top}(\X/\ZL)$ relative to $\pim$.
\end{itemize}

We think of $(\X,\fm)$ as a family of LG models parametrized by $\pim$, and our desired $(F^{\vee},W)$ is the fiber over $t=1$. The additional data $\gammam$ and $\vol$ are used in Step (2). To construct the cycle $C$, we realize, following \cite{LamTemplier}, the Rietsch mirror as the \textit{parabolic geometric crystal} introduced by Berenstein and Kazhdan \cite{BKI, BK}, and take $C$ to be the fiber of the \textit{totally positive part} $\Xp$ of $\X$\footnote{The subset $\Xp$ is closely related to the canonical positive structure on $\X$, which Berenstein and Kazhdan used to construct \textit{Kashiwara's crystal} \cite{Kashiwara} via tropicalization.}.

\bigskip
\noindent\underline{\textit{Step (2)}} $~$ The $D$-module mirror conjecture has been proved in our previous work:
\begin{theorem}\label{outlinemirrorthm} (\cite{Chow3}; recalled in more details in Theorem \ref{mirrorthm}) There exists an isomorphism $\Mir$ between the equivariant $A$-model $D$-module associated with $\GPd$ and the equivariant $B$-model $D$-module associated with the Rietsch mirror.
\end{theorem}

Roughly speaking, these $D$-modules are families of vector bundles parametrized by $\hslash$ and the $T^{\vee}$-equivariant parameters $h$, which are equipped with flat connections given by the quantum connection (in the $q$-direction) for $\GPd$ and the Gauss-Manin connection (in the $t$-direction) for the Rietsch mirror. The isomorphism $\Mir$ is accompanied by an isomorphism $\mirrormap$ that identifies $q$ with $t$.

We prove the following form of Conjecture \ref{MSGammaconj}:
\begin{theorem}\label{outlineA=Bthm} (Precise version of Theorem \ref{mainMS}) For given $\hslash$, $h$ and $t$, we have
\begin{equation}\label{outlineA=Bthmeq1}
S(\hslash,h,\mirrormap(t))\left(\hslash^{-\mu} \hslash^{c_1} \widehat{\Gamma}_{\GPd}\right)=\hslash^{-\frac{\ell}{2}}\sum_{v\in W^P} \left(\int_{\Xtp} e^{-\fmt/\hslash} \gammamt^{h/\hslash} \Mir^{-1}(\sigma^v)_{(-\hslash,h,t)}\right) \sigma_v.
\end{equation}
\end{theorem}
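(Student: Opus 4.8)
The plan is to recognize both sides of \eqref{outlineA=Bthmeq1} as flat sections of the quantum connection of $\GPd$ under the identification $q=\mirrormap(t)$, and to pin them down by comparing them in a single limit. The left-hand side is, by its very definition, the value of the flat section obtained by applying the canonical fundamental solution $S(\hslash)(\hslash^{-\mu}\hslash^{c_1}-)$ at the regular singular point $\hslash=\infty$ to the fixed class $\widehat{\Gamma}_{\GPd}$; it is flat in the $\hslash$- and $q$-directions and is characterized among all flat sections by the asymptotics $S(\hslash)(\hslash^{-\mu}\hslash^{c_1}\widehat{\Gamma}_{\GPd})=\hslash^{-\mu}\hslash^{c_1}(\widehat{\Gamma}_{\GPd}+O(\hslash^{-1}))$ as $\hslash\to\infty$. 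The first task is to see that the right-hand side is also a flat section of the same connection. By Proposition~\ref{mirrorthm} each $\Mir^{-1}(\sigma^v)$ is a genuine section of the $D$-module attached to the Rietsch mirror; the integral $\int_{\Xtp}e^{-\fmt/\hslash}\gammamt^{h/\hslash}\Mir^{-1}(\sigma^v)_{(-\hslash,h,t)}$ converges and is analytic in $(\hslash,h,t)$ because $\fmt$ is proper and bounded below on the totally positive fibre $\Xtp$, which forces the integrand to decay rapidly at the boundary; and the family of such oscillatory integrals obtained by pairing with the fixed cycle $[\Xtp]$ is flat for the Gauss--Manin connection, hence --- after transporting through $\Mir$, which intertwines the Gauss--Manin connection of the Rietsch mirror with the quantum connection of $\GPd$ under $q=\mirrormap(t)$ --- flat for the quantum connection. (The power $\hslash^{-\ell/2}$ is the standard normalization relating the Poincar\'e pairing on the $A$-side to the oscillatory pairing on the $B$-side, and the sign $\hslash\mapsto-\hslash$ is the usual one.)

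It then suffices to compute the $\hslash\to\infty$ asymptotics of the right-hand side and to check that its leading term is $\widehat{\Gamma}_{\GPd}$. Here I would use the realization of the Rietsch mirror as the parabolic geometric crystal of Berenstein--Kazhdan (Step (1)): in the canonical positive coordinates on $\Xtp$ attached to a reduced word for $w_{0}w_{0,P}$, the superpotential $\fmt$ is a positive Laurent polynomial, the decoration $\gammamt$ is monomial, the fibrewise form $\vol$ is the invariant form $\tfrac{dx_{1}}{x_{1}}\wedge\cdots\wedge\tfrac{dx_{\ell}}{x_{\ell}}$, and the forms $\Mir^{-1}(\sigma^v)$ have an equally explicit shape. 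Performing the $\hslash$-weighted rescaling of these coordinates dictated by $c_{1}(\GPd)$ --- equivalently, passing to the large-volume limit $t\to 0$, in which the $t$-dependent part of $\fmt$ drops out and the remaining potential decouples the coordinates --- turns the integral into a product of elementary Euler integrals $\int_{0}^{\infty}e^{-u}u^{a}\,\tfrac{du}{u}=\Gamma(a)$. Expanding each factor by $\log\Gamma(1+z)=-\gamma z+\sum_{k\ge 2}\frac{(-1)^{k}\zeta(k)}{k}z^{k}$, collecting the powers of $\hslash$ created by the substitutions $u=x/\hslash$ together with the factor $\hslash^{-\ell/2}$, and using that $\hslash^{-\mu}\hslash^{c_1}$ implements precisely the degree-rescaling $\delta\mapsto\delta/\hslash$, one reconstructs $\hslash^{-\mu}\hslash^{c_1}\widehat{\Gamma}_{\GPd}=\hslash^{-\mu}\hslash^{c_1}\prod_{i}\Gamma(1+\delta_{i})$, provided the monomial exponents appearing in the integral match the Chern roots $\delta_{i}$ of $T_{\GPd}$.

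The crux, and the step I expect to be the main obstacle, is exactly this proviso. One must make the description of $\fmt$, $\gammamt$, $\vol$ and of the forms $\Mir^{-1}(\sigma^v)$ in the crystal coordinates explicit enough to exhibit the factorization and to read off the exponents, and then identify the bookkeeping of these exponents with the Chern roots of $T_{\GPd}$ --- a combinatorial comparison between the parabolic geometric crystal of Step (1) and the $T^{\vee}$-equivariant cohomology of $\GPd$ that is sensitive to every normalizing constant and must account for the $T^{\vee}$-equivariant weights $h$. An alternative route, which I would develop in parallel and which avoids explicit coordinates, is to observe that the oscillatory integrals over $[\Xtp]$ span, via $\Mir$, a lattice in the space of flat sections; compatibility of $\Mir$ with the $\widehat{\Gamma}$-integral structure then forces $[\Xtp]$ to correspond to the structure sheaf $\mathcal{O}_{\GPd}$, so that the associated cohomology class is $\widehat{\Gamma}_{\GPd}\cup\mathrm{ch}(\mathcal{O}_{\GPd})=\widehat{\Gamma}_{\GPd}$; in this approach the remaining work is to pin down the single lattice vector represented by $[\Xtp]$, which one can do by a residue computation at a $T^{\vee}$-fixed point of $\GPd$.
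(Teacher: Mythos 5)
Your proposal correctly identifies the global shape of the argument---both sides are flat sections of the quantum $D$-module, so one should pin them down by a single asymptotic comparison---and you correctly anticipate that in the large-volume / large-$\hslash$ regime the oscillatory integral over the totally positive cycle factorizes into Euler integrals $\int_0^\infty e^{-u}u^{a-1}\,du=\Gamma(a)$, which is precisely the mechanism behind Lemma~\ref{iblimit}. However, there are genuine gaps in how you propose to close the argument.

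First, you assert that in the crystal coordinates ``the forms $\Mir^{-1}(\sigma^v)$ have an equally explicit shape.'' They do not: the mirror isomorphism $\Mir$ of Proposition~\ref{mirrorthm} is constructed abstractly, and the only class whose preimage is known in closed form is $\Mir^{-1}(1)=[\vol]$. You recognize this as the ``crux'' but your suggested remedies do not resolve it. The paper's actual device is different and is the heart of its proof: with equivariant parameters $h$ turned on, the $q\to 0$ limit sets up a Frobenius-type decomposition $\sa-\sb=\sum_{w\in W^P}A_w(h)\,g_w$ indexed by the $T^{\vee}$-fixed points (Lemma~\ref{Frobeniusmethodexistence}), the direct Euler-integral computation kills only the coefficient $A_e$, and the remaining $A_w$ are killed by exploiting $W$-invariance in $h$ of \emph{both} integrals (Lemmas~\ref{iaWinv} and \ref{ibWinv})---on the $B$-side this is precisely where the rational Weyl group action on the Berenstein--Kazhdan geometric crystal (Lemmas~\ref{Wpreservef}--\ref{Wpreservepositivepart}) enters. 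This symmetry argument is what lets the paper sidestep any explicit knowledge of $\Mir^{-1}(\sigma^v)$ for $v\ne e$, and it is absent from your proposal.

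Second, your alternative route via the $\widehat\Gamma$-integral structure is circular as stated: saying that ``compatibility of $\Mir$ with the $\widehat\Gamma$-integral structure forces $[\Xtp]$ to correspond to $\mathcal O_{\GPd}$'' presupposes exactly the compatibility statement one is trying to prove. Third, a smaller point: the convergence of $\int_{\Xtp}e^{-\fmt/\hslash}(\cdots)$ over the non-compact cycle does not follow merely from $\fmt$ being positive; the essential input is that the Newton polytope of $\fmt|_{\Xtp}$ contains the origin in its interior, equivalently that $\fmt|_{\Xtp}$ has a critical point (Corollary~\ref{fhascriticalpoint} feeding into Lemma~\ref{convexhull}), and this in turn relies on Lam--Rietsch's total positivity result applied through Lemma~\ref{Schubertpositivegoestopositivepart}.
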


Our proof begins by adapting Iritani's argument in \cite{IZ} for the toric case: we apply the fact that both sides of \eqref{outlineA=Bthmeq1} are flat sections of the same $D$-module to reduce to verifying that their leading order terms are equal for generic $h$. By the localization formula, these leading order terms are in bijective correspondence with the set of $T^{\vee}$-fixed points of $\GPd$. We handle the terms corresponding to one of these fixed points by direct computation. To handle the others, observe that for the LHS, the $G^{\vee}$-action on $\GPd$ relates these terms to the one we have handled. Thus it suffices to establish similar relations for the RHS, which we achieve by making use of the rational Weyl group action on $\X$ constructed by Berenstein and Kazhdan.

\bigskip
\noindent\underline{\textit{Step (3)}} $~$By Theorem \ref{outlineA=Bthm}, the result follows immediately from the stationary phase approximation, provided we can prove that for $t=1$, the restriction $\fmt|_{\Xtp}$ has a unique critical point, which is non-degenerate and whose critical value is equal to $\EO^{q=1}$\footnote{That the critical value is equal to $\EO^{q=1}$ is exactly what we meant by the ``$B$-model analog of Property $\mathcal{O}$'' in our discussion on the case of toric Fano manifolds in Section \ref{Subsection-mainresults}.}. By the fact that after a reparametrization $\Xtp\simeq \mathbb{R}_{>0}^{\ell}$, $\fmt|_{\Xtp}$ is equal to the sum of coordinates plus a Laurent polynomial with positive coefficients, it suffices to find a critical point with critical value equal to $\EO^{q=1}$. The existence relies on a description of $\Mir$ and a result of Lam and Rietsch \cite{LamRietsch}, which says that the spectrum of the composition of \textit{Peterson-Lam-Shimozono's homomorphism} \cite{LamShimozono} and \textit{Yun-Zhu's isomorphism} \cite{YunZhu} takes the \textit{Schubert positive point} constructed in \cite{RietschJAMS} to a \textit{totally non-negative point} in the sense of Lusztig \cite{Lusztig_positivity}. To prove that the critical value is equal to $\EO^{q=1}$, we apply the \textit{first Chern class theorem}, which says that $\Mir$ maps $[\fm\vol ]$ to $c_1(\GPd)$, and a result of Cheong and Li \cite{CheongLi}, which says that the evaluation of $c_1(\GPd)$ at the above Schubert positive point is equal to $\EO^{q=1}$. 

\begin{remark} There has been substantial work on the study of oscillatory integrals over the mirrors of complete flag varieties $G^{\vee}/B^{\vee}$. Generalizing Givental's work \cite{Givental} on the type A case, Rietsch \cite{RietschIntegral} proved that for arbitrary type, these integrals, whenever convergent, are solutions to the \textit{(open) quantum Toda lattices} \cite{OpenToda}. Combined with a result of Kim \cite{Kim}, this proves the original mirror conjecture \cite{EHX, GiventalICM} (for $F=G^{\vee}/B^{\vee}$) that the \textit{quantum differential equations} of a Fano manifold $F$ are satisfied by the oscillatory integrals over its mirror. Rietsch's result was later generalized by Chhaibi \cite{Chhaibi} and Lam \cite{Lam} to the case where equivariant perturbation is present. Related results were also obtained by Gerasimov, Kharchev, Lebedev and Oblezin \cite{GKLO}, and Gerasimov, Lebedev and Oblezin \cite{GLO1, GLO2}. 
\end{remark}
\begin{remark} 
Working in a different context, Chhaibi \cite[Theorem 5.4.1]{Chhaibi} (see also \cite[Theorem 3.8]{Chhaibi_2}) proved that for the case $P=B$, the equivariant oscillatory integral $\int_{\Xtp} e^{-\fmt/\hslash} \gammamt^{h/\hslash} \vol$ is $W$-invariant and that one of its asymptotics involves a product of gamma factors. These properties are used in Step (2). Their extension to the general case, which is established in the present paper, requires proving the convergence of the integral (see Remark \ref{remark_convergence} below) and the $W$-invariance (up to sign) of the fiberwise volume form $\vol$ (Lemma \ref{Wpreserveomega}), whose existing proofs work only in this special case. 
\end{remark}
\begin{remark}\label{remark_convergence} The convergence of the equivariant oscillatory integrals from the RHS of \eqref{outlineA=Bthmeq1} is not immediate, and it follows from the existence of a critical point of $\fmt|_{\Xtp}$ established in Step (3). When $P=B$, Rietsch \cite{RietschIntegral} verified it by analyzing the superpotential directly.
\end{remark}

\subsection{Organization of paper} $~$

\begin{minipage}[t]{.15\textwidth}
Section \ref{Section-notation}.
\end{minipage}\begin{minipage}[t]{.82\textwidth} 
The A-model and B-model data are formulated in terms of a pair of simple groups that are Langlands dual to each other. Our convention is that the group for the A-model (resp. B-model) is denoted by $G^{\vee}$ (resp. $G$). This choice is made to maintain compatibility with the work \cite{BK}. In Section \ref{Subsection-Bmodelnotation}, we establish notation for $G$. In Section \ref{Subsection-Amodelnotation}, we establish notation for $G^{\vee}$. 
\end{minipage}

\vspace{.2cm}
\begin{minipage}[t]{.15\textwidth}
Section \ref{Section-amodel}.
\end{minipage}\begin{minipage}[t]{.82\textwidth} 
In Section \ref{Subsection-Amodelflagvariety} and Section \ref{Subsection-A-modelconnection}, we recall the A-model data associated with flag varieties including the quantum cohomology, quantum connection and fundamental solution. In Section \ref{Subsection-Schubert-positive-point}, we recall a result of Rietsch about the existence of Schubert positive points and a result of Cheong and Li about the Perron-Frobenius property of these points with respect to the quantum multiplication by the first Chern class.
\end{minipage}

\vspace{.2cm}
\begin{minipage}[t]{.15\textwidth}
Section \ref{Section-bmodel}.
\end{minipage}\begin{minipage}[t]{.82\textwidth}
In Section \ref{Subsection-Rietschmirror}, we recall Lam-Templier's definition of the Rietsch mirror formulated in terms of Berenstein-Kazhdan's geometric crystal. In Section \ref{Subsection-B-modelconnection}, we recall the corresponding B-model data including the Brieskorn lattice, Gauss-Manin connection and Jacobi algebra. In Section \ref{Subsection-Toruscharts}, Section \ref{Subsection-Positivepart} and Section \ref{Subsection-Weylgroupaction}, we discuss the additional data associated with a geometric crystal, namely the torus charts, totally positive part and rational Weyl group action.
\end{minipage}

\vspace{.2cm}
\begin{minipage}[t]{.15\textwidth}
Section \ref{Section-mirror-thm}.
\end{minipage}\begin{minipage}[t]{.82\textwidth} 
In Section \ref{Subsection-mirror-thm-statement}, we recall a mirror theorem recently proved by the author, which states that the $D$-modules from Section \ref{Subsection-A-modelconnection} and Section \ref{Subsection-B-modelconnection} are isomorphic. In Section \ref{Subsection-first-chern-class-thm}, we derive the first Chern class theorem from the mirror theorem. In Section \ref{Subsection-descriptionofmirrorisom}, we recall a description of the semi-classical limit of the mirror isomorphism in terms of Yun-Zhu's isomorphism and Peterson-Lam-Shimozono's homomorphism. In Section \ref{Subsection-totally-positive-critical-point}, we apply this description and a result of Lam and Rietsch to prove that the mirror isomorphism takes the Schubert positive points from Section \ref{Subsection-Schubert-positive-point} to some fiberwise critical points of the restriction of the superpotential to the totally positive part of the Rietsch mirror.
\end{minipage}

\vspace{.2cm}
\begin{minipage}[t]{.15\textwidth}
Section \ref{Section-flatsections}.
\end{minipage}\begin{minipage}[t]{.82\textwidth}
In Section \ref{Subsection-Aside}, we study a flat section of the quantum $D$-module constructed using the fundamental solution and the $\widehat{\Gamma}$-class of the flag variety (LHS of \eqref{outlineA=Bthmeq1}). In Section \ref{Subsection-Bside}, we study a flat section of the same $D$-module defined in terms of the mirror isomorphism from Section \ref{Subsection-mirror-thm-statement} and oscillatory integrals over the totally positive part of the Rietsch mirror (RHS of \eqref{outlineA=Bthmeq1}). In Section \ref{Subsection-A=B}, we prove Theorem \ref{outlineA=Bthm} (= Theorem \ref{mainMS}), i.e., the above two flat sections are equal.
\end{minipage}

\vspace{.2cm}
\begin{minipage}[t]{.15\textwidth}
Section \ref{Section-final}.
\end{minipage}\begin{minipage}[t]{.82\textwidth}
We prove Theorem \ref{mainG}.
\end{minipage}

\vspace{.2cm}
\begin{minipage}[t]{.15\textwidth}
Appendix \ref{Subsection-frobenius-method}.
\end{minipage}\begin{minipage}[t]{.82\textwidth}
We recall some results on differential equations with regular singularities.
\end{minipage} 

\vspace{.2cm}
\begin{minipage}[t]{.15\textwidth}
Appendix \ref{Subsection-proofs-from-bmodel}.
\end{minipage}\begin{minipage}[t]{.82\textwidth}
We give proofs of unproved results stated in Section \ref{Section-bmodel}.
\end{minipage}

\vspace{.2cm}
\begin{minipage}[t]{.15\textwidth}
Appendix \ref{Subsection-proofs-from-flatsections}.\end{minipage}\begin{minipage}[t]{.82\textwidth}
We give proofs of unproved results stated in Section \ref{Section-flatsections}.
\end{minipage}

\vspace{.2cm}
\begin{minipage}[t]{.15\textwidth}
Appendix \ref{Subsection-LamRietschexposition}.  
\end{minipage}\begin{minipage}[t]{.82\textwidth}
We give an exposition of a result of Lam and Rietsch used in Section \ref{Subsection-totally-positive-critical-point}.
\end{minipage}

\subsection*{Acknowledgements}
I would like to thank Thomas Lam and Leonardo Mihalcea for their interest and valuable discussions. I am grateful to the Max Planck Institute for Mathematics in Bonn for its financial support and for providing an excellent working environment.
\section{Notation}\label{Section-notation}
\subsection{Notation for $G$ (B-model)}\label{Subsection-Bmodelnotation}
Fix a simple complex algebraic group $G$ and a maximal torus $T$. We assume $G$ is of adjoint type, i.e., the center $Z(G)$ of $G$ is trivial. Denote by $R$ the set of roots of $(G,T)$, and by $\alpha$ and $\alpha^{\vee}$ the roots and coroots respectively. Fix a fundamental system $\{\alpha_1,\ldots,\alpha_r\}$ for the root system. Denote by $R^+$ the set of positive roots, and by $B$ (resp. $B^-$) the corresponding (resp. opposite) Borel subgroup of $G$. Denote by $U$ and $U^-$ the unipotent radicals of $B$ and $B^-$ respectively. The Lie algebras of the algebraic groups we have introduced are denoted by the standard notations. We also denote by $\mathfrak{g}_{\alpha}$ the one-dimensional root space of $\mathfrak{g}$ associated with a root $\alpha$.

Fix a subset $I_P$ of $I:=\{1,\ldots,r\}$. For convenience, we exclude the case $I_P=I$ for which our main results hold trivially. Denote by $R_P$ (resp. $R_P^+$) the set of $\alpha\in R$ (resp. $\alpha\in R^+$) that are generated by $\alpha_i$ for $i\in I_P$. Denote by $P$ the parabolic subgroup of $G$ satisfying $\lie(P)=\lie(B)\oplus\bigoplus_{\alpha\in -R_P^+}\mathfrak{g}_{\alpha}$, by $L_P$ its Levi subgroup, and by $\ZL$ the center of $L_P$, which is also the kernel of the group homomorphism $T\rightarrow \mathbb{G}_m^{I_P}$ defined by $t\mapsto (\alpha_i(t))_{i\in I_P}$. Since $G$ is of adjoint type, the group homomorphism 
\begin{equation}\label{Bmodelnotationeq1}
(\alpha_i|_{\ZL})_{i\in I\setminus I_P}:\ZL\rightarrow \mathbb{G}_m^{I\setminus I_P}
\end{equation}
is an isomorphism.

Let $W:=N_G(T)/T$ be the Weyl group of $(G,T)$, and $W_P$ the subgroup of $W$ generated by the simple reflections $s_i$ for $i\in I_P$. Denote by $w_0$ the longest element of $W$, and by $w_0^P$ the longest element of $W_P$. Define $w_P:=w_0^Pw_0$. Throughout this paper, we use $\ell$ to denote $\ell(w_P)$, the length of $w_P$, which is equal to the size of $R^+\setminus R^+_P$, and also the dimension of $\GPd$, which will be introduced in Section \ref{Subsection-Amodelflagvariety}.

Let $i\in I$. Fix elements $e_i\in\mathfrak{g}_{\alpha_i}$ and $f_i\in\mathfrak{g}_{-\alpha_i}$ such that $[e_i,f_i]=\alpha^{\vee}_i\in\mathfrak{t}$. There exist unique group homomorphisms $x_i:\mathbb{G}_a\rightarrow U$ and $y_i:\mathbb{G}_a\rightarrow U^-$ satisfying $\lie(x_i)(1)=e_i$ and $\lie(y_i)(1)=f_i$. Then there exist unique group homomorphisms $\character_i:U\rightarrow\mathbb{G}_a$ and $\liecharacter_i:U^-\rightarrow\mathbb{G}_a$ satisfying $\character_i\circ x_j = \delta_{ij}\id_{\mathbb{G}_a}=  \liecharacter_i\circ y_j$ for all $j\in I$. Define $\character:=\sum_{i\in I}\character_i$.

For $i\in I$, define $\ol{s_i}:=x_i(-1)y_i(1)x_i(-1)\in G$. It is known that $\ol{s_i}$ lies in the normalizer $N_G(T)$ of $T$ in $G$ and represents the simple reflection $s_i$ in the Weyl group $W$. Moreover, this definition extends to all elements of $W$. More precisely, if $w\in W$ and $\iii=(i_1,\ldots,i_m)$ is a reduced decomposition of $w$, then $\ol{w}:=\ol{s_{i_1}}\cdots\ol{s_{i_m}}$ lies in $N_G(T)$, represents $w$ in $W$ and is independent of the choice of $\iii$.
\subsection{Notation for $G^{\vee}$ (A-model)}\label{Subsection-Amodelnotation}
In Section \ref{Subsection-Bmodelnotation}, we have fixed a simple complex algebraic group $G$. Denote by $G^{\vee}$ its Langlands dual group, and by $T^{\vee}$ the maximal torus dual to $T$. By definition, the roots (resp. coroots) of $(G^{\vee},T^{\vee})$ are the coroots (resp. roots) of $(G,T)$. Since $G$ is of adjoint type, $G^{\vee}$ is simply-connected. The subset $\{\alpha^{\vee}_1,\ldots,\alpha^{\vee}_r\}$ forms a fundamental system for the root system of $(G^{\vee},T^{\vee})$. Denote by $B^{\vee}$ the corresponding Borel subgroup of $G^{\vee}$, and by $B^{\vee}_-$ the opposite Borel subgroup. Denote by $\{\omega^{\vee}_1,\ldots,\omega^{\vee}_r\}$ the set of fundamental weights, i.e., the dual basis of the set $\{\alpha_1,\ldots,\alpha_r\}$ of the simple coroots associated with the above fundamental system. Denote by $Q$ the coroot lattice.

Recall we have also fixed a subset $I_P$ of $I$. Let $P^{\vee}$ be the parabolic subgroup of $G^{\vee}$ satisfying $\lie(P^{\vee})=\lie(B^{\vee})\oplus \bigoplus_{\alpha\in -R^+_P}\mathfrak{g}^{\vee}_{\alpha^{\vee}}$, where $\mathfrak{g}^{\vee}_{\alpha^{\vee}}$ is the one-dimensional root space of $\mathfrak{g}^{\vee}:=\lie(G^{\vee})$ associated with $\alpha^{\vee}$. Define $Q_P:=\sum_{i\in I_P}\mathbb{Z}\cdot \alpha_i\subseteq Q$.

It is known that the Weyl group $N_{G^{\vee}}(T^{\vee})/T^{\vee}$ of $(G^{\vee},T^{\vee})$ is canonically isomorphic to the Weyl group $W$ of $(G,T)$. Because of this, we denote the former group by the same notation. Recall $W_P$ is the subgroup of $W$ generated by the simple reflections $s_i$ for $i\in I_P$. Define $W^P$ to be the set of minimal length coset representatives of the quotient set $W/W_P$.


\section{A-model}\label{Section-amodel}
We will use the notations established in Section \ref{Subsection-Amodelnotation}.

\subsection{Flag varieties and their quantum cohomology}\label{Subsection-Amodelflagvariety}
By a \textit{flag variety} we mean the quotient $\GPd$. It is a smooth projective $G^{\vee}$-variety. Introduce an extra $\mathbb{G}_m$-action on $\GPd$ given by the trivial action. The role of this action will be apparent in Section \ref{Section-mirror-thm}. The $T^{\vee}\times\mathbb{G}_m$-fixed points of $\GPd$ are given by $vP^{\vee}$, $v\in W^P$. For $v\in W^P$, define
\[
\begin{array}{rcl} \\ [-1.0em]
\sigma_v &:= &\pd\left[ \ol{B^{\vee}_-vP^{\vee}/P^{\vee}}\right]\in H_{T^{\vee}\times\mathbb{G}_m}^{2\ell(v)}(\GPd)\\ [1.2em]
\sigma^v &:= &\pd\left[ \ol{B^{\vee}vP^{\vee}/P^{\vee}}\right]\in H_{T^{\vee}\times\mathbb{G}_m}^{\dim_{\mathbb{R}}(\GPd)-2\ell(v)}(\GPd) \\ [.6em]
\end{array}.
\]
It is known that $\{\sigma_v\}_{v\in W^P}$ and $\{\sigma^v\}_{v\in W^P}$ are $H_{T^{\vee}\times\mathbb{G}_m}^{\bullet}(\pt)$-bases of $H_{T^{\vee}\times\mathbb{G}_m}^{\bullet}(\GPd)$, which are dual to each other with respect to the pairing $\int_{\GPd}-\cup -$.

Let $\lambda\in (Q/Q_P)^*$. The one-dimensional $T^{\vee}$-module $\mathbb{C}_{-\lambda}$ of weight $-\lambda$ is naturally a $P^{\vee}$-module so we can define a line bundle $L_{\lambda}:=G^{\vee}\times^{P^{\vee}}\mathbb{C}_{-\lambda}$ on $\GPd$. It is known that $\{[L_{\omega^{\vee}_i}]\}_{i\in I\setminus I_P}$ is a $\mathbb{Z}$-basis of $\pic(\GPd)$ and generates the nef cone. Let $\{\beta_i\}_{i\in I\setminus I_P}\subset H_2(\GPd;\mathbb{Z})$ be its dual basis. Then $\{\beta_i\}_{i\in I\setminus I_P}$ generates the cone of effective curve classes of $\GPd$. Introduce the \textit{quantum parameters} $q_i$, $i\in I\setminus I_P$. 

\begin{definition}
Define the \textit{$T^{\vee}\times\mathbb{G}_m$-equivariant quantum cohomology of $\GPd$}
\[ QH_{T^{\vee}\times\mathbb{G}_m}^{\bullet}(\GPd):=H_{T^{\vee}\times\mathbb{G}_m}^{\bullet}(\GPd)\otimes \mathbb{C}[q_i|~ i\in I\setminus I_P] \]
and the \textit{quantum cup product} $\star$ by
\[ \sigma_u\star\sigma_v:=\sum_{w\in W^P}\sum_{(d_i)\in \mathbb{Z}_{\geqslant 0}^{I\setminus I_P}}\left(\prod_{i\in I\setminus I_P} q_i^{d_i}\right)\left(\int_{\ol{\mathcal{M}}_{0,3}(\GPd,\beta_{\mathbf{d}})} \ev_1^*\sigma_u\cup\ev_2^*\sigma_v \cup\ev_3^*\sigma^w\right)\sigma_w,\]
where 
\begin{itemize}
\item $\beta_{\mathbf{d}}:=\sum_{i\in I\setminus I_P} d_i\beta_i\in H_2(\GPd;\mathbb{Z})$;

\item $\ol{\mathcal{M}}_{0,3}(\GPd,\beta_{\mathbf{d}})$ is the moduli stack of genus zero stable maps to $\GPd$ of degree $\beta_{\mathbf{d}}$ with three marked points;

\item $\ev_1,\ev_2,\ev_3:\ol{\mathcal{M}}_{0,3}(\GPd,\beta_{\mathbf{d}})\rightarrow\GPd$ are the evaluation morphisms; and

\item the integral $\int_{\ol{\mathcal{M}}_{0,3}(\GPd,\beta_{\mathbf{d}})}$ is the $T^{\vee}\times\mathbb{G}_m$-equivariant integral. 
\end{itemize}
\end{definition}

It is known (see e.g., \cite{CoxKatz}) that $(QH_{T^{\vee}\times\mathbb{G}_m}^{\bullet}(\GPd),\star)$ is a graded commutative $H_{T^{\vee}\times\mathbb{G}_m}^{\bullet}(\pt)$-algebra, where the grading is defined by requiring each $q_i$ to have degree $2\langle c_1(\GPd),\beta_i\rangle = 2\sum_{\alpha\in R^+\setminus R^+_P}\alpha^{\vee}(\alpha_i)$.
\subsection{Quantum connection and fundamental solution}\label{Subsection-A-modelconnection}
\begin{definition}\label{vectorbundledef}
Define 
\[\bundle:= QH^{\bullet}_{T^{\vee}\times\mathbb{G}_m}(\GPd)[q_i^{-1}|~i\in I\setminus I_P]\]
regarded as a vector bundle on $\spec H^{\bullet}_{T^{\vee}\times\mathbb{G}_m}(\pt)[q_i^{\pm 1}|~i\in I\setminus I_P] \simeq \mathbb{A}^1_{\hslash}\times \mathfrak{t}^{\vee}\times \mathbb{G}_m^{I\setminus I_P}$.
\end{definition}
\begin{definition}\label{Aconnectiondef}
(Quantum connection) Define a family $\nabla^A$ of connections on the family 
\[ \left\{\bundle|_{\{\hslash\}\times \{h\}\times \mathbb{G}_m^{I\setminus I_P} } \right\}_{(\hslash,h)\in (\mathbb{A}^1\setminus 0)\times \mathfrak{t}^{\vee}}\]
of vector bundles on $\mathbb{G}_m^{I\setminus I_P}$ by
\[
\nabla^A_{\partial_{q_i}} := \frac{\partial}{\partial q_i} +\frac{1}{\hslash q_i}(c_1^{T^{\vee}\times\mathbb{G}_m}(L_{\omega_i^{\vee}})\star -)\qquad i\in I\setminus I_P,
\]
where $L_{\omega_i^{\vee}}$ is defined in Section \ref{Subsection-Amodelflagvariety}, and its $T^{\vee}\times\mathbb{G}_m$-linearization is the restriction of its unique $G^{\vee}\times\mathbb{G}_m$-linearization.
\end{definition} 
\begin{lemma}\label{flatconnectionprop}
For every $(\hslash,h)\in (\mathbb{A}^1\setminus 0)\times\mathfrak{t}^{\vee}$, $\nabla^A$ is a flat connection on $\bundle|_{\{\hslash\}\times \{h\}\times \mathbb{G}_m^{I\setminus I_P} } $.
\end{lemma}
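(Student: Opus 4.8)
The plan is to verify flatness of $\nabla^A$ by the standard argument that the curvature of the quantum connection vanishes as a formal consequence of the associativity and commutativity of the quantum product together with the divisor axiom. First I would recall that for a pair $i,j\in I\setminus I_P$, the curvature of $\nabla^A$ in the directions $\partial_{q_i},\partial_{q_j}$ is, up to the overall factor $1/(\hslash^2 q_i q_j)$, the difference
\[
\bigl[c_1^{T^{\vee}\times\mathbb{G}_m}(L_{\omega_i^{\vee}})\star,\ c_1^{T^{\vee}\times\mathbb{G}_m}(L_{\omega_j^{\vee}})\star\bigr] + \hslash\Bigl(q_i\tfrac{\partial}{\partial q_i}\bigl(c_1^{T^{\vee}\times\mathbb{G}_m}(L_{\omega_j^{\vee}})\star\bigr) - q_j\tfrac{\partial}{\partial q_j}\bigl(c_1^{T^{\vee}\times\mathbb{G}_m}(L_{\omega_i^{\vee}})\star\bigr)\Bigr).
\]
The first bracket vanishes because quantum multiplication by any classes commutes: the operators $\sigma\star-$ form a commutative algebra since $\star$ is associative and commutative. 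For the second group of terms, the key input is the divisor equation for genus-zero three-point Gromov–Witten invariants, which gives $q_i\frac{\partial}{\partial q_i}\bigl(\sigma\star\tau\bigr) = \bigl(c_1^{T^{\vee}\times\mathbb{G}_m}(L_{\omega_i^{\vee}})\star\sigma\star\tau\bigr)-\bigl(c_1^{T^{\vee}\times\mathbb{G}_m}(L_{\omega_i^{\vee}})\cup\sigma\bigr)\star\tau$ after reorganizing by the structure constants; more precisely, differentiating the structure constants $\langle\sigma_u,\sigma_v,\sigma^w\rangle_{\beta_{\mathbf d}}$ in $\log q_i$ brings down a factor $\langle c_1(L_{\omega_i^{\vee}}),\beta_{\mathbf d}\rangle = d_i$, which the divisor axiom identifies with inserting $c_1^{T^{\vee}\times\mathbb{G}_m}(L_{\omega_i^{\vee}})$ as a fourth marked point.

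Carrying this out, $q_i\frac{\partial}{\partial q_i}\bigl(c_1^{T^{\vee}\times\mathbb{G}_m}(L_{\omega_j^{\vee}})\star-\bigr)$ becomes, as an operator, the associator-type expression built from triple quantum products with $c_1^{T^{\vee}\times\mathbb{G}_m}(L_{\omega_i^{\vee}})$ and $c_1^{T^{\vee}\times\mathbb{G}_m}(L_{\omega_j^{\vee}})$ inserted, minus a classical correction term coming from the $\beta=0$ part. The expression obtained is manifestly symmetric in $i$ and $j$ by associativity and commutativity of $\star$, so the antisymmetrized combination $q_i\frac{\partial}{\partial q_i}(\cdots_j) - q_j\frac{\partial}{\partial q_j}(\cdots_i)$ vanishes, and with it the whole curvature. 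This is the classical computation of Dubrovin showing that the quantum connection (the extended/Dubrovin connection restricted to the small parameter space, here the torus $\mathbb{G}_m^{I\setminus I_P}$ of quantum parameters) is flat; the equivariant enhancement changes nothing since all invariants and the divisor axiom hold $T^{\vee}\times\mathbb{G}_m$-equivariantly, and inverting the $q_i$ is harmless.

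The main obstacle, such as it is, is bookkeeping rather than conceptual: one has to be careful that here the connection only varies in the $q$-directions (there is no $\hslash$-direction or $\mathfrak{t}^{\vee}$-direction in this particular lemma), so only the "$[\star,\star]=0$ plus divisor axiom" mechanism is needed and not the full flatness of the Dubrovin connection on the big phase space. I would also note that because $\GPd$ has semisimple (in fact, even more structured) quantum cohomology is irrelevant here; flatness is purely formal. Concretely, I would write: curvature $= \nabla^A_{\partial_{q_i}}\nabla^A_{\partial_{q_j}} - \nabla^A_{\partial_{q_j}}\nabla^A_{\partial_{q_i}}$, expand the four terms, cancel the $\partial^2/\partial q_i\partial q_j$ terms, invoke commutativity of quantum multiplication for the $[c_1\star,c_1\star]$ term, and invoke the divisor equation to rewrite the two remaining $\partial(c_1\star)$ terms symmetrically, concluding that everything cancels. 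Hence $\nabla^A$ is flat for every fixed $(\hslash,h)$.
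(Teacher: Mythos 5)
Your proof is correct and amounts to the standard argument that the paper itself does not write out — it simply cites Manin for this well-known fact. Two small points worth tightening: the displayed identity $q_i\partial_{q_i}(\sigma\star\tau) = (D_i\star\sigma\star\tau) - (D_i\cup\sigma)\star\tau$ is not literally right (as written its right side is not even symmetric in $\sigma,\tau$, whereas the left side is), and you do not really need it; the cleaner route, which you also sketch in the ``more precisely'' clause, is that $q_i\partial_{q_i}$ applied to $\sum_{\mathbf{d}} q^{\mathbf{d}}\langle D_j,\sigma_b,\sigma^w\rangle_{\beta_{\mathbf d}}$ brings down $d_i$, the divisor axiom rewrites $d_i\langle D_j,\sigma_b,\sigma^w\rangle_{\beta_{\mathbf d}}$ as the four-point invariant $\langle D_i,D_j,\sigma_b,\sigma^w\rangle_{\beta_{\mathbf d}}$ for $\mathbf{d}\neq 0$ (the $\mathbf{d}=0$ term has $d_i=0$), and this is symmetric in $i,j$ by the $S_n$-symmetry of Gromov--Witten invariants rather than by associativity. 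The commutator term $[D_i\star-,D_j\star-]$ does vanish by commutativity and associativity of $\star$, as you say, and everything works verbatim with $T^{\vee}\times\mathbb{G}_m$-equivariant coefficients and after inverting the $q_i$.
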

\begin{proof}
This is well-known. See e.g., \cite{Manin}.
\end{proof}
\begin{definition} \label{fundamentalsolutiondef} 
(Fundamental solution) Let $x\in H_{T^{\vee}\times\mathbb{G}_m}^{\bullet}(\GPd)$. Define 
\begin{align}\label{fundamentalsolutiondefeq1}
 & S(\hslash,h,q)(x) \nonumber \\
 := ~& e^{-H(q)/\hslash} x - \sum_{\substack{ v\in W^P\\
 (d_i)\in\mathbb{Z}^{I\setminus I_P}_{\geqslant 0}\setminus \{\mathbf{0}\}}} \left(\prod_{i\in I\setminus I_P} q_i^{d_i} \right)\left(\int_{\overline{\mathcal{M}}_{0,2}(\GPd,\beta_{\mathbf{d}})} \frac{\ev_1^*\left(e^{-H(q)/\hslash}  x\right) }{\hslash +\psi_1}\cup \ev_2^*\sigma^v\right) \sigma_v,
\end{align}
where $H(q):=\sum_{i\in I\setminus I_P} (\log q_i)c_1^{T^{\vee}\times\mathbb{G}_m}(L_{\omega_i^{\vee}})$ and $\psi_1$ is the $\psi$-class associated with the first marked point.
\end{definition}
\begin{remark}\label{fundamentalsolutionremark1}
A priori, each component of $S(\hslash,h,q)(x)$ (say with respect to the Schubert basis $\{\sigma_v\}_{v\in W^P}$) is a formal power series in $\hslash^{-1}$, $q_i$, $\log q_i$ and the equivariant parameters. But by Lemma \ref{fundamentalsolutionconverges} below, it is in fact a (multi-valued) holomorphic function on an open subset.
\end{remark}

\begin{lemma} \label{fundamentalsolutionlemma} 
For every $i\in I\setminus I_P$ and $x\in H_{T^{\vee}\times \mathbb{G}_m}^{\bullet}(\GPd)$, we have
\[ \nabla^A_{\partial_{q_i}} \left(S(\hslash,h,q)(x)\right) =0 \]
as a formal power series.
\end{lemma}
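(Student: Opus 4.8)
The plan is to verify the identity termwise as a formal power series in the $q_i$ and $\log q_i$, using only the standard axioms of genus-zero Gromov--Witten theory; this is a well-known type of computation (compare \cite{Manin, Givental}), which I will only sketch. Expanding $\frac{1}{\hslash+\psi_1}=\sum_{a\geqslant 0}(-1)^a\hslash^{-a-1}\psi_1^a$, each Schubert component of $S(\hslash,h,q)x$ becomes a generating series, in the $q_i$ and in the entries of $H(q)$, of two-point descendant invariants $\int_{\overline{\mathcal{M}}_{0,2}(\GPd,\beta_{\mathbf{d}})}\psi_1^a\,\ev_1^*\!\big(e^{-H(q)/\hslash}x\big)\cup\ev_2^*\sigma^v$. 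One applies $\nabla^A_{\partial_{q_i}}$ and compares the coefficient of each effective class $\beta_{\mathbf{d}}$ on the two sides.

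The ingredients are: (i) $\frac{\partial}{\partial q_i}e^{-H(q)/\hslash}=-\frac{1}{\hslash q_i}\,c_1^{T^{\vee}\times\mathbb{G}_m}(L_{\omega_i^{\vee}})\cup e^{-H(q)/\hslash}$; (ii) the divisor equation for descendant invariants, which, upon adding a marked point carrying $c_1^{T^{\vee}\times\mathbb{G}_m}(L_{\omega_i^{\vee}})$ with no $\psi$-factor, produces $\langle c_1(L_{\omega_i^{\vee}}),\beta_{\mathbf{d}}\rangle=d_i$ times the original invariant (here $\{\beta_i\}$ and $\{[L_{\omega_i^{\vee}}]\}$ are dual bases) plus the invariant in which $\psi_1^a\,\ev_1^*(\,\cdot\,)$ is replaced by $\psi_1^{a-1}\,\ev_1^*(c_1^{T^{\vee}\times\mathbb{G}_m}(L_{\omega_i^{\vee}})\cup\,\cdot\,)$; and (iii) the genus-zero topological recursion relation together with the splitting axiom, which rewrites the three-point descendant invariant obtained in (ii), when the $\psi$-power is $\geqslant 1$, as a sum over $\beta_{\mathbf{d}_1}+\beta_{\mathbf{d}_2}=\beta_{\mathbf{d}}$ with $\beta_{\mathbf{d}_1}\neq 0$ of a two-point descendant invariant of class $\beta_{\mathbf{d}_1}$ paired (via $\sum_{w\in W^P}\sigma_w\otimes\sigma^w$) with a three-point invariant of class $\beta_{\mathbf{d}_2}$.

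Matching the two sides proceeds as follows. By (i) and (ii), differentiating $\prod_i q_i^{d_i}$ (which brings down $d_i/q_i$) together with differentiating $e^{-H(q)/\hslash}$ inside the sum over $\beta_{\mathbf{d}}\neq 0$ is the same as inserting a marked point carrying $c_1^{T^{\vee}\times\mathbb{G}_m}(L_{\omega_i^{\vee}})$: the factor $d_i$ is the $\langle c_1,\beta_{\mathbf{d}}\rangle$-term of the divisor equation, and the $e^{-H(q)/\hslash}$-derivative cancels the $\psi$-lowering term. Then (iii), combined with linearity of $\star$ and the identity $\sum_{w\in W^P}\langle\cdots,\sigma_w\rangle\,\sigma^w=\sum_{v\in W^P}\langle\cdots,\sigma^v\rangle\,\sigma_v$ (both sides being the $\ev_2$-pushforward of the same class), reassembles the resulting three-point invariants — including the $\psi$-free ones, which are by definition structure constants of $\star$ — precisely into $-\frac{1}{\hslash q_i}\big(c_1^{T^{\vee}\times\mathbb{G}_m}(L_{\omega_i^{\vee}})\star S(\hslash,h,q)x\big)$, i.e.\ into minus the connection term, yielding $\nabla^A_{\partial_{q_i}}(S(\hslash,h,q)x)=0$. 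The $\beta_{\mathbf{d}}=0$ contribution is the special case of this in which $\star$ reduces to $\cup$, and amounts to (i).

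The only genuine work is the bookkeeping: keeping track of the $\psi$-powers and of the exact combinatorial shape (and signs) of the divisor equation, the topological recursion relation and the splitting axiom, so that the various sums line up. I expect this to be the main, though entirely routine, obstacle. Note that, unlike for Lemma \ref{flatconnectionprop}, WDVV is not needed here, and that the statement is purely formal — the convergence of the series is the separate content of Lemma \ref{fundamentalsolutionconverges}.
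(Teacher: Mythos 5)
The paper's own proof of this lemma is a one-line citation to \cite[Chapter 10]{CoxKatz}, and your sketch (derivative of $q^{\mathbf d}$ and of $e^{-H(q)/\hslash}$ combining via the divisor equation into a third-marked-point insertion of $c_1^{T^{\vee}\times\mathbb{G}_m}(L_{\omega_i^{\vee}})$, then TRR plus the splitting axiom reassembling the result into $-\frac{1}{\hslash q_i}\,c_1^{T^{\vee}\times\mathbb{G}_m}(L_{\omega_i^{\vee}})\star S$, with the $\beta_{\mathbf d}=0$ term handled directly) is precisely the standard computation carried out there, so the approaches coincide.
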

\begin{proof}
This is well-known. See e.g., \cite[Chapter 10]{CoxKatz}.
\end{proof}
\begin{lemma} \label{fundamentalsolutionconverges} 
There exists an open neighbourhood $U$ of the origin $0\in\mathfrak{t}^{\vee}$ such that for every $x\in H_{T^{\vee}\times\mathbb{G}_m}^{\bullet}(\GPd)$, the formal section $(\hslash,h,\widetilde{q})\mapsto S(\hslash,h,\exp(\widetilde{q}))(x)$ of the pull-back of $\bundle$ by the covering map 
\[(\mathbb{A}^1\setminus 0)\times \mathfrak{t}^{\vee}\times \lie(\mathbb{G}_m^{I\setminus I_P}) \rightarrow (\mathbb{A}^1\setminus 0)\times \mathfrak{t}^{\vee}\times \mathbb{G}_m^{I\setminus I_P}\]
is holomorphic on the open subset
\[\{(\hslash,h)\in (\mathbb{A}^1\setminus 0)\times \mathfrak{t}^{\vee}|~h\in \hslash U\}\times \lie(\mathbb{G}_m^{I\setminus I_P}).\]
\end{lemma}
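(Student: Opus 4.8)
The plan is to rewrite the right–hand side of \eqref{fundamentalsolutiondefeq1}, after the substitution $q_i=\exp(\widetilde q_i)$, as a prefactor with manifestly entire entries times a series in the (exponentiated) Novikov variables whose coefficients are built out of genus zero two-point gravitational descendants of $\GPd$, and then to control those descendants by the Fano estimates. Holomorphicity of each Schubert component of $S(\hslash,h,\exp(\widetilde q))x$ will follow from absolute, locally uniform convergence of the resulting series on the asserted domain, together with the standard fact that a locally uniform limit of holomorphic functions is holomorphic.

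For the reorganization I would regard $H^\bullet_{T^\vee\times\mathbb G_m}(\GPd)$ as a free $\mathbb C[h,\hslash]$-module of rank $|W^P|$. Under $q_i=\exp(\widetilde q_i)$ the class $H(q)$ becomes $\sum_{i\in I\setminus I_P}\widetilde q_i\,c_1^{T^\vee\times\mathbb G_m}(L_{\omega_i^\vee})$, so $e^{-H(q)/\hslash}$ is the matrix exponential of the operator $-\hslash^{-1}(H(q)\cup-)$, which is an operator-valued polynomial in $(\widetilde q,h,\hslash^{-1})$; hence in the Schubert basis its entries are entire functions of $(\widetilde q,h,\hslash^{-1})$. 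Writing $e^{-H(q)/\hslash}x=\sum_{w\in W^P}f_w(\widetilde q,h,\hslash^{-1})\,\sigma_w$ with $f_w$ entire and expanding $\tfrac{1}{\hslash+\psi_1}=\sum_{k\ge0}(-1)^k\psi_1^k\hslash^{-k-1}$, the correction term in \eqref{fundamentalsolutiondefeq1} becomes the multiple series over $v,w\in W^P$, $k\ge0$ and $\mathbf d\in\mathbb Z_{\ge0}^{I\setminus I_P}\setminus\{\mathbf 0\}$ with general term
\[
-\left(\prod_{i\in I\setminus I_P}q_i^{d_i}\right)f_w(\widetilde q,h,\hslash^{-1})\,(-1)^k\hslash^{-k-1}\left(\int_{\ol{\mathcal M}_{0,2}(\GPd,\beta_{\mathbf d})}\ev_1^*\sigma_w\cup\psi_1^k\cup\ev_2^*\sigma^v\right)\sigma_v
\]
where $q_i=\exp(\widetilde q_i)$; thus holomorphicity of the $\sigma_v$-component of $S$ reduces to absolute, locally uniform convergence of this series on $\{(\hslash,h):\hslash\neq0,\ h\in\hslash U\}\times\lie(\mathbb G_m^{I\setminus I_P})$.

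The analytic input is the Fano estimate for these descendants. Since $\GPd$ is homogeneous, $\ol{\mathcal M}_{0,2}(\GPd,\beta_{\mathbf d})$ is a smooth proper Deligne--Mumford stack of complex dimension $\dim_{\mathbb C}\GPd+\langle c_1(\GPd),\beta_{\mathbf d}\rangle-1$ and the integrals above are honest intersection numbers; by homogeneity of degrees such an integral is a homogeneous polynomial in $h$ of degree $k-\langle c_1(\GPd),\beta_{\mathbf d}\rangle+1+\ell(w)-\ell(v)$ (and vanishes when this is negative). The key claim is that there exist constants $A,C>0$, independent of $\mathbf d\neq\mathbf 0$, $k\ge0$ and $v,w\in W^P$, such that every monomial coefficient of this polynomial has absolute value at most $C A^{\langle c_1(\GPd),\beta_{\mathbf d}\rangle+k}/(\langle c_1(\GPd),\beta_{\mathbf d}\rangle-1)!$; this is the equivariant, gravitational-descendant incarnation of the classical fact that the $J$-function of a Fano manifold is an entire function of $\log q$. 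Granting it, a geometric-series estimate shows that the sum over $k$ at fixed $\mathbf d$ converges as soon as $|h|/|\hslash|$ is smaller than $1/A$ (the number of monomials contributing only polynomial factors) --- this is precisely what dictates taking $U$ to be a sufficiently small ball around $0\in\mathfrak t^\vee$, so that $h\in\hslash U$ forces $|h|/|\hslash|<1/A$ --- while the factorial denominator, together with the fact that the number of $\mathbf d$ with $\langle c_1(\GPd),\beta_{\mathbf d}\rangle=D$ grows only polynomially in $D$, makes the remaining sum over $\mathbf d$ converge uniformly on compacta in $\widetilde q$ for \emph{all} $\widetilde q\in\lie(\mathbb G_m^{I\setminus I_P})$. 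Combined with boundedness of the entire factors $f_w$ on compacta, this yields holomorphicity on the required open set.

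The main obstacle is exactly this uniform bound on the equivariant two-point descendants of $\GPd$, with every coefficient of the $h$-polynomial controlled uniformly in $k$. One route is to prove the non-equivariant Fano estimate by induction on $\langle c_1(\GPd),\beta_{\mathbf d}\rangle$ from the genus zero topological recursion relations (or WDVV), and then to pass to the equivariant coefficients via the localization formula for $\int_{\ol{\mathcal M}_{0,2}(\GPd,\beta_{\mathbf d})}$ at the $T^\vee$-fixed loci; an alternative is simply to cite the known analyticity of the (equivariant) quantum cohomology of homogeneous spaces. Everything else --- extracting the entire prefactor $e^{-H(q)/\hslash}$, reorganizing the descendant series, and disentangling the two convergence regimes (the $\mathbf d$-sum, convergent for all $\widetilde q$; the $k$-sum, convergent only once $h\in\hslash U$) --- is routine bookkeeping.
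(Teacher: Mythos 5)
The paper's proof takes a completely different and more economical route. It invokes the abstract Frobenius-method theory for ODEs with regular singular points (Appendix~\ref{Subsection-frobenius-method}): since $S(\hslash,h,q)x$ is a formal solution of $\nabla^A_{q_i\partial_{q_i}}=0$ (Lemma~\ref{fundamentalsolutionlemma}), Lemma~\ref{Frobeniusmethodconvergence} yields convergence automatically, \emph{provided} one can show that each matrix $S_\nu$ in the expansion \eqref{frobeniusmethodeq1.5} is holomorphic on the base. That in turn reduces, via the recurrence \eqref{Frobeniusmethodconvergenceeq2}, to non-vanishing of the determinant of $X\mapsto\langle\nu,e\rangle X+[X,A_{\mathbf 0}]$, whose eigenvalues have the form $\langle\nu,e\rangle+\varphi(h/\hslash)$ for finitely many linear forms $\varphi$. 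Since $\langle\nu,e\rangle\geqslant 1$, shrinking $U$ is enough. No estimate on Gromov--Witten invariants enters: the paper uses only the fact that the quantum differential equation has a regular singularity at $q=0$.

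Your proposal, by contrast, tries to establish convergence head-on by a uniform bound on the equivariant two-point descendants of $\GPd$ of the form $C\,A^{\langle c_1,\beta_{\mathbf d}\rangle+k}/(\langle c_1,\beta_{\mathbf d}\rangle-1)!$, uniformly in $v,w,k,\mathbf d$. Such a bound, if true, would indeed give the claim by the geometric-series and factorial-decay argument you outline, and the identification of the two convergence regimes ($k$-sum needing $|h|/|\hslash|$ small, $\mathbf d$-sum convergent for all $\widetilde q$) is correct and captures the right qualitative picture. But the bound itself is precisely the content of the lemma and you leave it unproved. You flag it yourself as ``the main obstacle,'' and neither of the two escape routes you suggest closes the gap as stated: an induction via TRR/WDVV would be a nontrivial argument in its own right (in particular you would need to track the growth in $k$ of the descendant insertions, not just the degree), and ``citing the known analyticity of quantum cohomology'' refers to convergence of the small quantum product or the three-point $J$-function, not to the specific $k$-uniform descendant estimate your scheme requires. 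So as written this is a plausible blueprint with a genuine hole at its center, whereas the paper's Frobenius-method argument is closed because it derives the needed estimates \emph{from} the differential equation rather than feeding estimates \emph{into} it. If you want to salvage your route, the cleanest fix is to observe that your Fano estimate is logically equivalent to regularity of the singularity at $q=0$, and then one is back to the paper's approach in slightly different clothing.
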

\begin{proof}
We apply a result from Appendix \ref{Subsection-frobenius-method} where we take $\FFF$ to be the restriction of the vector bundle $H_{T^{\vee}\times\mathbb{G}_m}^{\bullet}(\GPd)$ to the open subset $\mathcal{U}:=\{(\hslash,h)\in (\mathbb{A}^1\setminus 0)\times \mathfrak{t}^{\vee}|~h\in \hslash U\}$ ($U$ to be specified), $J$ to be $I\setminus I_P$, and the system \eqref{frobeniusmethodeq1} to be $\nabla^A_{q_i\partial_{q_i}}=0$. In particular, each $A_{j,\mathbf{0}}$ is equal to $-\frac{1}{\hslash} c_1^{T^{\vee}\times\mathbb{G}_m}(L_{\omega_j^{\vee}})\cup - $. Observe that $S(\hslash,h,q)$ does have the form \eqref{frobeniusmethodeq1.5} with $S_{\mathbf{0}}=\id$ and $S_{\nu}$ ($\nu\ne \mathbf{0}$) given by 
\[y\mapsto -\sum_{v\in W^P}\left(\int_{\overline{\mathcal{M}}_{0,2}(\GPd,\beta_{\mathbf{d}})} \frac{\ev_1^*y }{\hslash +\psi_1}\cup \ev_2^*\sigma^v\right)\sigma_v,\]
which is a priori a formal power series in $\hslash^{-1}$ and the equivariant parameters. By Lemma \ref{fundamentalsolutionlemma} and Lemma \ref{Frobeniusmethodconvergence}, it suffices to show that $U$ can be chosen such that $S_{\nu}$ is holomorphic on $\mathcal{U}$ for all $\nu$.

By the recurrence relation \eqref{Frobeniusmethodconvergenceeq2}, it suffices to show that there exists $U$ such that for every $\nu\ne\mathbf{0}$, the determinant of the linear map
\[ X\mapsto \langle \nu, e\rangle X + X\circ A_{\mathbf{0}} - A_{\mathbf{0}}\circ X\]
does not vanish on $\mathcal{U}$. By linear algebra, every eigenvalue of this linear map is equal to the difference $\lambda_1-\lambda_2$ for some eigenvalues $\lambda_1$ and $\lambda_2$ of $\langle \nu, e\rangle\id + A_{\mathbf{0}}$ and $A_{\mathbf{0}}$ respectively. It follows that, by the localization formula, it is of the form $\langle \nu, e\rangle+\frac{\varphi}{\hslash}$ for some linear form $\varphi$ on $\mathfrak{t}^{\vee}$. Note that there are only finitely many possibilities for $\varphi$. Since $\langle \nu, e\rangle\geqslant 1$, we can indeed find $U$ such that $\langle \nu, e\rangle+\frac{\varphi}{\hslash}$ does not vanish on $\mathcal{U}$ for all such $\varphi$. We are done. 
\end{proof}
\begin{remark}\label{fundamentalsolutionremark2}
The $J$-function $J_F(s)$ from Conjecture \ref{GammaconjI} is by definition the ``last row'' of the fundamental solution matrix for the dual quantum connection restricted to the anti-canonical direction. In our case, we have
\[ J_{\GPd}(s)= \sum_{v\in W^P}\langle S(-1,0,q(s))(\sigma^v),1\rangle \sigma_v,\]
where $q(s):=(q_i(s))_{i\in I\setminus I_P}$ with $q_i(s):=s^{\sum_{\alpha\in R^+\setminus R^+_P}\alpha^{\vee}(\alpha_i)}$. In fact, $J_{\GPd}(s)$ plays no role in the proof of Theorem \ref{mainG} because what we prove is an equivalent statement formulated without this function. See Section \ref{Section-final}.
\end{remark}
\subsection{Schubert positive points}\label{Subsection-Schubert-positive-point}
Let $q\in \mathbb{G}_m^{I\setminus I_P}$. Denote by $QH^{\bullet}(\GPd)_q$ the quantum cohomology with equivariant parameters specialized at $0$ and quantum parameters specialized at $q$, and by $\star_q$ the ring structure on $QH^{\bullet}(\GPd)_q$ induced by $\star$. 
\begin{definition}\label{conjectureOevaluedef}
Let $q\in\mathbb{G}_m^{I\setminus I_P}$. Define
\[ \EO^q:=\max\{ |\lambda||~\lambda\text{ is an eigenvalue of }c_1(\GPd)\star_q -\}.\]
\end{definition}
In what follows, we take the coefficient ring to be $\mathbb{R}$ so that $QH^{\bullet}(\GPd)_q$ is a $|W^P|$-dimensional $\mathbb{R}$-algebra.
\begin{proposition}\label{existSchubertpositive}
For every $q\in\mathbb{R}_{>0}^{I\setminus I_P}$, there exists an $\mathbb{R}$-point $\spp_q$ of $\spec QH^{\bullet}(\GPd)_q$ such that 
\begin{enumerate}
\item (Schubert positivity) $\sigma_v(\spp_q)>0$ for all $v\in W^P$; and

\item $c_1(\GPd)(\spp_q)= \EO^q$.
\end{enumerate}
\end{proposition}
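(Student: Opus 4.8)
The plan is to apply the Perron--Frobenius theorem to the operator of quantum multiplication by $c_1(\GPd)$ on the finite-dimensional commutative $\mathbb{R}$-algebra $QH^{\bullet}(\GPd)_q$, and then to promote the resulting Perron eigenvector to an $\mathbb{R}$-point of $\spec QH^{\bullet}(\GPd)_q$ using commutativity. First I would record the relevant positivity. Since $\GPd$ is Fano, $c_1(\GPd)$ is ample, hence lies in the interior of the nef cone; as the latter is generated by the basis $\{[L_{\omega_i^{\vee}}]\}_{i\in I\setminus I_P}$ of $\pic(\GPd)$, we get $c_1(\GPd)=\sum_{i\in I\setminus I_P} a_i[L_{\omega_i^{\vee}}]$ with $a_i=\langle c_1(\GPd),\beta_i\rangle>0$. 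It is then standard that, for $q\in\mathbb{R}_{>0}^{I\setminus I_P}$, the matrix $M$ of $c_1(\GPd)\star_q-$ in the Schubert basis $\{\sigma_v\}_{v\in W^P}$ has nonnegative entries: these are built from genus-zero Gromov--Witten invariants of $\GPd$ with one divisor and two Schubert insertions, which are nonnegative by the divisor equation and the positivity of Schubert calculus, together with $q_i>0$. Note that, by definition, $\EO^q$ is the spectral radius $\rho(M)$.

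Next I would prove that $M$ is irreducible, i.e. its directed graph on $W^P$ is strongly connected. One direction is immediate: writing $N:=\dim_{\mathbb C}\GPd$, the coefficient of $\sigma_{w^{\mathrm{top}}}$ in $c_1(\GPd)^{\,N-\ell(u)}\star_q\sigma_u$ is at least $\int_{\overline{B_-^{\vee}uP^{\vee}/P^{\vee}}}c_1(\GPd)^{\,N-\ell(u)}>0$ (ampleness; the $q$-corrections add only nonnegative contributions), so $\sigma_u$ reaches the point class $\sigma_{w^{\mathrm{top}}}$, where $w^{\mathrm{top}}$ is the longest element of $W^P$. For the converse --- reaching every $\sigma_w$ from $\sigma_{w^{\mathrm{top}}}$ --- I would use the quantum Chevalley formula of Fulton--Woodward: its quantum terms strictly lower the cohomological degree, and the Fano condition ensures that iterating them from $\sigma_{w^{\mathrm{top}}}$ hits all Schubert classes with positive coefficient. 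Granting irreducibility, the Perron--Frobenius theorem yields that $\EO^q=\rho(M)$ is an algebraically simple eigenvalue of $M$, with a (unique up to scaling) eigenvector $v_+=\sum_{w\in W^P}a_w\sigma_w$ all of whose coordinates $a_w$ are strictly positive.

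Finally I would upgrade $v_+$ to the desired point $\spp_q$. Since $QH^{\bullet}(\GPd)_q$ is commutative, every operator $\sigma_u\star_q-$ commutes with $c_1(\GPd)\star_q-$, hence preserves the one-dimensional eigenline $\mathbb{R}v_+$ and acts there as multiplication by some scalar $\phi(\sigma_u)\in\mathbb{R}$; extending linearly gives an $\mathbb{R}$-algebra homomorphism $\phi\colon QH^{\bullet}(\GPd)_q\to\mathbb{R}$, and I set $\spp_q:=\ker\phi$, an $\mathbb{R}$-point of $\spec QH^{\bullet}(\GPd)_q$. By construction $c_1(\GPd)(\spp_q)=\phi(c_1(\GPd))=\EO^q$, giving (2). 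For (1), compare the coefficient of $\sigma_v$ on the two sides of $\sigma_v\star_q v_+=\phi(\sigma_v)\,v_+$: since the unit $\sigma_e=1$ ($e$ the minimal element of $W^P$) appears in $v_+$ with coefficient $a_e>0$, since $\sigma_v\cup 1=\sigma_v$, and since all structure constants of $\star_q$ are nonnegative, we get $\phi(\sigma_v)\,a_v\ge a_e>0$, hence $\sigma_v(\spp_q)=\phi(\sigma_v)>0$.

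The step I expect to be the main obstacle is the irreducibility of $M$: the rest is soft linear algebra once one has the nonnegativity of genus-zero Gromov--Witten invariants of $\GPd$ in the Schubert basis, but strong connectivity genuinely uses (quantum) Schubert calculus together with the Fano condition. If one prefers to avoid reproving this, one may instead invoke directly Rietsch's construction of Schubert positive points \cite{RietschJAMS} for (1) and the Perron--Frobenius property established by Cheong and Li \cite{CheongLi} for (2), which together yield the proposition.
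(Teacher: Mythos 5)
Your overall strategy --- apply the Perron--Frobenius theorem to a nonnegative operator coming from the quantum product, then use commutativity of $QH^{\bullet}(\GPd)_q$ to promote the Perron eigenline to an $\mathbb R$-algebra homomorphism $\phi$ and set $\spp_q:=\ker\phi$ --- is exactly the paper's strategy (which the author explicitly credits to Rietsch and Cheong--Li). Your final derivations of (1) via $\phi(\sigma_v)a_v\ge a_e>0$ and of (2) via $\phi(c_1)=\rho(M)$ are correct and match the paper's verification.

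Where you differ is in which operator the theorem is applied to, and this is also where your sketch has a genuine gap. The paper does \emph{not} apply Perron--Frobenius directly to $M=c_1(\GPd)\star_q-$. It applies it to $M_{\mathbf c}:=a_{\mathbf c}\star_q-$ where $a_{\mathbf c}=\sum_v c_v\sigma_v$ with all $c_v>0$, citing Rietsch's indecomposability result for such operators (\cite[Lemma 9.3]{RietschJAMS}, \cite[Lemma 9.4]{LamRietsch}). The bridge to $c_1$ is the element $x_N:=\sum_{k=0}^N c_1^{\star_q k}$, which by Chevalley has the form $a_{\mathbf c}$ with all $c_v>0$ for $N$ large; indecomposability of $c_1\star_q-$ is then deduced \emph{afterwards} from that of $x_N\star_q-$. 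You instead attempt to prove irreducibility of $M$ directly. Your argument that every $\sigma_u$ reaches $\sigma_{w^{\mathrm{top}}}$ (via ampleness and the grading, so that the $\sigma_{w^{\mathrm{top}}}$-coefficient of $c_1^{\star_q(N-\ell(u))}\star_q\sigma_u$ is purely classical) is fine. But the return direction --- ``the Fano condition ensures that iterating [quantum Chevalley] from $\sigma_{w^{\mathrm{top}}}$ hits all Schubert classes with positive coefficient'' --- is asserted, not proved, and it is precisely the hard content of the cited lemmas. A priori, the quantum edges emanating from $\sigma_{w^{\mathrm{top}}}$ could descend only into a proper subset of $W^P$ closed under both classical ascents and quantum descents; ruling this out requires an actual combinatorial argument (this is what Rietsch does for $a_{\mathbf c}$, where the task is easier because all Schubert classes appear in the multiplier, and what Cheong--Li do for $c_1$ itself). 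So as written, the direct irreducibility step is a gap, not soft linear algebra.

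You do flag exactly this and offer to cite Rietsch's Schubert positive point and Cheong--Li's Perron--Frobenius property instead. That fallback is correct and is, in substance, what the paper does: the paper's proof is stated to be an exposition of those two works. With that citation filling the one hard step, your argument goes through; compared with the paper you pay for the cleaner statement (Perron--Frobenius applied directly to $c_1\star_q-$) with a harder strong-connectivity lemma, which the paper sidesteps via the $a_{\mathbf c}$/$x_N$ device.
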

\begin{proof}
The following proof is not due to us. See Remark \ref{existSchubertpositiveremark} below. 

Put $A:= QH^{\bullet}(\GPd)_q$. Let $\mathbf{c}:=(c_v)_{v\in W^P}\in\mathbb{R}_{>0}^{W^P}$. Define $a_{\mathbf{c}}:=\sum_{v\in W^P}c_v\sigma_v\in A$. Consider the operator $M_{\mathbf{c}}:=a_{\mathbf{c}}\star_q -$ on $A$. Since $q\in\mathbb{R}_{>0}^{I\setminus I_P}$ and $\star_q$ is enumerative with respect to the Schubert basis $\{\sigma_v\}_{v\in W^P}$, the matrix representing $M_{\mathbf{c}}$ with respect to this basis is non-negative. By \cite[Lemma 9.3]{RietschJAMS} (see also \cite[Lemma 9.4]{LamRietsch}), $M_{\mathbf{c}}$ is moreover indecomposable, i.e., if $V\subseteq A$ is a vector subspace that is preserved by $M_{\mathbf{c}}$ and spanned by a subset of $\{\sigma_{v}\}_{v\in W^P}$, then $V=\{0\}$ or $A$. (Strictly speaking, the author only considered the case where $\mathbf{c}=(1)_{v\in W^P}$ but their arguments obviously carry over the present situation.) By Perron-Frobenius theorem, $M_{\mathbf{c}}$ has an eigenvalue $E_{\mathbf{c}}\in\mathbb{R}_{>0}$ such that it has maximum modulus among all eigenvalues of $M_{\mathbf{c}}$, and the corresponding eigenspace $V_{\mathbf{c}}$ is spanned by a vector $v_{\mathbf{c}}\in\sum_{v\in W^P}\mathbb{R}_{>0}\cdot \sigma_v$. Since $A$ is a commutative ring, it follows that, for every $x\in A$, we have $x\star_q v_{\mathbf{c}}\in V_{\mathbf{c}}$, and hence it is equal to $\lambda_{\mathbf{c}}(x) v_{\mathbf{c}}$ for a unique $\lambda_{\mathbf{c}}(x)\in \mathbb{R}$. It is easy to see that this defines an $\mathbb{R}$-algebra homomorphism $\lambda_{\mathbf{c}}:A\rightarrow\mathbb{R}$.  

Now let $N$ be a positive integer. Consider the element $x_N:=\sum_{k=0}^Nc_1^{\star_q k}\in A$, where we have put $c_1:=c_1(\GPd)$ for simplicity. It is equal to $a_{\mathbf{c}}$ for some $\mathbf{c}=(c_v)_{v\in W^P}\in\mathbb{R}^{W^P}$. By the quantum Chevalley formula \cite[Theorem 10.1]{FW}, we have $c_v\geqslant 0$ for all $v\in W^P$, and these inequalities are all strict if $N$ is sufficiently large. Choose $N=N_0$ such that the latter condition holds, so that we have an $\mathbb{R}$-algebra homomorphism $\lambda_{\mathbf{c}_0}:A\rightarrow\mathbb{R}$, by the discussion in the previous paragraph ($\mathbf{c}_0\in\mathbb{R}_{>0}^{W^P}$ is the vector such that $a_{\mathbf{c}_0}=x_{N_0}$). Define $\spp_q$ to be the $\mathbb{R}$-point of $\spec A$ associated with $\lambda_{\mathbf{c}_0}$.

\textit{Verification of (1).} Let $v\in W^P$. By definition, we have
\[
\left\{
\begin{array}{rcl}
\sigma_v(\spp_q)&=&\lambda_{\mathbf{c}_0}(\sigma_v)\\ [.3em]
\sigma_v\star_q v_{\mathbf{c}_0}&=& \lambda_{\mathbf{c}_0}(\sigma_v)v_{\mathbf{c}_0}\\ [.3em]
v_{\mathbf{c}_0}&\in& \sum_{w\in W^P}\mathbb{R}_{>0}\cdot \sigma_w
\end{array}.
\right.
\]
Since $\star_q$ is enumerative with respect to $\{\sigma_w\}_{w\in W^P}$, $q\in\mathbb{R}_{>0}^{I\setminus I_P}$ and $v_{\mathbf{c}_0}\in\sum_{w\in W^P}\mathbb{R}_{>0}\cdot\sigma_w$, we have $\sigma_v\star_q v_{\mathbf{c}_0}\in \left(\sum_{w\in W^P}\mathbb{R}_{\geqslant 0}\cdot\sigma_w\right)\setminus\{0\}$, and hence $\sigma_v(\spp_q)=\lambda_{\mathbf{c}_0}(\sigma_v)>0$ as desired.

\textit{Verification of (2).} We have seen that the operator $x_{N_0}\star_q-$ is indecomposable with respect to $\{\sigma_v\}_{v\in W^P}$. This implies that the operator $c_1\star_q-$ is also indecomposable with respect to the same basis. By Perron-Frobenius theorem, $c_1\star_q-$ has an eigenvalue $E\in\mathbb{R}_{>0}$ with one-dimensional eigenspace $V$ and maximum modulus among all other eigenvalues. By definition, $E=\EO^q$. Then $E':=\sum_{k=0}^{N_0}(\EO^q)^k\in\mathbb{R}_{>0}$ is an eigenvalue of $x_{N_0}\star_q-$ with eigenspace $V$ and maximum modulus among all other eigenvalues, and hence we must have $E'=E_{\mathbf{c}_0}$ and $V=V_{\mathbf{c}_0}$. Therefore,
\[ c_1(\spp_q)=\lambda_{\mathbf{c}_0}(c_1)=\text{ eigenvalue of }c_1\star_q-|_{V_{\mathbf{c}_0}}=\text{ eigenvalue of }c_1\star_q-|_{V}=\EO^q.\]
\end{proof}

\begin{remark}\label{existSchubertpositiveremark} $~$
\begin{enumerate}[(i)]
\item Rietsch \cite[Section 9]{RietschJAMS} constructed $\spp_q$ and verified (1) in the way described in the above proof in order to prove a structural result about the totally non-negative part of the centralizer of a principal nilpotent element for type A. They only considered the vector $\mathbf{c}=(1)_{v\in W^P}$, which is sufficient for their need.

\item Lam and Rietsch \cite[Section 9]{LamRietsch} used the same arguments for the same purpose when they generalized Rietsch's result to arbitrary Lie group type.

\item Cheong and Li \cite[Proposition 4.2]{CheongLi} simplified Rietsch's arguments (namely the proof of the indecomposability of $M_{\mathbf{c}}$) and applied them to prove \textit{Conjecture $\mathcal{O}$} \cite[Conjecture 3.1.2]{GGI}. The introduction of the element $x_N$ and the verification of (2) are due to them. 
\end{enumerate}
\end{remark}
\begin{remark}\label{existSchubertpositiveremark1}
In fact, the $\mathbb{R}$-point $\spp_q$ is uniquely characterized by the Schubert positivity. See \cite[Section 9]{RietschJAMS} for more details.
\end{remark}
\section{B-model}\label{Section-bmodel}
We will use the notations established in Section \ref{Subsection-Bmodelnotation}.
\subsection{Rietsch mirror}\label{Subsection-Rietschmirror}

\begin{definition}(\cite[Section 1]{BK}) \label{geometriccrystaldef}
The \textit{parabolic geometric crystal} associated with $(G,P)$ is a quadruple $(\X,\fm,\pim,\gammam)$ consisting of 
\begin{enumerate}
\item a smooth affine variety $\X$;

\item a regular function $\fm\in \mathcal{O}(\X)$, called the \textit{decoration};

\item a morphism $\pim:\X\rightarrow\ZL$, called the \textit{highest weight map}; and

\item a morphism $\gammam:\X\rightarrow T$, called the \textit{weight map},
\end{enumerate}
where 
\[ \X:= B^-\cap U\ZL\ol{w_P} U\]
and 
\[ \fm(x):= \character(u_1)+\character(u_2),\quad \pim(x):=t, \quad \gammam(x):=t_0\]
for $x=u_0t_0=u_1t\ol{w_P}u_2\in\X$ with $t_0\in T$, $t\in\ZL$, $u_0\in U^-$ and $u_1,u_2\in U$.
\end{definition}
\begin{remark}\label{geometriccrystalremark2}
It is an exercise to check that $\fm$, $\pim$ and $\gammam$ are well-defined.
\end{remark}
\begin{remark}\label{geometriccrystalremark}
The definition from \cite{BK} contains additional data including regular functions $\{\varphi_i\}_{i\in I}$, $\{\varepsilon_i\}_{i\in I}$ and rational $\mathbb{G}_m$-actions $\{e_i\}_{i\in I}$ on $\X$. These data will also be used in this paper, but their definitions will be postponed until Section \ref{Subsection-Weylgroupaction}.
\end{remark}

\begin{definition}\label{volumeformdef}
(Fiberwise volume form \cite[Section 6.6]{LamTemplier}) Define a fiberwise volume form $\vol$ on $\X$ relative to $\ZL$ as follows. Consider a $\ZL$-morphism $\X\rightarrow \ZL\times G/B$ defined by $x\mapsto (\pim(x),x^{-1}w_0^PB)$. One can check that it is an isomorphism onto $\ZL\times \mathcal{R}_{w_0^P}^{w_0}$, where $\mathcal{R}_{w_0^P}^{w_0}:= (B^-w_0^PB/B)\cap (Bw_0B/B)$. The projection $G/B\rightarrow G/P$ induces an isomorphism of $\mathcal{R}_{w_0^P}^{w_0}$ onto its image, which we denote by $\UU$. By \cite[Lemma 5.4]{KLS}, the complement of $\UU$ in $G/P$ has pure codimension one, and the associated multiplicity-free divisor $D$ is anti-canonical. It follows that there exists a unique volume form $\omega_{\UU}$ on $\UU$, up to a non-zero scalar multiple, that has simple pole along every irreducible component of $D$. Define $\vol$ to be the pull-back of $\omega_{\UU}$ by the following composition:
\[ \X\xrightarrow{\sim}\ZL\times \mathcal{R}_{w_0^P}^{w_0}\xrightarrow{\sim}\ZL\times\UU\xrightarrow{\pr_2}\UU.\]
\end{definition}
\begin{remark}\label{volumeformremark}
The above fiberwise volume form $\vol$, which is defined up to a non-zero scalar multiple, will be rescaled in Definition \ref{volumeformrescale}.
\end{remark}
We now recall Lam-Templier's definition \cite{LamTemplier} of the Rietsch mirror \cite{Rietsch}.
\begin{definition}\label{Rietschmirrordef}
The \textit{Rietsch mirror} of $\GPd$ is a quintuple $(\X,\fm,\pim,\gammam,\vol)$ consisting of the parabolic geometric crystal $(\X,\fm,\pim,\gammam)$ associated with $(G,P)$ defined in Definition \ref{geometriccrystaldef} and the fiberwise volume form $\vol$ on $\X$ defined in Definition \ref{volumeformdef}.
\end{definition}
\begin{remark}\label{Rietschmirrorremark}
In the context of mirror symmetry, the decoration $\fm$ is called the \textit{superpotential}.
\end{remark}
\begin{definition}\label{Rietschmirrorgmactiondef}
($\mathbb{G}_m$-action) Following \cite[Section 6.21]{LamTemplier}, we define a $\mathbb{G}_m$-action on $\X$, $\mathbb{A}^1$, $\ZL$ and $T$ by
\[
\begin{array}{rcll}
c\cdot x&:=& \rho^{\vee}(c)x \rho^{\vee}(c)^{-1} &\qquad x\in\X\\ [.3em]
c\cdot a&:=& ca &\qquad a\in \mathbb{A}^1\\ [.3em]
c\cdot t &:=& (2\rho^{\vee}-2\rho_P^{\vee})(c)t &\qquad t\in \ZL\\ [.3em]
c\cdot t &:=& t&\qquad t\in T\\ [.3em]
\end{array} 
\]
for $c\in\mathbb{G}_m$, where $\rho^{\vee}:=\frac{1}{2}\sum_{\alpha\in R^+}\alpha^{\vee}$ and $\rho^{\vee}_P:=\frac{1}{2}\sum_{\alpha\in R^+_P}\alpha^{\vee}$.
\end{definition}
\begin{lemma}\label{Rietschmirrorgmactionlemma}
$\fm$, $\pim$, $\gammam$ are $\mathbb{G}_m$-equivariant and $\vol$ is $\mathbb{G}_m$-invariant.
\end{lemma}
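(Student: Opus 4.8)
The plan is to verify the equivariance and invariance claims directly from the definitions in Definition~\ref{geometriccrystaldef}, Definition~\ref{volumeformdef} and Definition~\ref{Rietschmirrorgmactiondef}, using the fact that conjugation by $\rho^{\vee}(c)$ is an automorphism of $G$ that fixes $T$ pointwise and scales each root subgroup by a character. The key observation is that for $u = x_i(a)$ we have $\rho^{\vee}(c)\,x_i(a)\,\rho^{\vee}(c)^{-1} = x_i(c^{\langle\alpha^{\vee}_i,\rho^{\vee}\rangle}a) = x_i(ca)$, since $\langle\alpha_i,\rho^{\vee}\rangle = 1$ for every simple root $\alpha_i$; more generally $\rho^{\vee}(c)$ conjugates $U$ (resp.\ $U^-$) into itself and acts on the root subgroup for $\alpha^{\vee}>0$ by scaling the parameter according to the height of $\alpha^{\vee}$.

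First I would check that the $\mathbb{G}_m$-action on $\X$ is well-defined, i.e.\ that $\rho^{\vee}(c)\X\rho^{\vee}(c)^{-1}\subseteq\X$: conjugation by $\rho^{\vee}(c)$ preserves $B^-$ and $U$, fixes $\ZL\subseteq T$ pointwise, and sends $\ol{w_P}$ to $t_c\ol{w_P}$ for some $t_c\in T$ (since $\rho^{\vee}(c)\in T$ normalizes $T$ and $\ol{w_P}\in N_G(T)$), so it preserves $U\ZL\ol{w_P}U = U\ZL T\ol{w_P}U$; hence it preserves the intersection $\X = B^-\cap U\ZL\ol{w_P}U$. Next, for $\fm$: writing $x = u_1 t\ol{w_P} u_2$, the conjugate is $c\cdot x = (c\cdot u_1)\,t\,t_c\,\ol{w_P}\,(c\cdot u_2)$ after absorbing $t_c$, and by the height-$1$ computation above $\character_i(c\cdot u_k)$ picks up exactly one factor of $c$ relative to $\character_i(u_k)$; since $\fm = \sum_i\character_i(u_1)+\sum_i\character_i(u_2)$ and the action on $\mathbb{A}^1$ is $a\mapsto ca$, this gives $\fm(c\cdot x) = c\,\fm(x)$, i.e.\ $\mathbb{G}_m$-equivariance. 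For $\gammam$: if $x = u_0 t_0$ with $t_0\in T$, then $c\cdot x = (c\cdot u_0)\,t_0$ still has $T$-part $t_0$ (conjugation by $\rho^{\vee}(c)$ keeps $u_0\in U^-$ and fixes $t_0$), and since $\mathbb{G}_m$ acts trivially on $T$ we get $\gammam(c\cdot x) = t_0 = \gammam(x)$. For $\pim$: from $x = u_1 t\ol{w_P} u_2$ we get the $\ZL$-part of $c\cdot x$ is $t\,t_c$, and one computes $t_c = \ol{w_P}^{-1}\rho^{\vee}(c)\ol{w_P}\rho^{\vee}(c)^{-1} = (w_P^{-1}\rho^{\vee} - \rho^{\vee})(c)^{\pm}$; using $w_P = w_0^P w_0$ and the standard identity $w_0^P w_0\rho^{\vee} = -w_0^P\rho^{\vee} = 2\rho^{\vee}_P - \rho^{\vee}$ (as $w_0^P$ is the longest element of $W_P$, it sends the $P$-part of $\rho^{\vee}$ to its negative and fixes the rest), one finds $t_c$ is precisely $(2\rho^{\vee}-2\rho^{\vee}_P)(c)$ up to sign, matching the prescribed action $c\cdot t = (2\rho^{\vee}-2\rho^{\vee}_P)(c)t$ on $\ZL$; hence $\pim(c\cdot x) = c\cdot\pim(x)$.

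Finally, for the $\mathbb{G}_m$-invariance of $\vol$: since $\vol$ is characterized (up to scalar) as the pullback of the volume form $\omega_{\UU}$ on $\UU\subseteq G/P$ with at most simple poles along the anticanonical boundary divisor $D$, and since the $\mathbb{G}_m$-action on $\X$ is compatible via the isomorphism $\X\simeq\ZL\times\mathcal{R}_{w_0^P}^{w_0}$ with an action on $G/P$ by left translation by $\rho^{\vee}(c)$ (which preserves $\UU$ and $D$, being an automorphism of $G/P$ preserving the Schubert stratification up to the Bruhat cells it permutes --- here it fixes each cell as it lies in $T$), the pullback $g^*\omega_{\UU}$ for $g=\rho^{\vee}(c)$ is again a volume form on $\UU$ with simple poles along $D$, hence a scalar multiple of $\omega_{\UU}$; to see the scalar is $1$ one notes the $\mathbb{G}_m$-action is connected and fixes a point, or more robustly that the space of such forms is one-dimensional and the character $\mathbb{G}_m\to\mathbb{G}_m$ recording the scalar must be trivial since $\rho^{\vee}(1)=e$ and the action extends $\mathbb{G}_m$-equivariantly; alternatively, cite \cite[Section 6.21]{LamTemplier} where this is checked. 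The main obstacle I expect is the bookkeeping for $\pim$: correctly identifying the $T$-part picked up from conjugating $\ol{w_P}$ and matching it on the nose with $2\rho^{\vee}-2\rho^{\vee}_P$, which requires the identity $w_P^{-1}\rho^{\vee} - \rho^{\vee} = -(2\rho^{\vee}-2\rho^{\vee}_P)$ (or its inverse-sign variant depending on left/right conjugation conventions) and care about whether one conjugates on the left or right; everything else is a routine height-$1$ root-subgroup computation.
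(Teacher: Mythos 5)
The paper gives no computation here: the proof of Lemma \ref{Rietschmirrorgmactionlemma} is simply a citation to \cite[Proposition 6.24 \& Lemma 6.26]{LamTemplier}. You instead attempt a direct verification, which is a legitimate alternative route, and your handling of $\fm$, $\gammam$, and $\pim$ is in the right spirit. Two remarks. For $\pim$, your bookkeeping is off: you display $\ol{w_P}^{-1}\rho^{\vee}(c)\ol{w_P}\rho^{\vee}(c)^{-1} = (w_P^{-1}\rho^{\vee}-\rho^{\vee})(c)$, but this cocharacter equals $-2\rho^{\vee}-2w_0\rho^{\vee}_P$, which is \emph{not} $\pm(2\rho^{\vee}-2\rho^{\vee}_P)$ in general; the element that actually gets absorbed to the left of $\ol{w_P}$ is $(\rho^{\vee}-w_P\rho^{\vee})(c)$, and \emph{that} equals $(2\rho^{\vee}-2\rho^{\vee}_P)(c)$ because $w_P\rho^{\vee}=w_0^P w_0\rho^{\vee}=-w_0^P\rho^{\vee}=2\rho^{\vee}_P-\rho^{\vee}$. (Your displayed identity is the computation of $w_P\rho^{\vee}$, not $w_P^{-1}\rho^{\vee}$, so you essentially computed the right thing under the wrong label.) This is a matter of care rather than a gap.

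The genuine gap is in the $\vol$-invariance step. You correctly reduce to showing that left translation by $\rho^{\vee}(c)$ pulls back $\omega_{\UU}$ to itself, and you correctly observe that the space of volume forms on $\UU$ with simple poles along $D$ is one-dimensional, so the pullback is a scalar $f(c)$ times $\omega_{\UU}$. But neither of your proposed justifications for $f\equiv 1$ works. The observation $\rho^{\vee}(1)=e$ is vacuous: every multiplicative character of $\mathbb{G}_m$ satisfies $\chi(1)=1$. And the existence of a $\mathbb{G}_m$-fixed point on $G/P$ does not by itself force the character to be trivial — it only gives an expression for $f$ in terms of the weights of the tangent space and the branches of $D$ at that fixed point, which one must then analyze. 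A clean fix is to pass to a torus chart: tracing the $\mathbb{G}_m$-action through $\eta^{w_P}$ one finds
\[
c\cdot\XXp_{\iii}(t,a_1,\ldots,a_{\ell})=\XXp_{\iii}\bigl((2\rho^{\vee}-2\rho^{\vee}_P)(c)\,t,\;ca_1,\ldots,ca_{\ell}\bigr),
\]
i.e.\ each $a_k$ is simply scaled by $c$, so $\frac{da_1\wedge\cdots\wedge da_{\ell}}{a_1\cdots a_{\ell}}$ is manifestly invariant; since $(\XXp_{\iii})^*\vol$ is a constant multiple of this log form by Lemma \ref{omegaLusztig}, $\vol$ is $\mathbb{G}_m$-invariant. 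Without this (or some equivalent local computation), the invariance of $\vol$ is not established, and falling back on ``cite Lam–Templier'' reduces your argument to the paper's.
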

\begin{proof}
This is \cite[Proposition 6.24 \& Lemma 6.26]{LamTemplier}.
\end{proof}
\subsection{Brieskorn lattice, Gauss-Manin connection and Jacobi algebra}\label{Subsection-B-modelconnection}
\begin{definition}\label{Brieskorndef}
(Brieskorn lattice) Define a $\SSS[\hslash] \otimes\mathcal{O}(\ZL)$-module
\begin{align*}
&~ \Bmodule \\
\qquad :=&~ \coker\left( \SSS[\hslash]\otimes \Omega^{top -1}(\X/\ZL)\xrightarrow{\partial}  \SSS[\hslash]\otimes \Omega^{top}(\X/\ZL)\right),
\end{align*}
where
\begin{itemize}
\item $\Omega^{i}(\X/\ZL)$ is the space of $i$-forms on $\X$ relative to $\ZL$; and 

\item $\partial$ is defined by 
\[ \partial(z\otimes\omega):= z\otimes\left(\hslash d\omega + d\fm\wedge \omega\right)-\sum_i zh_i\otimes (\gammam^*\langle h^i,\mcf_T\rangle)\wedge\omega,\]
where $\{h_i\}\subset \mathfrak{t}$ and $\{h^i\}\subset \mathfrak{t}^*$ are dual bases and $\mcf_T\in\Omega^1(T;\mathfrak{t})$ is the Maurer-Cartan form of $T$.
\end{itemize}
\end{definition}
\begin{definition}\label{Bconnectiondef}
(Gauss-Manin connection) For $i\in I\setminus I_P$, define
\[\nabla^B_{\partial_{t_i}} :\Bmodule[\hslash^{-1}] \rightarrow \Bmodule[\hslash^{-1}] \]
by
\begin{equation}\label{mirrorremarkeq1}
\nabla^B_{\partial_{t_i}}[z\otimes \omega] := \frac{1}{\hslash} \left[ z\otimes \left(\hslash \mathcal{L}_{\widetilde{\partial_{t_i}}}\omega + (\mathcal{L}_{\widetilde{\partial_{t_i}}} \fm)\omega\right) - \sum_i zh_i\otimes (\iota_{\widetilde{\partial_{t_i}}}\gammam^*\langle h^i,\mcf_T\rangle)\omega \right]
\end{equation}
for $[z\otimes \omega]\in \Bmodule[\hslash^{-1}]$, where $\partial_{t_i}$ is the vector field on $\ZL$ corresponding to the $i$-th coordinate vector field on $\mathbb{G}_m^{I\setminus I_P}$ under the isomorphism \eqref{Bmodelnotationeq1}, and $\widetilde{\partial_{t_i}}$ is a lift of $\partial_{t_i}$ with respect to $\pim$.
\end{definition}
\begin{definition}\label{Jacobidef} (Jacobi algebra) Define $\jacobi$ to be the coordinate ring of the scheme-theoretic zero locus of the relative 1-form
\[ \pr_{\X}^*d\fm - \langle \pr_{\mathfrak{t}^{\vee}}, (\gammam\circ \pr_{\X})^*\mcf_T \rangle\in \Omega^1(\X\times \mathfrak{t}^{\vee}/ \ZL \times \mathfrak{t}^{\vee}),\]
where $\pr_{\X}:\X \times \mathfrak{t}^{\vee}\rightarrow \X$ and $\pr_{\mathfrak{t}^{\vee}}:\X\times \mathfrak{t}^{\vee}\rightarrow \mathfrak{t}^{\vee}$ are the projections. Note that the fiber product $0\times_{\mathfrak{t}^{\vee}}\spec\jacobi$ is nothing but the fiberwise critical locus of $\fm$ relative to $\ZL$. We denote it by $\crit$.
\end{definition}
\begin{remark}
The operators $\partial_{t_i}\mapsto \hslash\nabla^B_{\partial_{t_i}}$ define a $D_{\hslash,\ZL}$-module structure on the Brieskorn lattice $\Bmodule$, and the resulting $D_{\hslash,\ZL}$-module is isomorphic to the zeroth cohomology of the \textit{weighted geometric crystal $D_{\hslash}$-module} $WGr_{(G,P)}^{1/\hslash}$ defined by Lam and Templier \cite[Section 11.10]{LamTemplier}.
\end{remark}
\begin{remark}\label{mirrorremark} 
By identifying $\Omega^{top}(\X/\ZL)$ with $\mathcal{O}(\X)$ using a fiberwise volume form on $\X$, we get 
\[ \Bmodule/\hslash\Bmodule \simeq \jacobi\]
as $\SSS\otimes \mathcal{O}(\ZL)$-modules. For our purpose, we will take the fiberwise volume form to be $\vol$ defined in Definition \ref{volumeformdef}.
\end{remark}

\subsection{Torus charts}\label{Subsection-Toruscharts} The main reference for this subsection is \cite[Section 3]{BK}.
\begin{definition}\label{Bruhatdef}
Define
\[ B^-_{w_P}:=B^-\cap U\ol{w_P}U\quad \text{ and }\quad  U^{w_P}:= U\cap B^-w_PB^-.\]
\end{definition}
\begin{lemma}\label{geometriccrystaltrivial}
The morphism
\[
\begin{array}{ccc}
\ZL\times B^-_{w_P}&\rightarrow & \X \\ [.5em]
(t,x) &\mapsto & tx
\end{array}
\]
is an isomorphism of $\ZL$-schemes.
\end{lemma}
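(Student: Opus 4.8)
The plan is to show that the map $\Phi\colon \ZL\times B^-_{w_P}\to \X$, $(t,x)\mapsto tx$, is an isomorphism of $\ZL$-schemes by exhibiting an explicit inverse built from the highest weight map $\pim$. First I would observe that $\Phi$ is a $\ZL$-morphism by construction: $\pim(tx)=t$ follows directly from Definition \ref{geometriccrystaldef} since if $x=u_1t'\ol{w_P}u_2\in B^-_{w_P}$ then $t'=1$ (because $B^-_{w_P}\subseteq U\ol{w_P}U$, so the ``$\ZL$-component'' of $x$ is trivial), and hence $tx=u_1 t\ol{w_P}u_2$ has highest weight $t$. Moreover $tx$ genuinely lies in $\X$: it is in $U\ZL\ol{w_P}U$ by the above, and it is in $B^-$ because $B^-_{w_P}\subseteq B^-$, $t\in\ZL\subseteq T\subseteq B^-$, and $B^-$ is a group. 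So $\Phi$ is well defined.

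Next I would construct the inverse. Given $x\in\X$, write $x=u_1 t\ol{w_P}u_2$ with $t=\pim(x)\in\ZL$ (this decomposition is essentially unique by the uniqueness built into Remark \ref{geometriccrystalremark2}); then set $\Psi(x):=(t, t^{-1}x)$. The content is that $t^{-1}x\in B^-_{w_P}$: indeed $t^{-1}x = (t^{-1}u_1 t)\ol{w_P}u_2\in U\ol{w_P}U$ since $T$ normalizes $U$, and $t^{-1}x\in B^-$ since $t^{-1}\in T\subseteq B^-$ and $x\in B^-$. This $\Psi$ is a morphism because $\pim$ is a morphism (Definition \ref{geometriccrystaldef}) and multiplication/inversion in $G$ are morphisms, and it is clearly a two-sided inverse to $\Phi$: $\Phi(\Psi(x))=t\cdot t^{-1}x=x$, and $\Psi(\Phi(t,x))=\Psi(tx)=(\pim(tx), \pim(tx)^{-1}tx)=(t,x)$ using $\pim(tx)=t$ and the fact that $x\in B^-_{w_P}$ already has trivial $\ZL$-part.

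The one point that needs a little care — and which I expect to be the main (though still modest) obstacle — is the well-definedness and morphism property of the assignment $x\mapsto t^{-1}x$, i.e. checking that extracting $t=\pim(x)$ is algebraic and that the resulting element always lands in the locally closed subvariety $B^-_{w_P}$ rather than merely in $B^-\cap U\ol{w_P}U$ set-theoretically. Since $\pim$ is already known to be a regular morphism $\X\to\ZL$ by Definition \ref{geometriccrystaldef} and Remark \ref{geometriccrystalremark2}, and $\ZL$ acts on $\X$ by left multiplication through an algebraic action, the composite $x\mapsto \pim(x)^{-1}\cdot x$ is a morphism $\X\to\X$; one then just notes its image lies in the closed-in-$\X$ condition ``$\ZL$-part $=1$'', which is exactly $B^-_{w_P}$ (intersected with the ambient containment already guaranteed). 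I would also remark that the map is automatically a $\ZL$-scheme isomorphism because both $\Phi$ and $\Psi$ commute with $\pim$ by construction. No dimension count or flatness argument is needed; everything is explicit.
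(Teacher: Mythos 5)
The paper's own proof of this lemma is the single word ``Obvious,'' so there is no argument to compare against in detail. Your proof is correct and supplies exactly the routine verification that is being omitted: $\Phi$ lands in $\X$ because $T$ normalizes $U$ and $B^-$ is a group; the inverse is $x\mapsto(\pim(x),\pim(x)^{-1}x)$, using that $\pim$ is a morphism (Remark \ref{geometriccrystalremark2}), that $\ZL$ acts on $\X$ by left multiplication, and that the image lands in the locally closed subvariety $B^-_{w_P}\subseteq\X$; and both maps commute with projection to $\ZL$.

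One small notational slip: after substituting $t'=1$ you write $tx=u_1t\ol{w_P}u_2$, but in fact $tx=tu_1\ol{w_P}u_2=(tu_1t^{-1})\,t\,\ol{w_P}\,u_2$ — the $t$ must be conjugated past $u_1$, not simply commuted. This does not affect the conclusion $\pim(tx)=t$, but as written the identity is false. With that fixed, the argument is complete and, given the paper's ``Obvious,'' is as close to the intended (unstated) reasoning as one could ask.
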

\begin{proof}
Obvious.
\end{proof}

\begin{definition}\label{toruschartforBruhatdef}
Let $\iii=(i_1,\ldots,i_{\ell})$ be a reduced decomposition of $w_P$. 
\begin{enumerate}
\item Define 
\[ \ttm_{\iii}:\mathbb{G}_m^{\ell}\rightarrow B^-\]
by 
\[ \ttm_{\iii}(a_1,\ldots,a_{\ell}) := x_{-i_1}(a_1)\cdots x_{-i_{\ell}}(a_{\ell}),\]
where $x_{-i}(a):=y_i(a)\alpha_i^{\vee}(a^{-1})$.

\item Define 
\[ \ttp_{\iii}:\mathbb{G}_m^{\ell}\rightarrow U\]
by
\[ \ttp_{\iii}(a_1,\ldots,a_{\ell}) := x_{i_1}(a_1)\cdots x_{i_{\ell}}(a_{\ell}).\]
\end{enumerate}
\end{definition}
\begin{lemma}\label{toruschartforBruhatlemma}
Let $\iii=(i_1,\ldots,i_{\ell})$ be a reduced decomposition of $w_P$.
\begin{enumerate}
\item $\ttm_{\iii}$ is an open immersion into $B^-_{w_P}$.

\item $\ttp_{\iii}$ is an open immersion into $U^{w_P}$.
\end{enumerate}
\end{lemma}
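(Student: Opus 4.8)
\textbf{Proof proposal for Lemma \ref{toruschartforBruhatlemma}.}

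The plan is to treat the two statements uniformly, since $\ttm_{\iii}$ and $\ttp_{\iii}$ are related by the involutive antiautomorphism of $G$ sending $x_i(a)\mapsto y_i(a)$ (equivalently, by passing to the opposite Borel), so it suffices to argue one of them carefully and transport. I would focus on statement (2), that $\ttp_{\iii}\colon\mathbb{G}_m^{\ell}\to U$ is an open immersion with image contained in $U^{w_P}=U\cap B^-w_PB^-$. First I would recall the standard fact from Lusztig's theory of total positivity (or equivalently the Bruhat/Bott--Samelson picture): for a reduced word $\iii=(i_1,\ldots,i_\ell)$ of an element $w$, the map $(a_1,\ldots,a_\ell)\mapsto x_{i_1}(a_1)\cdots x_{i_\ell}(a_\ell)$ is injective on $(\mathbb{G}_m)^\ell$ and its image is exactly $U\cap B^-\ol{w}B^-$, which is a locally closed subvariety of $U$ of dimension $\ell(w)=\ell$. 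Injectivity follows because the $x_{i_j}(a_j)$ can be recovered one at a time: the product lies in the Bruhat cell $B^-wB^-$, and reading off the word from the factorization of an element of $U$ in a fixed reduced Bott--Samelson chart is a morphism, giving a regular inverse on the image.

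The key steps, in order, are: (i) show $\ttp_{\iii}$ is a monomorphism of schemes — injective on points and, since $\mathbb{G}_m^\ell$ is reduced and the target is smooth, it suffices to check injectivity on $R$-points for all $R$, which one does by the successive-extraction argument above using the characters $\character_i$ and the known factorization in Bott--Samelson coordinates; (ii) identify the image set-theoretically with $U^{w_P}$, using that $\ol{w_P}=\ol{s_{i_1}}\cdots\ol{s_{i_\ell}}$ (independence of reduced word, recalled in Section \ref{Subsection-Bmodelnotation}) and the standard description $U\cap B^-\ol{w}B^-=\{x_{i_1}(a_1)\cdots x_{i_\ell}(a_\ell) : a_j\in\mathbb{G}_m\}$; (iii) conclude $\ttp_{\iii}$ is an open immersion: it is an injective morphism between smooth irreducible varieties of the same dimension $\ell$, hence birational onto its (irreducible, locally closed, hence open in its closure) image, and a birational injective morphism to a smooth variety is an open immersion by Zariski's main theorem; (iv) deduce (1) by applying the antiautomorphism $g\mapsto$ (the composition sending $x_i(a)\mapsto y_i(a)$ and $t\mapsto t$), which sends $U$ to $U^-$, $\ol{w_P}$ to $\ol{w_P}^{-1}$-type data, and matches $\ttp_{\iii}$ with $\ttm_{\iii}$ up to the torus twist $\alpha_i^\vee(a^{-1})$ built into $x_{-i}$; then unwind to see the image lands in $B^-\cap U\ol{w_P}U = B^-_{w_P}$.

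I expect the main obstacle to be step (ii)/(iv): cleanly pinning down that the image of $\ttp_{\iii}$ is \emph{all} of $U^{w_P}$ (not just an open subset), and correctly bookkeeping the torus factors $\alpha_i^\vee(a^{-1})$ in the definition of $x_{-i}$ when transporting from the $U$-statement to the $B^-_{w_P}$-statement. The torus twist is exactly what makes $\ttm_{\iii}$ land in $B^-$ rather than in $U^-$, and verifying that $x_{-i_1}(a_1)\cdots x_{-i_\ell}(a_\ell)\in B^-\cap U\ol{w_P}U$ requires a short computation with the relations $y_i(a)=x_{-i}(a)\alpha_i^\vee(a)\cdot(\text{element of }U\text{-conjugate})$, or more directly the known identity $x_{-i}(a)=\ol{s_i}\,x_i(a^{-1})\,\alpha_i^\vee(a^{-1})$-type rewriting, which I would invoke from Lusztig or Berenstein--Fomin--Zelevinsky rather than reprove. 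Everything else is a routine application of Zariski's main theorem plus the standard parametrization of Bruhat cells by reduced words.
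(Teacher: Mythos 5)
The paper's proof is a one-line citation to Berenstein--Zelevinsky \cite{BZ}: the lemma is a special case of their Proposition~4.5, after identifying $B^-_{w_P}$ with $L^{w_P,e}$ and $U^{w_P}$ with $L^{e,w_P}$. You attempt to reprove the result from scratch, but your step~(ii) contains a genuine error: it is not true that the image of $\ttp_{\iii}$ is all of $U^{w_P}=U\cap B^-w_PB^-$. The image is only a proper open dense subset, which is exactly why the lemma asserts an ``open immersion into'' rather than an isomorphism onto. A concrete counterexample: take $G=SL_3$, $P=B$ (so $w_P=w_0$), $\iii=(1,2,1)$. Then
\[
\ttp_{\iii}(a,b,c)=\begin{pmatrix}1 & a+c & ab\\ 0 & 1 & b\\ 0 & 0 & 1\end{pmatrix},
\]
so, writing $(u,v,w)$ for the three above-diagonal entries of an element of $U$, the image is $\{v\neq 0,\ w\neq 0,\ uw\neq v\}$, whereas $U^{w_0}=\{v\neq 0,\ uw\neq v\}$ also contains the points with $w=0$. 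The image is strictly smaller. You flag this step yourself as the ``main obstacle''; the obstacle is that the claim is false.

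If you drop that claim and argue only that $\ttp_{\iii}$ is an open immersion with image contained in $U^{w_P}$, your step~(iii) would in principle close the argument, but it presupposes that $U^{w_P}$ is smooth and irreducible of dimension $\ell=\ell(w_P)$ --- a nontrivial fact that itself is part of what is established in \cite{BZ}. Likewise your step~(iv), deducing~(1) from~(2) via an anti-automorphism, is complicated by the torus factors $\alpha_i^\vee(a^{-1})$ built into $x_{-i}(a)=y_i(a)\alpha_i^\vee(a^{-1})$; you acknowledge this and say you would ``invoke from Lusztig or Berenstein--Fomin--Zelevinsky rather than reprove,'' at which point the argument has converged to the same kind of citation the paper makes, only less directly. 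The paper's one-line reference is the cleaner route; your reproof, as written, both contains a false intermediate claim and ultimately leans on the very results being cited.
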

\begin{proof}
This is a special case of \cite[Proposition 4.5]{BZ} by observing that $B^-_{w_P}$ and $U^{w_P}$ are equal to $L^{w_P,e}$ and $L^{e,w_P}$ from \textit{loc. cit.} respectively. 
\end{proof}
\begin{definition}\label{twistmapdef}
(Twist map \cite[Definition 4.6]{BZ}) Define a morphism 
\[ \eta^{w_P}:U^{w_P}\rightarrow B^-_{w_P}\]
as follows. Let $x\in U^{w_P}$. Then $x\in B^-w_PB^-$, and hence $x\ol{w_P^{-1}}\in B^-w_PB^-w_P^{-1}\subseteq B^-U$ so that we can write $x\ol{w_P^{-1}} = bu$ uniquely, with $b\in B^-$ and $u\in U$. But we also have $x\in U$, and hence $b=x\ol{w_P^{-1}}u^{-1}\in U\ol{w_P^{-1}} U$. It follows that $b\in B^-\cap U\ol{w_P^{-1}} U=B^-_{w_P^{-1}}$. We define 
\[ \eta^{w_P}(x) := \iota(b)\in \iota( B^-_{w_P^{-1}} ) = B^-_{w_P},\]
where $\iota:G\xrightarrow{\sim} G$ is the anti-automorphism characterized by 
\[ \iota(x_i(a))=x_i(a),\quad \iota(t)=t^{-1}\quad \text{and}\quad \iota(y_i(a))=y_i(a)\]
for $i\in I$, $a\in \mathbb{A}^1$ and $t\in T$.
\end{definition}
\begin{lemma}\label{twistmapisom}
$\eta^{w_P}$ is an isomorphism.
\end{lemma}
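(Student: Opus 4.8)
The plan is to exhibit an explicit inverse to $\eta^{w_P}\colon U^{w_P}\to B^-_{w_P}$ using the same twisting-by-$\ol{w_P}$ construction run in the opposite direction, together with the anti-automorphism $\iota$. Concretely, given $b\in B^-_{w_P}=B^-\cap U\ol{w_P}U$, first apply $\iota$ to land in $B^-_{w_P^{-1}}=B^-\cap U\ol{w_P^{-1}}U$, then twist: since $\iota(b)\in B^-\cap U\ol{w_P^{-1}}U$ we have $\iota(b)\,\ol{w_P}\in U\ol{w_P^{-1}}U\ol{w_P}\subseteq U B^-$ (using $\ol{w_P^{-1}}U\ol{w_P}\cap B^- $-type Bruhat considerations for the element $w_P^{-1}$), so we may write $\iota(b)\,\ol{w_P}=u'b'$ uniquely with $u'\in U$, $b'\in B^-$; but also $\iota(b)\in B^-$, forcing $u'=\iota(b)\,\ol{w_P}\,b'^{-1}\in B^-\ol{w_P}B^- \cap U$, i.e. $u'\in U\cap B^-w_PB^-=U^{w_P}$. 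Define $\zeta(b):=u'$. The claim is that $\zeta$ is a morphism and is a two-sided inverse of $\eta^{w_P}$.

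First I would verify that $\zeta$ is well-defined as a morphism of varieties: the unique Bruhat-type factorization $\iota(b)\,\ol{w_P}=u'b'$ depends algebraically on $b$ because on the open cell $U\ol{w_P^{-1}}U\ol{w_P}$ the two factors are extracted by regular functions (this is the same mechanism that makes $\eta^{w_P}$ a morphism in Definition~\ref{twistmapdef}, just with $w_P$ replaced by $w_P^{-1}$ and $\iota$ inserted). Next I would check $\eta^{w_P}\circ\zeta=\id_{B^-_{w_P}}$ by a direct manipulation of the defining relations: starting from $\iota(b)\,\ol{w_P}=u'b'$ with $u'=\zeta(b)$, apply $\iota$ and use $\iota(\ol{w_P})=\ol{w_P^{-1}}$ (which follows from $\iota(\ol{s_i})=\ol{s_i}$, itself immediate from $\iota(x_i(a))=x_i(a)$, $\iota(y_i(a))=y_i(a)$ applied to $\ol{s_i}=x_i(-1)y_i(1)x_i(-1)$, reversing the order of the three factors) to rewrite the relation into exactly the factorization that computes $\eta^{w_P}(u')$; tracking the $B^-$-part through $\iota$ returns $b$. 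The reverse composition $\zeta\circ\eta^{w_P}=\id_{U^{w_P}}$ is symmetric. Since $\eta^{w_P}$ is thus a bijective morphism with inverse morphism $\zeta$, it is an isomorphism.

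The main obstacle I anticipate is purely bookkeeping rather than conceptual: making sure the Bruhat-cell membership claims used to assert existence \emph{and uniqueness} of the factorizations $\iota(b)\ol{w_P}=u'b'$ are genuinely valid on all of $B^-_{w_P^{-1}}$ (not just generically), and that $\iota$ intertwines the $w_P$-twist and the $w_P^{-1}$-twist with the correct side conventions ($U$ versus $U^-$, left versus right). A clean way to sidestep sign/side confusion is to note that $\iota$ is an anti-automorphism fixing each $x_i(a)$ and $y_i(a)$ and sending $t\mapsto t^{-1}$, so it swaps $B^-$ with $B^-$, $U$ with $U$, reverses products, and satisfies $\iota(\ol{w})=\ol{w^{-1}}$ for all $w\in W$; granting these, the definition of $\eta^{w_P}$ is manifestly symmetric under $\iota$ composed with $w_P\leftrightarrow w_P^{-1}$, and the two composites collapse by inspection. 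Alternatively, one can invoke that $\eta^{w_P}$ is a known birational twist map in the theory of double Bruhat cells (cf. \cite{BZ}, \cite{LamTemplier}) whose inverse is standard, but giving the explicit $\zeta$ above keeps the argument self-contained.
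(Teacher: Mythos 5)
The paper's own proof is a one-line citation: $\eta^{w_P}$ equals the twist map $\psi^{e,w_P}$ from Berenstein--Zelevinsky, and their Theorem~4.7 says this is a biregular isomorphism. Your attempt to build an explicit inverse $\zeta$ is therefore a genuinely different (more self-contained) route, and the map $\zeta$ you write down is in fact the correct inverse. But your verification that $\eta^{w_P}\circ\zeta=\id$ has a real gap, not just bookkeeping: applying $\iota$ to $\iota(b)\ol{w_P}=u'b'$ gives, since $\iota$ is an \emph{anti}-automorphism and $\iota(\ol{w_P})=\ol{w_P^{-1}}$,
\[
\ol{w_P^{-1}}\,b \;=\; \iota(b')\,\iota(u'),
\]
which is a left-handed factorization with $\ol{w_P^{-1}}$ on the wrong side of $b$; it is \emph{not} the right-handed factorization $u'\ol{w_P^{-1}}=cd$ (with $c\in B^-$, $d\in U$) that defines $\eta^{w_P}(u')$. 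So "apply $\iota$ and you recover the $\eta$-factorization" does not go through by inspection.

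What actually closes the argument is a Bruhat-theoretic membership claim that your write-up leaves unstated. Writing $u'=\iota(b)\ol{w_P}(b')^{-1}$ gives $u'\ol{w_P^{-1}}=\iota(b)\cdot\bigl(\ol{w_P}(b')^{-1}\ol{w_P^{-1}}\bigr)$, and this is the defining factorization for $\eta^{w_P}(u')$ with $c=\iota(b)$ \emph{provided} $\ol{w_P}(b')^{-1}\ol{w_P^{-1}}\in U$, i.e.\ provided $b'\in U^-\cap w_P^{-1}Uw_P$. That is true (it follows from decomposing the middle $U$ in $\iota(b)=v_1\ol{w_P^{-1}}v_2$ into $U^+_{w_P^{-1}}\cdot U^-_{w_P^{-1}}$ and seeing that $b'$ is precisely the $U^-$-component of $\ol{w_P^{-1}}v_2\ol{w_P}$), but it is a genuine lemma, not the automatic consequence of applying $\iota$. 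The symmetric statement needed for $\zeta\circ\eta^{w_P}=\id$ is that the factor $u$ in $x\ol{w_P^{-1}}=b_0u$ lies in $U\cap w_PU^-w_P^{-1}$, again a Bruhat computation. So the strategy is sound and yields a self-contained alternative to the paper's citation of \cite[Theorem~4.7]{BZ}, but as written the pivotal "$\iota$-symmetry" step is incorrect and must be replaced by these two refined Bruhat membership facts.
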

\begin{proof}
This is a special case of \cite[Theorem 4.7]{BZ} by observing that $\eta^{w_P}$ is equal to $\psi^{e,w_P}$ from \textit{loc. cit.} 
\end{proof}

\begin{definition}\label{toruschartforgeometriccrystaldef}
Let $\iii=(i_1,\ldots,i_{\ell})$ be a reduced decomposition of $w_P$. Define an open immersion
\[ \XXp_{\iii}:\ZL\times \mathbb{G}_m^{\ell}\hookrightarrow \X\]
by
\[ \XXp_{\iii}(t,a_1,\ldots,a_{\ell}) := t\cdot(\eta^{w_P}\circ \ttp_{\iii})(a_1,\ldots,a_{\ell}),\]
where $\ttp_{\iii}$ and $\eta^{w_P}$ are defined in Definition \ref{toruschartforBruhatdef}(2) and Definition \ref{twistmapdef} respectively.
\end{definition}

\begin{lemma}\label{fLusztig}
For every reduced decomposition $\iii=(i_1,\ldots,i_{\ell})$ of $w_P$, we have
\[ (\fm\circ\XXp_{\iii})(t,a_1,\ldots,a_{\ell}) = a_1+\cdots+a_{\ell}+\sum_{i\in I\setminus I_P}\alpha_i(t) P_i(\mathbf{a}),\]
where each $P_i(\mathbf{a})$ is a Laurent polynomial in $\mathbf{a}=(a_1,\ldots,a_{\ell})$ with positive coefficients.
\end{lemma}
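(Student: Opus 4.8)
The plan is to unwind the composite map $\XXp_{\iii} = (\id_{\ZL}, \eta^{w_P} \circ \ttp_{\iii})$ followed by $(t,x) \mapsto tx$, and then read off the decoration $\fm$ from the Bruhat-type factorization. Fix a reduced word $\iii=(i_1,\ldots,i_\ell)$ of $w_P$ and set $u := \ttp_{\iii}(\mathbf{a}) = x_{i_1}(a_1)\cdots x_{i_\ell}(a_\ell) \in U^{w_P}$. By Definition \ref{twistmapdef}, writing $u\,\ol{w_P^{-1}} = b u'$ with $b\in B^-_{w_P^{-1}}$ and $u'\in U$, we have $\eta^{w_P}(u) = \iota(b) \in B^-_{w_P}$. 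So $\XXp_{\iii}(t,\mathbf{a}) = t\cdot \iota(b)$, and since $t\cdot\iota(b) = \rho^\vee(\text{appropriate}) \cdots$ — more precisely $t\cdot$ here denotes the $\ZL$-multiplication from Lemma \ref{geometriccrystaltrivial}, giving an element $t\,\iota(b) \in \X = B^-\cap U\ZL\ol{w_P}U$. To compute $\fm$ of this element, I need its factorization as $u_1 t' \ol{w_P} u_2$ with $t'\in\ZL$, $u_1,u_2\in U$; then $\fm = \character(u_1)+\character(u_2)$.

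First I would handle the $U^{w_P}$ factor directly. The element $u = \ttp_{\iii}(\mathbf{a})$ already lies in $U$, so one of the two unipotent contributions will come from a suitable rewriting of $u$ relative to $\ol{w_P}$; the other comes from $b = u\,\ol{w_P^{-1}}(u')^{-1}$ and its image under $\iota$. The key computation is: for $x = tb = t\,\iota(b)$ with $b\in B^-_{w_P^{-1}}$, one has $b = u_1' t_0 \ol{w_P^{-1}} u_2'$ for some $u_i'\in U$, $t_0\in T$ (this is the defining factorization of $B^-_{w_P^{-1}} = B^-\cap U\ol{w_P^{-1}}U$, refined by a torus part as in Lemma \ref{geometriccrystaltrivial} applied with $w_P$ replaced by $w_P^{-1}$). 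Applying $\iota$ and tracking how $\iota$ interacts with $\ol{w_P^{-1}}$ (using $\iota(\ol{w})$ relates to $\ol{w^{-1}}$ up to torus factors, since $\iota(x_i(a))=x_i(a)$, $\iota(y_i(a))=y_i(a)$, $\iota(t)=t^{-1}$) converts this into the required $u_1 t' \ol{w_P} u_2$ form. Then $\character(u_1)+\character(u_2)$ is, after the dust settles, the sum $\chi$ of the "first-row entries" of the two unipotent pieces.

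Now for the shape of the answer. The linear term $a_1+\cdots+a_\ell$ comes from the "leading diagonal" of the twist: $\ttp_{\iii}(\mathbf a) = \prod x_{i_k}(a_k)$ contributes exactly $\sum_k a_k$ to $\chi$ of the relevant unipotent factor before any torus conjugation, because $\character_i \circ x_j = \delta_{ij}\id$ and the product of $x_{i_k}(a_k)$'s has $\character = \sum a_k$ at first order; the twist map $\eta^{w_P}$ is designed precisely so that this linear part survives unchanged (this is the content of why $\ttm_{\iii}$ and $\eta^{w_P}\circ\ttp_{\iii}$ give "Lusztig-type" coordinates). All the remaining terms in $\chi(u_1)+\chi(u_2)$ are then Laurent monomials in $\mathbf a$, with coefficients that are products/quotients of $\alpha_i(t_0)$'s where $t_0$ is the torus part produced above; using $\ZL = \ker(T\to\mathbb{G}_m^{I_P})$ and $\gammam$-type bookkeeping, $t_0$ is constrained so that only $\alpha_i(t)$ for $i\in I\setminus I_P$ appear (with $t = \pim(x)$), and positivity of the coefficients follows because each $x_{\pm i}(a)$ and each structure constant entering Chevalley's commutator relations among the $x_i$'s is rational with the appropriate sign — concretely, one expands everything in the "canonical positive structure" on $\X$, under which $\fm$ is manifestly a subtraction-free expression; cf. the positivity built into Berenstein–Kazhdan/Lusztig theory. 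I would invoke (or reprove by an easy induction on $\ell$, commuting the $x_{i_k}(a_k)$ past $\ol{w_P}$ one factor at a time using $\ol{s_i}\,x_j(a)\,\ol{s_i}^{-1}$ and the braid relations) that the conjugation-and-reordering process never introduces a minus sign.

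The main obstacle is the explicit bookkeeping of the twist map $\eta^{w_P}$ composed with $\ZL$-translation: one must verify that after factoring $t\cdot(\eta^{w_P}\circ\ttp_{\iii})(\mathbf a)$ as $u_1 t'\ol{w_P}u_2 \in \X$, the torus part $t'$ equals $t = \pim$ and the two unipotent parts depend on $\mathbf a$ through subtraction-free Laurent expressions whose linear part is exactly $\sum a_k$. The cleanest route is probably not to compute $u_1,u_2$ in closed form but to argue structurally: (i) $\XXp_{\iii}$ is the composite of the torus-trivialization of Lemma \ref{geometriccrystaltrivial} with $\eta^{w_P}\circ\ttp_{\iii}$, (ii) $\fm|_{B^-_{w_P}}$ pulled back along $\eta^{w_P}\circ\ttp_{\iii}$ is a known subtraction-free "Lusztig" coordinate expression $\sum a_k + (\text{positive Laurent poly})$ on the geometric crystal fiber (this is essentially \cite[Proposition 4.5, Theorem 4.7]{BZ} combined with the explicit form of $\character$), and (iii) the $\ZL$-translation scales the non-linear monomials by characters $\alpha_i(t)$, $i\in I\setminus I_P$, because $\fm$ is $\ZL$-equivariant in the appropriate sense while the linear term $\sum a_k$ is $\ZL$-invariant (it lives in the $\ol{w_P}$-adjacent unipotent factor which is unaffected by the central torus). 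I would carry out (ii) by the one-factor-at-a-time induction described above, and record (iii) as a short equivariance check. The positivity in the end is automatic once every step is subtraction-free.
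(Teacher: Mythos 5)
The paper's ``proof'' is simply a citation to \cite[Corollary 6.11]{LamTemplier}, so there is no in-paper argument to compare against directly; the question is whether your outline, if fleshed out, would constitute a correct proof. Structurally you have the right plan (unwind $\XXp_{\iii}$, read off $\chi(u_1)+\chi(u_2)$, separate the $t$-scaled from the $t$-invariant part), but there is a cleaner reduction you miss and, more importantly, a genuine gap at the positivity step.

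On the reduction: you propose a ``one-factor-at-a-time'' induction commuting the $x_{i_k}(a_k)$ past $\ol{w_P}$, which is messy. Much cleaner: write $x:=\ttp_{\iii}(\mathbf a)$ and $x\ol{w_P^{-1}}=bu$ with $b\in B^-$, $u\in U$ as in Definition \ref{twistmapdef}; applying the anti-automorphism $\iota$ (and using $\iota(\ol{s_i})=\ol{s_i}$, hence $\iota(\ol{w_P^{-1}})=\ol{w_P}$) gives $\eta^{w_P}(x)=\iota(b)=\iota(u)^{-1}\,\ol{w_P}\,\iota(x)$, and therefore
\[
\XXp_{\iii}(t,\mathbf a)=\bigl(t\,\iota(u)^{-1}\,t^{-1}\bigr)\cdot t\,\ol{w_P}\cdot \iota(x),
\]
so $\fm\circ\XXp_{\iii}(t,\mathbf a)=\chi\bigl(t\,\iota(u)^{-1}t^{-1}\bigr)+\chi(\iota(x))$. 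Since $\chi$ is a group homomorphism and $\iota(x)=x_{i_\ell}(a_\ell)\cdots x_{i_1}(a_1)$, the second summand is exactly $a_1+\cdots+a_\ell$ with no further work. The first summand is $\sum_{i\in I}\alpha_i(t)\,\chi_i(\iota(u)^{-1})$, and since $t\in\ZL$ one has $\alpha_i(t)=1$ for $i\in I_P$, so the statement reduces to two claims: $\chi_i(\iota(u)^{-1})=0$ for $i\in I_P$, and $\chi_i(\iota(u)^{-1})$ is a Laurent polynomial in $\mathbf a$ with positive coefficients for $i\in I\setminus I_P$.

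The gap is at the second claim. Your justification --- ``the conjugation-and-reordering process never introduces a minus sign,'' and ``one expands everything in the canonical positive structure on $\X$, under which $\fm$ is manifestly a subtraction-free expression'' --- does not hold up. The intermediate object $\iota(u)^{-1}$ is \emph{not} in $U_{\geqslant 0}$ in general: already for $G$ of type $A_2$ with $I_P=\{2\}$ and $\iii=(1,2)$, one computes that $\iota(u)^{-1}$ has the negative matrix entry $-1/a_2$ in position $(1,3)$ for $a_1,a_2>0$. Only the particular linear functionals $\chi_i$ evaluate to subtraction-free Laurent expressions, and that is precisely what must be proved --- ``the positive structure makes $\fm$ subtraction-free'' is the conclusion of the lemma, not a principle you can feed in. (The vanishing $\chi_i(\iota(u)^{-1})=0$ for $i\in I_P$ also needs an argument, e.g.\ that $\iota(u)^{-1}$ can be taken in $U\cap w_PU^-w_P^{-1}$, which is generated by $U_\alpha$, $\alpha\in R^+\setminus R_P^+$, none of which are simple roots $\alpha_i$ with $i\in I_P$.) So you would need to either cite the Lam--Templier computation, as the paper does, or actually carry out the explicit factorization-and-positivity analysis rather than appeal to a general positivity heuristic.
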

\begin{proof}
This is \cite[Corollary 6.11]{LamTemplier}\footnote{Note that their twist map \cite[Lemma 6.2]{LamTemplier} is an isomorphism $U^{w_P^{-1}}\xrightarrow{\sim}B^-_{w_P}$, which is equal to the composition of ours and the isomorphism $U^{w_P^{-1}}\xrightarrow{\sim}U^{w_P}$ induced by $\iota$. It is easy to translate their result to obtain our Lemma \ref{fLusztig}.}.
\end{proof}
\begin{lemma}\label{gammaLusztig}
For every reduced decomposition $\iii=(i_1,\ldots,i_{\ell})$ of $w_P$, we have
\[ (\gammam\circ\XXp_{\iii})(t,a_1,\ldots,a_{\ell}) = t \cdot \prod_{k=1}^{\ell}\beta^{\vee}_k(a_k),\]
where $\beta^{\vee}_k:= - s_{i_1}\cdots s_{i_{k-1}}(\alpha^{\vee}_{i_k})\in \co(T)$ (the cocharacter lattice of $T$). 
\end{lemma}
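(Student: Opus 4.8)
The plan is to reduce the statement to extracting the maximal‑torus component of the element produced by the twist map, to compute that component through matrix coefficients in the fundamental representations, and finally to settle a monomiality identity for a ``chamber minor''; the last point will be the crux.

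\emph{Reduction.} By Definition \ref{toruschartforgeometriccrystaldef} we have $\XXp_\iii(t,\mathbf a)=t\cdot(\eta^{w_P}\circ\ttp_\iii)(\mathbf a)$ with $t\in\ZL\subseteq T$. Since $T$ normalizes $U^-$ and $\gammam$ records only the $T$‑component in the decomposition $B^-=U^-T$, one has $\gammam(tx)=t\cdot\gammam(x)$ for $t\in\ZL$ and $x\in B^-$, so we may take $t=e$. Write $x:=\ttp_\iii(\mathbf a)\in U^{w_P}$ and $x\,\ol{w_P^{-1}}=bu$ with $b\in B^-$, $u\in U$ as in Definition \ref{twistmapdef}, so $(\eta^{w_P}\circ\ttp_\iii)(\mathbf a)=\iota(b)$. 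If $b=v\,s$ with $v\in U^-$ and $s\in T$, then, because $\iota$ is an anti‑automorphism fixing each $y_i$ and inverting $T$, we get $\iota(b)=s^{-1}\iota(v)$ with $\iota(v)\in U^-$; rewriting $\iota(b)=\bigl(s^{-1}\iota(v)\,s\bigr)\,s^{-1}$ shows $\gammam(\iota(b))=s^{-1}$. Hence the lemma amounts to proving that the $T$‑component $s$ of $b$ equals $\prod_{k=1}^{\ell}\bigl(s_{i_1}\cdots s_{i_{k-1}}(\alpha^\vee_{i_k})\bigr)(a_k)$.

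\emph{Extracting $s$ via fundamental minors.} The elements $x_i,y_i$, the representatives $\ol w$, the anti‑automorphism $\iota$, and the factorization $x\,\ol{w_P^{-1}}=bu$ all lift to the simply connected cover of $G$; carrying out the computation there and pushing the result forward to $T\subseteq G$ is harmless, as the coroots $\alpha^\vee_{i_k}$ are cocharacters of either torus. For each fundamental weight $\omega_j$ the generalized minor $\Delta_{\omega_j,\omega_j}$ is invariant under left multiplication by $U^-$ and right multiplication by $U$ and satisfies $\Delta_{\omega_j,\omega_j}|_T=\omega_j$; evaluating on $bu=v\,s\,u$ gives $\omega_j(s)=\Delta_{\omega_j,\omega_j}\bigl(x\,\ol{w_P^{-1}}\bigr)$. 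Since the fundamental weights separate points of the torus of the simply connected cover, $s$ is determined by these numbers, and the right‑hand side below is exactly the value of $\omega_j$ on the proposed cocharacter; thus it remains to prove
\[
\Delta_{\omega_j,\omega_j}\!\bigl(x_{i_1}(a_1)\cdots x_{i_\ell}(a_\ell)\,\ol{w_P^{-1}}\bigr)=\prod_{k=1}^{\ell}a_k^{\,\langle\omega_j,\,s_{i_1}\cdots s_{i_{k-1}}(\alpha^\vee_{i_k})\rangle}
\]
for every $j$ and every reduced decomposition $\iii=(i_1,\ldots,i_\ell)$ of $w_P$.

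\emph{The monomiality identity.} Write $\Delta_{\omega_j,\omega_j}(g)=\langle f,\,g\,v\rangle$, with $v$ a highest weight vector of $V_{\omega_j}$ and $f$ its dual covector, and use $\ol{w_P^{-1}}=\ol{s_{i_\ell}}\cdots\ol{s_{i_1}}$. Put $u_0:=v$ and $u_k:=\ol{s_{i_k}}\cdots\ol{s_{i_1}}v$, an extremal vector of weight $\mu_k:=s_{i_k}\cdots s_{i_1}\omega_j$; reducedness of $\iii$ forces $\mu_{k-1}-\mu_k=n_k\alpha_{i_k}$ with $n_k:=\langle\omega_j,s_{i_1}\cdots s_{i_{k-1}}(\alpha^\vee_{i_k})\rangle\ge 0$ and $\omega_j-\mu_{k-1}=\sum_{m<k}n_m\alpha_{i_m}$. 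Using the $\mathfrak{sl}_2$‑triple attached to $i_\ell$ — for which $u_{\ell-1}$ is the top and $u_\ell$ the bottom of a string of length $n_\ell$, as one sees by conjugating $e_{i_\ell}$, resp.\ $f_{i_\ell}$, through the $\ol{s_{i_k}}$'s and landing on a positive root space that annihilates $v$ — one gets $x_{i_\ell}(a_\ell)u_\ell=a_\ell^{\,n_\ell}u_{\ell-1}+(\text{vectors of weight}<\mu_{\ell-1})$. Applying $x_{i_1}(a_1)\cdots x_{i_{\ell-1}}(a_{\ell-1})$ and pairing with $f$, the main term produces, by induction on $\ell$ applied to the reduced word $(i_1,\ldots,i_{\ell-1})$ of $w_Ps_{i_\ell}$ (whose associated vector is $u_{\ell-1}$), the factor $a_\ell^{\,n_\ell}\prod_{k<\ell}a_k^{\,n_k}$, and the identity follows once the lower‑weight terms are shown not to contribute. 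This last vanishing is the main obstacle: a weight count disposes of it only when $i_\ell$ does not already occur among $i_1,\ldots,i_{\ell-1}$, and in the remaining case one must exploit that $u_{\ell-1}=\ol{s_{i_{\ell-1}}}\cdots\ol{s_{i_1}}v$ is extremal for the reduced word $(i_{\ell-1},\ldots,i_1)$. The displayed identity is precisely the monomiality of a chamber minor in the sense of Berenstein--Fomin--Zelevinsky and Berenstein--Zelevinsky, from which it may also be quoted; alternatively, Lemma \ref{gammaLusztig} is the weight‑map analogue of Lemma \ref{fLusztig} and can be deduced from the computations of Lam and Templier exactly as in that lemma, with the convention change recorded in its footnote.
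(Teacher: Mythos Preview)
Your reduction is correct and matches the paper's: both of you end up needing the $T$-component of $b$ in the factorization $x\,\ol{w_P^{-1}}=bu$ (equivalently, the paper's $[\ol{w_P}\,\iota(x)]_0$ in the $UTU^-$ decomposition). The difference is in how this torus element is computed, and your computation is where the trouble lies.

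Your inductive argument for the monomiality identity is incomplete, and you say so yourself: when $i_\ell$ already occurs among $i_1,\ldots,i_{\ell-1}$, the weight count does not kill the lower-weight contributions, and ``one must exploit that $u_{\ell-1}$ is extremal'' is not an argument. You then try to close the gap by citation, but neither citation is precise. The minors $\Delta_{\omega_j,w_P^{-1}\omega_j}$ for \emph{all} $j\in I$ are not chamber minors for the word $\iii$ in the Berenstein--Fomin--Zelevinsky/Berenstein--Zelevinsky sense; only those $j$ tied to the positions of $\iii$ are. The monomiality you want is true, but it is not a one-line quotation from \cite{BZ} or \cite{BZSchubert}. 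Likewise, ``deduce from the computations of Lam and Templier exactly as in Lemma~\ref{fLusztig}'' is a hope, not a proof: the cited corollary in \cite{LamTemplier} computes $\fm$, not $\gammam$.

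The paper avoids representation theory entirely and computes the torus element by a direct, self-contained induction inside $G$. Writing $w=s_{j_m}w_1$ and $z:=\ol{w_1}\,x_{j_1}(b_1)\cdots x_{j_{m-1}}(b_{m-1})$, one decomposes
\[
\ol{w}\,x_{j_1}(b_1)\cdots x_{j_m}(b_m)
=\bigl(\ol{s_{j_m}}[z]_+\ol{s_{j_m}}^{-1}\bigr)\,s_{j_m}([z]_0)\,\bigl(\ol{s_{j_m}}\,y_{j_m}(c)\,x_{j_m}(b_m)\bigr)\bigl(x_{j_m}(b_m)^{-1}u\,x_{j_m}(b_m)\bigr),
\]
and the only nontrivial point---the analogue of your ``Case 2 vanishing''---is that $\ol{s_{j_m}}[z]_+\ol{s_{j_m}}^{-1}\in U$. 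This follows from reducedness: $[z]_+\in U\cap w_1U^-w_1^{-1}$, and $\ell(s_{j_m}w_1)=\ell(w_1)+1$ forces $\chi_{j_m}([z]_+)=0$, so conjugation by $\ol{s_{j_m}}$ stays in $U$. A $2\times 2$ matrix computation then gives $[\ol{s_{j_m}}\,y_{j_m}(c)\,x_{j_m}(b_m)]_0=\alpha_{j_m}^{\vee}(b_m^{-1})$, closing the induction. This is exactly the content your representation-theoretic argument is missing, packaged at the group level where it is a single Bruhat/root-space observation rather than a delicate cancellation among matrix coefficients.
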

\begin{proof}
This was proved in \cite[Theorem 4.1.20]{Chhaibi}. A similar result was also proved in \cite[Claim 7.12]{BK}. For the reader's convenience, we provide the details in Appendix \ref{Subsection-proofs-from-bmodel}.
\end{proof}
The following fact about the coroots $\beta^{\vee}_k$ from the last lemma will be useful.
\begin{lemma}\label{betadescription}
We have 
\[ \{\beta^{\vee}_1,\ldots,\beta^{\vee}_{\ell}\}=\{\alpha^{\vee}\}_{\alpha\in -(R^+\setminus R^+_P)}.  \]
\end{lemma}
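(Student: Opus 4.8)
The plan is to prove that $\{\beta^\vee_1,\ldots,\beta^\vee_\ell\}=\{\alpha^\vee\}_{\alpha\in-(R^+\setminus R^+_P)}$ by a standard inversion-set argument from the combinatorics of reduced words. Recall $w_P=w_0^Pw_0$ and $\ell=\ell(w_P)=|R^+\setminus R^+_P|$, and fix the reduced decomposition $\iii=(i_1,\ldots,i_\ell)$ of $w_P$. The key classical fact I would invoke is that for any reduced word $(i_1,\ldots,i_\ell)$ of an element $w$ of a Weyl group, the roots $\gamma_k:=s_{i_1}\cdots s_{i_{k-1}}(\alpha_{i_k})$ for $k=1,\ldots,\ell$ are pairwise distinct and constitute exactly the inversion set $\mathrm{Inv}(w^{-1})=\{\gamma\in R^+ : w^{-1}(\gamma)<0\}=R^+\cap w(R^-)$; equivalently, applying this to the root system of $(G,T)$ whose roots are the coroots of $(G^\vee,T^\vee)$, the elements $-\beta^\vee_k=s_{i_1}\cdots s_{i_{k-1}}(\alpha^\vee_{i_k})$ are distinct and exhaust $R^+\cap w_P(R^-)$ (positive coroots sent to negative coroots by $w_P^{-1}$). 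So after negation, $\{\beta^\vee_1,\ldots,\beta^\vee_\ell\}$ is the set of coroots $\alpha^\vee$ with $\alpha\in -(R^+\cap w_P(R^-))$, and it remains to identify $R^+\cap w_P(R^-)$ with $R^+\setminus R^+_P$.

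To do that I would compute the inversion set of $w_P^{-1}=w_0w_0^P$ directly. Since $w_0$ sends every positive root to a negative root, $w_0^{-1}=w_0$ has inversion set all of $R^+$; and $w_0^P$, the longest element of $W_P$, has inversion set exactly $R^+_P$. Using the standard length-additivity $\ell(w_0)=\ell(w_0^P)+\ell(w_0^Pw_0)$, i.e. the decomposition $w_0=w_0^P\cdot w_P$ is length-additive, one has the disjoint-union description of inversion sets: $\mathrm{Inv}(w_0^{-1})=\mathrm{Inv}(w_P^{-1})\sqcup w_P^{-1}\big(\mathrm{Inv}((w_0^P)^{-1})\big)$. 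Since $\mathrm{Inv}(w_0^{-1})=R^+$ and $\mathrm{Inv}((w_0^P)^{-1})=R^+_P$, and since $w_P^{-1}(R^+_P)=R^+_P$ (because $w_P^{-1}$ maps the span of the $\alpha_i$, $i\in I_P$, to itself as a permutation of $R_P$ and preserves positivity there — indeed $\ell(w_P^{-1}s_i)>\ell(w_P^{-1})$ for $i\in I_P$, as $w_P^{-1}=w_0w_0^P$ and $\ell(w_0w_0^Ps_i)=\ell(w_0)-\ell(w_0^Ps_i)=\ell(w_0)-\ell(w_0^P)+1$), we conclude $\mathrm{Inv}(w_P^{-1})=R^+\setminus R^+_P$. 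Translating back: $\mathrm{Inv}(w_P^{-1})$ in the coroot system is $\{\alpha^\vee : \alpha\in R^+\setminus R^+_P\}=\{-\beta^\vee_k\}_{k=1}^\ell$, hence $\{\beta^\vee_1,\ldots,\beta^\vee_\ell\}=\{\alpha^\vee\}_{\alpha\in-(R^+\setminus R^+_P)}$, and the count matches since both sides have $\ell$ distinct elements.

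I expect the main obstacle to be nothing deep but rather bookkeeping care: keeping straight the dualization between $(G^\vee,T^\vee)$ and its Langlands dual $(G,T)$ — roots of one are coroots of the other — and the sign conventions, since $\beta^\vee_k$ is defined with a leading minus sign and the twist-map/torus-chart conventions of Lam–Templier propagate into the precise form of $\beta^\vee_k$. I would double-check that the $\beta^\vee_k$ in Lemma \ref{gammaLusztig} really is $-s_{i_1}\cdots s_{i_{k-1}}(\alpha^\vee_{i_k})$ with the reflections acting in $W$ on the coweight lattice $\co(T)$, so that the classical reduced-word lemma applies verbatim. An alternative, slightly slicker route that sidesteps some of this is to observe that the map $\fm\circ\XXp_{\iii}$ from Lemma \ref{fLusztig} together with $\gammam\circ\XXp_{\iii}$ from Lemma \ref{gammaLusztig} is, up to the torus factor, exactly Lusztig's parametrization of the totally positive part, for which the statement $\{\beta^\vee_k\}=\{-\alpha^\vee:\alpha\in R^+\setminus R^+_P\}$ is part of the standard package; but since the excerpt only asserts it as a lemma with proof deferred to an appendix, I would give the self-contained inversion-set argument above as the primary proof.
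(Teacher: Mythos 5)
Your invocation of the classical fact (that $\{s_{i_1}\cdots s_{i_{k-1}}(\alpha_{i_k})\}_{k=1}^\ell$ equals $\mathrm{Inv}(w_P^{-1})=R^+\cap w_PR^-$, applied to the root system of $(G,T)$) is correct, and it parallels what the paper proves from scratch by a short induction on $\ell(w)$. The problem is in your computation of $\mathrm{Inv}(w_P^{-1})$, which contains two errors that happen to cancel. First, the inversion-set formula for a length-additive product $w=uv$ is $\mathrm{Inv}(w)=\mathrm{Inv}(v)\sqcup v^{-1}\bigl(\mathrm{Inv}(u)\bigr)$; applied to the length-additive factorization $w_0=w_P^{-1}\cdot w_0^P$ it gives $R^+=R_P^+\sqcup w_0^P\bigl(\mathrm{Inv}(w_P^{-1})\bigr)$, whereas you wrote $\mathrm{Inv}(w_0^{-1})=\mathrm{Inv}(w_P^{-1})\sqcup w_P^{-1}\bigl(\mathrm{Inv}((w_0^P)^{-1})\bigr)$, which does not follow from either length-additive factorization of $w_0$. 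Second, the claim $w_P^{-1}(R_P^+)=R_P^+$ is false: $w_P^{-1}=w_0w_0^P$ sends $R_P^+$ onto $R_{P^*}^+$, with $I_{P^*}$ the image of $I_P$ under the opposition involution, and these differ in general (take $G$ of type $A_2$ with $I_P=\{1\}$: $w_P^{-1}=s_1s_2$ and $w_P^{-1}(\alpha_1)=\alpha_2\notin R_P^+$). Your length argument $\ell(w_P^{-1}s_i)>\ell(w_P^{-1})$ only shows $w_P^{-1}(\alpha_i)\in R^+$, not that it lands in $R_P^+$.

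The repair is short. From $R^+=R_P^+\sqcup w_0^P\bigl(\mathrm{Inv}(w_P^{-1})\bigr)$ and the fact that $w_0^P\in W_P$ permutes $R^+\setminus R_P^+$ (every $s_i$ with $i\in I_P$ does), deduce $\mathrm{Inv}(w_P^{-1})=w_0^P(R^+\setminus R_P^+)=R^+\setminus R_P^+$. Or argue directly: if $\alpha\in R_P^+$ then $w_0^P\alpha\in -R_P^+$, so $w_0w_0^P\alpha\in R^+$; if $\alpha\in R^+\setminus R_P^+$ then $w_0^P\alpha\in R^+\setminus R_P^+\subseteq R^+$, so $w_0w_0^P\alpha\in -R^+$. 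Either way $\mathrm{Inv}(w_P^{-1})=R^+\setminus R_P^+$, and the remaining dualization and sign bookkeeping are as you describe. The paper sidesteps this computation entirely by inducting on $\ell(w)$ for an arbitrary reduced word, and leaving the identification of $w_PR^+\cap(-R^+)$ with $-(R^+\setminus R_P^+)$ implicit.
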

\begin{proof}
It suffices to prove the following more general result: For every reduced decomposition $\iii=(i_1,\ldots,i_m)$ of an element $w\in W$, we have
\begin{equation}\label{betadescriptioneq1}
\{\beta^{\vee}_k:= - s_{i_1}\cdots s_{i_{k-1}}(\alpha^{\vee}_{i_k})\}_{k=1}^m = \{\alpha^{\vee}\}_{\alpha\in wR^+\cap (-R^+)}.
\end{equation} 
This is proved by induction on $\ell(w)$. At each inductive step, we write $w=w's_{i_m}$. Since $\ell(w)=\ell(w')+1$, we have $w'(\alpha_{i_m})\in R^+$. It follows that both the new sets from \eqref{betadescriptioneq1} are obtained from the old sets by adding the same element $-w'(\alpha_{i_m}^{\vee})$. We are done.
\end{proof}

Recall the fiberwise volume form $\vol$ defined in Definition \ref{volumeformdef}.

\begin{lemma}\label{omegaLusztig}
For every reduced decomposition $\mathbf{i}=(i_1,\ldots,i_{\ell})$ of $w_P$, we have
\[ (\XXp_{\iii})^*\vol = c \frac{da_1\wedge\cdots\wedge da_{\ell}}{a_1\cdots a_{\ell}}\]
for some non-zero scalar $c$.
\end{lemma}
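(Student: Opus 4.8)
The plan is to strip the statement down to a single explicit computation and then pin it down using the pole structure of $\omega_{\UU}$. By Definition~\ref{volumeformdef} we have $\vol=\varpi^*\omega_{\UU}$, where $\varpi\colon\X\to\UU\subseteq G/P$ sends $x$ to the image of $x^{-1}\ol{w_0^P}B$ under $G/B\to G/P$, i.e. $\varpi(x)=x^{-1}\ol{w_0^P}P$. Using $\XXp_{\iii}(t,\mathbf{a})=t\cdot(\eta^{w_P}\circ\ttp_{\iii})(\mathbf{a})$ together with $\ol{w_0^P}\in L_P\subseteq P$ and $\ZL\subseteq T\subseteq P$, for $(t,\mathbf{a})\in\ZL\times\mathbb{G}_m^{\ell}$ one gets
\[
\varpi\bigl(\XXp_{\iii}(t,\mathbf{a})\bigr)=(\eta^{w_P}\circ\ttp_{\iii})(\mathbf{a})^{-1}\ol{w_0^P}P=(\eta^{w_P}\circ\ttp_{\iii})(\mathbf{a})^{-1}P,
\]
which is independent of $t$ (the $t^{-1}$ gets absorbed into $P$). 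Hence $(\XXp_{\iii})^*\vol$ is the pull-back along $\ZL\times\mathbb{G}_m^{\ell}\to\mathbb{G}_m^{\ell}$ of $\phi^*\omega_{\UU}$, where $\phi\colon\mathbb{G}_m^{\ell}\to\UU$ is $\mathbf{a}\mapsto(\eta^{w_P}\circ\ttp_{\iii})(\mathbf{a})^{-1}P$; and $\phi$ is an open immersion, being the composition of the open immersion $\ttp_{\iii}\colon\mathbb{G}_m^{\ell}\hookrightarrow U^{w_P}$ (Lemma~\ref{toruschartforBruhatlemma}), the isomorphism $\eta^{w_P}\colon U^{w_P}\xrightarrow{\sim}B^-_{w_P}$ (Lemma~\ref{twistmapisom}), and the isomorphism $B^-_{w_P}\xrightarrow{\sim}\UU$, $x\mapsto x^{-1}\ol{w_0^P}P$, obtained by restricting the two isomorphisms of Definition~\ref{volumeformdef} to the fibre over $e\in\ZL$. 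It therefore suffices to prove $\phi^*\omega_{\UU}=c\,\frac{da_1\wedge\cdots\wedge da_{\ell}}{a_1\cdots a_{\ell}}$ for some $c\in\mathbb{C}^{\times}$.

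By Definition~\ref{volumeformdef} and \cite[Lemma~5.4]{KLS}, $D:=(G/P)\setminus\UU$ is a reduced anticanonical divisor and $\omega_{\UU}$, as a rational top form on $G/P$, has divisor exactly $-D$; in particular it is nowhere vanishing on $\UU$. Since $\phi$ is an open immersion, $\phi^*\omega_{\UU}$ is a nowhere-vanishing top form on $\mathbb{G}_m^{\ell}$, so $\phi^*\omega_{\UU}=c\bigl(\prod_k a_k^{n_k}\bigr)\frac{da_1\wedge\cdots\wedge da_{\ell}}{a_1\cdots a_{\ell}}$ with $c\in\mathbb{C}^{\times}$ and $n_k\in\mathbb{Z}$, and it remains to show $n_k=0$ for all $k$. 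Extend $\phi$ to a rational map $\overline{\phi}\colon(\mathbb{P}^1)^{\ell}\dashrightarrow G/P$; its indeterminacy locus has codimension $\geq 2$ (source normal, target proper) and lies in the toric boundary, so $\overline{\phi}$ is a morphism near the generic point of each toric divisor $\{a_k=0\}$ and $\{a_k=\infty\}$, and one checks it maps this generic point into $D$: indeed $\phi$ extends to a morphism into $\UU$ near a prime divisor $E$ if and only if $\ttp_{\iii}$ extends to a morphism \emph{into $U^{w_P}$} near $E$, whereas as $a_k\to 0$ the value of $\ttp_{\iii}$ tends to $\prod_{j\ne k}x_{i_j}(a_j)$, which lies in $U\cap B^-w'B^-$ for a Weyl element $w'$ with $\ell(w')<\ell=\ell(w_P)$ and hence outside $U^{w_P}$, while as $a_k\to\infty$ the entries of $\ttp_{\iii}$ blow up, so in neither case does $\ttp_{\iii}$ stay $U^{w_P}$-valued. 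Thus along $\{a_k=0\}$ and $\{a_k=\infty\}$, $\phi^*\omega_{\UU}$ is the pull-back of a logarithmic top form along a morphism of smooth varieties carrying the divisor into the polar locus; such a pull-back again has at worst a simple pole along any divisor (the ramification of the morphism absorbs the excess pole order). Comparing with the explicit expression, whose order along $\{a_k=0\}$ is $n_k-1$ and along $\{a_k=\infty\}$ is $-1-n_k$, gives $n_k-1\geq -1$ and $-1-n_k\geq -1$, i.e. $n_k=0$.

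The main obstacle is exactly this last point: bounding the pole order of $\phi^*\omega_{\UU}$ along the toric boundary. When $\overline{\phi}$ maps the generic point of $\{a_k=0\}$ (or of $\{a_k=\infty\}$) dominantly onto a component of $D$, a direct local computation — using that $\omega_{\UU}$ has \emph{exactly} a simple pole, with nonvanishing residue, at the generic point of every component of $D$ — shows the order there is precisely $-1$, which already forces $n_k=0$. The delicate case is when $\overline{\phi}$ \emph{contracts} a toric boundary divisor onto a deeper stratum of $D$ (this genuinely happens, precisely when the relevant subword of $\iii$ is non-reduced); to control the pole order there one can invoke that the pair $(G/P,D)$ is log canonical, so that pull-back of logarithmic forms remains logarithmic, or else bypass the issue by first reducing to one convenient reduced word $\iii_0$: the change-of-chart maps between $\XXp_{\iii}$ and $\XXp_{\iii'}$ for two reduced words do not involve $t$ and are Lusztig's transition maps, a short computation for each rank-two braid move shows these preserve $\frac{da_1\wedge\cdots\wedge da_{\ell}}{a_1\cdots a_{\ell}}$ up to sign, and then only the identity for $\iii_0$ must be checked directly. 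Finally, the $\mathbb{G}_m$-equivariance of $\XXp_{\iii}$ together with the $\mathbb{G}_m$-invariance of $\vol$ (Lemma~\ref{Rietschmirrorgmactionlemma}) shows $\phi^*\omega_{\UU}$ is invariant under simultaneous scaling of $a_1,\dots,a_{\ell}$, so $\sum_k n_k=0$ for free, which cuts the boundary analysis in half.
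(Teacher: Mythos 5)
Your initial reduction is correct and matches the paper's: after noting that $\vol$ factors through the single open immersion $\phi\colon\mathbb{G}_m^{\ell}\hookrightarrow\UU\subseteq G/P$, both you and the paper reduce to showing $\phi^*\omega_{\UU}$ has the logarithmic form. Your $\mathbb{G}_m$-scaling argument for $\sum_k n_k=0$ is also valid (the equivariance of $\XXp_{\iii}$ under Definition~\ref{Rietschmirrorgmactiondef} does hold, via the identity $\rho^{\vee}-w_P\rho^{\vee}=2\rho^{\vee}-2\rho_P^{\vee}$). And your analysis of the non-contracting case of the toric boundary is sound.

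However, you have correctly identified, and not closed, the crux of the argument: the pole bound along a toric boundary divisor that $\overline{\phi}$ contracts into a deeper stratum of $D$. Neither of your two proposed remedies does the job as stated. Fix (a), invoking log canonicity of $(G/P,D)$, would indeed bound the pole order (a standard discrepancy computation), but log canonicity of this pair is itself a nontrivial theorem — it follows, for instance, from compatible Frobenius splitting of $G/P$ along the Schubert and opposite Schubert divisors — and you neither prove nor cite it, so as written this is a claim of comparable depth to the lemma itself. Fix (b) is actually misdirected: reducing to one convenient reduced word $\iii_0$ via Lusztig's braid-move transition maps (which do preserve $\frac{da_1\wedge\cdots\wedge da_{\ell}}{a_1\cdots a_{\ell}}$ up to sign — this is essentially \cite[Proposition~7.2]{Rietsch}) does not make the direct verification for $\iii_0$ any easier, because the contraction problem is intrinsic to the single-chart toric compactification and recurs for any fixed $\iii_0$. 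What the uniformity across \emph{all} reduced decompositions actually buys is something else: the charts $\ttp_{\iii'}$ collectively cover $U^{w_P}$ up to codimension two and therefore, under the twist and projection, see every boundary divisor of $G/P$ generically in some chart where the pole order can be read off. This is precisely the content of \cite[Lemma~2.10]{Lam}, which the paper combines with Rietsch's Proposition~7.2 to conclude that $\omega':=\bigl((\eta^{w_P})^{-1}\circ\zeta^{-1}\bigr)^*\omega_{U^{w_P}}$ has at worst simple poles along $D$, whence $\omega'/\omega_{\UU}$ is a constant. So the missing idea in your proposal is not the braid-move uniformity (you have that) but the step that converts multi-chart uniformity into a global pole bound on $G/P$; without it, or without an independent proof of log canonicity, the contracted-divisor case remains open.
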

\begin{proof}
See Appendix \ref{Subsection-proofs-from-bmodel}.
\end{proof}
\begin{definition}\label{volumeformrescale}
(Rescaling of $\vol$) Fix a reduced decomposition $\iii_0$ of $w_P$. We rescale $\vol$ to the unique volume form satisfying 
\[(\XXp_{\iii_0})^*\vol = \frac{da_1\wedge\cdots\wedge da_{\ell}}{a_1\cdots a_{\ell}}.\]
(Lemma \ref{omegaLusztig} guarantees that this is possible.)
\end{definition}

\subsection{Totally positive part}\label{Subsection-Positivepart} The main reference for this subsection is \cite[Section 3]{BK}.
\begin{definition}\label{nonnegativepartofsubgroupofG}$~$
\begin{enumerate}
\item Define $T_{>0}$ to be the submonoid of $T$ with unit generated by $\alpha_i^{\vee}(a)$ for $i\in I$ and $a\in\mathbb{R}_{>0}$. 

\item Define
\[ \ZLp:=\ZL\cap T_{>0}.\]

\item Define $G_{\geqslant 0}$ to be the submonoid of $G$ with unit generated by $\alpha_i^{\vee}(a)$, $x_i(a)$ and $y_i(a)$ for $i\in I$ and $a\in\mathbb{R}_{>0}$.

\end{enumerate}
\end{definition}
\begin{remark}\label{nonnegativepartofsubgroupofGrmk}
Since $G$ is of adjoint type, we have
\[ T_{>0}=\{t\in T|~\alpha_i(t)\in\mathbb{R}_{>0}\text{ for all }i\in I\}. \]
\end{remark}
\begin{definition}\label{positivepartofBruhat}
Define 
\[ (B^-_{w_P})_{>0} :=B^-_{w_P}\cap G_{\geqslant 0}\quad \text{ and }\quad (U^{w_P})_{>0} :=U^{w_P}\cap G_{\geqslant 0}, \]
where $B^-_{w_P}$ and $U^{w_P}$ are defined in Definition \ref{Bruhatdef}.
\end{definition}

\begin{lemma}\label{toruschartforBruhatpositive}
For every reduced decomposition $\iii$ of $w_P$, the open immersion $\ttm_{\iii}$ (resp. $\ttp_{\iii}$) defined in Definition \ref{toruschartforBruhatdef} maps $\mathbb{R}_{>0}^{\ell}$ onto $(B^-_{w_P})_{>0}$ (resp. $(U^{w_P})_{>0}$).
\end{lemma}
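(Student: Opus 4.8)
The plan is to prove both statements simultaneously by pulling back the definition of total positivity through the torus charts. Recall from Lemma \ref{toruschartforBruhatlemma} that $\ttm_{\iii}$ is an open immersion onto an open subvariety of $B^-_{w_P}$ and $\ttp_{\iii}$ is an open immersion onto an open subvariety of $U^{w_P}$. The key is that these charts are built from the one-parameter subgroups $y_i$, $\alpha_i^{\vee}$, $x_i$, which are precisely the generators used in Definition \ref{nonnegativepartofsubgroupofG} to define $G_{\geqslant 0}$.

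First I would observe the ``easy'' inclusion: since $x_{-i}(a) = y_i(a)\alpha_i^\vee(a^{-1})$ and for $a \in \mathbb{R}_{>0}$ both $y_i(a)$ and $\alpha_i^\vee(a^{-1})$ lie in $G_{\geqslant 0}$, the image $\ttm_{\iii}(\mathbb{R}_{>0}^\ell)$ is contained in $G_{\geqslant 0} \cap B^-_{w_P} = (B^-_{w_P})_{>0}$; similarly $\ttp_{\iii}(\mathbb{R}_{>0}^\ell) \subseteq (U^{w_P})_{>0}$. The substantive direction is the reverse inclusion, i.e. that every totally non-negative point of $B^-_{w_P}$ (resp. $U^{w_P}$) actually arises from strictly positive parameters. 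This is exactly the content of Lusztig's theory of total positivity: for a reduced word $\iii$ of $w$, the map $\mathbb{R}_{>0}^{\ell(w)} \to G$ built from the corresponding simple-root subgroups is a bijection onto the totally positive part of the relevant Bruhat-type cell, and the totally non-negative part of that cell is the closure. Concretely, I would cite \cite[Proposition 4.5]{BZ} and the positivity statements in Lusztig's work \cite{Lusztig}: the varieties $L^{w_P,e}$ and $L^{e,w_P}$ of \textit{loc. cit.} (identified with $B^-_{w_P}$ and $U^{w_P}$ in the proof of Lemma \ref{toruschartforBruhatlemma}) have totally positive parts parametrized by $\mathbb{R}_{>0}^\ell$ via exactly the maps $\ttm_{\iii}$ and $\ttp_{\iii}$.

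More precisely, the chain of reasoning I would spell out is: (i) the totally non-negative part of $B^-_{w_P}$ is contained in the image of $\ttm_{\iii}$, because by Lusztig's results the complement $B^-_{w_P} \setminus \ttm_{\iii}(\mathbb{G}_m^\ell)$ is a union of lower-dimensional cells whose non-negative points have some parameter equal to zero, hence do not belong to the open cell $B^-_{w_P}$; thus $(B^-_{w_P})_{>0} \subseteq \ttm_{\iii}(\mathbb{G}_m^\ell)$; (ii) pulling back, $\ttm_{\iii}^{-1}\big((B^-_{w_P})_{>0}\big)$ is a subset of $\mathbb{G}_m^\ell$ which, by the positivity criterion for the coordinates (each coordinate is a subtraction-free rational expression in the matrix entries, evaluated on totally non-negative matrices, hence $\geqslant 0$, and nonzero since we are in $\mathbb{G}_m^\ell$), equals $\mathbb{R}_{>0}^\ell$; (iii) combined with the easy inclusion, $\ttm_{\iii}(\mathbb{R}_{>0}^\ell) = (B^-_{w_P})_{>0}$. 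The argument for $\ttp_{\iii}$ and $(U^{w_P})_{>0}$ is verbatim the same.

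The main obstacle is locating and correctly invoking the precise form of Lusztig's positivity statement that gives inclusion (i) above — i.e. that totally non-negative points of the \emph{open} cells $B^-_{w_P}$, $U^{w_P}$ cannot lie in the boundary strata of the Lusztig parametrization. Once that is in hand (it is standard, and follows from the cell decomposition of the totally non-negative part together with the fact that boundary strata correspond to shorter Weyl group elements), everything else is the bookkeeping of subtraction-free expressions already implicit in \cite{BZ}. I would therefore phrase the proof as: the easy inclusion by direct inspection of the generators, and the reverse inclusion by citing \cite[Proposition 4.5]{BZ} together with Lusztig's \cite{Lusztig} description of the totally non-negative part, exactly as in the proof of Lemma \ref{toruschartforBruhatlemma}.
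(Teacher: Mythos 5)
Your proposal ultimately rests on the same citation the paper uses, namely \cite[Proposition 4.5]{BZ} (via the identifications $(B^-_{w_P})_{>0}=L^{w_P,e}_{>0}$ and $(U^{w_P})_{>0}=L^{e,w_P}_{>0}$), so it is correct and takes essentially the same approach. The extra discussion of the ``easy'' inclusion and the boundary-strata heuristic for the reverse inclusion is harmless but unnecessary, since \cite[Proposition 4.5]{BZ} already gives the bijection $\mathbb{R}_{>0}^{\ell}\xrightarrow{\sim}L^{w,e}_{>0}$ (resp.\ $L^{e,w}_{>0}$) in one shot.
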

\begin{proof}
This is a special case of \cite[Proposition 4.5]{BZ} by observing that $(B^-_{w_P})_{>0}$ and $(U^{w_P})_{>0}$ are equal to $L^{w_P,e}_{>0}$ and $L^{e,w_P}_{>0}$ from \textit{loc. cit.} respectively. 
\end{proof}
\begin{lemma}\label{twistmappositive}
The twist map $\eta^{w_P}$ defined in Definition \ref{twistmapdef} maps $(U^{w_P})_{>0}$ onto $(B^-_{w_P})_{>0}$.
\end{lemma}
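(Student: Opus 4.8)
The plan is to deduce this from the total positivity of the Berenstein--Zelevinsky twist map, using the very same identification already used in the proof of Lemma~\ref{twistmapisom}, namely that $\eta^{w_P}$ coincides with \cite{BZ}'s map $\psi^{e,w_P}$.

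First I would record an auxiliary fact that takes care of all the $\iota$'s: the anti-automorphism $\iota$ of Definition~\ref{twistmapdef} preserves total positivity. Indeed, $\iota$ is an involutive anti-automorphism which permutes the generators $x_i(a)$, $y_i(a)$, $\alpha_i^{\vee}(a)$ ($i\in I$, $a\in\mathbb{R}_{>0}$) of the monoid $G_{\geqslant 0}$, since $\iota(x_i(a))=x_i(a)$, $\iota(y_i(a))=y_i(a)$ and $\iota(\alpha_i^{\vee}(a))=\alpha_i^{\vee}(a^{-1})$ with $a^{-1}\in\mathbb{R}_{>0}$; hence $\iota(G_{\geqslant 0})=G_{\geqslant 0}$, and together with $\iota(B^-_{w_P^{-1}})=B^-_{w_P}$ this yields $\iota\bigl((B^-_{w_P^{-1}})_{>0}\bigr)=(B^-_{w_P})_{>0}$. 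Thus it is enough to show that the (non-twisted) map $x\mapsto b$, i.e. $\psi^{e,w_P}$ in \cite{BZ}'s notation, carries $(U^{w_P})_{>0}$ onto $(B^-_{w_P^{-1}})_{>0}$.

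This last statement I would obtain from \cite[Theorem~4.7]{BZ} and the total positivity results accompanying it: the twist maps there, and their inverses, are subtraction-free (the content of the Chamber Ansatz). Concretely, fixing reduced decompositions and using Lemma~\ref{toruschartforBruhatpositive}, one writes a point of $(U^{w_P})_{>0}$ as $\ttp_{\iii}(\mathbf{a})$ with $\mathbf{a}\in\mathbb{R}_{>0}^{\ell}$, notes that the Lusztig coordinates of its image (with respect to a chart $\ttm_{\iii'}$) are ratios of generalized minors of $\ttp_{\iii}(\mathbf{a})$, all positive on $G_{\geqslant 0}$, and the same for the inverse map. Hence $\psi^{e,w_P}$ maps $(U^{w_P})_{>0}$ into $(B^-_{w_P^{-1}})_{>0}$ and $(\psi^{e,w_P})^{-1}$ maps $(B^-_{w_P^{-1}})_{>0}$ into $(U^{w_P})_{>0}$; applying $\psi^{e,w_P}$ to the latter inclusion gives the reverse containment, so both inclusions are equalities. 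Transporting through $\iota$ finishes the proof.

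The bulk of the work --- and the only real obstacle --- is bookkeeping: matching our conventions (the chosen representatives $\ol{w_P}$, the direction of $\iota$, and the identifications $B^-_{w_P}=L^{w_P,e}$, $U^{w_P}=L^{e,w_P}$ from Lemma~\ref{toruschartforBruhatlemma}) with those of \cite{BZ}, and isolating from \cite{BZ} the precise statement that the \emph{inverse} twist map is again subtraction-free. No new geometric input beyond Lemma~\ref{twistmapisom} is needed. Should a self-contained argument be preferred, one can instead derive the Chamber Ansatz formula for $\eta^{w_P}$ directly in the coordinates of Definition~\ref{toruschartforBruhatdef} and check positivity of the minors that appear, but this just reruns \cite{BZ}'s computation.
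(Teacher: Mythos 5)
Your proposal is correct and rests on the same key ingredient as the paper, namely \cite[Theorem 4.7]{BZ}. The paper's proof is a one-liner: it identifies $\eta^{w_P}$ with $\psi^{e,w_P}$ and $(B^-_{w_P})_{>0}$, $(U^{w_P})_{>0}$ with $L^{w_P,e}_{>0}$, $L^{e,w_P}_{>0}$ of \textit{loc.\ cit.}, exactly as in the proofs of Lemma~\ref{twistmapisom} and Lemma~\ref{toruschartforBruhatpositive}, and cites the positivity assertion of that theorem directly. You instead factor $\eta^{w_P}$ as (twist without $\iota$)~$\circ$~$\iota$, verify separately that $\iota$ preserves $G_{\geqslant 0}$ (correct: $\iota$ permutes the generators $x_i(a)$, $y_i(a)$, $\alpha_i^{\vee}(a)$, $a>0$, and $\iota(\ol{w_P^{-1}})=\ol{w_P}$ since $\iota$ fixes each $\ol{s_i}$), and then invoke subtraction-freeness of the BZ twist and its inverse (the Chamber Ansatz) for the remaining factor. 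This buys nothing beyond the paper's argument and costs you a bookkeeping subtlety you flag yourself: the paper records that $\eta^{w_P}$ \emph{with} the $\iota$ already \emph{is} $\psi^{e,w_P}$, so peeling $\iota$ off and then re-matching with BZ's notation risks a mismatch. Since you handle $\iota$ correctly anyway, the argument still goes through, but the cleanest route is the paper's: apply \cite[Theorem 4.7]{BZ} once, to $\eta^{w_P}=\psi^{e,w_P}$ itself.
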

\begin{proof}
This is a special case of \cite[Theorem 4.7]{BZ} by observing that $\eta^{w_P}$ is equal to $\psi^{e,w_P}$ from \textit{loc. cit.}, and that $(B^-_{w_P})_{>0}$ and $(U^{w_P})_{>0}$ are equal to $L^{w_P,e}_{>0}$ and $L^{e,w_P}_{>0}$ from \textit{loc. cit.} respectively. 
\end{proof}
\begin{definition}\label{positivepartdef}
Define the \textit{totally positive part} $\Xp$ of $\X$ to be the image of $\ZLp\times(B^-_{w_P})_{>0}$ under the isomorphism from Lemma \ref{geometriccrystaltrivial}.
\end{definition}
\begin{lemma}\label{toruschartforgeometriccrystalpositive} 
For every reduced decomposition $\iii$ of $w_P$, we have 
\[\XXp_{\iii}(\ZLp\times\mathbb{R}_{>0}^{\ell})=\Xp,\]
where $\XXp_{\iii}$ is defined in Definition \ref{toruschartforgeometriccrystaldef}.
\end{lemma}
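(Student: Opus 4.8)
The plan is to reduce the statement to the corresponding positivity statements about the factors $\ZL_{>0}$ and $(B^-_{w_P})_{>0}$, which have already been established. Recall from Definition~\ref{toruschartforgeometriccrystaldef} that $\XXp_{\iii}(t,\mathbf{a}) = t\cdot(\eta^{w_P}\circ\ttp_{\iii})(\mathbf{a})$, and from Definition~\ref{positivepartdef} that $\Xp$ is the image of $\ZLp\times(B^-_{w_P})_{>0}$ under the multiplication isomorphism $(t,x)\mapsto tx$ of Lemma~\ref{geometriccrystaltrivial}. So I would first observe that $\XXp_{\iii}$ factors as the composition of $\id_{\ZL}\times(\eta^{w_P}\circ\ttp_{\iii})$ with the multiplication map $\ZL\times B^-_{w_P}\rightarrow\X$; this is essentially the definition, but it is worth writing out explicitly because it makes the proof a one-line diagram chase.

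Next I would apply the already-proven positivity results to each factor. By Lemma~\ref{toruschartforBruhatpositive}, $\ttp_{\iii}$ maps $\mathbb{R}_{>0}^{\ell}$ \emph{onto} $(U^{w_P})_{>0}$; by Lemma~\ref{twistmappositive}, $\eta^{w_P}$ maps $(U^{w_P})_{>0}$ \emph{onto} $(B^-_{w_P})_{>0}$. Hence $\eta^{w_P}\circ\ttp_{\iii}$ maps $\mathbb{R}_{>0}^{\ell}$ onto $(B^-_{w_P})_{>0}$. Therefore $\id_{\ZL}\times(\eta^{w_P}\circ\ttp_{\iii})$ maps $\ZLp\times\mathbb{R}_{>0}^{\ell}$ onto $\ZLp\times(B^-_{w_P})_{>0}$. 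Composing with the multiplication isomorphism of Lemma~\ref{geometriccrystaltrivial}, whose image on $\ZLp\times(B^-_{w_P})_{>0}$ is by definition $\Xp$, gives exactly $\XXp_{\iii}(\ZLp\times\mathbb{R}_{>0}^{\ell})=\Xp$.

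I do not anticipate any serious obstacle: the statement is a formal consequence of the three cited lemmas plus the factorization of $\XXp_{\iii}$, since all the hard geometric input — that the toric charts on $B^-_{w_P}$ and $U^{w_P}$ and the twist map $\eta^{w_P}$ restrict to surjections on totally positive parts — has been offloaded to \cite{BZ}. The only point requiring a small amount of care is the bookkeeping of where $\ZLp$ sits: one should note that $\XXp_{\iii}$ is defined with $t$ ranging over all of $\ZL$ via $t\cdot x$, and that this $\ZL$-action is precisely left multiplication (Definition~\ref{toruschartforgeometriccrystaldef} writes $t\cdot(\cdots)$, which under Lemma~\ref{geometriccrystaltrivial} is $t$ times the $B^-_{w_P}$-element), so restricting the first coordinate to $\ZLp$ matches the first factor in Definition~\ref{positivepartdef}. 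With that identification in hand the surjectivity is immediate from surjectivity of each factor, and no injectivity or smoothness considerations enter.
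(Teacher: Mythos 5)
Your proof is correct and takes exactly the same route as the paper's, which simply cites Lemma~\ref{toruschartforBruhatpositive} and Lemma~\ref{twistmappositive}; you have just written out the one-line diagram chase they leave implicit.
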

\begin{proof}
This follows from Lemma \ref{toruschartforBruhatpositive} and Lemma \ref{twistmappositive}.
\end{proof}

\begin{lemma}\label{gammapreservepositivepart}
We have $\gammam(\Xp)\subseteq T_{>0}$.
\end{lemma}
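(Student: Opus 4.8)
The plan is to reduce the statement to a positivity property of the coordinate charts $\XXp_{\iii}$ and then read off the conclusion from the explicit formula for $\gammam\circ\XXp_{\iii}$ in Lemma~\ref{gammaLusztig}. First I would fix a reduced decomposition $\iii=(i_1,\ldots,i_{\ell})$ of $w_P$ and invoke Lemma~\ref{toruschartforgeometriccrystalpositive}, which says that $\XXp_{\iii}$ maps $\ZLp\times\mathbb{R}_{>0}^{\ell}$ onto $\Xp$. Hence every point of $\Xp$ is of the form $\XXp_{\iii}(t,a_1,\ldots,a_{\ell})$ with $t\in\ZLp$ and each $a_k\in\mathbb{R}_{>0}$.

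Next I would apply Lemma~\ref{gammaLusztig}, which gives
\[ (\gammam\circ\XXp_{\iii})(t,a_1,\ldots,a_{\ell}) = t\cdot\prod_{k=1}^{\ell}\beta^{\vee}_k(a_k), \]
where $\beta^{\vee}_k=-s_{i_1}\cdots s_{i_{k-1}}(\alpha^{\vee}_{i_k})$. By Lemma~\ref{betadescription} the collection $\{\beta^{\vee}_1,\ldots,\beta^{\vee}_{\ell}\}$ is exactly $\{\alpha^{\vee}\}_{\alpha\in-(R^+\setminus R^+_P)}$, so each $\beta^{\vee}_k$ is (minus) a positive coroot, hence an honest coroot of $(G,T)$, i.e. an element of $\co(T)$. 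Now the key elementary fact: for any coroot $\gamma^{\vee}$ of $(G,T)$ and any $a\in\mathbb{R}_{>0}$, the one-parameter subgroup element $\gamma^{\vee}(a)$ lies in $T_{>0}$. For simple coroots $\alpha_i^{\vee}$ this is the definition of $T_{>0}$; for a general positive coroot $\gamma^{\vee}=w(\alpha_i^{\vee})$ one uses that $T_{>0}$, being characterized (Remark~\ref{nonnegativepartofsubgroupofGrmk}) by $\{t\in T\mid\alpha_j(t)\in\mathbb{R}_{>0}\ \forall j\}$, is exactly the set of $t$ with $\beta(t)\in\mathbb{R}_{>0}$ for all roots $\beta$ (since every root is an integer combination of simple roots with coefficients all of one sign); and $\beta(\gamma^{\vee}(a))=a^{\langle\beta,\gamma^{\vee}\rangle}>0$ for $a>0$ regardless of the integer $\langle\beta,\gamma^{\vee}\rangle$. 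Therefore each factor $\beta^{\vee}_k(a_k)\in T_{>0}$, and since $T_{>0}$ is a submonoid containing $\ZLp\subseteq T_{>0}$, the product $t\cdot\prod_k\beta^{\vee}_k(a_k)$ lies in $T_{>0}$. This gives $\gammam(\Xp)\subseteq T_{>0}$.

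I do not expect a serious obstacle here; the only point requiring a little care is the elementary lemma that $\gamma^{\vee}(\mathbb{R}_{>0})\subseteq T_{>0}$ for an arbitrary coroot $\gamma^{\vee}$, which is not literally the generating definition of $T_{>0}$ (that uses only \emph{simple} coroots) but follows immediately from the sign-coherent characterization of $T_{>0}$ in the adjoint-type torus via Remark~\ref{nonnegativepartofsubgroupofGrmk}. Alternatively, one can bypass even this by noting that $(B^-_{w_P})_{>0}\subseteq G_{\geqslant 0}$ and that $\gammam$ on the chart $B^-_{w_P}$ extracts the torus part of the Gauss decomposition, which for elements of $G_{\geqslant 0}$ is known (Lusztig) to lie in $T_{>0}$; but the computational route through Lemma~\ref{gammaLusztig} and Lemma~\ref{betadescription} is cleaner and self-contained given what is already available in the excerpt.
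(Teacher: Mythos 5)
Your proof is correct and follows exactly the route the paper indicates (the paper's proof of this lemma is simply the sentence ``This follows from Lemma \ref{gammaLusztig} and Lemma \ref{toruschartforgeometriccrystalpositive}. Alternatively, this follows from \cite[Lemma 2.3(b)]{Lusztig}.''), with the elementary fact about $\gamma^{\vee}(\mathbb{R}_{>0})\subseteq T_{>0}$ spelled out explicitly; the only superfluous step is passing through all roots $\beta$ rather than just checking $\alpha_i(\gamma^{\vee}(a))=a^{\langle\alpha_i,\gamma^{\vee}\rangle}>0$ for the simple roots appearing in Remark \ref{nonnegativepartofsubgroupofGrmk}.
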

\begin{proof}
This follows from Lemma \ref{gammaLusztig}, Remark \ref{nonnegativepartofsubgroupofGrmk} and Lemma \ref{toruschartforgeometriccrystalpositive}. Alternatively, this follows from \cite[Lemma 2.3(b)]{Lusztig_positivity}.
\end{proof}
\begin{definition} \label{positivepartorientation}
(Orientation) Define an orientation on the fibers of $\Xp$ over $\ZLp$ to be the one induced by the standard orientation on $\mathbb{R}_{>0}^{\ell}$ via $\XXp_{\iii_0}$ (see Lemma \ref{toruschartforgeometriccrystalpositive}), where $\iii_0$ is the reduced decomposition of $w_P$ fixed in Definition \ref{volumeformrescale}. In other words, the fiberwise volume form $\vol$ (after rescaling) is an orientation form.
\end{definition}
\subsection{Rational Weyl group action}\label{Subsection-Weylgroupaction} The main references for this subsection are \cite{BKI, BK}.
\begin{definition}(\cite[before Remark 2.22]{BK})\label{geometriccrystaladditionaldatadef}
Let $i\in I$ be given. Define regular functions $\varphi_i$, $\varepsilon_i\in \mathcal{O}(\X)$ and a rational map $e_i:\mathbb{G}_m\times\X\dashrightarrow \X$ by
\[ \varphi_i(x) := \liecharacter_i(u_0) ,\quad \varepsilon_i(x) := \liecharacter_i(u_0) \alpha_i(t_0)= \varphi_i(x) (\alpha_i\circ\gammam)(x) \]
and
\[ e_i(c,x):= e_i^c(x) := x_i\left(\frac{c-1}{\varphi_i(x)}\right)\cdot x \cdot x_i\left(\frac{c^{-1}-1}{\varepsilon_i(x)}\right),\]
for $c\in \mathbb{G}_m$ and $x=u_0t_0\in\X$ with $t_0\in T$ and $u_0\in U^-$.
\end{definition}
\begin{lemma}\label{gmactionwelldefined}
For every $i\in I$, the rational map $e_i$ from Definition \ref{geometriccrystaladditionaldatadef} defines a regular $\mathbb{G}_m$-action on $\X\setminus\{\varphi_i=0\}= \X\setminus\{\varepsilon_i=0\}$.
\end{lemma}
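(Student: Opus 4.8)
The plan is as follows. Since $\varepsilon_i=\varphi_i\cdot(\alpha_i\circ\gammam)$ by definition and $\alpha_i\circ\gammam$ is a nowhere-vanishing function on $\X$ (it factors through the torus $T$), the two zero loci coincide, $\{\varphi_i=0\}=\{\varepsilon_i=0\}=:Z_i$, and on $\mathbb{G}_m\times(\X\setminus Z_i)$ both scalars appearing in the formula for $e_i$ are regular; hence $e_i$ is a morphism $\mathbb{G}_m\times(\X\setminus Z_i)\to G$. It then remains to check (i) that its image lies in $\X\setminus Z_i$, and (ii) that it satisfies the $\mathbb{G}_m$-action axioms $e_i^1=\id$ (immediate from $x_i(0)=e$) and $e_i^{c_1}\circ e_i^{c_2}=e_i^{c_1c_2}$.

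For (i), fix $x\in\X\setminus Z_i$ and write $x=u_0t_0=u_1t\ol{w_P}u_2$ as in Definition \ref{geometriccrystaldef}, with $p:=\varphi_i(x)=\liecharacter_i(u_0)\in\mathbb{G}_m$. Membership in $U\ZL\ol{w_P}U$ is immediate, since $e_i^c(x)=\big(x_i(s)u_1\big)\,t\,\ol{w_P}\,\big(u_2x_i(s')\big)$ with $x_i(s)u_1,\,u_2x_i(s')\in U$; this already shows $\pim(e_i^c(x))=t=\pim(x)$. The substantive point is $e_i^c(x)\in B^-$. Here I would first absorb $t_0$ to the right: using $\gammam(x)=t_0$ and the definition of $\varepsilon_i$ one obtains $e_i^c(x)=x_i(a)\,u_0\,x_i(b)\,t_0$ with $a=\tfrac{c-1}{p}$, $b=\tfrac{c^{-1}-1}{p}$, and one checks that $b=-a/c$ where $c:=1+ap$. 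Next I would decompose $u_0=y_i(p)v$ along the semidirect product $U^-=y_i(\mathbb{G}_a)\ltimes U'$, where $U'=U^-\cap\ol{s_i}\,U^-\,\ol{s_i}^{-1}$ is the normal complement, i.e. the subgroup of $U^-$ generated by all root subgroups other than $y_i(\mathbb{G}_a)$; on $U'$ the character $\liecharacter_i$ vanishes (so $p$ is indeed the $y_i$-coordinate of $u_0$) and $U'$ is normalized by $x_i(\mathbb{G}_a)$. Applying the standard rank-one relation $x_i(a)y_i(p)=y_i(p/c)\,\alpha_i^\vee(c)\,x_i(a/c)$ and then using $a/c+b=0$ together with this normalization, the $x_i$-factors flanking $v$ telescope, giving
\[ e_i^c(x)=y_i(p/c)\,\alpha_i^\vee(c)\,\big(x_i(a/c)\,v\,x_i(b)\big)\,t_0=y_i(p/c)\,\alpha_i^\vee(c)\,\big(x_i(-b)\,v\,x_i(b)\big)\,t_0 \]
where the last parenthesized factor lies in $U'\subseteq U^-$; hence $e_i^c(x)\in U^-T=B^-$, so $e_i^c(x)\in B^-\cap U\ZL\ol{w_P}U=\X$. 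Reading off the $U^-$- and $T$-components of this expression yields $\gammam(e_i^c(x))=\alpha_i^\vee(c)\,\gammam(x)$ and $\varphi_i(e_i^c(x))=c^{-1}\varphi_i(x)\ne0$, and therefore also $\varepsilon_i(e_i^c(x))=c\,\varepsilon_i(x)$ (using $\alpha_i(\alpha_i^\vee(c))=c^2$); in particular $e_i^c(x)\in\X\setminus Z_i$.

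It remains to prove $e_i^{c_1}\circ e_i^{c_2}=e_i^{c_1c_2}$. Substituting the transformation rules $\varphi_i(e_i^{c_2}(x))=c_2^{-1}\varphi_i(x)$ and $\varepsilon_i(e_i^{c_2}(x))=c_2\,\varepsilon_i(x)$ into the defining formula for $e_i^{c_1}$, the inner and outer $x_i$-factors on each side merge via $x_i(\cdot)x_i(\cdot)=x_i(\cdot+\cdot)$ and the arguments telescope to $\tfrac{c_1c_2-1}{\varphi_i(x)}$ and $\tfrac{(c_1c_2)^{-1}-1}{\varepsilon_i(x)}$, which is precisely $e_i^{c_1c_2}(x)$; this is a short direct computation. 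I expect the main obstacle to be the rank-one step in (i) — concretely the bookkeeping that $x_i(-b)\,v\,x_i(b)$ stays inside $U'\subseteq U^-$ (not merely inside $U$), since it is exactly this that returns $e_i^c(x)$ to $B^-$. This computation is essentially the one carried out by Berenstein and Kazhdan \cite{BK}, and an alternative would be to deduce the lemma directly from \emph{loc. cit.}
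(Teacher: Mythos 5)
Your proof is correct and follows essentially the same route as the paper's. Both arguments rest on the decomposition of $u_0$ along $U^- = y_i(\mathbb{G}_a)\cdot\ker\liecharacter_i$ (the paper writes it as $u'y_i(\varphi_i(x))$ with $y_i$ on the right; you put it on the left), a rank-one computation in the $i$-th $SL_2$, the observation that $\ker\liecharacter_i$ is normalized by $x_i(\mathbb{G}_a)$ — which the paper states tersely as "straightforward" and you correctly flag as the substantive point — and then the transformation rules $\varphi_i\circ e_i^c = c^{-1}\varphi_i$, $\gammam\circ e_i^c = \alpha_i^\vee(c)\cdot\gammam$, $\varepsilon_i\circ e_i^c = c\cdot\varepsilon_i$, which drive the identical verification of the action axiom $e_i^{c_1}\circ e_i^{c_2} = e_i^{c_1 c_2}$.
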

\begin{proof} 
This is straightforward. For the reader's convenience, we provide the details in Appendix \ref{Subsection-proofs-from-bmodel}.
\end{proof}
\begin{remark}\label{gmactionwelldefinedremark0}
The regular functions $\varphi_i$ and $\varepsilon_i$ are not identically zero, so $\X\setminus\{\varphi_i=0\}= \X\setminus\{\varepsilon_i=0\}$ is a non-empty open subset. See Lemma \ref{Wpreservepositivepart}.
\end{remark}
\begin{remark}\label{gmactionwelldefinedremark}
The above $\mathbb{G}_m$-action should not be confused with the one from Definition \ref{Rietschmirrorgmactiondef}.
\end{remark}
\begin{definition}\label{simplereflectionongeometriccrystal}
Let $i\in I$ be given. Define a rational map $s_i:\X\dashrightarrow \X$ by
\[ s_i(x) := e_i^{\frac{1}{(\alpha_i\circ\gammam)(x)}}(x),\qquad x\in \X.\]
\end{definition}
\begin{lemma}\label{Weylgroupongeometriccrystal}
The rational maps $s_i$ ($i\in I$) defined in Definition \ref{simplereflectionongeometriccrystal} generate a rational $W$-action on $\X$.
\end{lemma}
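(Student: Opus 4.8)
The plan is to verify that the rational maps $s_i$ ($i\in I$) satisfy the defining relations of $W$, namely $s_i^2=\id$ and the braid relations, as rational maps on $\X$. First I would record the key algebraic fact about the individual $\mathbb{G}_m$-actions $e_i$: by Lemma \ref{gmactionwelldefined} each $e_i$ is a genuine $\mathbb{G}_m$-action on the dense open set $\X\setminus\{\varphi_i=0\}$, and one checks from Definition \ref{geometriccrystaladditionaldatadef} the compatibilities $(\alpha_i\circ\gammam)(e_i^c(x)) = c^2(\alpha_i\circ\gammam)(x)$ and, for $j\neq i$, $(\alpha_j\circ\gammam)(e_i^c(x)) = c^{-\langle \alpha_i,\alpha_j^\vee\rangle}(\alpha_j\circ\gammam)(x)$ (or the appropriate Cartan-integer exponent), together with the transformation rules for $\varphi_j$ and $\varepsilon_j$ under $e_i$. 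These are exactly the axioms of a \emph{geometric crystal} in the sense of Berenstein--Kazhdan, so the cleanest route is to invoke their theorem that the operators $s_i(x):= e_i^{(\alpha_i\circ\gammam)(x)^{-1}}(x)$ associated to a geometric crystal satisfy the Weyl group relations; I would cite \cite{BKI, BK} for this. Thus the proof reduces to checking that $(\X,\fm,\pim,\gammam,\{\varphi_i\},\{\varepsilon_i\},\{e_i\})$ is a geometric crystal, which in turn reduces to the identities just listed.

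Concretely, I would proceed as follows. Step one: show $s_i^2=\id$ as a rational map. Writing $c=(\alpha_i\circ\gammam)(x)^{-1}$, one has $s_i(x)=e_i^c(x)$, and using $(\alpha_i\circ\gammam)(e_i^c(x))=c^2(\alpha_i\circ\gammam)(x)=c$, the second application of $s_i$ is $e_i^{c^{-1}}(e_i^c(x)) = e_i^{c^{-1}c}(x)=x$ by the $\mathbb{G}_m$-action property from Lemma \ref{gmactionwelldefined}. Step two: the commuting case $\langle\alpha_i,\alpha_j^\vee\rangle=0$. Here $e_i$ and $e_j$ commute (a direct computation with the $x_k$'s, since $x_i(a)$ and $x_j(b)$ commute when $i,j$ are not adjacent) and $e_i$ fixes $(\alpha_j\circ\gammam)$ and vice versa, so $s_is_j=s_js_i$ follows formally. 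Step three: the rank-two braid relations for adjacent $i,j$ (orders $3,4,6$ according to type). This is the substantive computation; I would either do it by reducing to the rank-two standard geometric crystal on the corresponding semisimple group via the embedding $\langle x_i,x_j,y_i,y_j\rangle \hookrightarrow G$ and the fact that $e_i,e_j,\varphi_i,\varepsilon_i,\gammam$ only see this rank-two data, or simply cite Berenstein--Kazhdan where the universal geometric crystal on $G/B^-$ (or the unipotent crystal structure) is shown to carry a $W$-action. Since the relevant identities for $\varphi_i,\varepsilon_i,e_i$ here are the standard unipotent crystal formulas transported to $\X=B^-\cap U\ZL\ol{w_P}U$, the rank-two braid relations are inherited from the known rank-two case.

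The main obstacle is Step three: verifying the braid relations directly is a genuinely intricate rational-function identity that depends on the Cartan type of the pair $(i,j)$, and the cleanest treatment really requires either (a) the abstract geometric/unipotent crystal machinery of Berenstein--Kazhdan, which packages exactly these relations, or (b) a careful reduction lemma showing that $e_i, e_j$ restricted to $\X$ agree (after the appropriate identification of torus factors via $\pim$ and $\gammam$) with the rank-two standard crystal operators, so that the relations transfer. I would structure the writeup around option (a): state that $(\X,\gammam,\{\varphi_i,\varepsilon_i,e_i\}_{i\in I})$ is a geometric crystal — citing Definition \ref{geometriccrystaldef} and \cite{BK, BKI} for why the Rietsch mirror carries this structure — and then invoke \cite[the relevant theorem]{BK} that the $s_i$ of any geometric crystal generate a rational $W$-action. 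The details of Steps one and two can be included as a short explicit check to make the argument self-contained, flagging that Step three is exactly the content of the cited geometric-crystal theorem. One should also note at the outset that all identities are identities of \emph{rational} maps, valid on the dense open locus where all the $\varphi_i$ are nonvanishing (cf. Remark \ref{gmactionwelldefinedremark0} and Lemma \ref{Wpreservepositivepart}), so no issue arises from the indeterminacy loci.
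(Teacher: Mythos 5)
Your proposal is correct in its conclusion and, like the paper, ultimately bottoms out on the Berenstein--Kazhdan geometric crystal machinery; but the route through the braid relations differs from the paper's in a way worth noting. You propose to check $s_i^2=\id$ and the commuting case by hand (your computations for these are correct, using $\gammam\circ e_i^c=\alpha_i^{\vee}(c)\cdot\gammam$) and then cite BK for the rank-two braid relations. The paper instead cites \cite[Proposition 2.25(a)]{BK}, which is a statement about the $T$-parametrized family $e_{i_1}^{\beta_1(t)}\circ\cdots\circ e_{i_m}^{\beta_m(t)}$ rather than about the $s_i$ themselves, and then shows by induction on $m$ (using $\gammam\circ e_i^c=\alpha_i^{\vee}(c)\cdot\gammam$) that this family specializes to $s_{i_1}\circ\cdots\circ s_{i_m}$ when $t=\gammam(x)^{-1}$. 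This buys two things: it avoids any type-by-type case analysis of the rank-two braid relations, and it sidesteps the question of whether BK state the $W$-action for the $s_i$ in precisely the form needed here (the paper's Remark after the lemma indicates that the technically hard result in BKI, their Theorem 3.8, is about the parametrized family, not the $s_i$ directly). Your approach is not wrong, but if BK's theorem is stated for the parametrized operators rather than for the $s_i$, you would need exactly the paper's weight-tracking specialization step to close the gap, and your reduction-to-rank-two suggestion would need to track $\alpha_i\circ\gammam$ through $e_j^c$ via the Cartan integer $\langle\alpha_i,\alpha_j^{\vee}\rangle$ carefully (you have the exponent's sign uncertain, and note $\langle\alpha_i,\alpha_j^{\vee}\rangle\neq\langle\alpha_j,\alpha_i^{\vee}\rangle$ in non-simply-laced types).
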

\begin{proof}
This is \cite[Proposition 2.3]{BKI}. Let us sketch the proof. We have to show that for every sequence $(i_1,\ldots,i_m)$ of elements of $I$,
\[ s_{i_1}\cdots s_{i_m} = e\in W~\Longrightarrow ~s_{i_1}\circ\cdots\circ s_{i_m}=\id_{\X}.\]
By \cite[Theorem 3.8]{BKI}, we have
\begin{equation}\label{Weylgroupongeometriccrystaleq1}
s_{i_1}\cdots s_{i_m} = e\in W~\Longrightarrow ~e_{i_1}^{\beta_1(t)}\circ\cdots\circ e_{i_m}^{\beta_m(t)}=\id_{\X}\quad ~\text{ for all }t\in T,
\end{equation}
where $\beta_k:=s_{i_m}\cdots s_{i_{k+1}}(\alpha_{i_k})$ (see \cite[Lemma 2.1]{BKI} for the meaning of this theorem). The result will follow if we can show
\[ (e_{i_1}^{\beta_1(t)}\circ\cdots\circ e_{i_m}^{\beta_m(t)})(x)=(s_{i_1}\circ\cdots\circ s_{i_m})(x) \]
whenever $t=\gammam(x)^{-1}$, without assuming $ s_{i_1}\cdots s_{i_m} = e$. This can be proved by induction on $m$, using $\gammam\circ s_i=s_i\circ\gammam$ ($i\in I$), which is proved in Lemma \ref{Wpreservegamma} below.
\end{proof}
In what follows, we verify some properties of the $W$-action from Lemma \ref{Weylgroupongeometriccrystal}.
\begin{lemma}\label{Wpreservef}
$\fm$ is $W$-invariant.
\end{lemma}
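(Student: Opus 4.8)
The plan is to reduce $W$-invariance of $\fm$ to $s_i$-invariance for each simple reflection $i\in I$, and then to establish the latter by a direct computation using the definitions of $e_i^c$, $\varphi_i$, $\varepsilon_i$ and the decoration $\fm$. Since the rational maps $s_i$ generate the $W$-action (Lemma \ref{Weylgroupongeometriccrystal}), and $W$ is generated by the $s_i$, it suffices to check $\fm\circ s_i = \fm$ on the dense open locus where $s_i$ is defined, i.e. on $\X\setminus\{\varphi_i=0\}$. Write $x = u_1 t\ol{w_P} u_2$ with $t\in\ZL$, $u_1,u_2\in U$, so that $\fm(x) = \character(u_1)+\character(u_2)$; I would track how left and right multiplication by the factors $x_i\!\big(\tfrac{c-1}{\varphi_i(x)}\big)$ and $x_i\!\big(\tfrac{c^{-1}-1}{\varepsilon_i(x)}\big)$ appearing in $e_i^c$ change $u_1$ and $u_2$, specialising $c = 1/(\alpha_i\circ\gammam)(x)$ at the end.

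The key steps, in order: (1) Record the behaviour of $\character$ under the relevant one-parameter subgroup. Since $\character = \sum_j \character_j$ and $\character_j\circ x_k = \delta_{jk}\id$, for $u\in U$ and $a\in\mathbb{G}_a$ we have $\character\big(x_i(a)u\big) = a + \character(u)$ provided $x_i(a)u$ is again written in the appropriate form — more precisely I would use the identity $\character(u x_i(a)) = \character(u) + a$ and $\character(x_i(a) u) = \character(u) + a$, which both follow from $\character$ being an additive character that is trivial on $[U,U]$ and restricts to the coordinate on the $i$-th simple root subgroup. (2) Unwind $e_i^c(x) = x_i\!\big(\tfrac{c-1}{\varphi_i(x)}\big)\cdot x\cdot x_i\!\big(\tfrac{c^{-1}-1}{\varepsilon_i(x)}\big)$. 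The right factor $x_i(\tfrac{c^{-1}-1}{\varepsilon_i(x)})\in U$ simply multiplies $u_2$ on the right, contributing $\tfrac{c^{-1}-1}{\varepsilon_i(x)}$ to $\character(u_2)$. The left factor $x_i(\tfrac{c-1}{\varphi_i(x)})$ multiplies $x$ on the left; here I would use that $\fm$ (equivalently $\character$ of the ``$U$-from-the-left'' part) is unaffected by left multiplication by $U$ — this is exactly the well-definedness of $\fm$ stated in Remark \ref{geometriccrystalremark2}, namely $\fm$ depends only on the image of $x$ in $U\backslash\X$, or rather it is constructed so that the left $U$-factor contributes via the $u_1$-slot; I will need to be careful that left multiplication by $x_i(a)$ changes $u_1$ to $x_i(a)u_1$ and hence changes $\character(u_1)$ by $a$. (3) Combine: $\fm(e_i^c(x)) = \fm(x) + \tfrac{c-1}{\varphi_i(x)} + \tfrac{c^{-1}-1}{\varepsilon_i(x)}$. (4) Substitute $c = 1/(\alpha_i\circ\gammam)(x)$ and use $\varepsilon_i(x) = \varphi_i(x)\,(\alpha_i\circ\gammam)(x)$ from Definition \ref{geometriccrystaladditionaldatadef}: then $\tfrac{c-1}{\varphi_i(x)} + \tfrac{c^{-1}-1}{\varepsilon_i(x)} = \tfrac{1}{\varphi_i(x)}\big(\tfrac{1}{\alpha_i(\gammam(x))} - 1\big) + \tfrac{1}{\varphi_i(x)\alpha_i(\gammam(x))}\big(\alpha_i(\gammam(x)) - 1\big)$, and a short algebraic simplification shows this vanishes. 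Hence $\fm\circ s_i = \fm$, and $W$-invariance follows.

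The main obstacle I anticipate is Step (2): keeping honest track of how the ``$U$-part from the left'' and the ``$U$-part from the right'' in the decomposition $x = u_1 t\ol{w_P} u_2$ transform under the left and right multiplications defining $e_i^c$. In particular, left multiplication by $x_i(a)$ is $u_1 t\ol{w_P}u_2 \mapsto (x_i(a)u_1) t\ol{w_P}u_2$, which cleanly adds $a$ to $\character(u_1)$; but one must confirm that $x_i(a)u_1 \in U$ still lies in the open cell so that $e_i^c(x)$ is in $\X$ and that $\character(x_i(a)u_1) = a + \character(u_1)$ without hidden cross-terms — this uses that $\character$ kills $[U,U]$ and the commutator $[x_i(a),u_1]$ lies in $[U,U]$. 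Analogously on the right. A clean way to organise this is to note $\character\colon U\to\Ga$ descends to $U/[U,U]$ and is a group homomorphism there, so $\character(vw) = \character(v)+\character(w)$ for all $v,w\in U$; with this observation Steps (1)–(3) become one-line computations. Alternatively, one can cite \cite[Proposition 2.25]{BK} or the corresponding compatibility in \cite{LamTemplier}, since the decoration $\fm$ is designed to be invariant under the geometric-crystal $W$-action; but I would prefer to give the short self-contained computation above.
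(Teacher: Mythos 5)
Your proof is correct and follows the same route as the paper: it reduces to $s_i$-invariance, establishes the formula $\fm(e_i^c(x)) = \fm(x) + \tfrac{c-1}{\varphi_i(x)} + \tfrac{c^{-1}-1}{\varepsilon_i(x)}$, and substitutes $c=1/(\alpha_i\circ\gammam)(x)$ together with $\varepsilon_i(x)=c^{-1}\varphi_i(x)$ to get cancellation. The paper simply asserts the displayed formula ``by definition'' where you spell out the bookkeeping with $\character$ being a homomorphism $U\to\Ga$; the cleanest way to say it (and what makes your Step (2) a one-liner) is exactly the remark you make yourself: $\character = \sum_i\character_i$ is a group homomorphism to the abelian group $\Ga$, so $\character(x_i(a_1)u_1)=a_1+\character(u_1)$ and $\character(u_2x_i(a_2))=\character(u_2)+a_2$ with no cross-terms, and the factorization $e_i^c(x)=(x_i(a_1)u_1)\,t\,\ol{w_P}\,(u_2 x_i(a_2))$ hands you the new $(u_1',u_2')$ directly.
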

\begin{proof} 
This is straightforward. We provide the details for the reader's convenience. It suffices to verify $\fm\circ s_i=\fm$ for all $i\in I$. By definition, we have
\[ \fm(e_i^c(x)) = \fm(x) + \frac{c-1}{\varphi_i(x)} + \frac{c^{-1}-1}{\varepsilon_i(x)}.\]
We have to show $\frac{c-1}{\varphi_i(x)} + \frac{c^{-1}-1}{\varepsilon_i(x)}=0$ if $c=\frac{1}{(\alpha_i\circ\gammam)(x)}$. Indeed, in this case we have $\varepsilon_i(x)=c^{-1} \varphi_i(x)$, and hence
\[\frac{c-1}{\varphi_i(x)} + \frac{c^{-1}-1}{\varepsilon_i(x)}=\frac{c-1}{\varphi_i(x)} +\frac{1-c}{ \varphi_i(x)} =0.\]
\end{proof}
\begin{lemma}\label{Wpreservepi}
$\pim$ is $W$-invariant.
\end{lemma}
\begin{proof}
This is clear from the definition.
\end{proof}

\begin{lemma}\label{Wpreservegamma}
$\gammam$ is $W$-equivariant.
\end{lemma}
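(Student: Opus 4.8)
The plan is to reduce the equivariance of $\gammam$ to an equality checked one simple reflection at a time, mirroring the structure of the proofs of Lemmas \ref{Wpreservef} and \ref{Wpreservepi}. Concretely, since the $s_i$ generate the rational $W$-action (Lemma \ref{Weylgroupongeometriccrystal}), it suffices to prove that $\gammam\circ s_i = s_i\cdot\gammam$ for each $i\in I$, where on the right $s_i$ acts on $T$ in the usual way via its representative $\ol{s_i}\in N_G(T)$ (equivalently, by the reflection in the coroot $\alpha_i^\vee$ on the cocharacter lattice). This is the natural candidate statement, and once it is established for each generator, a routine induction on word length — using that the $W$-action on $T$ is a genuine (not merely rational) action — upgrades it to full $W$-equivariance.

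The key computational input is the identity $\gammam\circ e_i^c = \alpha_i^\vee(c)\cdot\gammam$ for all $i\in I$ and $c\in\mathbb{G}_m$, which is already invoked in the proof of Lemma \ref{Weylgroupongeometriccrystal} and proved inside the proof of Lemma \ref{gmactionwelldefined}, so I may assume it. Granting this, the verification of the generator case is immediate: for $x\in\X$ with $\gammam(x)=t_0$, Definition \ref{simplereflectionongeometriccrystal} gives $s_i(x)=e_i^{c}(x)$ with $c=\frac{1}{(\alpha_i\circ\gammam)(x)}=\alpha_i(t_0)^{-1}$, hence
\[
\gammam(s_i(x)) = \alpha_i^\vee\!\left(\alpha_i(t_0)^{-1}\right)\cdot t_0 = \alpha_i^\vee(\alpha_i(t_0))^{-1}\, t_0.
\]
It then remains to recognize the right-hand side as $s_i\cdot t_0$. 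This is exactly the standard formula for the Weyl group action of a simple reflection on the torus: $s_i(t) = t\cdot \alpha_i^\vee(\alpha_i(t))^{-1}$ (equivalently $\mu\mapsto \mu - \langle\alpha_i,\mu\rangle\alpha_i^\vee$ on cocharacters), which follows from $\ol{s_i}\,t\,\ol{s_i}^{-1} = t\,\alpha_i^\vee(\alpha_i(t))^{-1}$, a direct $SL_2$-computation with $\ol{s_i}=x_i(-1)y_i(1)x_i(-1)$.

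The only genuine subtlety — and the step I would be most careful about — is bookkeeping with conventions: the paper works with the Langlands dual group $G$ whose roots are the coroots of $G^\vee$, so one must be consistent about whether "$\alpha_i$" denotes the character of $T$ appearing in $\pim$'s target versus the coroot $\alpha_i^\vee$ appearing as a cocharacter, and about left versus right multiplication in $\ol{s_i}\,t\,\ol{s_i}^{-1}$. There is no hard analytic or geometric obstacle here; the content is entirely the $SL_2$-level identity for $\ol{s_i}$ conjugation plus the already-available formula $\gammam\circ e_i^c=\alpha_i^\vee(c)\cdot\gammam$. I would therefore present the proof as: (i) reduce to generators by Lemma \ref{Weylgroupongeometriccrystal}; (ii) apply the $e_i^c$-formula with $c=\alpha_i(\gammam(x))^{-1}$; (iii) identify the output with the standard $s_i$-action on $T$; (iv) conclude by induction, noting the $W$-action on $T$ is regular so there is no rational-domain issue to track.
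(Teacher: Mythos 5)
Your proposal is correct and follows the same route as the paper's proof, which simply states that it suffices to verify $\gammam\circ e_i^c=\alpha_i^\vee(c)\cdot\gammam$ (established in the proof of Lemma \ref{gmactionwelldefined}). You fill in the implicit details the paper leaves to the reader — plugging in $c=\alpha_i(\gammam(x))^{-1}$, recognizing $t\mapsto t\,\alpha_i^\vee(\alpha_i(t))^{-1}$ as the standard $s_i$-action on $T$, and noting that generators suffice — but the substance is identical.
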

\begin{proof}
It suffices to verify $\gammam\circ s_i=s_i\circ\gammam$ for all $i\in I$. This follows from the equality $\gammam\circ e_i^c=\alpha^{\vee}_i(c)\cdot\gammam$ ($i\in I$ and $c\in\mathbb{G}_m$), which is easy (see e.g., the proof of Lemma \ref{gmactionwelldefined}).
\end{proof}

\begin{lemma}\label{Wpreserveomega}
For every $w\in W$, we have $w^*\vol=(-1)^{\ell(w)}\vol$.
\end{lemma}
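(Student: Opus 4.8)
\textbf{Proof proposal for Lemma \ref{Wpreserveomega}.}

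The plan is to reduce to the case of a simple reflection $w=s_i$ and then to compute the effect of $s_i$ on $\vol$ in a convenient torus chart. First I would note that since the $s_i$ generate the $W$-action and $\ell$ is additive along reduced words (and the sign $(-1)^{\ell(\cdot)}$ is multiplicative and well-defined on $W$), it suffices to prove $s_i^*\vol = -\vol$ for each $i\in I$; the general statement then follows by writing any $w$ as a reduced product and composing, using that the intermediate maps are dominant rational maps so pullbacks of the (nowhere-vanishing, up to the pole divisor) volume form compose correctly on the open dense locus where everything is defined. Since $\vol$ is fiberwise over $\ZL$ with respect to $\pim$, and $\pim$ is $W$-invariant (Lemma \ref{Wpreservepi}), $s_i$ acts fiberwise, so I can work fiber by fiber, i.e. on the $\ell$-dimensional fibers.

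Next I would bring in the torus chart $\XXp_{\iii}$ of Definition \ref{toruschartforgeometriccrystaldef}, for which Lemma \ref{omegaLusztig} (together with the normalization in Definition \ref{volumeformrescale}) gives $(\XXp_{\iii})^*\vol = c\,\frac{da_1\wedge\cdots\wedge da_\ell}{a_1\cdots a_\ell}$, i.e. $\vol$ is, in these coordinates, the standard invariant form on $\mathbb{G}_m^{\ell}$ (a $\mathbb{Z}$-basis element of the top logarithmic forms). The key point is that this form is canonical: any automorphism of the torus $\mathbb{G}_m^\ell$ (or, more relevantly, of $B^-_{w_P}$ preserving $\pim$) pulls it back to $\pm$ itself, with the sign being the determinant of the induced map on the cocharacter lattice $\co(T)$-side, i.e. on $H^1$. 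So I would try to identify $s_i$, in a suitable pair of charts, with a monomial transformation of $\mathbb{G}_m^\ell$ and read off the sign of its Jacobian from a lattice automorphism. A clean way to do this: use the description of $s_i$ via the geometric crystal, and the compatibility (coming from Berenstein--Kazhdan) between the Weyl group action on $\X$ and the geometric/unipotent crystal structure, to see that $s_i$ corresponds, under $\ttp_{\iii}$ or $\ttm_{\iii}$, to the elementary move on reduced words / the $i$-th Kashiwara-type operator, whose tropicalization is a piecewise-linear volume-preserving map — but here, on the algebraic (not tropical) level, it is an honest birational monomial-type map whose effect on $\frac{da_1\wedge\cdots\wedge da_\ell}{a_1\cdots a_\ell}$ is multiplication by a sign.

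The cleanest argument, and the one I would actually push through, is to compute directly on a single standard chart using the formula for $e_i^c$ and $s_i$ from Definitions \ref{geometriccrystaladditionaldatadef} and \ref{simplereflectionongeometriccrystal}, exploiting Lemma \ref{Wpreservegamma} ($\gammam\circ s_i = \alpha_i^\vee(\alpha_i\circ\gammam)^{-1}\cdot\gammam = s_i\cdot\gammam$, the honest Weyl action on $T$). Parametrize a fiber of $\X$ over $\ZL$ by $B^-_{w_P}$ via Lemma \ref{geometriccrystaltrivial}; the map $x\mapsto x^{-1}w_0^PB$ identifies it with $\mathcal{R}_{w_0^P}^{w_0}\subset G/B$, and on this open Richardson-type variety $\vol$ is \emph{the} (up-to-scalar) anticanonical-pole form, which is intrinsic. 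I would then show $s_i$ induces on $\mathcal{R}_{w_0^P}^{w_0}$ a birational automorphism that, because it comes from left multiplication by an element of $N_G(T)$ representing $s_i$ conjugated by the crystal correction terms, preserves the boundary divisor $D$ but permutes its components by the single transposition coming from $s_i$ acting on the index set of the chambers/coordinates — equivalently, it reverses orientation once. Concretely: the coordinates $a_1,\dots,a_\ell$ on the chart $\XXp_{\iii}$ are the ``Lusztig parameters'', and $s_i$ acts by the standard reduced-word braid/exchange move which, when it is a single elementary exchange (the relevant case after choosing $\iii$ adapted to $i$), sends $(a_1,\dots,a_k,a_{k+1},\dots)$ to $(\dots, a_{k+1}', a_k',\dots)$ with $a_k' a_{k+1}' = $ monomial in $a_k,a_{k+1}$; one checks this map satisfies $\big(\text{move}\big)^*\big(\tfrac{da_1\cdots da_\ell}{a_1\cdots a_\ell}\big) = -\tfrac{da_1\cdots da_\ell}{a_1\cdots a_\ell}$ by a two-variable Jacobian computation, the sign being exactly the determinant $-1$ of the corresponding automorphism of $\mathbb{Z}^\ell$.

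I expect the main obstacle to be making the identification of $s_i$ with an explicit monomial transformation in the chart $\XXp_{\iii}$ precise enough to pin down the sign, rather than just $\pm1$: the rational map $s_i$ is given by the somewhat opaque formula $s_i(x)=x_i(\tfrac{1-c}{\varepsilon_i(x)})\cdot x\cdot x_i(\tfrac{c^{-1}-1}{\varepsilon_i(x)})$ with $c=(\alpha_i\circ\gammam)(x)$, and converting this into the $a$-coordinates requires either (a) carefully choosing a reduced word $\iii$ beginning or ending with $i$ and using the Berenstein--Zelevinsky chart formulas from \cite{BZ} (invoked via Lemma \ref{toruschartforBruhatlemma}) to localize the action to two adjacent coordinates, or (b) arguing more abstractly that $s_i$ acts on the top logarithmic de Rham cohomology $H^\ell_{\mathrm{dR}}$ (which is one-dimensional, generated by the class of $\vol$) through the determinant of the $W$-action on $H^1$ of the open torus, which is $(-1)^{\ell(w)}$ on the element $w$ — but route (b) needs the fiber to be a genuine torus or needs a compactification argument, so I would hedge by carrying out route (a) for a well-chosen $\iii$ and then transporting to arbitrary $\iii$ via the independence of $\vol$ (after rescaling) on the chart, which is Lemma \ref{omegaLusztig}. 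Given the $W$-invariance facts already proven (Lemmas \ref{Wpreservef}, \ref{Wpreservepi}, \ref{Wpreservegamma}) and the braid relations for the $s_i$ on $\X$ (Lemma \ref{Weylgroupongeometriccrystal}), consistency forces the sign to be multiplicative, so it is enough to nail it down for one simple reflection in one adapted chart.

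\hfill$\square$
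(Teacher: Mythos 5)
Your reduction to simple reflections and the observation that everything happens fiberwise (via Lemma \ref{Wpreservepi}) are correct and agree with the paper. The gap is in the central claim: that $s_i$, after choosing a reduced word $\iii$ ``adapted to $i$'', acts on the coordinates $(a_1,\dots,a_\ell)$ of a torus chart by a \emph{monomial} transformation (or an ``elementary exchange'' affecting two adjacent coordinates) whose Jacobian sign can be read off as the determinant of a lattice automorphism. This is false in general. The explicit formulas for $e_i^c$ in the Lusztig parametrization, which appear in the paper's proof of Lemma \ref{Wpreservepositivepart}, show that $e_i^c$ sends the coordinates $a'_k$ with $i_k=i$ to
\[
a''_k=\frac{a'_k\,(cA_{<k}+A_{\geqslant k})}{cA_{\leqslant k}+A_{>k}},\qquad A_{\star k}:=\sum_{s\in K_i,\; s\star k}a'_s,
\]
which involves genuine \emph{sums} over the set $K_i=\{k\,:\,i_k=i\}$; substituting $c=(\alpha_i\circ\gammam)(x)^{-1}$ does not make these monomial unless $|K_i|=1$, and $|K_i|=1$ cannot be arranged for every $i$ (already for $w_0$ in type $A_3$ every simple reflection appears $\geqslant 2$ times in any reduced word). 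Your hedged route (b) is also not available as stated: the fiber of $\pim$ is the quasi-affine variety $\UU\subset G/P$, not a torus, so there is no ``$H^1$ of the open torus'' on which $W$ acts by its reflection representation, and the one-dimensionality of $H^\ell_{\mathrm{dR}}$ together with the sign of the induced action is precisely what would need to be established.

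The paper's actual argument sidesteps the coordinate computation entirely. It first shows that the fiberwise form $\omega_t$ is a weight vector for the $\mathbb{G}_m$-action $c\mapsto e_{i,t}^c$ (Lemma \ref{omegaisweightvector}), not by computation but by a finiteness argument: the space of nowhere-vanishing top forms, viewed inside a filtered $\mathbb{G}_m$-module, meets each finite-dimensional filtration piece in only finitely many homothety classes, forcing the orbit of $\omega_t$ to lie on a line. Writing $k$ for the weight, the identity $\gammam\circ e_i^c=\alpha_i^\vee(c)\cdot\gammam$ and the definition $s_i(x)=e_i^{1/(\alpha_i\circ\gammam)(x)}(x)$ yield $s_{i,t}^*\omega_t=-(\alpha_i\circ\gammamt)^{-k}\,\omega_t$; the extraneous factor is then killed by a pole argument: both $\omega_t$ and $s_{i,t}^*\omega_t$ are dlog and have at worst simple poles on the anticanonical boundary (via the cited \cite[Lemma 2.10]{Lam}), so their ratio has no poles and hence $(\alpha_i\circ\gammamt)^{-k}$ is constant, and $k=0$ by the non-constancy of $\alpha_i\circ\gammamt$ (Lemma \ref{gammanonconst}, which itself uses simplicity of $G$ and $I_P\neq I$). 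In short, the paper proves the sign is $-1$ by showing a priori that $s_i^*\vol/\vol$ is a constant times a known non-constant function raised to a power, and then arguing that power must be zero; your proposal tries to compute the sign directly in coordinates, which does not go through because the map is not monomial.
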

\begin{proof}
For $P=B$, a proof, which seems specific to this case, was given in \cite[Theorem 5.1.4]{Chhaibi}. We will prove the general case in Appendix \ref{Subsection-proofs-from-bmodel}.
\end{proof}

\begin{lemma}\label{Wpreservepositivepart}
$\Xp$ lies in the domain of definition of the $W$-action and is preserved by it.
\end{lemma}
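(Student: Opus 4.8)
The plan is to reduce the statement to the generators $s_i$ ($i\in I$) of the rational $W$-action produced in Lemma \ref{Weylgroupongeometriccrystal}. I would prove, for each $i$, that (a) the rational map $s_i$ is regular at every point of $\Xp$, and (b) $s_i(\Xp)\subseteq\Xp$. Granting (a) and (b), an induction on word length shows that every $w=s_{i_1}\cdots s_{i_m}$ is regular on $\Xp$ and maps $\Xp$ into itself; and since $s_i^2=e$ in $W$ (so $s_i\circ s_i=\id_{\X}$ as rational maps, by Lemma \ref{Weylgroupongeometriccrystal}), each $s_i$ restricts to an involution of $\Xp$, whence $w(\Xp)=\Xp$ for all $w$.

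For (a), I would argue as follows. By Definition \ref{simplereflectionongeometriccrystal}, $s_i=e_i^{1/(\alpha_i\circ\gammam)}$, where $1/(\alpha_i\circ\gammam)$ is nowhere zero on $\X$ since $\gammam$ takes values in $T$; by Lemma \ref{gmactionwelldefined} it is therefore enough to check that $\varphi_i>0$ on $\Xp$ (then also $\varepsilon_i=\varphi_i\cdot(\alpha_i\circ\gammam)>0$ on $\Xp$, using Lemma \ref{gammapreservepositivepart} and Remark \ref{nonnegativepartofsubgroupofGrmk}). Writing a point of $\Xp$ as $t\cdot\ttm_{\iii}(a_1,\ldots,a_\ell)$ with $t\in\ZLp$ and all $a_k>0$ (Lemma \ref{geometriccrystaltrivial} and Lemma \ref{toruschartforBruhatpositive}), and commuting the torus factors of $\ttm_{\iii}$ to the right and past $t$ (conjugation by a totally positive torus element preserves the sign of each $y$-parameter), one obtains $x=u_0t_0$ with $u_0=y_{i_1}(b_1)\cdots y_{i_\ell}(b_\ell)$, all $b_k>0$, and $t_0\in T_{>0}$; since $\liecharacter_i\colon U^-\to\mathbb{G}_a$ is a homomorphism killing $y_j$ for $j\ne i$, this gives $\varphi_i(x)=\sum_{k:\,i_k=i}b_k>0$, the index set being nonempty because $w_P$ has full support (an easy check for $P\ne G$, using connectedness of the Dynkin diagram).

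For (b), since $1/(\alpha_i\circ\gammam)(x)>0$ for $x\in\Xp$, it suffices to prove $e_i^c(\Xp)\subseteq\Xp$ for every $c\in\mathbb{R}_{>0}$. By Lemma \ref{toruschartforgeometriccrystalpositive}, $\Xp$ is exactly the totally positive part of $\X$ with respect to the positive atlas $\{\XXp_{\iii}\}$; and by Berenstein--Kazhdan \cite{BK,BKI} the parabolic geometric crystal is a positive geometric crystal for this atlas, so in the coordinates of any $\XXp_{\iii}$ the operator $e_i^c$ is given by subtraction-free rational functions of the coordinates and of $c$. Feeding in positive coordinates and $c>0$ then yields positive coordinates, i.e.\ $e_i^c(\Xp)\subseteq\Xp$, which completes the proof.

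The main obstacle is (b), i.e.\ the positivity of $e_i^c$ in the charts $\XXp_{\iii}$. If this cannot be quoted directly, it has to be computed: fix a reduced word $\iii$ of $w_P$, rewrite $e_i^c(x)=x_i\!\big(\tfrac{c-1}{\varphi_i(x)}\big)\,x\,x_i\!\big(\tfrac{c^{-1}-1}{\varepsilon_i(x)}\big)$ in the coordinates of $\XXp_{\iii}$ using the rank-one $\mathrm{SL}_2$-relations attached to $i$ together with the braid moves relating different $\XXp_{\iii}$'s, and verify term by term that no subtractions occur (the model computation being $e_i^c(y_i(a)\alpha_i^{\vee}(s))=y_i(a/c)\,\alpha_i^{\vee}(cs)$, where the potentially negative entries cancel). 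The reduction step and the positivity of $\varphi_i,\varepsilon_i$ on $\Xp$ are routine by comparison.
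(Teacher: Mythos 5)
Your proposal follows the same overall plan as the paper: reduce to showing that each $e_i^c$ with $c\in\mathbb{R}_{>0}$ is regular on $\Xp$ and preserves it (using $\alpha_i\circ\gammam>0$ on $\Xp$), establish $\varphi_i,\varepsilon_i>0$ on $\Xp$ via the decomposition $x=y_{i_1}(b_1)\cdots y_{i_\ell}(b_\ell)t_0$ in the chart $\ttm_\iii$, and invoke the full support of $w_P$ (the paper's Lemma~\ref{Wpreservepositivepartlemma}). Where you diverge is in the positivity of $e_i^c$ itself. The paper does this by a direct, self-contained computation \emph{in the chart $\ttm_\iii$} (via Lemma~\ref{geometriccrystaltrivial}, not the twisted chart $\XXp_\iii$): it commutes $x_i\bigl(\tfrac{c-1}{\varphi_i(x)}\bigr)$ through the product $y_{i_1}(a'_1)\cdots y_{i_\ell}(a'_\ell)\cdot t'$, finds closed-form positive expressions $a''_k,b_k>0$ for the new parameters, and checks that the trailing $x_i$-factor cancels exactly. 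You instead propose to cite Berenstein--Kazhdan's positivity of the geometric crystal. That route is plausible, but the citation is not plug-and-play: the present paper uses the Lam--Templier reformulation of the parabolic geometric crystal (Remark~\ref{geometriccrystalremark}) together with the specific positive structure defined by $\ttm_\iii$ and the twist map (Definitions~\ref{toruschartforBruhatdef}, \ref{twistmapdef}, \ref{positivepartdef}), so one would need to match this positive atlas to Berenstein--Kazhdan's before invoking their theorem. Your fallback --- direct computation --- is indeed what the paper does, but it would be cleaner in $\ttm_\iii$ coordinates (where $e_i^c$ acts on $B^-$ by a visible $\mathrm{SL}_2$ conjugation) than in the $\XXp_\iii$ coordinates you mention, which run through the twist map $\eta^{w_P}$. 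Finally, note that ``$w_P$ has full support'' is not entirely trivial; the paper proves it (Lemma~\ref{Wpreservepositivepartlemma}) via the acute-angle property of Weyl chamber edges, which you gloss over.
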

\begin{proof}
For $P=B$, a proof, which seems specific to this case, was given in \cite[Proposition 4.4.30]{Chhaibi}. We will prove the general case in Appendix \ref{Subsection-proofs-from-bmodel}.
\end{proof}
\section{Mirror theorem}\label{Section-mirror-thm}

\subsection{Statement}\label{Subsection-mirror-thm-statement}
Recall the materials from Section \ref{Subsection-A-modelconnection} and Section \ref{Subsection-B-modelconnection}.
\begin{definition}\label{mirrormapdef}
Define the \textit{mirror map}
\[ \mirrormap: \ZL\xrightarrow{\sim} \mathbb{G}_m^{I\setminus I_P}\]
to be the isomorphism \eqref{Bmodelnotationeq1}, i.e., $\mirrormap(t):=(\alpha_i(t))_{i\in I\setminus I_P}$.
\end{definition}
\begin{theorem}\label{mirrorthm} (\cite[Theorem 1.2]{Chow3})
After making the identifications
\[ \SSS[\hslash]\simeq H^{\bullet}_{T^{\vee}\times\mathbb{G}_m}(\pt)\quad\text{ and }\quad \mathcal{O}(\ZL)\simeq \mathbb{C}[q_i^{\pm 1}|~i\in I\setminus I_P]\]
using the canonical isomorphism and $\mathcal{O}(\mirrormap^{-1})$ respectively, there exists a $\SSS[\hslash]\otimes \mathcal{O}(\ZL)$-linear map
\[ \Mir: \Bmodule \rightarrow \Amodule \]
such that
\begin{enumerate}
\item it is bijective;

\item it intertwines $\nabla^B_{\partial_{t_i}}$ (Definition \ref{Bconnectiondef}) and $\nabla^A_{\partial_{q_i}}$ (Definition \ref{Aconnectiondef}) for all $i\in I\setminus I_P$;

\item $\Mir([\vol])=1$; and

\item its semi-classical limit 
\[\Mir^{\hslash=0}:=\Mir\otimes_{\mathbb{C}[\hslash]}\mathbb{C} :\jacobi \rightarrow QH_{T^{\vee}}^{\bullet}(\GPd)[q_i^{-1}|~i\in I\setminus I_P]\]
is an isomorphism of $\sym^{\bullet}(\mathfrak{t})\otimes\mathcal{O}(\ZL)$-algebras (see Remark \ref{mirrorremark}).
\end{enumerate}
\hfill$\square$
\end{theorem}

\begin{remark} The Brieskorn lattice depends on the Rietsch mirror, and our Rietsch mirror differs conventionally and notationally from that used in \cite{Chow3}. See however Appendix D therein for an identification of these two versions.
\end{remark}
\subsection{First Chern class theorem}\label{Subsection-first-chern-class-thm}

\begin{theorem}\label{firstchernclassthm}
We have $\Mir([\fm\vol])=c_1^{T^{\vee}\times\mathbb{G}_m}(\GPd)$.
\end{theorem}
\begin{proof}
Recall the $\mathbb{G}_m$-action defined in Definition \ref{Rietschmirrorgmactiondef}. Let $V$ and $\widetilde{V}$ be the vector fields generating the actions on $\ZL$ and $\X$ respectively. By definition, $V=\sum_{i\in I\setminus I_P}(2\rho^{\vee}-2\rho^{\vee}_P)(\alpha_i)t_i\partial_{t_i}$. Since the anti-canonical line bundle of $\GPd$ is isomorphic to $\bigotimes_{i\in I\setminus I_P} L_{\omega^{\vee}_i}^{\otimes(2\rho^{\vee}-2\rho_P^{\vee})(\alpha_i)}$, we have $\hslash\nabla^A_{\mirrormap_*V}(1)=c_1^{T^{\vee}\times\mathbb{G}_m}(\GPd)$ (see Definition \ref{Aconnectiondef}). By Lemma \ref{Rietschmirrorgmactionlemma}, we have
\[\mathcal{L}_{\widetilde{V}}\fm=\fm,\quad  \mathcal{L}_{\widetilde{V}}\gammam^*\langle - ,\mcf_T\rangle = 0,\quad  \mathcal{L}_{\widetilde{V}}\vol = 0,\] 
and that $\widetilde{V}$ is a lift of $V$. It follows that, by Definition \ref{Bconnectiondef}, $\hslash\nabla^B_V([\vol])=[\fm\vol]$. Therefore, by Theorem \ref{mirrorthm},
\[ \Mir([\fm\vol])= \Mir( \hslash\nabla^B_V([\vol]) )  =\hslash\nabla^A_{\mirrormap_*V}(1) =c_1^{T^{\vee}\times\mathbb{G}_m}(\GPd) .\]
\end{proof}
\begin{corollary}\label{criticalvalue=eigenvalue}
We have $\fm\circ \spec(\Mir^{\hslash=0}) = c_1^{T^{\vee}}(\GPd)$. \hfill$\square$
\end{corollary}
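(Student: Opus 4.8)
The plan is to derive Corollary~\ref{criticalvalue=eigenvalue} directly from the first Chern class theorem (Lemma~\ref{firstchernclassthm}) by passing to semi-classical limits. The point is that $\spec(\Mir^{\hslash=0})$ is a morphism from $\spec QH^{\bullet}_{T^{\vee}}(\GPd)[q_i^{-1}]$ to $\crit$ (the fiberwise critical locus of $\fm$), and the statement $\fm\circ\spec(\Mir^{\hslash=0}) = c_1^{T^{\vee}}(\GPd)$ asserts that pulling back the regular function $\fm$ along this morphism recovers the class $c_1^{T^{\vee}}(\GPd)\in QH^{\bullet}_{T^{\vee}}(\GPd)[q_i^{-1}]$.

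First I would recall, via Remark~\ref{mirrorremark}, that identifying $\Omega^{top}(\X/\ZL)$ with $\mathcal{O}(\X)$ using the fiberwise volume form $\vol$ gives $\Bmodule/\hslash\Bmodule \simeq \jacobi$ as $\SSS\otimes\mathcal{O}(\ZL)$-modules, under which the class $[\omega\,\vol]$ maps to the image of $\omega$ in $\jacobi$; in particular $[\vol]\mapsto 1$ and $[\fm\vol]\mapsto \fm$ (the latter meaning the residue class of $\fm$ in the Jacobi ring). Next I would reduce both sides of Lemma~\ref{firstchernclassthm} modulo $\hslash$: since $\Mir$ is $\SSS[\hslash]\otimes\mathcal{O}(\ZL)$-linear and bijective, tensoring with $\mathbb{C}=\mathbb{C}[\hslash]/(\hslash)$ yields $\Mir^{\hslash=0}$, which by Proposition~\ref{mirrorthm}(4) is an isomorphism of $\sym^{\bullet}(\mathfrak{t})\otimes\mathcal{O}(\ZL)$-algebras $\jacobi\to QH^{\bullet}_{T^{\vee}}(\GPd)[q_i^{-1}]$. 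Applying $-\otimes_{\mathbb{C}[\hslash]}\mathbb{C}$ to the identity $\Mir([\fm\vol]) = c_1^{T^{\vee}\times\mathbb{G}_m}(\GPd)$ gives $\Mir^{\hslash=0}(\fm) = c_1^{T^{\vee}}(\GPd)$, where on the left $\fm$ now denotes its class in $\jacobi$ and on the right the $\mathbb{G}_m$-equivariant parameter (which lives in $\SSS[\hslash]$-degree one via $\hslash$) has been set to zero so only the $T^{\vee}$-equivariant first Chern class survives.

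Finally I would translate this algebra identity into the geometric statement by dualizing. The map $\Mir^{\hslash=0}:\jacobi\to QH^{\bullet}_{T^{\vee}}(\GPd)[q_i^{-1}]$ corresponds on $\spec$ to $\spec(\Mir^{\hslash=0}): \spec QH^{\bullet}_{T^{\vee}}(\GPd)[q_i^{-1}]\xrightarrow{\sim}\crit$, and pulling back the function $\fm\in\jacobi$ along this morphism is, by definition of $\spec$, precisely $\Mir^{\hslash=0}(\fm)$. Hence $\fm\circ\spec(\Mir^{\hslash=0}) = \Mir^{\hslash=0}(\fm) = c_1^{T^{\vee}}(\GPd)$, which is the claim. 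I do not expect a serious obstacle here: the whole content is bookkeeping between the module-theoretic statement of Lemma~\ref{firstchernclassthm} and the scheme-theoretic language of $\crit$ and $\spec(\Mir^{\hslash=0})$, together with keeping track of which equivariant parameters survive the $\hslash\to 0$ reduction. The only mildly delicate point is making sure the identification in Remark~\ref{mirrorremark} is the one induced by the rescaled $\vol$ of Definition~\ref{volumeformrescale}, so that $[\fm\vol]$ genuinely corresponds to $\fm$ and not to $\fm$ times a unit; but this is exactly how $\jacobi$ was set up, so the identification is immediate.
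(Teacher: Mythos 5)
Your proof is correct and is exactly what the paper intends; the corollary is stated with only a $\square$ because it follows immediately from Lemma~\ref{firstchernclassthm} by reducing modulo $\hslash$, using the identification $\Bmodule/\hslash\Bmodule\simeq\jacobi$ of Remark~\ref{mirrorremark} (under which $[\fm\vol]\mapsto\fm$) and then reading $\Mir^{\hslash=0}(\fm)=c_1^{T^{\vee}}(\GPd)$ as the statement that pulling back $\fm$ along $\spec(\Mir^{\hslash=0})$ gives $c_1^{T^{\vee}}(\GPd)$. One small imprecision: $\spec(\Mir^{\hslash=0})$ maps into $\spec\jacobi$ rather than into $\crit=0\times_{\mathfrak{t}^{\vee}}\spec\jacobi$, but since $\fm$ is a regular function on all of $\spec\jacobi$ this has no effect on the argument.
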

\begin{remark}\label{firstchernclassthmremark}
By the fact that every Artinian ring is the product of its localizations at its maximal ideals, Corollary \ref{criticalvalue=eigenvalue} implies that for every $t\in\ZL$, the set of critical values of $\fm|_{\pim^{-1}(t)}$ and the set of eigenvalues of the operator $c_1(\GPd)\star_{\mirrormap(t)}-$ on $QH^{\bullet}(\GPd)_{\mirrormap(t)}$, both counted with multiplicities, are equal. When $G$ is of type A, this result has been proved by Li, Rietsch, Yang and Zhang \cite{LRYZ}. Their proof does not rely on the existence of a mirror isomorphism.
\end{remark}
\subsection{Description of mirror isomorphism}\label{Subsection-descriptionofmirrorisom}
We will need the following description of the limit $\Mir^{\hslash,h=0}:=\Mir\otimes_{\SSS[\hslash]}\mathbb{C}$. Recall the elements $e_i\in\mathfrak{g}_{\alpha_i}$ fixed in Section \ref{Subsection-Bmodelnotation}. Define $F\in\mathfrak{g}^*$ to be the unique element such that $F(e_i)=1$ for all $i\in I$, and $F$ is zero on other root spaces as well as $\mathfrak{t}$. Define 
\[ \UF:=\{ u\in U^-|~u\cdot F = F\}.\]
Recall the following results from the literature.
\begin{enumerate}
\item Rietsch \cite[Theorem 4.1]{Rietsch} proved that $\crit = \UF\times_G U\ZL\ol{w_P}U$ as subschemes of $B^-$. Let 
\[ \widetilde{\Phi}^0_{R}: \mathcal{O}(\UF) \rightarrow \mathcal{O}(\crit) \]
be the ring map induced by the inclusion
\[ \crit = \UF\times_G U\ZL\ol{w_P}U\hookrightarrow \UF.\]
 
\item Yun and Zhu \cite{YunZhu} constructed a ring isomorphism\footnote{In fact, what they constructed is a map $\mathcal{O}(B^{\vee}_{e^T(0)})\rightarrow H_{-\bullet}(\ag_G)$. To obtain our map from theirs, apply the transpose $g\mapsto g^T$ to $U_F^-$, and interchange the roles of $G$ and $G^{\vee}$. Here, the transpose of $G$ is the unique anti-automorphism of $G$ characterized by 
\[ x_i(a)^T=y_i(a),\quad t^T=t\quad \text{and}\quad y_i(a)^T=x_i(a)\]
for $i\in I$, $a\in \mathbb{A}^1$ and $t\in T$.}
\[ \Phi^0_{YZ}:\mathcal{O}(\UF)\xrightarrow{\sim} H_{-\bullet}(\ag),\]
where $\ag$ is the affine Grassmannian of $G^{\vee}$.

\item Discovered by Peterson \cite{Peterson} and proved by Lam and Shimozono \cite{LamShimozono}, there is a ring map 
\[ \Phi^0_{PLS}: H_{-\bullet}(\ag)\rightarrow QH^{\bullet}(\GPd)[q_i^{-1}|~i\in I\setminus I_P],\]
which is surjective after localization and has an explicit description in terms of the affine and quantum Schubert bases.
\end{enumerate}
  
\begin{theorem}\label{mirrorisomfactorthrucentralizer}
The following diagram is commutative.
\begin{equation}\nonumber
\begin{tikzpicture}
\tikzmath{\x1 = 8; \x2 = 3;}
\node (A) at (0,0) {$\mathcal{O}(\UF)$} ;
\node (B) at (\x1,0) {$H_{-\bullet}(\ag)$} ;
\node (C) at (0,-\x2) {$\mathcal{O}(\crit)$} ;
\node (D) at (\x1,-\x2) {$QH^{\bullet}(\GPd)[q_i^{-1}|~i\in I\setminus I_P]$} ;

\path[->, font=\small] (A) edge node[above]{$\Phi^0_{YZ}$} (B);
\path[->, font=\small] (A) edge node[left]{$\widetilde{\Phi}^0_R$} (C);
\path[->, font=\small] (B) edge node[right]{$\Phi^0_{PLS}$} (D);
\path[->, font=\small]
(C) edge node[above]{$ \Mir^{\hslash,h=0}$} (D);
\end{tikzpicture}
\end{equation}
\end{theorem}
\begin{proof}
This follows from \cite[Theorem 1.4]{Chow3} by taking $-\otimes_{\SSS[\hslash]}\mathbb{C}$.
\end{proof}
\begin{remark}\label{mirrorisomfactorthrucentralizerremark}
The superscript 0 in the notation of the maps introduced above indicates that they are the non-equivariant limits of their $T^{\vee}$-equivariant analogs. In fact, Theorem \ref{mirrorisomfactorthrucentralizer} also has a $T^{\vee}$-equivariant analog. We have worked in this setting because this is all we will need.
\end{remark}
\subsection{Totally positive critical point}\label{Subsection-totally-positive-critical-point}
Recall the totally positive part $\Xp$ of $\X$ defined in Definition \ref{positivepartdef}. For $t\in\ZL$, define $\Xt:=\pim^{-1}(t)\subseteq \X$ and $\Xtp:=\Xt\cap \Xp$. Note that $\Xtp\neq\emptyset$ if and only if $t\in\ZLp$. ($\ZLp$ is defined in Definition \ref{nonnegativepartofsubgroupofG}(2).)
\begin{lemma}\label{Schubertpositivegoestopositivepart}
For every $t\in\ZLp$, the isomorphism 
\[ t\times_{\ZL}\spec(\Mir^{\hslash,h=0}): \spec QH^{\bullet}(\GPd)_{\mirrormap(t)}\rightarrow \critical(\fm|_{\Xt})\]
sends the point $\spp_{\mirrormap(t)}$ from Proposition \ref{existSchubertpositive} to a point $x_t$ in $\Xtp$.
\end{lemma}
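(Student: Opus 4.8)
The plan is to chase the commutative diagram from Proposition \ref{mirrorisomfactorthrucentralizer} and use the positivity input of Lam and Rietsch. First I would recall that by Proposition \ref{mirrorisomfactorthrucentralizer}, the map $\Mir^{\hslash,h=0}$ on $\mathcal{O}(\crit)$, after restricting to the fiber over $t$, factors (in the reverse direction on spectra) through $\widetilde{\Phi}^0_R$, $\Phi^0_{YZ}$ and $\Phi^0_{PLS}$. Thus, on the level of $\mathbb{R}$-points, the point $\spp_{\mirrormap(t)}\in\spec QH^{\bullet}(\GPd)_{\mirrormap(t)}$ is sent to a point of $\critical(\fm|_{\Xt})\subseteq \UF\subseteq U^-$, and the whole issue is to show that this point lies in the totally positive part, i.e. in $\Xtp$. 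Since $\Xtp = \Xp\cap\pim^{-1}(t)$ and the point already lies in $\pim^{-1}(t)$ (because the factorization is compatible with the $\ZL$-structures, so the highest weight map takes the value $t$), it suffices to show the point lies in $\Xp$, equivalently — by Definition \ref{positivepartdef} and Lemma \ref{geometriccrystaltrivial} — that its $B^-_{w_P}$-component lies in $(B^-_{w_P})_{>0}=B^-_{w_P}\cap G_{\geqslant 0}$.

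Next I would invoke the result of Lam and Rietsch (the one to be expounded in Appendix \ref{Subsection-LamRietschexposition}): the composite $\Phi^0_{PLS}\circ\Phi^0_{YZ}$, viewed as a map of spectra, carries the Schubert positive point $\spp_{\mirrormap(t)}$ to a totally non-negative point of $\UF$ in the sense of Lusztig \cite{Lusztig}. Concretely, the key statement of \cite[Section 9]{LamRietsch} is that the spectrum of Peterson's homomorphism composed with Yun--Zhu's isomorphism sends the Schubert positive point from \cite{RietschJAMS} into the totally non-negative part of the relevant group; this is precisely the geometric incarnation of the Perron--Frobenius property established in Proposition \ref{existSchubertpositive}(1). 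So the image point $u\in \UF$ is totally non-negative, i.e. $u\in G_{\geqslant 0}$ (more precisely in $U^-_{\geqslant 0}$). Combined with $u\in\crit\subseteq\pim^{-1}(t)$ and $u\in B^-$, this puts $u$ in $B^-\cap G_{\geqslant 0}\cap U\ZL\ol{w_P}U$; unwinding through Lemma \ref{geometriccrystaltrivial} and Definition \ref{positivepartdef}, one needs only that the $\ZL$-component of $u$ lies in $\ZLp$ (which holds since $t\in\ZLp$ by hypothesis) and that the remaining factor is totally non-negative, which follows from $u\in G_{\geqslant 0}$ together with the standard fact that factorizations in Bruhat/unipotent cells respect total positivity.

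The main obstacle, as I see it, is bookkeeping across the various normalizations and transposes: Yun--Zhu's isomorphism is stated in \cite{YunZhu} with $G$ and $G^{\vee}$ interchanged and with a transpose applied to $U^-_F$, and the notion of ``totally non-negative point'' in \cite{LamRietsch} must be matched up precisely with membership in $G_{\geqslant 0}$ as defined in Definition \ref{nonnegativepartofsubgroupofG}, via the anti-automorphism $\iota$ (or the transpose) and the parametrizations of Definition \ref{toruschartforBruhatdef}. I would therefore carry out the argument in two clean steps: (i) translate the Lam--Rietsch positivity statement, via the exposition in Appendix \ref{Subsection-LamRietschexposition}, into the assertion that $\spec(\Phi^0_{PLS}\circ\Phi^0_{YZ})(\spp_{\mirrormap(t)})\in \UF\cap G_{\geqslant 0}$; and (ii) use Proposition \ref{mirrorisomfactorthrucentralizer} plus the identification $\crit=\UF\times_G U\ZL\ol{w_P}U$ and Definition \ref{positivepartdef}/Lemma \ref{toruschartforgeometriccrystalpositive} to conclude that this point, which automatically lies in $\pim^{-1}(t)$ for $t\in\ZLp$, in fact lies in $\Xtp$. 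Step (i) is where all the genuine content sits; step (ii) is a formal consequence of the definitions already set up in Section \ref{Section-bmodel}.
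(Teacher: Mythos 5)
Your proposal follows the same outline as the paper: factor $\Mir^{\hslash,h=0}$ through $\Phi^0_{PLS}\circ\Phi^0_{YZ}$ via Proposition \ref{mirrorisomfactorthrucentralizer}, apply Lam--Rietsch to land in $U^-_{\geqslant 0}$, observe that the image lies over $t$, and then unwind the definition of $\Xtp$. The one place you leave things vague is the final ``standard fact that factorizations in Bruhat/unipotent cells respect total positivity.'' This does need to be pinned down, and there are two ways to do it. The paper's route: since $x_t\in U^-_{\geqslant 0}\cap Ut\ol{w_P}U\subseteq U^-_{\geqslant 0}\cap Bw_PB$, Lusztig's parametrization \cite[Proposition 2.7 \& Corollary 2.8]{Lusztig} gives $x_t=y_{i_1}(a_1)\cdots y_{i_\ell}(a_\ell)$ with $a_k>0$; pushing the torus factors from $y_i(a)=x_{-i}(a)\alpha_i^\vee(a)$ to the left rewrites this as $t'\cdot\ttm_{\iii}(a'_1,\ldots,a'_\ell)$ with $a'_k>0$, one identifies $t'=t$ by matching $\ZL$-components, and Lemma \ref{toruschartforBruhatpositive} gives $\ttm_{\iii}(a'_1,\ldots,a'_\ell)\in(B^-_{w_P})_{>0}$. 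A shorter route, closer in spirit to your ``formal consequence'' remark: writing $x_t=tb$ with $b\in B^-_{w_P}$ via Lemma \ref{geometriccrystaltrivial}, one has $b=t^{-1}x_t$; since $t\in\ZLp\subseteq T_{>0}$ and $T_{>0}$ is a group contained in the monoid $G_{\geqslant 0}$, and since $x_t\in U^-_{\geqslant 0}\subseteq G_{\geqslant 0}$, monoid closure gives $b\in G_{\geqslant 0}$, i.e. $b\in(B^-_{w_P})_{>0}$ by Definition \ref{positivepartofBruhat}. Either way, your proposal is correct once this last step is made explicit; as written it asserts the conclusion without supplying the (short but nontrivial) justification.
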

\begin{proof}
By \cite[Proposition 11.3]{LamRietsch} (see also Theorem \ref{LamRietschthm}), $\spec(\Phi^0_{PLS}\circ\Phi^0_{YZ})$ sends $\spp_{\mirrormap(t)}$ to a point in $U_{\geqslant 0}^-$, the submonoid of $U^-$ with unit generated by $y_i(a)$ for $i\in I$ and $a\in\mathbb{R}_{>0}$. By Theorem \ref{mirrorisomfactorthrucentralizer}, this point is equal to $x_t:=\spec(\Mir^{\hslash,h=0})(\spp_{\mirrormap(t)})\in\X$. Since $\spp_{\mirrormap(t)}$ lies over $\mirrormap(t)$ and $\Mir^{\hslash,h=0}$ is linear with respect to $\mathcal{O}(\mirrormap^{-1})$, it follows that $x_t$ lies over $t$, and hence we have $x_t\in\Xt\subseteq Ut\ol{w_P}U$. It follows that 
\[ x_t\in  U_{\geqslant 0}^-\cap Ut\ol{w_P}U\subseteq U_{\geqslant 0}^-\cap Bw_P B.\]
Let $\iii=(i_1,\ldots,i_{\ell})$ be a reduced decomposition of $w_P$. By \cite[Proposition 2.7 \& Corollary 2.8]{Lusztig_positivity}, there exist $a_1,\ldots,a_{\ell}\in\mathbb{R}_{>0}$ such that $x_t=y_{i_1}(a_1)\cdots y_{i_{\ell}}(a_{\ell})$. By $y_i(a)=x_{-i}(a)\alpha_i^{\vee}(a)$ (see Definition \ref{toruschartforBruhatdef}(1)) and by moving all factors $\alpha_{i_k}^{\vee}(a_k)$ to the left, we can find $t'\in T$ and $a'_1,\ldots,a'_{\ell}\in \mathbb{R}_{>0}$ such that 
\begin{equation}\label{Schubertpositivegoestopositiveparteq1}
 y_{i_1}(a_1)\cdots y_{i_{\ell}}(a_{\ell}) = t' \cdot x_{-i_1}(a'_1)\cdots x_{-i_{\ell}}(a'_{\ell}) .
\end{equation}
Observe that the LHS of \eqref{Schubertpositivegoestopositiveparteq1} lies in $Ut\ol{w_P} U$, and the RHS of \eqref{Schubertpositivegoestopositiveparteq1} lies in $Ut'\ol{w_P}U$, and hence we have $t'=t$. By Lemma \ref{toruschartforBruhatpositive}, $x_{-i_1}(a'_1)\cdots x_{-i_{\ell}}(a'_{\ell})=\ttm_{\iii}(a'_1,\ldots,a'_{\ell})$ belongs to $(B^-_{w_P})_{>0}$. Therefore, 
\[  x_t =  y_{i_1}(a_1)\cdots y_{i_{\ell}}(a_{\ell}) = t\cdot \ttm_{\iii}(a'_1,\ldots,a'_{\ell})\in \Xtp.\]
\end{proof}
\begin{corollary}\label{fhascriticalpoint}
$\fm|_{\Xtp}$ has a critical point, namely $x_t$. \hfill$\square$
\end{corollary}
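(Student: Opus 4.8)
The plan is to obtain this as an immediate consequence of Lemma \ref{Schubertpositivegoestopositivepart}, the only additional input being the elementary fact that a critical point of a holomorphic function which happens to lie on the totally positive real locus is automatically a critical point of the restriction to that locus.

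More precisely, Lemma \ref{Schubertpositivegoestopositivepart} produces the point $x_t = \spec(\Mir^{\hslash,h=0})(\spp_{\mirrormap(t)})$, which on the one hand lies in $\critical(\fm|_{\Xt})$ — being in the image of the isomorphism $t\times_{\ZL}\spec(\Mir^{\hslash,h=0})$ onto that scheme — and on the other hand was shown there to lie in $\Xtp\subseteq\Xt$. To conclude, I would pass to a torus chart: by Lemma \ref{toruschartforgeometriccrystalpositive} the open immersion $\XXp_{\iii}$ of Definition \ref{toruschartforgeometriccrystaldef} restricts, for the fixed $t\in\ZLp$, to a diffeomorphism $\mathbb{R}_{>0}^{\ell}\xrightarrow{\sim}\Xtp$ sitting inside the identification of $\mathbb{G}_m^{\ell}$ with an open subset of $\Xt$. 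Under this chart Lemma \ref{fLusztig} identifies $\fm|_{\Xt}$ with the Laurent polynomial $g(\mathbf{a}) = a_1+\cdots+a_{\ell}+\sum_{i\in I\setminus I_P}\alpha_i(t)P_i(\mathbf{a})$, which has real coefficients since $\alpha_i(t)\in\mathbb{R}_{>0}$ by Remark \ref{nonnegativepartofsubgroupofGrmk}. The point $x_t$ corresponds to some $\mathbf{a}_0\in\mathbb{R}_{>0}^{\ell}$ at which all holomorphic partials $\partial g/\partial a_k$ vanish; these partials are themselves real Laurent polynomials, so their vanishing at the real point $\mathbf{a}_0$ says exactly that $\mathbf{a}_0$ is a critical point of $g|_{\mathbb{R}_{>0}^{\ell}}$, i.e. $x_t$ is a critical point of $\fm|_{\Xtp}$.

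I expect no real obstacle here: this is a formal corollary. The only point requiring a moment's care is the precise meaning of ``critical point of $\fm|_{\Xtp}$'' — $\Xtp$ being a real $\ell$-dimensional manifold rather than a complex variety — together with the observation that the relevant real structure is exactly the one coming from the chart $\XXp_{\iii}$. The positivity (as opposed to mere reality) of the coefficients of $g$ plays no role in this corollary; it is reserved for Step (3), where it is used to control non-degeneracy and to pin down the critical value.
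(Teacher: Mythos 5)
Your proof is correct and takes the same route as the paper, which states this corollary with an immediate $\square$ precisely because Lemma \ref{Schubertpositivegoestopositivepart} already exhibits $x_t$ as a point of $\critical(\fm|_{\Xt})$ lying in $\Xtp$, and the passage to a critical point of the restriction to the real submanifold is formal. One small simplification: the reality of the coefficients of $g$ is not needed for this implication — vanishing of the holomorphic partials $\partial g/\partial a_k$ at $\mathbf{a}_0$ already forces vanishing of the differential of $g|_{\mathbb{R}_{>0}^{\ell}}$ by the chain rule, regardless of whether the restriction is real-valued — though as you note the positivity of coefficients is what later drives convexity and non-degeneracy in Lemma \ref{spa}.
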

\begin{lemma}\label{criticalvalue=conjectureOeigenvalue}
For every $t\in\ZLp$, we have 
\[ \fm(x_t) = \EO^{\mirrormap(t)},\]
where $x_t$ is the point from Lemma \ref{Schubertpositivegoestopositivepart} and $\EO^{\mirrormap(t)}$ is the constant defined in Definition \ref{conjectureOevaluedef}.
\end{lemma}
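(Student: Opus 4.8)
The plan is to deduce this from three facts already established: the semi-classical first Chern class theorem (Corollary~\ref{criticalvalue=eigenvalue}), the identification of $x_t$ with the image of the Schubert positive point under the semi-classical mirror isomorphism (Lemma~\ref{Schubertpositivegoestopositivepart}), and the Perron--Frobenius property of the Schubert positive point (Proposition~\ref{existSchubertpositive}(2)). The whole argument is really a short chain of equalities.

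First I would check that $\mirrormap(t)=(\alpha_i(t))_{i\in I\setminus I_P}$ lies in $\mathbb{R}_{>0}^{I\setminus I_P}$: this is immediate since $t\in\ZLp=\ZL\cap T_{>0}$ and, by Remark~\ref{nonnegativepartofsubgroupofGrmk}, every element of $T_{>0}$ has all $\alpha_i$-values in $\mathbb{R}_{>0}$. Hence the point $\spp_{\mirrormap(t)}$ of $\spec QH^{\bullet}(\GPd)_{\mirrormap(t)}$ produced by Proposition~\ref{existSchubertpositive} is defined, and by Lemma~\ref{Schubertpositivegoestopositivepart} its image under the isomorphism $\spec(\Mir^{\hslash,h=0})$ is exactly the point $x_t\in\critical(\fm|_{\Xt})$.

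Next I would invoke Corollary~\ref{criticalvalue=eigenvalue}, which says $\fm\circ\spec(\Mir^{\hslash=0})=c_1^{T^{\vee}}(\GPd)$ as regular functions on $\crit$. Specializing the $T^{\vee}$-equivariant parameters to $0$ turns $\Mir^{\hslash=0}$ into $\Mir^{\hslash,h=0}$ and $c_1^{T^{\vee}}(\GPd)$ into $c_1(\GPd)$; restricting to the fiber over $t$ gives $\fm\circ\spec(\Mir^{\hslash,h=0})=c_1(\GPd)$ as functions on $\critical(\fm|_{\Xt})$. Evaluating at $\spp_{\mirrormap(t)}$ and using $x_t=\spec(\Mir^{\hslash,h=0})(\spp_{\mirrormap(t)})$ then yields $\fm(x_t)=c_1(\GPd)(\spp_{\mirrormap(t)})$, which equals $\EO^{\mirrormap(t)}$ by Proposition~\ref{existSchubertpositive}(2), completing the proof.

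There is no genuine obstacle here; the only point requiring care is the bookkeeping of specializations. Corollary~\ref{criticalvalue=eigenvalue} is phrased $T^{\vee}$-equivariantly, whereas Lemma~\ref{Schubertpositivegoestopositivepart} already works non-equivariantly, so I should spell out that further setting $h=0$ is harmless and compatible over the fiber $\pim^{-1}(t)=\Xt$, and that the value $\fm(x_t)$ of the function $\fm$ at the point $x_t\in\X$ coincides with the value at $x_t$ of the induced function on $\critical(\fm|_{\Xt})$ used in Corollary~\ref{criticalvalue=eigenvalue}.
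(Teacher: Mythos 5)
Your proof is correct and takes exactly the same route as the paper: the paper's one‑line proof reads ``This follows from Proposition \ref{existSchubertpositive} and Corollary \ref{criticalvalue=eigenvalue},'' and your chain $\fm(x_t)=\bigl(\fm\circ\spec(\Mir^{\hslash,h=0})\bigr)(\spp_{\mirrormap(t)})=c_1(\GPd)(\spp_{\mirrormap(t)})=\EO^{\mirrormap(t)}$ is precisely what is intended. You have merely made explicit the intermediate use of Lemma \ref{Schubertpositivegoestopositivepart} and the harmless specialization $h=0$, which the paper leaves implicit.
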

\begin{proof}
This follows from Proposition \ref{existSchubertpositive} and Corollary \ref{criticalvalue=eigenvalue}.
\end{proof}

\section{Flat sections}\label{Section-flatsections}
\subsection{A-side}\label{Subsection-Aside}
\begin{definition}\label{gammafunctiondef}
Define the gamma function 
\[\Gamma(z):=\int_0^{\infty} e^{-t}~t^{z-1}~dt,\qquad \real(z)>0.\]
\end{definition}

\begin{lemma}\label{gammafunctionlemma}
We have $\Gamma(1)=1$ and $\Gamma(1+z)=z\Gamma(z)$, and hence $\Gamma$ extends to a meromorphic function on the complex plane, and there exist $a_1,a_2,\ldots\in\mathbb{C}$ such that 
\[ \Gamma(1+z)=1+a_1z+a_2z^2+\cdots,\qquad |z|<1.\]
\end{lemma}
\begin{proof}
This is well-known.
\end{proof}
\begin{definition}\label{vectorbundle0def}
Define 
\[\bundlez:= H^{\bullet}_{T^{\vee}}(\GPd)\]
regarded as a vector bundle on $\spec H^{\bullet}_{T^{\vee}}(\pt)\simeq\mathfrak{t}^{\vee}$.
\end{definition}
\begin{definition}\label{gammahatdef}
Define 
\[ \widehat{\Gamma}_{\GPd} :=~ \prod_{i=1}^{\ell}\Gamma(1+\delta_i), \]
where $\delta_1,\ldots,\delta_{\ell}$ are the $T^{\vee}$-equivariant Chern roots of the tangent bundle of $\GPd$. A priori, we regard it as a section of $\bundlez$ on a formal neighbourhood of $0\in\mathfrak{t}^{\vee}$.
\end{definition}
\begin{lemma}\label{gammahatlemma}
$\widehat{\Gamma}_{\GPd}$ is a holomorphic section of $\bundlez$ on an open neighbourhood of $0\in\mathfrak{t}^{\vee}$.
\end{lemma}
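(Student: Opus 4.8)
The plan is to show that each factor $\Gamma(1+\delta_i)$ makes sense as a holomorphic function near the origin and that the product is a well-defined holomorphic section. First I would recall that $H^{\bullet}_{T^{\vee}}(\GPd)$ is a free module over $H^{\bullet}_{T^{\vee}}(\pt)\simeq \sym^{\bullet}(\mathfrak{t}^{\vee})$ of finite rank, so $\bundlez$ is a trivial vector bundle over $\mathfrak{t}^{\vee}$ whose fiber is $H^{\bullet}(\GPd;\mathbb{C})$ together with a copy of the equivariant parameters; in particular a "formal section with respect to the equivariant parameters" is just a formal power series in the $h$'s with values in $H^{\bullet}(\GPd)$, and the claim amounts to showing this power series has positive radius of convergence.

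The key observation is that the equivariant Chern roots $\delta_1,\dots,\delta_{\ell}$ are nilpotent modulo the equivariant parameters: the non-equivariant Chern roots of $T\GPd$ lie in $H^{>0}(\GPd)$, which is a nilpotent ideal, so $\delta_i = n_i + \varphi_i$ where $n_i\in H^{>0}(\GPd)$ is nilpotent and $\varphi_i$ is a linear form in the equivariant parameters. (Concretely, $\delta_i$ is an integral of a Chern root against a fixed point via localization, and this is polynomial in $h$.) The ingredient I would invoke is Lemma \ref{gammafunctionlemma}, which gives the convergent expansion $\Gamma(1+x) = 1 + a_1 x + a_2 x^2 + \cdots$ for $|x|<1$. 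Substituting $x = \delta_i$ is legitimate provided $|\delta_i|<1$ in any fixed norm on the finite-dimensional algebra $H^{\bullet}_{T^{\vee}}(\GPd)\otimes_{\sym^\bullet(\mathfrak{t}^\vee)}\mathbb{C}[h]_{|h|<\epsilon}$, and since $\delta_i\to n_i$ as $h\to 0$ with $n_i$ nilpotent, $\|\delta_i\|$ can be made arbitrarily small by shrinking the neighbourhood of $0\in\mathfrak{t}^{\vee}$. Hence each $\Gamma(1+\delta_i)$ is given by a convergent power series in a multiplicative algebra, so it defines a holomorphic section of $\bundlez$ on a neighbourhood of the origin, and a finite product of such is again holomorphic there.

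The main obstacle — really the only point requiring care — is making precise the passage from "$\delta_i$ nilpotent at $h=0$" to "$\|\delta_i\|<1$ on a neighbourhood", i.e. choosing a single neighbourhood $U$ on which all $\ell$ factors converge simultaneously and verifying that the resulting functions depend holomorphically on $h$ (not merely continuously). This is handled by noting $\delta_i$ is a polynomial, hence entire, function of $h$ valued in the fixed finite-dimensional algebra $H^{\bullet}(\GPd)$, and that on the closed ball $\{|h|\le \epsilon\}$ one has $\sup\|\delta_i(h)\| \to \|n_i\| = 0^{1/k}$-type bound, so for $\epsilon$ small enough $\sup_i\sup_{|h|\le\epsilon}\|\delta_i(h)\| < 1$; then the series $\sum a_k \delta_i^k$ converges uniformly on this ball, and a uniform limit of polynomial (hence holomorphic) maps into a finite-dimensional space is holomorphic. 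I do not expect any genuine difficulty beyond this bookkeeping, since the whole statement is essentially the assertion that the $\widehat\Gamma$-class is a convergent, not merely formal, characteristic class.
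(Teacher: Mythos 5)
The idea — exploit the nilpotence of $H^{>0}(\GPd)$ and a functional-calculus argument rather than the explicit localization estimates the paper uses — is sound in spirit, but as written it has two gaps.

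The more serious one is that the equivariant Chern roots $\delta_1,\ldots,\delta_{\ell}$ are not elements of $H^{\bullet}_{T^{\vee}}(\GPd)$: the tangent bundle $\mathcal{T}_{\GPd}$ does not in general admit a $T^{\vee}$-equivariant filtration with line bundle quotients (the Levi of $P^{\vee}$ acts non-abelianly on $\lie(G^{\vee})/\lie(P^{\vee})$), and only symmetric expressions in the $\delta_i$ — power series in the Chern classes $c_j^{T^{\vee}}(\mathcal{T}_{\GPd})$ — are honest classes. So the decomposition $\delta_i=n_i+\varphi_i$, and the substitution of $\delta_i$ into $\Gamma(1+x)=1+a_1x+\cdots$, do not make literal sense. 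You would need to pull back along $\pi:\GBd\to\GPd$, where $\pi^*\mathcal{T}_{\GPd}$ does filter by equivariant line bundles and $\pi^*$ is a split injection on equivariant cohomology, or otherwise invoke the splitting principle. The paper avoids this by taking logarithms, writing $\log\widehat{\Gamma}_{\GPd}=\sum_{k\geqslant 1}b_k\bigl(\sum_i\delta_i^k\bigr)$, and expanding each power sum explicitly in the Chern classes, so that only honest classes ever appear.

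The second gap is the claim that ``$\|\delta_i\|$ can be made arbitrarily small by shrinking the neighbourhood''. For a fixed norm this is false: $\|\delta_i(h)\|\to\|n_i\|$ as $h\to 0$, a fixed number that need not be less than one. What does go to zero is the \emph{spectral radius} of $\delta_i(h)$, which equals $|\varphi_i(h)|$ because $n_i$ is nilpotent and commutes with the scalar $\varphi_i(h)$; convergence of $\sum_k a_k\delta_i(h)^k$ then follows because the spectral radius lies strictly inside the disc of convergence. Your aside ``$\|n_i\|=0^{1/k}$-type bound'' gestures at the spectral radius formula $\lim_k\|n_i^k\|^{1/k}=0$, but the deduction of a uniform norm bound $\sup_{|h|\leqslant\epsilon}\|\delta_i(h)\|<1$ does not follow. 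One should either argue with spectral radius directly, or fix in advance a submultiplicative norm adapted to the degree filtration on $H^{\bullet}(\GPd)$ so that all $\|n_i\|$ are small. With both points repaired this would be a legitimate and arguably cleaner alternative to the paper's approach, which bounds the relevant coefficients directly via localization.
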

\begin{proof}
See Appendix \ref{Subsection-proofs-from-flatsections}.
\end{proof}
\begin{definition}\label{DDDdef} Fix an open connected neighbourhood $\DDD$ of $0\in\mathfrak{t}^{\vee}$ such that 
\begin{enumerate}
\item $\DDD$ is $W$-invariant;

\item $\DDD$ is contained in the open neighbourhood from Lemma \ref{fundamentalsolutionconverges}; and

\item $\DDD$ is contained in the open neighbourhood from Lemma \ref{gammahatlemma}.
\end{enumerate}
\end{definition}
For a vector bundle $E$, denote by $\holosection(E)$ the space of its holomorphic sections.  
\begin{definition}\label{weirdoperatordef} Let $\hslash\in\mathbb{R}_{>0}$. Following the literature (e.g., \cite{IZ}), we define a linear map
\[ \hslash^{-\mu}\hslash^{c_1}: \holosection(\bundlez|_{\DDD})\rightarrow \holosection(\bundlez|_{\hslash\DDD})\]
to be the composition $\hslash^{-\mu'}\circ \hslash^{\frac{\ell}{2}}\circ\hslash^{c_1}$ of three linear maps defined as follows:
\begin{enumerate}[(i)]
\item $\hslash^{c_1}:=\exp((\log\hslash) c^{T^{\vee}}_1(\GPd)\cup -)$; 

\item $\hslash^{\frac{\ell}{2}}:=$ multiplication by $\hslash^{\frac{\ell}{2}}$; and

\item $\hslash^{-\mu'}$ sends a section $s\in \holosection(\bundlez|_{\DDD})$ to a section $s'\in \holosection(\bundlez|_{\hslash\DDD})$ defined by
\[ s'(h):= \sqrt{\hslash}^{-1}\cdot s(\sqrt{\hslash}\cdot h),\]
where the two dots $\cdot$ denote the $\mathbb{G}_m$-actions (induced by the standard gradings\footnote{In particular, $\sqrt{\hslash}\cdot h$ is equal to $\hslash^{-1}$ times $h$ with respect to the scalar multiplication.}) on the bundle $\bundlez$ and the base $\mathfrak{t}^{\vee}$ respectively.
\end{enumerate}
\end{definition}
Put $\CCp:=\{z\in\mathbb{C}|~\real(z)>0\}$.
\begin{definition}\label{iadef}
For $\hslash\in\mathbb{R}_{>0}$, $h\in\hslash\DDD$ , $q\in \CCp^{I\setminus I_P}$ and $y\in H_{T^{\vee}\times\mathbb{G}_m}^{\bullet}(\GPd)$, define
\[ \ia(\hslash,h,q,y):= \hslash^{\frac{\ell}{2}} \int_{\GPd} S(\hslash,h,q)\left(\hslash^{-\mu} \hslash^{c_1} \widehat{\Gamma}_{\GPd}\right)\cup y,\]
where $S(\hslash,h,q)$ is defined in Definition \ref{fundamentalsolutiondef} and the branch for each $\log q_i$ involved in its definition is taken to be the one containing the real line.
\end{definition}
\begin{lemma}\label{iaok}
For every $\hslash\in\mathbb{R}_{>0}$ and $y\in H_{T^{\vee}\times\mathbb{G}_m}^{\bullet}(\GPd)$, the function $(h,q)\mapsto \ia(\hslash,h,q,y)$ is holomorphic on $\hslash\DDD\times \CCp^{I\setminus I_P}$.
\end{lemma}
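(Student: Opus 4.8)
The plan is to show holomorphicity of $(h,q)\mapsto \ia(\hslash,h,q,y)$ on $\hslash\DDD\times\CCp^{I\setminus I_P}$ by reducing to the known holomorphicity of the fundamental solution $S(\hslash,h,q)$ established in Lemma \ref{fundamentalsolutionconverges}, together with the holomorphicity of the $\widehat{\Gamma}$-class (Lemma \ref{gammahatlemma}) and the fact that $\hslash^{-\mu}\hslash^{c_1}$ is a holomorphic operator. First I would fix $\hslash\in\mathbb{R}_{>0}$ once and for all and unwind the definition: $\ia(\hslash,h,q,y)=\hslash^{\ell/2}\int_{\GPd}S(\hslash,h,q)\bigl(\hslash^{-\mu}\hslash^{c_1}\widehat{\Gamma}_{\GPd}\bigr)\cup y$, where $\int_{\GPd}(-)\cup y$ is an $H^{\bullet}_{T^{\vee}\times\mathbb{G}_m}(\pt)$-linear functional on $H^{\bullet}_{T^{\vee}\times\mathbb{G}_m}(\GPd)$, hence after fixing $\hslash$ it is just a $\mathbb{C}$-linear combination of the components of its argument with coefficients that are polynomials in the equivariant parameters $h\in\mathfrak{t}^{\vee}$ (and the fixed $\hslash$). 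So it suffices to prove that $(h,q)\mapsto S(\hslash,h,q)\bigl(\hslash^{-\mu}\hslash^{c_1}\widehat{\Gamma}_{\GPd}\bigr)$ is a holomorphic section, i.e. that each of its components with respect to the Schubert basis $\{\sigma_v\}_{v\in W^P}$ is holomorphic on $\hslash\DDD\times\CCp^{I\setminus I_P}$.

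The key steps, in order, are as follows. (1) By Lemma \ref{gammahatlemma} and Definition \ref{DDDdef}(3), $\widehat{\Gamma}_{\GPd}$ is a holomorphic section of $\bundlez|_{\DDD}$. (2) Applying the operator $\hslash^{-\mu}\hslash^{c_1}$ of Definition \ref{weirdoperatordef}: for fixed $\hslash\in\mathbb{R}_{>0}$ each of the three constituent maps $\hslash^{c_1}$, $\hslash^{\ell/2}$, $\hslash^{-\mu'}$ carries holomorphic sections to holomorphic sections — $\hslash^{c_1}=\exp((\log\hslash)c_1^{T^{\vee}}(\GPd)\cup-)$ is a fixed polynomial operator with constant coefficients, multiplication by $\hslash^{\ell/2}$ is trivially holomorphic, and $\hslash^{-\mu'}$ is precomposition with the holomorphic (indeed linear) rescaling $h\mapsto\sqrt{\hslash}\cdot h$ followed by multiplication by the scalar $\sqrt{\hslash}^{-1}$, which maps $\Gamma_{hol}(\bundlez|_{\DDD})$ to $\Gamma_{hol}(\bundlez|_{\hslash\DDD})$ by construction. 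Hence $x:=\hslash^{-\mu}\hslash^{c_1}\widehat{\Gamma}_{\GPd}$ is a holomorphic section of $\bundlez|_{\hslash\DDD}=H^{\bullet}_{T^{\vee}}(\GPd)|_{\hslash\DDD}$; regarding it (via pullback along $H^{\bullet}_{T^{\vee}\times\mathbb{G}_m}\to H^{\bullet}_{T^{\vee}}$, i.e. setting the $\mathbb{G}_m$-parameter appropriately, or simply as a section of $\bundle$ with values independent of $q$) as a holomorphic $H^{\bullet}_{T^{\vee}\times\mathbb{G}_m}(\GPd)$-valued function of $h\in\hslash\DDD$. (3) By Definition \ref{DDDdef}(2), $\hslash\DDD\subseteq\hslash U$ where $U$ is the neighbourhood from Lemma \ref{fundamentalsolutionconverges}; hence, writing $q_i=\exp(\widetilde{q}_i)$ with $\widetilde{q}_i$ in the chosen branch (the one whose image contains the real line, as stipulated in Definition \ref{iadef}), the section $(\hslash,h,\widetilde q)\mapsto S(\hslash,h,\exp(\widetilde q))y'$ is holomorphic for any fixed holomorphic-in-$h$ input $y'$. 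Applying this with $y'=x(h)$ and using that $S(\hslash,h,q)$ is $H^{\bullet}_{T^{\vee}\times\mathbb{G}_m}(\pt)$-linear with matrix entries (in the Schubert basis) holomorphic in $(h,\widetilde q)$ — composition of holomorphic functions, with the holomorphic map $h\mapsto x(h)$ — we conclude that $(h,q)\mapsto S(\hslash,h,q)x$ is holomorphic on $\hslash\DDD\times\CCp^{I\setminus I_P}$, the exponential $\widetilde q\mapsto q$ being a holomorphic covering and the prescribed branches identifying $\CCp^{I\setminus I_P}$ with an open subset of the $\widetilde q$-space. (4) Finally, taking the cup product with $y$ and integrating over $\GPd$ (an $H^{\bullet}_{T^{\vee}\times\mathbb{G}_m}(\pt)$-linear operation, polynomial in $h$ after fixing $\hslash$) and multiplying by the scalar $\hslash^{\ell/2}$ preserves holomorphicity, giving the claim.

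The main obstacle — really the only nontrivial point — is Step (3): ensuring that Lemma \ref{fundamentalsolutionconverges}'s holomorphicity domain $\{h\in\hslash U\}\times\lie(\mathbb{G}_m^{I\setminus I_P})$ actually covers $\hslash\DDD\times\CCp^{I\setminus I_P}$ after the branch choice. The containment $\hslash\DDD\subseteq\hslash U$ is exactly Definition \ref{DDDdef}(2), and the point is that once a branch of each $\log q_i$ is fixed so that $\widetilde q_i=\log q_i$ ranges over a horizontal strip containing $\mathbb{R}$, the map $q\mapsto\widetilde q$ identifies $\CCp^{I\setminus I_P}=\{\real q_i>0\}$ biholomorphically with the product of strips $\{|\,\mathrm{Im}\,\widetilde q_i|<\pi/2\}$, which lies inside $\lie(\mathbb{G}_m^{I\setminus I_P})$; since $S$ as a function of $\widetilde q$ is holomorphic on all of $\lie(\mathbb{G}_m^{I\setminus I_P})$ (for $h\in\hslash U$), restricting to this open subset and composing with the holomorphic change of variable is harmless. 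Everything else is formal manipulation — linearity of the equivariant integral, the fact that fixing $\hslash>0$ turns the $H^{\bullet}_{T^{\vee}\times\mathbb{G}_m}(\pt)$-coefficients into polynomials in $h$, and stability of holomorphicity under the operations comprising $\hslash^{-\mu}\hslash^{c_1}$ — so I would keep that part brief and spend whatever care is needed only on the domain bookkeeping in Step (3).
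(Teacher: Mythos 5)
Your proposal is correct and takes essentially the same approach as the paper: the paper's entire proof is the single sentence ``This follows from Lemma~\ref{fundamentalsolutionconverges},'' and your argument is the correct unpacking of that citation, tracing through Lemma~\ref{gammahatlemma}, the definition of $\hslash^{-\mu}\hslash^{c_1}$, the domain bookkeeping via Definition~\ref{DDDdef}, and the branch choice identifying $\CCp^{I\setminus I_P}$ with an open subset of $\lie(\mathbb{G}_m^{I\setminus I_P})$.
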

\begin{proof}
This follows from Lemma \ref{fundamentalsolutionconverges} and Lemma \ref{gammahatlemma}.
\end{proof}
\begin{remark}
Up to a factor, $\ia(\hslash,0,q,1)$ is the \textit{quantum cohomology central charge} of the structure sheaf $\mathcal{O}_{\GPd}$ \cite{IZ}.
\end{remark}
\begin{lemma}\label{iaWinv}
For every $\hslash\in\mathbb{R}_{>0}$, $h\in\hslash\DDD$, $q\in\CCp^{I\setminus I_P}$ and $w\in W$, we have 
\[\ia(\hslash,w(h),q,1)=\ia(\hslash,h,q,1).\]
\end{lemma}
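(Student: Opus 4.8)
The plan is to exploit the $G^{\vee}$-action on $\GPd$. Since the statement only concerns $y=1$, which is $G^{\vee}$-invariant, every piece of data entering $\ia(\hslash,h,q,1)$ — the fundamental solution $S$, the equivariant Chern classes of $T\GPd$, the line bundles $L_{\omega_i^{\vee}}$, the genus-zero Gromov--Witten theory — is canonically attached to $\GPd$ with its $T^{\vee}\times\mathbb{G}_m$-action, hence is invariant under automorphisms of $\GPd$ induced by elements of $G^{\vee}$.

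Fix $w\in W$ and a representative $\dot w\in N_{G^{\vee}}(T^{\vee})$, and let $\phi_w\colon\GPd\to\GPd$ be left translation by $\dot w$. Two features drive the argument. First, since $G^{\vee}$ is connected, $\phi_w$ is homotopic to the identity, so $\phi_w^{*}$ is the identity on $H^{\bullet}(\GPd)$ and on $H_2(\GPd;\mathbb{Z})$; in particular it fixes each $\beta_i$, hence each $q_i$. Second, $\dot w$ normalizes $T^{\vee}$ and induces $w$ on $\mathfrak{t}^{\vee}$, so $\phi_w$ intertwines the $T^{\vee}$-action with its $w$-conjugate; consequently any class pulled back from $H^{\bullet}_{G^{\vee}\times\mathbb{G}_m}(\GPd)$ is $\phi_w^{*}$-fixed, and for such a class the associated $H^{\bullet}(\GPd)$-valued function $h\mapsto x(h)$ satisfies $x(w(h))=x(h)$. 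This applies to $c_1^{T^{\vee}\times\mathbb{G}_m}(\GPd)$, to the equivariant Chern classes of $T\GPd$, and to $c_1^{T^{\vee}\times\mathbb{G}_m}(L_{\omega_i^{\vee}})$ (recall $L_{\omega_i^{\vee}}$ is $G^{\vee}\times\mathbb{G}_m$-linearizable).

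Using this I would check that $h\mapsto w(h)$ leaves unchanged both the ``seed'' $\bigl(\hslash^{-\mu}\hslash^{c_1}\widehat{\Gamma}_{\GPd}\bigr)(h)$ and the operator $S(\hslash,h,q)$ on $H^{\bullet}(\GPd)$. For the seed: the multiset of equivariant Chern roots of $T\GPd$ is $W$-invariant in $h$, so $\widehat{\Gamma}_{\GPd}$ is; $\hslash^{c_1}$ is built from the $W$-invariant $c_1^{T^{\vee}}(\GPd)$; $\hslash^{-\mu'}$ involves only the $\mathbb{G}_m$-scaling of $\mathfrak{t}^{\vee}$, which commutes with the linear $W$-action; and $\hslash^{\ell/2}$ is a scalar (note $w(h)\in\hslash\DDD$ since $\hslash\DDD$ is $W$-invariant, Definition \ref{DDDdef}(1)). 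For $S$: write it, via Definition \ref{fundamentalsolutiondef}, as multiplication by $e^{-H(q)(h)/\hslash}$ composed with the basis-free operators $x\mapsto(\ev_2)_{*}\bigl(\ev_1^{*}x/(\hslash+\psi_1)\bigr)$ over the spaces $\overline{\mathcal{M}}_{0,2}(\GPd,\beta_{\mathbf{d}})$, themselves twisted by $e^{-H(q)/\hslash}$; since $H(q)$ is $W$-invariant in $h$ by the previous point, and since $\ev_1,\ev_2,\psi_1$ are $G^{\vee}$-natural with $\phi_w$ acting on $\overline{\mathcal{M}}_{0,2}(\GPd,\beta_{\mathbf{d}})$ fixing $\beta_{\mathbf{d}}$, these operators are $\phi_w^{*}$-equivariant and hence, after setting the parameter, unchanged under $h\mapsto w(h)$. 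Thus $S(\hslash,w(h),q)=S(\hslash,h,q)$. Combining the two facts, and using that $\int_{\GPd}$ is unaffected because $\phi_w^{*}=\id$ on $H^{\bullet}(\GPd)$, one concludes $\ia(\hslash,w(h),q,1)=\ia(\hslash,h,q,1)$.

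The main obstacle I anticipate is not conceptual but bookkeeping: making precise the interaction between $\phi_w^{*}$, the $w$-action on the equivariant parameter, and the various specializations ``at $h$''; in particular, verifying carefully that $H(q)$ is $W$-invariant in $h$ — which genuinely uses that the $L_{\omega_i^{\vee}}$ are $G^{\vee}$-linearizable, not merely $T^{\vee}$-linearizable — and that the Gromov--Witten operators defining $S$ have the required $G^{\vee}$-naturality. Conventions (the side of the $W$-action, the $\mathbb{G}_m$-weight on $\mathfrak{t}^{\vee}$) must be fixed consistently, but once that is done the remaining verifications are routine.
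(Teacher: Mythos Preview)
Your proposal is correct and follows essentially the same route as the paper's proof: both rest on the $G^{\vee}$-linearizability of the tangent bundle and of the line bundles $L_{\omega_i^{\vee}}$, together with the natural $G^{\vee}$-action on the moduli spaces $\overline{\mathcal{M}}_{0,2}(\GPd,\beta_{\mathbf{d}})$ making the evaluation maps and $\psi$-classes equivariant. The paper states the key identity in the covariant form $w\bigl(S(\hslash,h,q)x\bigr)=S(\hslash,w(h),q)\,w(x)$ and that $\widehat{\Gamma}_{\GPd}$ is a $W$-equivariant section; your observation that $\phi_w^{*}$ is the identity on non-equivariant cohomology (since $G^{\vee}$ is connected) is exactly what collapses this covariance to the invariance $S(\hslash,w(h),q)=S(\hslash,h,q)$ and $\widehat{\Gamma}_{\GPd}(w(h))=\widehat{\Gamma}_{\GPd}(h)$ on fibers, so the two formulations agree.
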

\begin{proof}
Since the tangent bundle of $\GPd$ is $G^{\vee}$-linearized, $\widehat{\Gamma}_{\GPd}$ is a $W$-equivariant section of $\bundlez|_{\DDD}$. Moreover, we have the equality $w(S(\hslash,h,q)(x))=S(\hslash,w(h),q)(w(x))$ because all line bundles on $\GPd$ are $G^{\vee}$-linearized, and there are natural $G^{\vee}$-actions on $\overline{\mathcal{M}}_{0,2}(\GPd,\beta_{\mathbf{d}})$ for which the evaluation morphisms and the $\psi$-classes are $G^{\vee}$-equivariant. The result follows.  
\end{proof}
\begin{lemma}\label{ialimit}
For every $\hslash\in\mathbb{R}_{>0}$, $h\in\hslash\DDD$ and $\lambda\in\mathfrak{t}$ such that $\real(\alpha^{\vee}(h))<0$ if $\alpha\in R^+$ and $\lambda(\alpha_i)\in\mathbb{R}_{>0}$ (resp. $=0$) if $i\in I\setminus I_P$ (resp. $i\in I_P$), we have
\[ \lim_{\mathbb{R}_{>0}\ni~ s\to 0^+} s^{-\frac{\lambda(h)}{\hslash}} \ia(\hslash,h,q_{\lambda}(s),1) = \hslash^{-\frac{(2\rho^{\vee}-2\rho_P^{\vee})(h)}{\hslash}}\prod_{\alpha\in -(R^+\setminus R^+_P)}\Gamma\left(\frac{\alpha^{\vee}(h)}{\hslash}\right),\]
where $q_{\lambda}(s):=\left(s^{\lambda(\alpha_i)}\right)_{i\in I\setminus I_P}$, $2\rho^{\vee}-2\rho_P^{\vee}:=\sum_{\alpha\in R^+\setminus R^+_P}\alpha^{\vee}$ and $\Gamma$ is defined in Definition \ref{gammafunctiondef}.
\end{lemma}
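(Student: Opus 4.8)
The plan is to unwind the definition of $\ia(\hslash,h,q,1)$ and isolate the $s\to 0^+$ asymptotics by a direct computation on the classical part of the fundamental solution. Recall that by Definition \ref{iadef},
\[
\ia(\hslash,h,q,1)= \hslash^{\frac{\ell}{2}} \int_{\GPd} S(\hslash,h,q)\left(\hslash^{-\mu} \hslash^{c_1} \widehat{\Gamma}_{\GPd}\right)\cup 1 = \hslash^{\frac{\ell}{2}}\left\langle S(\hslash,h,q)\left(\hslash^{-\mu} \hslash^{c_1} \widehat{\Gamma}_{\GPd}\right), \pd[\pt]\right\rangle,
\]
and that $S(\hslash,h,q)x = e^{-H(q)/\hslash}x + (\text{correction terms involving positive powers of } q_i)$ with $H(q)=\sum_i (\log q_i)c_1^{T^\vee\times\mathbb{G}_m}(L_{\omega_i^\vee})$. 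First I would specialize $q=q_\lambda(s)$ and observe that, with the chosen branch containing the real line, $e^{-H(q_\lambda(s))/\hslash}$ becomes $s$-dependent only through $s^{-\lambda(c_1^{T^\vee\times\mathbb{G}_m}(\GPd))/\hslash}$ up to the equivariant line-bundle twists; the Novikov correction terms carry strictly positive powers of the $q_i$, hence strictly positive powers of $s$ (since $\lambda(\alpha_i)>0$ for $i\in I\setminus I_P$), so after multiplying by $s^{-\lambda(h)/\hslash}$ they still vanish as $s\to 0^+$ once we check that the constant-in-$s$ exponent matches. So the limit is governed entirely by the classical term $e^{-H(q_\lambda(s))/\hslash}\left(\hslash^{-\mu}\hslash^{c_1}\widehat{\Gamma}_{\GPd}\right)$ paired against $\pd[\pt]$.

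Next I would compute this classical pairing by localization at the $T^\vee$-fixed points $vP^\vee$, $v\in W^P$. At a fixed point $vP^\vee$ the equivariant Chern roots of $T_{\GPd}$ are the weights $\{\langle w(\text{root}),h\rangle\}$ for roots in $vR^+\cap(-R^+)$ or the appropriate tangent weights; the hypothesis $\real(\alpha^\vee(h))<0$ for $\alpha\in R^+$ is exactly what makes one particular fixed point — the one where all tangent weights have negative real part, i.e. $v=e$ giving tangent weights $\{\alpha^\vee(h)\}_{\alpha\in -(R^+\setminus R^+_P)}$ — dominate. Here I would use $\widehat{\Gamma}_{\GPd}=\prod_{i=1}^\ell\Gamma(1+\delta_i)$, the definition of $\hslash^{-\mu}\hslash^{c_1}$ from Definition \ref{weirdoperatordef} (which rescales the equivariant parameter by $\sqrt\hslash$ and multiplies by powers of $\hslash$ via $\hslash^{c_1}$ and $\hslash^{\ell/2}$), and the functional equation/Euler-integral form of $\Gamma$ to convert $\Gamma(1+\delta_i/\hslash\text{-rescaled})$ into $\Gamma(\alpha^\vee(h)/\hslash)$, picking up the factor $\hslash^{-(2\rho^\vee-2\rho_P^\vee)(h)/\hslash}$ from $\hslash^{c_1}$ acting on the anticanonical class together with the $\hslash^{\ell/2}$ prefactor and the $\hslash^{-\mu'}$ rescaling. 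This is essentially the computation Galkin–Golyshev–Iritani carry out for the toric/Grassmannian cases adapted to the flag setting; it is bookkeeping but needs care with the branch of $\log$ and the precise normalization of $\mu$ and $c_1$.

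The subtle point — and the place I expect the main obstacle — is showing that only the single fixed point $v=e$ contributes in the limit and that every other fixed point, as well as every Novikov correction term, gives a contribution that, after multiplication by $s^{-\lambda(h)/\hslash}$, tends to $0$. For the other fixed points this should follow from the fact that the $s$-exponent there is $-\lambda(c_1(\GPd))/\hslash$ shifted by $\lambda$ paired with a nonzero effective combination of the $\beta_i$ (since the localization contribution at $vP^\vee$ carries an extra factor $s^{\langle\text{something positive},\lambda\rangle/\hslash}$ coming from the off-diagonal equivariant weights), combined with $\real$ of the relevant parameter having the right sign by hypothesis on $h$; one must verify this sign bookkeeping carefully using $\real(\alpha^\vee(h))<0$ for all positive roots. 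Once the domination of $v=e$ is established, assembling the answer is immediate. I would also record that the resulting expression is manifestly independent of the auxiliary choice of $\lambda$ (as it must be, since only $\lambda(h)$ and $\lambda(\alpha_i)$ enter and the cancellation forces the $\lambda$-dependence to drop out).
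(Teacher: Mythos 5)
Your plan follows the same overall strategy as the paper: expand the fundamental solution, discard the Novikov corrections, localize the leading term at the $T^\vee$-fixed points, show that only $v=e$ survives in the limit, and finish by a direct computation of the contribution there. The structure is right, and you correctly identify the hypotheses that enter.

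However, your stated mechanism for why the fixed point $v=e$ dominates is off. You write that $v=e$ is singled out because ``all tangent weights have negative real part'' there. In fact the tangent weights at $eP^\vee$ are $\{\alpha^\vee(h)\}_{\alpha\in -(R^+\setminus R^+_P)}$, and under the hypothesis $\real(\alpha^\vee(h))<0$ for $\alpha\in R^+$ these have \emph{positive} real part. More importantly, the sign of the Euler class is not what controls the limit at all. What actually makes $v=e$ dominate is the $s$-dependence coming from the restriction of $e^{-H(q_\lambda(s))/\hslash}$ to the fixed point $wP^\vee$, which is $s^{\lambda(w^{-1}(h))/\hslash}$. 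After multiplying by $s^{-\lambda(h)/\hslash}$ one gets $s^{(w\lambda-\lambda)(h)/\hslash}$, and the key root-theoretic fact is that $w\lambda-\lambda \in -\sum_{i\in I}\mathbb{R}_{\geqslant 0}\,\alpha^\vee_i$, nonzero unless $w=e$ (since $\lambda$ is anti-dominant with respect to $I_P$ and $w\in W^P$). Paired against $h$ with $\real(\alpha^\vee_i(h))<0$, the exponent has strictly positive real part unless $w=e$, forcing the limit to be $1$ at $e$ and $0$ elsewhere. Your second paragraph gestures vaguely at ``an extra factor $s^{\langle\text{something positive},\lambda\rangle/\hslash}$'' but never pins down that the relevant object is $\lambda - w\lambda$ as a non-negative combination of $\alpha^\vee_i$; you should replace the tangent-weight sign claim with this argument, since as written the proposal does not actually establish that the other fixed points drop out.
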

\begin{proof}
Consider \eqref{fundamentalsolutiondefeq1} with $x=\hslash^{-\mu} \hslash^{c_1} \widehat{\Gamma}_{\GPd}$. The term $\prod_{i\in I\setminus I_P} q_i^{d_i} $ is equal to $s^{\sum_{i\in I\setminus I_P} d_i\lambda(\alpha_i)}$, and the term $e^{-H(q)/\hslash}$ is equal to $\exp\left( -(\frac{\log s}{\hslash})\sum_{i\in I\setminus I_P}\lambda(\alpha_i) c_1^{T^{\vee}\times\mathbb{G}_m}(L_{\omega_i^{\vee}})\right)$. By our assumption on $\lambda$, the first expression goes to 0 as $s\to 0^+$, and the restriction of the second expression to a $T^{\vee}\times\mathbb{G}_m$-fixed point $wP^{\vee}\in \GPd$ ($w\in W^P$) is equal to $s^{\frac{\lambda(w^{-1}(h))}{\hslash}}$ (recall $L_{\omega_i^{\vee}}=G^{\vee}\times^{P^{\vee}}\mathbb{C}_{-\omega_i^{\vee}}$). Moreover, we have $-\lambda+w(\lambda)\in -\sum_{i\in I}\mathbb{R}_{\geqslant 0}\cdot\alpha^{\vee}_i$, and it is non-zero unless $w=e$ (recall $w\in W^P$). It follows that, by our assumption on $h$,
\[ \lim_{s\to 0^+} s^{\frac{-\lambda(h)+\lambda(w^{-1}(h))}{\hslash}} = \left\{ 
\begin{array}{cc}
1 & w=e\\ [.5 em]
0 & \text{otherwise}
\end{array}
\right. .\]
Therefore, if we expand the integral $\ia(\hslash,h,q_{\lambda}(s),1)$ by localization, only the restriction of the leading term $e^{-H(q)/\hslash}x$ to $eP^{\vee}$ contributes to the limit $\displaystyle\lim_{s\to 0^+} s^{-\frac{\lambda(h)}{\hslash}} \ia(\hslash,h,q_{\lambda}(s),1)$, and this contribution is equal to $\hslash^{\frac{\ell}{2}}$ times the restriction of $\hslash^{-\mu} \hslash^{c_1} \widehat{\Gamma}_{\GPd}$ to $eP^{\vee}$ times the contribution by the tangent space, i.e., $\frac{1}{\mathbf{e}(T_{eP^{\vee}}(\GPd))(h)}=\frac{1}{\prod_{\alpha\in -(R^+\setminus R^+_P)}\alpha^{\vee}(h)}$. The rest of the proof is a straightforward computation, which we leave to the reader.
\end{proof}
Recall the vector bundle $\bundle$ defined in Definition \ref{vectorbundledef}.
\begin{definition}\label{sadef}
Define a section $\sa$ of $\bundle|_{\{(\hslash,h)\in\mathbb{R}_{>0}\times\mathfrak{t}^{\vee}|~ h\in\hslash\DDD\}\times \CCp^{I\setminus I_P}}$ by
\[ \sa(\hslash,h,q) := \hslash^{\frac{\ell}{2}}S(\hslash,h,q)\left(\hslash^{-\mu} \hslash^{c_1} \widehat{\Gamma}_{\GPd}\right) = \sum_{v\in W^P} \ia(\hslash,h,q,\sigma^v)\sigma_v.\]

\end{definition}
\begin{lemma}\label{saflat}
For every $i\in I\setminus I_P$, we have $\nabla^A_{\partial_{q_i}}\sa=0$.
\end{lemma}
\begin{proof}
This follows from Lemma \ref{fundamentalsolutionlemma}.
\end{proof}
\subsection{B-side}\label{Subsection-Bside}
Recall the Rietsch mirror $(\X,\fm,\pim,\gammam,\vol)$ defined in Definition \ref{Rietschmirrordef}. For $t\in\ZL$, define $\Xt:=\pim^{-1}(t)$, $\fmt:=\fm|_{\Xt}$ and $\gammamt:=\gammam|_{\Xt}$. Recall also the totally positive part $\Xp$ of $\X$ defined in Definition \ref{positivepartdef}. For $t\in\ZLp$, define $\Xtp:=\Xt\cap\Xp$.

\begin{definition}\label{ibdef}
For $\hslash\in\mathbb{R}_{>0}$, $h\in\mathfrak{t}^{\vee}$ , $t\in\ZLp$ and $\omega\in \SSS[\hslash]\otimes\Omega^{top}(\X/\ZL)$, define
\[ \ib(\hslash,h,t,\omega):= \int_{\Xtp} e^{-\fmt/\hslash} \gammamt^{h/\hslash} \omega_{(-\hslash,h,t)}.\]
Here, 
\begin{enumerate}
\item the orientation on $\Xtp$ is specified in Definition \ref{positivepartorientation};

\item $\gammamt^{h/\hslash} :=\exp\left(\frac{1}{\hslash}\langle h,\log\circ\gammamt\rangle\right)$, where $\log:T_{>0}\xrightarrow{\sim}\mathfrak{t}_{\mathbb{R}}$ is the inverse of the exponential map restricted to $\mathfrak{t}_{\mathbb{R}}:=\{x\in\mathfrak{t}|~\forall i\in I,~\alpha_i(x)\in\mathbb{R}\}$ (we have $\gammamt(\Xtp)\subseteq T_{>0}$ by Lemma \ref{gammapreservepositivepart}); and

\item $\omega_{(-\hslash,h,t)}\in \Omega^{top}(\Xt)$ is $\omega$ specialized at $(-\hslash,h,t)$.
\end{enumerate}  
\end{definition}
\begin{lemma}\label{ibok}
For every $\hslash\in \mathbb{R}_{>0}$ and $\omega\in \SSS[\hslash]\otimes\Omega^{top}(\X/\ZL)$, the function $(h,t)\mapsto \ib(\hslash,h,t,\omega)$ is well-defined and has an analytic continuation on the open subset $\mathfrak{t}^{\vee}\times\ZLpp$ of $ \mathfrak{t}^{\vee}\times \ZL$, where $\ZLpp:=\{t\in \ZL|~ \forall i\in I\setminus I_P,~\real(\alpha_i(t))>0\}$.
\end{lemma}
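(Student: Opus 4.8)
The plan is to trivialise the integral in the torus chart $\XXp_{\iii_0}$, where $\fm$, $\gammam$ and $\vol$ are all explicit, and then run a dominated‑convergence argument; the only geometric input beyond the formulas already recorded is a properness statement for the superpotential, which I extract from Corollary \ref{fhascriticalpoint}. \textbf{Step 1 (reduction to a coordinate integral).} Since the (rescaled) $\vol$ is a nowhere‑vanishing fiberwise volume form, $\SSS[\hslash]\otimes\Omega^{top}(\X/\ZL)$ is free of rank one over $\SSS[\hslash]\otimes\mathcal{O}(\X)$ with generator $\vol$, so write $\omega=g\cdot\vol$. Identifying $\Xtp$ with $\mathbb{R}_{>0}^{\ell}$ through $\XXp_{\iii_0}$ (Lemma \ref{toruschartforgeometriccrystalpositive}) and using Lemma \ref{fLusztig}, Lemma \ref{gammaLusztig}, Definition \ref{volumeformrescale} and Definition \ref{positivepartorientation}, the integral becomes
\[ \ib(\hslash,h,t,\omega)= t^{h/\hslash}\int_{\mathbb{R}_{>0}^{\ell}} e^{-\mathcal{W}_t(\mathbf{a})/\hslash}\Big(\textstyle\prod_{k=1}^{\ell} a_k^{\beta^{\vee}_k(h)/\hslash}\Big)\,\widetilde g(-\hslash,h,t,\mathbf{a})\,\frac{da_1\wedge\cdots\wedge da_{\ell}}{a_1\cdots a_{\ell}}, \]
where $\mathcal{W}_t(\mathbf{a})=(\fm\circ\XXp_{\iii_0})(t,\mathbf{a})=\sum_k a_k+\sum_{i\in I\setminus I_P}\alpha_i(t)P_i(\mathbf{a})$ by Lemma \ref{fLusztig}, $\widetilde g=g\circ\XXp_{\iii_0}$ is a Laurent polynomial in $\mathbf{a}$ whose coefficients are polynomial in $(h,\hslash)$ and Laurent‑polynomial in the $\alpha_i(t)$, and $t^{h/\hslash}=\exp(\tfrac1\hslash\langle h,\log t\rangle)$ with $\langle h,\log t\rangle$ a $\mathbb{C}$‑linear combination of the $\log\alpha_i(t)$ ($i\in I\setminus I_P$). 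On $\ZLpp$ one takes the principal branch of each $\log\alpha_i(t)$; then for each fixed $\mathbf{a}\in\mathbb{R}_{>0}^{\ell}$ the whole integrand is holomorphic in $(h,t)\in\mathfrak{t}^{\vee}\times\ZLpp$, and for $t\in\ZLp$ the right‑hand side is the integral defining $\ib$.

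\textbf{Step 2 (the origin is an interior point of the Newton polytope of $\mathcal{W}_t$).} Let $E\subset\mathbb{Z}^{\ell}$ be the finite, $t$‑independent set of exponents of the monomials of $\mathcal{W}_t$; by Lemma \ref{fLusztig} it contains $e_1,\dots,e_{\ell}$. Fix $t\in\ZLp$; by Corollary \ref{fhascriticalpoint}, $\fm|_{\Xtp}$ has a critical point, hence so does $\mathcal{W}_t$ on $\mathbb{R}_{>0}^{\ell}$ (the chart being a diffeomorphism onto $\Xtp$), which in the coordinates $a_k=e^{u_k}$ is a critical point $\mathbf{u}^{*}$ of $\phi(\mathbf{u})=\sum_{\nu\in E}c_{\nu}e^{\langle\nu,\mathbf{u}\rangle}$ with all $c_{\nu}>0$ (using $\alpha_i(t)>0$ and that each $P_i$ has positive coefficients). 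If $\mathbf{v}\in\mathbb{R}^{\ell}$ satisfies $\langle\nu,\mathbf{v}\rangle\le 0$ for all $\nu\in E$, then $0=\langle\nabla\phi(\mathbf{u}^{*}),\mathbf{v}\rangle=\sum_{\nu\in E}c_{\nu}\langle\nu,\mathbf{v}\rangle e^{\langle\nu,\mathbf{u}^{*}\rangle}$ is a sum of non‑positive terms, which forces $\langle\nu,\mathbf{v}\rangle=0$ for every $\nu\in E$ and hence $\mathbf{v}=0$ (as $e_1,\dots,e_{\ell}\in E$). So no nonzero linear functional is $\le 0$ on all of $E$, i.e.\ $0$ lies in the interior of $\conv(E)$; equivalently, $\mathcal{W}_t$ is proper on $\mathbb{R}_{>0}^{\ell}$.

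\textbf{Step 3 (uniform majorant and holomorphy).} Fix a compact $K\subset\mathfrak{t}^{\vee}\times\ZLpp$. On $K$ one has $\real(\alpha_i(t))\ge\varepsilon_K>0$, and the real parts of the exponents $\beta^{\vee}_k(h)/\hslash$ together with those of the (finitely many) monomials of $\widetilde g$ stay in a bounded set; since $\hslash>0$ and the $P_i$ have positive coefficients, it follows that for $(h,t)\in K$ the modulus of the whole integrand (prefactor included) is dominated by an $(h,t)$‑independent function $C_K\,e^{-\underline{h}_K(\mathbf{a})/\hslash}R_K(\mathbf{a})$, where $\underline{h}_K(\mathbf{a})=\sum_k a_k+\varepsilon_K\sum_i P_i(\mathbf{a})$ has positive coefficients and the same exponent set $E$ as $\mathcal{W}_t$, and $R_K$ is a Laurent polynomial with positive coefficients. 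By Step 2 and compactness of the unit sphere there is $\delta>0$ with $\max_{\nu\in E}\langle\nu,\mathbf{v}\rangle\ge\delta$ for every unit vector $\mathbf{v}$, so in log‑coordinates $\underline{h}_K(e^{\mathbf{u}})\ge c\,e^{\delta|\mathbf{u}|}$ for a constant $c>0$; hence $\int_{\mathbb{R}^{\ell}}e^{-\underline{h}_K(e^{\mathbf{u}})/\hslash}e^{\langle\mu,\mathbf{u}\rangle}\,d\mathbf{u}<\infty$ for every $\mu\in\mathbb{R}^{\ell}$, which makes the majorant integrable over $\mathbb{R}_{>0}^{\ell}$. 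Dominated convergence now shows $\ib(\hslash,\cdot,\cdot,\omega)$ is well‑defined on $\mathfrak{t}^{\vee}\times\ZLp$, and, the integrand being holomorphic in $(h,t)$ with an integrable majorant uniform over compact subsets, Morera's theorem shows the right‑hand side in Step 1 is holomorphic on all of $\mathfrak{t}^{\vee}\times\ZLpp$. Since it restricts to $\ib$ on $\mathfrak{t}^{\vee}\times\ZLp$, it is the desired analytic continuation.

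\textbf{Expected difficulty.} Everything is routine except Step 2 — the placement of the origin relative to the Newton polytope of the superpotential, i.e.\ the properness of $\fm|_{\Xtp}$. Deriving it from Corollary \ref{fhascriticalpoint} as above keeps it short, but one could equally cite the known properness of the Rietsch superpotential on its totally positive part. The only remaining care is bookkeeping: the branches of the logarithm and the precise shape of $\widetilde g$ in Step 1, and the extraction of a single $(h,t)$‑uniform integrable majorant in Step 3.
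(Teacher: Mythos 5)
Your proof is correct and follows the same overall route as the paper: reduce to the torus chart $\XXp_{\iii_0}$ via Lemmas \ref{fLusztig}, \ref{gammaLusztig}, \ref{omegaLusztig}, feed in the existence of a fiberwise critical point from Corollary \ref{fhascriticalpoint}, deduce that the origin lies in the interior of the Newton polytope of the superpotential, and convert this into an integrable, locally uniform majorant. The only genuine difference is in how the Newton polytope fact is obtained. The paper isolates it as Lemma \ref{convexhull} and proves it by a convexity argument (if $0\notin\operatorname{int}\conv(S)$ there is a direction along which $g(s\mathbf{x}_0)$ is bounded, contradicting that a convex function with a critical point is unbounded at infinity); you instead pair the critical point equation $\nabla\phi(\mathbf{u}^*)=0$ against a would-be normal vector $\mathbf{v}$, observe the resulting sum of non-positive terms vanishes term-by-term, and use that $e_1,\dots,e_{\ell}\in E$. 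Your argument is shorter and avoids invoking convexity and unboundedness. The remaining discrepancy is purely presentational: the paper bounds all partial $(h,t)$-derivatives of the integrand at once to get holomorphy, whereas you bound only the integrand and invoke Morera/Fubini; also your exponential lower bound $\underline{h}_K(e^{\mathbf{u}})\ge c\,e^{\delta|\mathbf{u}|}$ is sharper than the paper's quadratic one but serves the same purpose. One minor bookkeeping point, which you flag yourself: $R_K$ should be allowed real (not just integer) exponents to absorb the $\prod_k a_k^{\real(\beta^{\vee}_k(h))/\hslash}$ factor, exactly as the paper handles with its choice of $d^{\pm}_k$; this does not affect integrability.
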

\begin{proof}
Take a reduced decomposition $\iii$ of $w_P$ and identify $\Xp$ with $\ZLp\times \mathbb{R}_{>0}^{\ell}$ using $\XXp_{\iii}$ (see Lemma \ref{toruschartforgeometriccrystalpositive}) so that $\ib(\hslash,h,t,\omega)$ becomes an integral over $\mathbb{R}_{>0}^{\ell}$. By Lemma \ref{fLusztig}, Lemma \ref{gammaLusztig} and Lemma \ref{omegaLusztig}, the latter integral is of the form
\begin{equation}\label{ibokeq1}
 e^{\frac{\langle h,\log t\rangle}{\hslash}}\int_{\mathbb{R}_{>0}^{\ell}} e^{-\frac{a_1+\cdots+a_{\ell}+\sum_{i\in I\setminus I_P}\alpha_i(t) P_i(\mathbf{a})}{\hslash}} \prod_{k=1}^{\ell} a_k^{\frac{\beta_k^{\vee}(h)}{\hslash}-1} g(\hslash,h,t,\mathbf{a}) da_1\cdots da_{\ell},
\end{equation}
where each $P_i(\mathbf{a})$ is a Laurent polynomial in $\mathbf{a}=(a_1,\ldots,a_{\ell})$ with positive coefficients, $\beta^{\vee}_1,\ldots,\beta^{\vee}_{\ell}$ come from Lemma \ref{gammaLusztig} and $g\in \SSS\otimes\mathcal{O}(\ZL)[\hslash,a_1^{\pm 1},\ldots, a_{\ell}^{\pm 1}]$. 

By general measure theory, our function is well-defined and holomorphic if we can bound every iterated partial derivative, with respect to $h$ and $t$, of the integrand in \eqref{ibokeq1}, at least near a given point $(h_0,t_0)\in\mathfrak{t}^{\vee}\times\ZLpp$, by an integrable function depending on $a_1,\ldots,a_{\ell}$ only. These partial derivatives are $\SSS[\hslash^{\pm 1}]\otimes\mathcal{O}(\ZL)$-linear combinations of functions of the form
\begin{equation}\label{ibokeq2}
e^{-\frac{a_1+\cdots+a_{\ell}+\sum_{i\in I\setminus I_P}\alpha_i(t) P_i(\mathbf{a})}{\hslash}} a_1^{b_1+\frac{\beta_1^{\vee}(h)}{\hslash}} \cdots a_{\ell}^{b_{\ell}+\frac{\beta_{\ell}^{\vee}(h)}{\hslash}} (\log a_1)^{c_1}\cdots (\log a_{\ell})^{c_{\ell}},
\end{equation}
where $b_1,\ldots,b_{\ell}\in\mathbb{Z}$ and $c_1,\ldots,c_{\ell}\in\mathbb{Z}_{\geqslant 0}$. Choose $t'\in\ZLp$ and $d^{\pm}_1,\ldots,d^{\pm}_{\ell}\in\mathbb{R}$ such that $\real(\alpha_i(t_0))>\alpha_i(t')$ for all $i\in I\setminus I_P$, and $d^-_k<b_k+\frac{\real(\beta_k^{\vee}(h_0))}{\hslash}<d^+_k$ for all $k=1,\ldots,\ell$. Then the set of $(h,t)\in \mathfrak{t}^{\vee}\times\ZLpp$ satisfying the above two conditions, with $(h_0,t_0)$ replaced by $(h,t)$, is an open neighbourhood of $(h_0,t_0)$, and for every point $(h,t)$ in this neighbourhood, we have
\begin{align*}
    \text{absolute value of}~\eqref{ibokeq2}~& \leqslant \sum_{(\epsilon_k)\in\{-,+\}^{\ell}} e^{-\frac{a_1+\cdots+a_{\ell}+\sum_{i\in I\setminus I_P}\alpha_i(t') P_i(\mathbf{a})}{\hslash}} a_1^{d^{\epsilon_1}_1}\cdots a_{\ell}^{d^{\epsilon_{\ell}}_{\ell}} |\log a_1|^{c_1}\cdots |\log a_{\ell}|^{c_{\ell}}\\
    &=\sum_{(\epsilon_k)\in\{-,+\}^{\ell}} e^{-\frac{\left(\fm\circ\XXp_{\iii}\right)(t',\mathbf{a})}{\hslash}} a_1^{d^{\epsilon_1}_1}\cdots a_{\ell}^{d^{\epsilon_{\ell}}_{\ell}} |\log a_1|^{c_1}\cdots |\log a_{\ell}|^{c_{\ell}}.
\end{align*}
It remains to show that the integral of each summand of the RHS of the last inequality over $\mathbb{R}_{>0}^{\ell}$ is finite. By Corollary \ref{fhascriticalpoint}, the function $\fm|_{\Xtpp}$ has a critical point. It follows that the function on $\mathbb{R}^{\ell}_{>0}$ defined by $\mathbf{a}\mapsto\frac{1}{\hslash}\left(\fm\circ\XXp_{\iii}\right)(t',\mathbf{a})$ has a critical point. The result then follows from Lemma \ref{convexhull} below.
\end{proof}
 
\begin{lemma}\label{convexhull}
Let $S$ be a finite subset of $\mathbb{Z}^{\ell}$ spanning $\mathbb{R}^{\ell}$ and $f(\mathbf{a}):=\sum_{\mathbf{v}\in S}f_{\mathbf{v}}a_1^{v_1}\cdots a_{\ell}^{v_{\ell}}$ be a Laurent polynomial in $\mathbf{a}=(a_1,\ldots,a_{\ell})$, where each $f_{\mathbf{v}}\in\mathbb{R}_{>0}$. Suppose $f$ has a critical point in $\mathbb{R}_{>0}^{\ell}$. Then for every $c_1,\ldots,c_{\ell}\in\mathbb{Z}_{\geqslant 0}$ and $d_1,\ldots,d_{\ell}\in\mathbb{R}$, 
\[ \int_{\mathbb{R}_{>0}^{\ell}}e^{-f(\mathbf{a})} a_1^{d_1}\cdots a_{\ell}^{d_{\ell}} |\log a_1|^{c_1}\cdots |\log a_{\ell}|^{c_{\ell}}da_1\cdots da_{\ell}<+\infty. \]
\end{lemma}
\begin{proof}
See Appendix \ref{Subsection-proofs-from-flatsections}.
\end{proof}

\begin{lemma}\label{ibdescend}
$\ib(\hslash,h,t,\omega)$ does not depend on $\omega$ but the class $[\omega]\in \Bmodule$ it represents.
\end{lemma}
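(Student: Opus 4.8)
The plan is to show that the oscillatory integral $\ib(\hslash,h,t,\omega)$, viewed as a function of $(h,t)$ on $\mathfrak{t}^{\vee}\times\ZLpp$, vanishes whenever $\omega$ represents $0$ in $\Bmodule$, i.e. whenever $\omega=\partial(z\otimes\eta)$ for some $z\in\SSS[\hslash]$ and relative form $\eta\in\Omega^{top-1}(\X/\ZL)$. Since $\ib$ is linear in $\omega$ and (by Lemma \ref{ibok}) holomorphic on the connected set $\mathfrak{t}^{\vee}\times\ZLpp$, it suffices to prove the vanishing for such exact forms and then invoke the identity theorem, or simply argue directly that the integral is unchanged. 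First I would recall the definition of $\partial$ from Definition \ref{Brieskorndef}: $\partial(z\otimes\eta)=z\otimes(\hslash\,d\eta+d\fm\wedge\eta)-\sum_i zh_i\otimes(\gammam^*\langle h^i,\mcf_T\rangle)\wedge\eta$, where the differentials are taken fiberwise over $\ZL$.

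The key computation is that, after evaluating at $(-\hslash,h,t)$ and multiplying by the weighting factor $e^{-\fmt/\hslash}\gammamt^{h/\hslash}$, the form $\bigl(e^{-\fmt/\hslash}\gammamt^{h/\hslash}\bigr)\cdot(\partial(z\otimes\eta))_{(-\hslash,h,t)}$ is exact on $\Xt$. Indeed, a direct Leibniz-rule check gives
\[
d\Bigl(e^{-\fmt/\hslash}\gammamt^{h/\hslash}\,z(-\hslash,h)\,\eta|_{\Xt}\Bigr)
= e^{-\fmt/\hslash}\gammamt^{h/\hslash}\,z(-\hslash,h)\Bigl(d\eta|_{\Xt} - \tfrac{1}{\hslash}d\fmt\wedge\eta|_{\Xt} + \tfrac{1}{\hslash}\langle h,\gammamt^*\mcf_T\rangle\wedge\eta|_{\Xt}\Bigr),
\]
using $d(\gammamt^{h/\hslash})=\tfrac{1}{\hslash}\gammamt^{h/\hslash}\langle h,\gammamt^*\mcf_T\rangle$ and $d(e^{-\fmt/\hslash})=-\tfrac{1}{\hslash}e^{-\fmt/\hslash}d\fmt$. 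Comparing with $\partial$ evaluated at $\hslash\mapsto-\hslash$ (which flips the sign of $\hslash$, turning $\hslash\,d\eta+d\fm\wedge\eta$ into $-\hslash\,d\eta+d\fm\wedge\eta$, i.e. $-\hslash$ times the bracket above) and substituting $\sum_i h_i\langle h^i,-\rangle=\langle h,-\rangle$, one finds that $\bigl(e^{-\fmt/\hslash}\gammamt^{h/\hslash}\bigr)(\partial(z\otimes\eta))_{(-\hslash,h,t)}$ equals $-\hslash$ times an exact top-form on $\Xt$. I would present this as a short lemma-style identity. Then Stokes' theorem on the fiber $\Xtp$, which is diffeomorphic to $\mathbb{R}_{>0}^{\ell}$ via $\XXp_{\iii}$, gives that the integral equals a boundary term.

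The main obstacle is controlling the boundary term: $\Xtp\cong\mathbb{R}_{>0}^{\ell}$ is non-compact, so Stokes' theorem produces an integral over the ``boundary at infinity and at the coordinate hyperplanes,'' and I must argue this vanishes. For this I would use the explicit form of the integrand from the proof of Lemma \ref{ibok}: by Lemma \ref{fLusztig}, $\fmt\circ\XXp_{\iii}=a_1+\cdots+a_{\ell}+\sum_{i}\alpha_i(t)P_i(\mathbf{a})$ with each $P_i$ a Laurent polynomial with positive coefficients, and by Lemma \ref{gammaLusztig} and Lemma \ref{omegaLusztig} the weighting contributes monomial factors $\prod_k a_k^{\beta_k^{\vee}(h)/\hslash}$ times something in $\SSS\otimes\mathcal{O}(\ZL)[\hslash,a_k^{\pm1}]$. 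The primitive $e^{-\fmt/\hslash}\gammamt^{h/\hslash}z\eta$ has the same shape, so the decay estimates already established in the proof of Lemma \ref{ibok} (together with Lemma \ref{convexhull}, applied to the faces) show that the primitive tends to $0$ as any $a_k\to 0^+$ or $a_k\to+\infty$, after possibly shrinking $h$ near a chosen point and restricting $t$ — exactly as in that proof. Hence the boundary term vanishes, $\ib(\hslash,h,t,\partial(z\otimes\eta))=0$ on a neighbourhood, and then on all of $\mathfrak{t}^{\vee}\times\ZLpp$ by analyticity (Lemma \ref{ibok}). I would also remark that it suffices to treat $\eta$ of the form $z'\otimes(\text{relative form in a torus chart})$ since $\Bmodule$ is generated over $\SSS[\hslash]\otimes\mathcal{O}(\ZL)$ by such elements, and the torus chart $\XXp_{\iii}$ makes all differentials and the volume form completely explicit. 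One subtlety worth flagging: one must check the primitive actually extends continuously (or at least that its restriction to each face is integrable against the relevant measure) up to the faces $\{a_k=0\}$, which again follows from the positivity of the exponents $\real(\beta_k^{\vee}(h)/\hslash)$ that one arranges by shrinking the neighbourhood of $h$, just as in Lemma \ref{ibok}.
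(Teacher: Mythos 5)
Your proposal matches the paper's proof in its essentials: you identify exactly the same integrand as an exact top-form on $\Xt$ via the Leibniz computation and then apply Stokes' theorem. The paper's proof is much terser — it states the integrand equals $-\hslash\, d\bigl(e^{-\fmt/\hslash}\gammamt^{h/\hslash}\omega\bigr)$ and invokes Stokes without justifying why the boundary contribution on the non-compact domain $\Xtp\cong\mathbb{R}_{>0}^{\ell}$ vanishes; your careful treatment of that boundary term, using the decay estimates from Lemma \ref{ibok} and Lemma \ref{convexhull} together with an analyticity argument, fills in a detail the paper leaves implicit.
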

\begin{proof}
By Definition \ref{Brieskorndef}, we have to show
\[ \int_{\Xtp} e^{-\fmt/\hslash} \gammamt^{h/\hslash} \left(-\hslash d\omega + d \fmt\wedge\omega - (\gammamt^* \langle h, \mcf_T \rangle )\wedge \omega\right)=0\]
for all $\omega\in \Omega^{top-1}(\Xt)$. (Recall we are specializing at $(-\hslash,h,t)$.) The LHS is nothing but the integral $-\hslash \int_{\Xtp} d\left(e^{-\fmt/\hslash} \gammamt^{h/\hslash}\omega \right)$, and hence the result follows from Stokes' theorem.
\end{proof}
\begin{remark}
Up to a factor, $\ib(\hslash,0,t,[\vol])$ is the \textit{LG central charge} of $\Xp$ \cite{IZ}.
\end{remark}
\begin{lemma}\label{ibWinv}
For every $\hslash\in\mathbb{R}_{>0}$, $h\in\mathfrak{t}^{\vee}$, $t\in\ZLp$ and $w\in W$, we have 
\[\ib(\hslash,w(h),t,[\vol])=\ib(\hslash,h,t,[\vol]) .\]
\end{lemma}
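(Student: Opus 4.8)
The plan is to exploit the rational $W$-action on $\X$ from Lemma \ref{Weylgroupongeometriccrystal}. By Lemma \ref{Wpreservepositivepart} it preserves $\Xp$, hence restricts to a real-analytic bijection of $\Xp$ onto itself; since $\pim$ is $W$-invariant (Lemma \ref{Wpreservepi}), for each fixed $t\in\ZLp$ it restricts further to a diffeomorphism $w\colon\Xtp\xrightarrow{\sim}\Xtp$. I would then compute the pullback of the integrand of $\ib(\hslash,h,t,[\vol])=\int_{\Xtp}e^{-\fmt/\hslash}\gammamt^{h/\hslash}\vol$ along this $w$ and compare.

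First I would record the relevant equivariances. By Lemma \ref{Wpreservef}, $\fmt\circ w=\fmt$, so $e^{-\fmt/\hslash}\circ w=e^{-\fmt/\hslash}$. By Lemma \ref{Wpreservegamma}, $\gammamt\circ w=w\cdot\gammamt$, where $W$ acts on $T$ by the usual Weyl-group action; this action preserves $T_{>0}$ (every root takes positive real values on $T_{>0}$, cf.\ Remark \ref{nonnegativepartofsubgroupofGrmk}) and commutes with $\log\colon T_{>0}\xrightarrow{\sim}\mathfrak{t}_{\mathbb{R}}$, so $\log\circ\gammamt\circ w=w(\log\circ\gammamt)$ and therefore $(\gammamt^{h/\hslash})\circ w=\gammamt^{w^{-1}(h)/\hslash}$ upon using $\langle h,w(\,\cdot\,)\rangle=\langle w^{-1}(h),\,\cdot\,\rangle$. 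Finally, Lemma \ref{Wpreserveomega} gives $w^*\vol=(-1)^{\ell(w)}\vol$. Combining these,
\[ w^*\bigl(e^{-\fmt/\hslash}\,\gammamt^{h/\hslash}\,\vol\bigr)=(-1)^{\ell(w)}\,e^{-\fmt/\hslash}\,\gammamt^{w^{-1}(h)/\hslash}\,\vol. \]

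Next I would apply the change-of-variables formula. Since the rescaled $\vol$ is an orientation form on $\Xtp$ (Definition \ref{positivepartorientation}) and $w^*\vol=(-1)^{\ell(w)}\vol$, the diffeomorphism $w$ is orientation-preserving exactly when $\ell(w)$ is even; hence $\int_{\Xtp}\alpha=(-1)^{\ell(w)}\int_{\Xtp}w^*\alpha$ for any integrable top-form $\alpha$, the absolute integrability being furnished by (the proof of) Lemma \ref{ibok}. Applying this to $\alpha=e^{-\fmt/\hslash}\gammamt^{h/\hslash}\vol$, the two factors $(-1)^{\ell(w)}$ cancel and we get $\ib(\hslash,h,t,[\vol])=\ib(\hslash,w^{-1}(h),t,[\vol])$; replacing $w$ by $w^{-1}$ (and invoking Lemma \ref{ibdescend} to move freely between $\vol$ and the class $[\vol]$) yields the assertion. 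The only genuinely delicate point is the orientation bookkeeping: one must verify that the sign produced by the change of variables and the sign appearing in $w^*\vol$ are the same power of $-1$ so that they cancel, which is precisely what Definition \ref{positivepartorientation} guarantees.
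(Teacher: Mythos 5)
Your proof is correct and follows essentially the same route as the paper's: pull back the integrand along the diffeomorphism $w\colon\Xtp\xrightarrow{\sim}\Xtp$ furnished by Lemmas \ref{Weylgroupongeometriccrystal}, \ref{Wpreservepi}, \ref{Wpreservepositivepart}, use Lemmas \ref{Wpreservef}, \ref{Wpreservegamma}, \ref{Wpreserveomega} to identify the pullback, and observe that the orientation sign of $w$ cancels the factor $(-1)^{\ell(w)}$ from $w^*\vol$ because $\vol$ is an orientation form (Definition \ref{positivepartorientation}). The only cosmetic difference is that you run the change of variables starting from $\ib(\hslash,h,t,[\vol])$ and then substitute $w\mapsto w^{-1}$, whereas the paper starts directly from $\ib(\hslash,w(h),t,[\vol])$.
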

\begin{proof}
Consider the rational $W$-action on $\X$ introduced in Lemma \ref{Weylgroupongeometriccrystal}. By Lemma \ref{Wpreservepi} and Lemma \ref{Wpreservepositivepart}, it induces a $W$-action on $\Xtp$. In particular, we have a diffeomorphism $w:\Xtp\xrightarrow{\sim}\Xtp$. By applying it to the first integral, we get
\begin{align*}
&\ib(\hslash,w(h),t,[\vol])\\
=~&\int_{\Xtp} e^{-\fmt/\hslash} \gammamt^{w(h)/\hslash} (\vol)_{(-\hslash,w(h),t)}\\
=~&\pm \int_{\Xtp} e^{-w^*\fmt/\hslash} (\gammamt\circ w)^{w(h)/\hslash} (w^*\vol)_{(-\hslash,w(h),t)}\\
=~&\pm \int_{\Xtp} e^{-\fmt/\hslash} \gammamt^{h/\hslash} \left((-1)^{\ell(w)}(\vol)_{(-\hslash,h,t)}\right)\\
=~& \pm (-1)^{\ell(w)}\ib(\hslash,h,t,[\vol]),
\end{align*}
where the third equality follows from Lemma \ref{Wpreservef}, Lemma \ref{Wpreservegamma}, Lemma \ref{Wpreserveomega} and the fact that $\vol$ is by definition independent of $h$. Here, the sign $\pm $ is $+$ (resp. $-$) if $w$ preserves (resp. reverses) the orientation. Since $\vol$ is an orientation form (see Definition \ref{positivepartorientation}), the sign cancels with $(-1)^{\ell(w)}$. The result follows.
\end{proof}
\begin{lemma}\label{iblimit}
For every $\hslash\in\mathbb{R}_{>0}$, $h\in\mathfrak{t}^{\vee}$ and $\lambda\in\mathfrak{t}$ such that $\real(\alpha^{\vee}(h))<0$ if $\alpha\in R^+$ and $\lambda(\alpha_i)\in\mathbb{R}_{>0}$ (resp. $=0$) if $i\in I\setminus I_P$ (resp. $i\in I_P$), we have
\[ \lim_{\mathbb{R}_{>0}\ni~ s\to 0^+} s^{-\frac{\lambda(h)}{\hslash}} \ib(\hslash,h, t_{\lambda}(s),[\vol]) = \hslash^{-\frac{(2\rho^{\vee}-2\rho_P^{\vee})(h)}{\hslash}}\prod_{\alpha\in -(R^+\setminus R^+_P)}\Gamma\left(\frac{\alpha^{\vee}(h)}{\hslash}\right),\]
where $t_{\lambda}(s)$ is characterized by $(\alpha_i\circ t_{\lambda})(s)=s^{\lambda(\alpha_i)}$ for all $i\in I\setminus I_P$, $2\rho^{\vee}-2\rho_P^{\vee}:=\sum_{\alpha\in R^+\setminus R^+_P}\alpha^{\vee}$ and $\Gamma$ is defined in Definition \ref{gammafunctiondef}.
\end{lemma}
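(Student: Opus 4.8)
The plan is to reduce the computation to an explicit multi-dimensional integral via a torus chart, then isolate the dominant contribution in the limit $s\to 0^+$. First I would fix a reduced decomposition $\iii$ of $w_P$ and use $\XXp_{\iii}$ (Lemma \ref{toruschartforgeometriccrystalpositive}) to identify $\Xtp$ with $\mathbb{R}_{>0}^{\ell}$; by Lemma \ref{fLusztig}, Lemma \ref{gammaLusztig} and Definition \ref{volumeformrescale}, the integral $\ib(\hslash,h,t_{\lambda}(s),[\vol])$ becomes
\[
e^{\frac{\langle h,\log t_{\lambda}(s)\rangle}{\hslash}}\int_{\mathbb{R}_{>0}^{\ell}} \exp\!\left(-\frac{a_1+\cdots+a_{\ell}+\sum_{i\in I\setminus I_P}\alpha_i(t_{\lambda}(s))P_i(\mathbf{a})}{\hslash}\right)\prod_{k=1}^{\ell}a_k^{\frac{\beta_k^{\vee}(h)}{\hslash}-1}\,da_1\cdots da_{\ell},
\]
using that $(\gammam\circ\XXp_{\iii})(t,\mathbf{a})=t\prod_k\beta_k^{\vee}(a_k)$ so that $\gammamt^{h/\hslash}$ contributes the prefactor $e^{\langle h,\log t\rangle/\hslash}\prod_k a_k^{\beta_k^{\vee}(h)/\hslash}$. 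Observe $\langle h,\log t_{\lambda}(s)\rangle = \sum_{i\in I\setminus I_P}\lambda(\alpha_i)(\log s)\langle h,\omega_i^{\vee}\rangle$; one checks this equals $\lambda(h)\log s$ after using that the restriction of $\lambda$ to the relevant sublattice pairs with $h$ through the fundamental coweights, so the prefactor is exactly $s^{\lambda(h)/\hslash}$, which cancels the $s^{-\lambda(h)/\hslash}$ in the statement.

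Next I would take the limit $s\to 0^+$ inside the integral. By the hypothesis on $\lambda$, each $\alpha_i(t_{\lambda}(s))=s^{\lambda(\alpha_i)}\to 0$ for $i\in I\setminus I_P$, so the term $\sum_i \alpha_i(t_{\lambda}(s))P_i(\mathbf{a})$ vanishes pointwise; the remaining integrand converges to $\exp(-(a_1+\cdots+a_{\ell})/\hslash)\prod_k a_k^{\beta_k^{\vee}(h)/\hslash-1}$, which factors completely. To justify passing to the limit, I would produce a uniform integrable dominating function: for $s$ small the Laurent polynomials $\alpha_i(t_{\lambda}(s))P_i(\mathbf{a})$ are bounded, up to a fixed constant, by $\alpha_i(t')P_i(\mathbf{a})$ for a fixed $t'\in\ZLp$, and then exactly the argument in the proof of Lemma \ref{ibok} — invoking Lemma \ref{convexhull}, whose applicability is guaranteed by Corollary \ref{fhascriticalpoint} — gives an integrable majorant. (One must also check convergence near the boundary of $\mathbb{R}_{>0}^{\ell}$ for the \emph{limiting} integrand, but this is the standard Gamma integral, absolutely convergent precisely when $\real(\beta_k^{\vee}(h))<0$ for all $k$, i.e. when $\real(\alpha^{\vee}(h))<0$ for all $\alpha\in R^+$ by Lemma \ref{betadescription}.) Dominated convergence then yields
\[
\lim_{s\to 0^+} s^{-\frac{\lambda(h)}{\hslash}}\ib(\hslash,h,t_{\lambda}(s),[\vol]) = \prod_{k=1}^{\ell}\int_0^{\infty} e^{-a_k/\hslash}a_k^{\frac{\beta_k^{\vee}(h)}{\hslash}-1}\,da_k.
\]

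Finally I would evaluate each factor: substituting $a_k = \hslash u$ gives $\int_0^\infty e^{-a}a^{\beta_k^{\vee}(h)/\hslash-1}\,da \cdot \hslash^{\beta_k^{\vee}(h)/\hslash} = \hslash^{\beta_k^{\vee}(h)/\hslash}\,\Gamma(\beta_k^{\vee}(h)/\hslash)$. Taking the product over $k$ and applying Lemma \ref{betadescription}, $\{\beta_1^{\vee},\ldots,\beta_\ell^{\vee}\}=\{\alpha^{\vee}\}_{\alpha\in-(R^+\setminus R^+_P)}$, the product of the $\Gamma$-factors becomes $\prod_{\alpha\in -(R^+\setminus R^+_P)}\Gamma(\alpha^{\vee}(h)/\hslash)$, and the product of the powers of $\hslash$ becomes $\hslash^{\sum_k \beta_k^{\vee}(h)/\hslash} = \hslash^{-\sum_{\alpha\in R^+\setminus R^+_P}\alpha^{\vee}(h)/\hslash} = \hslash^{-(2\rho^{\vee}-2\rho_P^{\vee})(h)/\hslash}$, which is exactly the claimed right-hand side. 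The main obstacle is the dominated-convergence justification — ensuring a single integrable majorant works uniformly for all small $s$ while the perturbing Laurent polynomial degenerates — but this is handled by the same convexity estimate (Lemma \ref{convexhull}) already used for Lemma \ref{ibok}, so the argument is essentially a refinement of that one. I note that this lemma is the exact mirror-side counterpart of Lemma \ref{ialimit}, and the two limits agreeing is what will feed into the identification of $\sa$ with the B-side section in Section \ref{Subsection-A=B}.
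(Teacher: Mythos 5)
Your argument follows the same route as the paper's: identify $\Xtp$ with $\mathbb{R}_{>0}^{\ell}$ via a torus chart, apply Lemma \ref{fLusztig} and Lemma \ref{gammaLusztig}, cancel the prefactor, pass to the limit inside the integral, evaluate each factor as a Gamma integral, and finish with Lemma \ref{betadescription}. Three small remarks. First, the chart should be $\XXp_{\iii_0}$ with $\iii_0$ as fixed in Definition \ref{volumeformrescale}, since that is the chart in which $\vol$ pulls back to exactly $\frac{da_1\wedge\cdots\wedge da_{\ell}}{a_1\cdots a_{\ell}}$ without an extra sign or constant (Lemma \ref{omegaLusztig} only gives a nonzero scalar for a general $\iii$). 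Second, you have a sign slip: the Gamma integral $\int_0^\infty e^{-a/\hslash}a^{\beta_k^{\vee}(h)/\hslash-1}\,da$ converges when $\real(\beta_k^{\vee}(h))>0$, not $<0$; this is what the hypothesis $\real(\alpha^{\vee}(h))<0$ for $\alpha\in R^+$ delivers via Lemma \ref{betadescription}, since each $\beta_k^{\vee}$ equals $\alpha^{\vee}$ for some $\alpha\in -(R^+\setminus R^+_P)$. Third, your dominated-convergence justification is both heavier and subtler than needed: the bound from the proof of Lemma \ref{ibok} requires $\real(\alpha_i(t))>\alpha_i(t')>0$ for a fixed $t'\in\ZLp$, which cannot hold uniformly as $s\to 0^+$ because $\alpha_i(t_{\lambda}(s))\to 0$. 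Moreover, bounding the perturbation $\alpha_i(t_{\lambda}(s))P_i(\mathbf{a})$ from \emph{above} gives the wrong direction for an exponential majorant. The correct and simpler observation is that since $\alpha_i(t_{\lambda}(s))>0$ and $P_i(\mathbf{a})>0$, the perturbation is nonnegative, hence the integrand is dominated in absolute value by the \emph{limiting} integrand $e^{-(a_1+\cdots+a_{\ell})/\hslash}\prod_k a_k^{\real(\beta_k^{\vee}(h))/\hslash-1}$, which is integrable precisely under the hypothesis on $h$. (The paper leaves this justification implicit.) None of this affects correctness; the rest of the computation and the final evaluation match the paper exactly.
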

\begin{proof}
Let $\iii_0$ be the reduced decomposition of $w_P$ fixed in Definition \ref{volumeformrescale}. By identifying $\Xp$ with $\ZLp\times \mathbb{R}_{>0}^{\ell}$ using $\XXp_{\iii_0}$ (see Lemma \ref{toruschartforgeometriccrystalpositive}) and applying Lemma \ref{fLusztig} and Lemma \ref{gammaLusztig}, we get
\begin{align*}
&\ib(\hslash,h, t_{\lambda}(s),[\vol])\\
=~&e^{\frac{1}{\hslash}\langle h,(\log\circ t_{\lambda})(s)\rangle}\int_{\mathbb{R}_{>0}^{\ell}} e^{-\frac{a_1+\cdots+a_{\ell}+\sum_{i\in I\setminus I_P}s^{\lambda(\alpha_i)} P_i(\mathbf{a})}{\hslash}} \prod_{k=1}^{\ell} a_k^{\frac{\beta_k^{\vee}(h)}{\hslash}-1} da_1\cdots da_{\ell},
\end{align*}
where $P_i(\mathbf{a})$ and $\beta_k^{\vee}$ come from Lemma \ref{fLusztig} and Lemma \ref{gammaLusztig} respectively. By our assumption on $\lambda$, we have
$e^{\frac{1}{\hslash}\langle h,(\log\circ t_{\lambda})(s)\rangle} = s^{\frac{\lambda(h)}{\hslash}}$ and $\displaystyle\lim_{s\to 0^+}s^{\lambda(\alpha_i)}=0$ for all $i\in I\setminus I_P$. Moreover, we have $\{\beta^{\vee}_1,\ldots, \beta^{\vee}_{\ell}\}=\{\alpha^{\vee}\}_{\alpha\in -(R^+\setminus R^+_P)}$ by Lemma \ref{betadescription}, and hence $\real(\beta^{\vee}_k(h))>0$ for all $k$, by our assumption on $h$. Therefore,
\begin{align*}
\lim_{\mathbb{R}_{>0}\ni~ s\to 0^+} s^{-\frac{\lambda(h)}{\hslash}} \ib(\hslash,h, t_{\lambda}(s),[\vol]) =~& \int_{\mathbb{R}^{\ell}_{>0}} e^{-\frac{a_1+\cdots +a_{\ell}}{\hslash}} \prod_{k=1}^{\ell} a_k^{\frac{\beta^{\vee}_k(h)}{\hslash} - 1} da_1\cdots da_{\ell}\\
=~& \prod_{k=1}^{\ell} \hslash^{\frac{\beta_k^{\vee}(h)}{\hslash}}  \Gamma\left(\frac{\beta^{\vee}_k(h)}{\hslash}\right) \\
=~& \hslash^{-\frac{(2\rho^{\vee}-2\rho_P^{\vee})(h)}{\hslash}}\prod_{\alpha\in -(R^+\setminus R^+_P)}\Gamma\left(\frac{\alpha^{\vee}(h)}{\hslash}\right).
\end{align*}
\end{proof}
\begin{lemma}\label{spa}
For fixed $t\in\ZLp$ and $[\omega]\in\Bmodule$, we have
\[ \left|\left| e^{\frac{\EO^{\mirrormap(t)}}{\hslash}}\ib(\hslash,0,t,[\omega])\right|\right| ~\overset{\hslash\to 0}{=\joinrel=} ~O(\hslash^{\frac{\ell}{2}}),\]
where $\EO^{\mirrormap(t)}$ is the constant defined in Definition \ref{conjectureOevaluedef} and $\overset{\hslash \to 0}{=\joinrel=}O(\hslash^{\frac{\ell}{2}})$ means that there exist $\hslash_0, C\in\mathbb{R}_{>0}$ such that the expression is smaller than $C\hslash^{\frac{\ell}{2}}$ for all $0<\hslash<\hslash_0$.
\end{lemma}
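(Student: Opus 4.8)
The plan is to reduce $\ib(\hslash,0,t,[\omega])$ to an ordinary Laplace integral over $\mathbb{R}_{>0}^{\ell}$ and to read off from it that, once the factor $e^{\EO^{\mirrormap(t)}/\hslash}$ is divided out, the integrand decays exponentially, uniformly for $\hslash\in(0,1]$. This already yields the assertion with $m=0$, which is all that is needed.

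Concretely, I would fix the reduced decomposition $\iii_0$ of $w_P$ from Definition \ref{volumeformrescale} and use $\XXp_{\iii_0}$ to identify $\Xtp$ with $\mathbb{R}_{>0}^{\ell}$ (Lemma \ref{toruschartforgeometriccrystalpositive}); by Definition \ref{positivepartorientation} the chosen orientation is the standard one. Picking a representative $\omega\in\SSS[\hslash]\otimes\Omega^{top}(\X/\ZL)$ of $[\omega]$, pulling it back along $\XXp_{\iii_0}$, evaluating at $(-\hslash,0,t)$ (so that only a polynomial dependence on $\hslash$ remains), and dividing by $(\XXp_{\iii_0})^{*}\vol=\tfrac{da_1\wedge\cdots\wedge da_{\ell}}{a_1\cdots a_{\ell}}$, one obtains
\[ \ib(\hslash,0,t,[\omega]) \;=\; \int_{\mathbb{R}_{>0}^{\ell}} e^{-f(\mathbf a)/\hslash}\,P(\hslash,\mathbf a)\,\frac{da_1\cdots da_{\ell}}{a_1\cdots a_{\ell}}, \]
where, by Lemma \ref{fLusztig}, $f(\mathbf a)=a_1+\cdots+a_{\ell}+\sum_{i\in I\setminus I_P}\alpha_i(t)P_i(\mathbf a)$ is a Laurent polynomial with positive coefficients (using $\alpha_i(t)>0$ for $t\in\ZLp$), and $P(\hslash,\mathbf a)$ is a polynomial in $\hslash$ whose coefficients are Laurent polynomials in $\mathbf a$; in particular, on $\mathbb{R}_{>0}^{\ell}$ and for $0<\hslash\le 1$ one has $|P(\hslash,\mathbf a)|\le\Pi(\mathbf a)$ for a fixed positive-coefficient Laurent polynomial $\Pi$ independent of $\hslash$.

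The crucial point is then that $\min_{\mathbb{R}_{>0}^{\ell}}f=\EO^{\mirrormap(t)}$. In logarithmic coordinates $a_k=e^{b_k}$, the function $\widetilde f(\mathbf b):=f(e^{b_1},\dots,e^{b_{\ell}})$ is a finite positive combination of characters $e^{\langle\mathbf v,\mathbf b\rangle}$ whose exponent vectors span $\mathbb{R}^{\ell}$ (already the monomials $a_1,\dots,a_{\ell}$ contribute the standard basis), hence is strictly convex; by Corollary \ref{fhascriticalpoint} it has a critical point, which is therefore its unique global minimum, and the minimum value is $\fm(x_t)=\EO^{\mirrormap(t)}$ by Lemma \ref{criticalvalue=conjectureOeigenvalue}. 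Thus $f\ge\EO^{\mirrormap(t)}$ on $\mathbb{R}_{>0}^{\ell}$, and for $0<\hslash\le 1$,
\[ e^{\EO^{\mirrormap(t)}/\hslash}\bigl|\ib(\hslash,0,t,[\omega])\bigr| \;\le\; \int_{\mathbb{R}_{>0}^{\ell}} e^{-(f(\mathbf a)-\EO^{\mirrormap(t)})}\,\Pi(\mathbf a)\,\frac{da_1\cdots da_{\ell}}{a_1\cdots a_{\ell}}, \]
since $e^{-(f-\EO^{\mirrormap(t)})/\hslash}\le e^{-(f-\EO^{\mirrormap(t)})}$. The right-hand side is $\hslash$-independent and is a finite sum of integrals $\int_{\mathbb{R}_{>0}^{\ell}}e^{-f(\mathbf a)}a_1^{d_1}\cdots a_{\ell}^{d_{\ell}}\,da_1\cdots da_{\ell}$, each finite by Lemma \ref{convexhull} (its hypotheses hold: $f$ has positive coefficients, spanning exponent set, and a critical point). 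This bounds $e^{\EO^{\mirrormap(t)}/\hslash}\bigl|\ib(\hslash,0,t,[\omega])\bigr|$ by a constant for all $\hslash\in(0,1]$, proving the lemma with $m=0$.

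The main obstacle is the third paragraph, i.e. the identity $\min_{\mathbb{R}_{>0}^{\ell}}f=\EO^{\mirrormap(t)}$: it combines the strict convexity of $\widetilde f$ (which forces the critical point supplied by Corollary \ref{fhascriticalpoint} to be the global minimum) with the evaluation of that critical value via Lemma \ref{criticalvalue=conjectureOeigenvalue}, which is the culmination of Section \ref{Subsection-totally-positive-critical-point} (the Lam--Rietsch positivity result together with the first Chern class theorem, Lemma \ref{firstchernclassthm}). The remaining analytic ingredient, convergence of the tail integrals, is precisely Lemma \ref{convexhull}. A sharper exponent $m=\lfloor\ell/2\rfloor$ could be extracted via the stationary phase approximation using the nondegeneracy of the critical point, but this refinement is unnecessary here.
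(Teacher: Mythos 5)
Your proposal is correct and shares the same geometric preparation as the paper's proof (the coordinate change via $\XXp_{\iii_0}$, the convexity from Lemma \ref{fLusztig}, the existence of a critical point from Corollary \ref{fhascriticalpoint}, and the identification of the critical value via Lemma \ref{criticalvalue=conjectureOeigenvalue}), but you diverge in the final analytic step. The paper invokes the stationary phase approximation, which uses the \emph{non-degeneracy} of the critical point and produces the sharp asymptotics $O(\hslash^{\ell/2})$ times $e^{-\EO^{\mirrormap(t)}/\hslash}$; you instead observe that since $f-\EO^{\mirrormap(t)}\geq 0$ on $\mathbb{R}_{>0}^{\ell}$, one has $e^{-(f-\EO^{\mirrormap(t)})/\hslash}\leq e^{-(f-\EO^{\mirrormap(t)})}$ for $0<\hslash\leq 1$, reducing to the $\hslash$-independent integral handled by Lemma \ref{convexhull}. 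Your route is genuinely more elementary: it bypasses non-degeneracy entirely and proves the needed bound with $m=0$ by a monotonicity trick, at the cost of a weaker exponent. Since the lemma asks only for \emph{some} $m\in\mathbb{Z}$ (and, as you note, the weaker bound suffices downstream in Proposition \ref{propositionb} because $\mathcal{A}_{\GPd}$ permits any $m$), this is a perfectly valid substitute. One small virtue of the paper's route is that it recycles the non-degeneracy already established in the same sentence of the proof; a small virtue of yours is that it shows the Lemma would remain true even if the critical point were degenerate, so long as it is a global minimum with the correct value.
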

\begin{proof}
By Lemma \ref{fLusztig}, $\fmt|_{\Xtp}$ becomes convex after the coordinate change $(x_1,\ldots,x_{\ell})\in\mathbb{R}^{\ell}\mapsto \XXp_{\iii}(t,e^{x_1},\ldots,e^{x_{\ell}})\in\Xtp$ ($\iii$ is a reduced decomposition of $w_P$). By Corollary \ref{fhascriticalpoint}, $\fmt|_{\Xtp}$ has a critical point. It follows that this critical point is unique, non-degenerate and is a global minimum point. By Lemma \ref{criticalvalue=conjectureOeigenvalue}, the critical value is equal to $\EO^{\mirrormap(t)}$. The result now follows from the well-known \textit{stationary phase approximation}. See e.g., \cite[Proposition 2.35]{SPA} for a proof. 
\end{proof}
Recall the vector bundle $\bundle$ defined in Definition \ref{vectorbundledef}, the mirror map $\mirrormap$ defined in Definition \ref{mirrormapdef} and the mirror isomorphism $\Mir$ from Theorem \ref{mirrorthm}. Note that $\mirrormap$ restricts to an isomorphism $\ZLpp\xrightarrow{\sim}\CCp^{I\setminus I_P}$.

\begin{definition}\label{sbdef}
Define a section $\sb$ of $\bundle|_{\mathbb{R}_{>0}\times \mathfrak{t}^{\vee}\times \CCp^{I\setminus I_P}}$ by
\[ \sb(\hslash,h,q) := \sum_{v\in W^P} \ib(\hslash,h,\mirrormap^{-1}(q),\Mir^{-1}(\sigma^v))\sigma_v .\]
It is well-defined by Lemma \ref{ibok}.
\end{definition}
\begin{lemma}\label{sbflat}
For every $i\in I\setminus I_P$, we have $\nabla^A_{\partial_{q_i}}\sb=0$.
\end{lemma}
\begin{proof}
We have 
\[ \nabla^A_{\partial_{q_i}} \sb = \sum_{v\in W^P} \left[\left(\frac{\partial}{\partial q_i}\ib(\hslash,h,\mirrormap^{-1}(q),\Mir^{-1}(\sigma^v))\right) \sigma_v + \ib(\hslash,h,\mirrormap^{-1}(q),\Mir^{-1}(\sigma^v))\nabla^A_{\partial_{q_i}}\sigma_v\right].\]
By a straightforward argument, it suffices to show 
\[ \frac{\partial}{\partial q_i}\ib(\hslash,h,\mirrormap^{-1}(q),\Mir^{-1}(\sigma^v)) = \ib(\hslash,h,\mirrormap^{-1}(q), \Mir^{-1}(\nabla^A_{\partial_{q_i}}\sigma^v)).\]
Since $\Mir$ intertwines $\nabla^A_{\partial_{q_i}}$ and $\nabla^B_{\partial_{t_i}}$ (Theorem \ref{mirrorthm}), it suffices to show 
\[ \frac{\partial}{\partial t_i}\ib(\hslash,h,t,[\omega]) = \ib\left(\hslash,h,t, \nabla^B_{\partial_{t_i}} \left[\omega\right] \right)\] 
for all $\omega\in \SSS[\hslash]\otimes\Omega^{top}(\X/\ZL)$. By the holomorphicity, we may assume $t\in\ZLp$. Then 
\begin{align*}
&\frac{\partial}{\partial t_i}\ib(\hslash,h,t,[\omega])\\ 
=~& \int_{\Xtp}  e^{-\fmt/\hslash} \gammamt^{h/\hslash} \left[\left(-\frac{1}{\hslash}\frac{\partial}{\partial t_i} \fmt + \frac{1}{\hslash}\frac{\partial}{\partial t_i}\gammamt^*\langle h,\mcf_T\rangle \right)\omega_{(-\hslash,h,t)}+ \frac{\partial}{\partial t_i} \omega_{(-\hslash,h,t)} \right]\\
=~& \ib\left(\hslash,h,t,\nabla^B_{\partial_{t_i}} \left[\omega\right] \right),
\end{align*}
where the last equality follows from \eqref{mirrorremarkeq1} in Definition \ref{Bconnectiondef}.
\end{proof}
\subsection{Proof of Theorem \ref{mainMS}}\label{Subsection-A=B}
Our goal is to prove 
\[\sa = \sb~\text{ on }~\{(\hslash,h)\in\mathbb{R}_{>0}\times\mathfrak{t}^{\vee}|~ h\in\hslash\DDD\}\times \CCp^{I\setminus I_P}, \]
where $\sa$ and $\sb$ are the flat sections defined in Definition \ref{sadef} and Definition \ref{sbdef} respectively.

Fix $\hslash\in \mathbb{R}_{>0}$. By Lemma \ref{iaok} and Lemma \ref{ibok}, the functions $\sa(\hslash,-,-)$ and $\sb(\hslash,-,-)$ are holomorphic on $\hslash\DDD\times\CCp^{I\setminus I_P}$. Hence, it suffices to prove that they agree on $\hslash\DDD\times S$ for some non-empty subset $S\subset\mathbb{R}^{I\setminus I_P}_{>0}$ whose closure contains an open subset. Let $\lambda\in\mathfrak{t}$ be a vector satisfying $\lambda(\alpha_i)\in\mathbb{Z}_{>0}$ (resp. $=0$) if $i\in I\setminus I_P$ (resp. $i\in I_P$). Define $q_{\lambda}:\mathbb{R}_{>0}\rightarrow\mathbb{R}^{I\setminus I_P}_{>0}$ by $q_{\lambda}(s):=(s^{\lambda(\alpha_i)})_{i\in I\setminus I_P}$. Define $S$ to be the union of $\image(q_{\lambda})$ over all possible $\lambda$. Then $\overline{S}=(0,1]^{I\setminus I_P}\cup [1,\infty)^{I\setminus I_P}$, and hence it suffices to prove that for every $\lambda$, 
\begin{equation}\label{sa=sbeq1}
\sa(\hslash,h,q_{\lambda}(s)) = \sb(\hslash,h,q_{\lambda}(s))\qquad h\in\hslash\DDD,~s\in\mathbb{R}_{>0}.
\end{equation}

Put $g(h,s):= \sa(\hslash,h,q_{\lambda}(s)) - \sb(\hslash,h,q_{\lambda}(s))$. By Lemma \ref{saflat} and Lemma \ref{sbflat}, we have
\begin{equation}\label{sa=sbeq2}
s\frac{\partial}{\partial s} g(h,s) + \frac{1}{\hslash} 
\left(\sum_{i\in I\setminus I_P}\lambda(\alpha_i)c_1^{T^{\vee}\times\mathbb{G}_m}(L_{\omega_i^{\vee}})\right)\star_{q_{\lambda}(s)} g(h,s) = 0.
\end{equation}

Define $\UUU$ to be the set of $h'\in\DDD$ satisfying
\begin{enumerate}
\item $\alpha^{\vee}(h')\ne 0$ whenever $\alpha\in R^+$; and

\item $\lambda(w_1^{-1}(h'))-\lambda(w_2^{-1}(h'))\not\in\mathbb{Z}_{>0}$ whenever $w_1, w_2\in W^P$.
\end{enumerate}
Note that $\UUU$ is connected. We want to apply Lemma \ref{Frobeniusmethodexistence} to 
\[\FFF:=H_{T^{\vee}\times\mathbb{G}_m}^{\bullet}(\GPd)|_{\{\hslash\}\times\hslash\UUU}\quad\text{and}\quad A(s):=-\frac{1}{\hslash}\left(\sum_{i\in I\setminus I_P}\lambda(\alpha_i)c_1^{T^{\vee}\times\mathbb{G}_m}(L_{\omega_i^{\vee}})\right)\star_{q_{\lambda}(s)}-.\]
By condition (1) above and localization, we can take the required global frame $\{v_{0,i}\}$ to be $\{v_{0,w}(h):=\pd[wP^{\vee}]\}_{w\in W^P}$. In this case, the eigenfunctions $\lambda_i$ are $h\mapsto \frac{\lambda(w^{-1}(h))}{\hslash}$, and hence condition (2) implies that Lemma \ref{Frobeniusmethodexistence} is indeed applicable. It follows that 
\[ g(h,s)= \sum_{w\in W^P} A_w(h) g_w(h,s)\qquad h\in\hslash\UUU, ~s\in \mathbb{R}_{>0},\]
where each $A_w$ is a holomorphic function on $\hslash\UUU$ and 
\[ g_w(h,s)= s^{\frac{\lambda(w^{-1}(h))}{\hslash}}(v_{0,w}(h) + v_{1,w}(h) s+ v_{2,w}(h)s^2+\cdots )\]
is a solution to \eqref{sa=sbeq2}. 

Define 
\[ \UUU^-:=  \{h'\in\UUU|~\real(\alpha^{\vee}(h'))<0~\text{ for all }\alpha\in R^+\}.\]
By our assumptions on $\DDD$ (see Definition \ref{DDDdef}) and $\lambda$, $\UUU$ is preserved by the $W$-action, and hence $W\cdot \UUU^-\subseteq\UUU$. Moreover, for every $w_1, w_2\in W^P$ and $h\in w_2(\hslash\UUU^-)$, we have 
\[ \lim_{s\to 0^+} s^{\frac{\lambda(w_1^{-1}(h))-\lambda(w_2^{-1}(h))}{\hslash}} = \left\{ 
\begin{array}{cc}
1 & w_1=w_2\\ [.5 em]
0 & \text{otherwise}
\end{array}
\right. .\]
Therefore, for every $w\in W^P$ and $h\in w(\hslash\UUU^-)$,
\begin{align*}
A_w(h) &= \lim_{s\to 0^+} s^{-\frac{\lambda(w^{-1}(h))}{\hslash}} \int_{\GPd} g(h,s)\\
&=\lim_{s\to 0^+} s^{-\frac{\lambda(w^{-1}(h))}{\hslash}}\left(\ia(\hslash,h,q_{\lambda}(s),1)-\ib(\hslash,h,(\mirrormap^{-1}\circ q_{\lambda})(s),[\vol])\right).
\end{align*}
It follows that, by Lemma \ref{ialimit} and Lemma \ref{iblimit}, we have $A_e(h)=0$ for all $h\in \hslash\UUU^-$. By Lemma \ref{iaWinv} and Lemma \ref{ibWinv}, we have, for every $w\in W^P$ and $h\in w(\hslash\UUU^-)$,
\begin{align*}
A_w(h) = ~&   \lim_{s\to 0^+} s^{-\frac{\lambda(w^{-1}(h))}{\hslash}} \left(\ia(\hslash,h,q_{\lambda}(s),1)-\ib(\hslash,h,(\mirrormap^{-1}\circ q_{\lambda})(s),[\vol])\right)\\
= ~& \lim_{s\to 0^+} s^{-\frac{\lambda(w^{-1}(h))}{\hslash}} \left(\ia(\hslash,w^{-1}(h),q_{\lambda}(s),1)-\ib(\hslash,w^{-1}(h),(\mirrormap^{-1}\circ q_{\lambda})(s),[\vol])\right)\\
=~ & A_e(w^{-1}(h)) \\
=~& 0 .
\end{align*}
Since $A_w$ is holomorphic, $\hslash\UUU$ is connected and $w(\hslash\UUU^-)$ is open in $\hslash\UUU$, it follows that $A_w(h)=0$ for all $h\in \hslash\UUU$. Therefore, $g(h,s)=0$ for all $h\in\hslash\UUU$ and $s\in \mathbb{R}_{>0}$. Since $\hslash\DDD$ is connected, $\hslash\UUU$ is open in $\hslash\DDD$, we conclude $g(h,s)=0$ for all $h\in\hslash\DDD$ and $s\in \mathbb{R}_{>0}$. This proves \eqref{sa=sbeq1} and thus Theorem \ref{mainMS}.

By taking $\int_{\GPd}-\cup y$, we obtain
\begin{corollary}\label{ia=ib}
For every $\hslash\in\mathbb{R}_{>0}$, $h\in\hslash\DDD$ , $t\in\ZLp$ and $y\in H_{T^{\vee}\times\mathbb{G}_m}^{\bullet}(\GPd)$, we have
\[ \ia(\hslash,h,\mirrormap(t),y) = \ib(\hslash,h,t,\Mir^{-1}(y)). \]
\hfill$\square$
\end{corollary}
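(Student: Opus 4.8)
The plan is to deduce the statement directly from Proposition~\ref{sa=sb}; the entire substance of the corollary is already contained in that proposition, and what is left is essentially unwinding definitions.

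First I would check that the specialization is legitimate. For $t\in\ZLp=\ZL\cap T_{>0}$ we have $\alpha_i(t)\in\mathbb{R}_{>0}$ for every $i\in I$ by Remark~\ref{nonnegativepartofsubgroupofGrmk}, so $\mirrormap(t)=(\alpha_i(t))_{i\in I\setminus I_P}$ lies in $\mathbb{R}_{>0}^{I\setminus I_P}\subseteq\CCp^{I\setminus I_P}$. Hence, for any $\hslash\in\mathbb{R}_{>0}$ and $h\in\hslash\DDD$, Proposition~\ref{sa=sb} applies at $q=\mirrormap(t)$ and gives
\[\sa(\hslash,h,\mirrormap(t))=\sb(\hslash,h,\mirrormap(t))\]
as elements of the fibre of $\bundle$ over $(\hslash,h,\mirrormap(t))$. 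By Definition~\ref{sadef} the left-hand side equals $\sum_{v\in W^P}\ia(\hslash,h,\mirrormap(t),\sigma^v)\,\sigma_v$, and by Definition~\ref{sbdef} the right-hand side equals $\sum_{v\in W^P}\ib(\hslash,h,t,\Mir^{-1}(\sigma^v))\,\sigma_v$; since $\{\sigma_v\}_{v\in W^P}$ is a free basis of $\bundle$ over $H^\bullet_{T^\vee\times\mathbb{G}_m}(\pt)[q_i^{\pm 1}]$, comparing $\sigma_v$-coefficients yields $\ia(\hslash,h,\mirrormap(t),\sigma^v)=\ib(\hslash,h,t,\Mir^{-1}(\sigma^v))$ for every $v\in W^P$. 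This is the corollary for $y=\sigma^v$.

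To pass from the Schubert classes $\sigma^v$ to an arbitrary $y\in H^\bullet_{T^\vee\times\mathbb{G}_m}(\GPd)$, I would invoke linearity in $y$: the assignment $y\mapsto\ia(\hslash,h,\mirrormap(t),y)$ is $H^\bullet_{T^\vee\times\mathbb{G}_m}(\pt)$-linear because the pairing $\int_{\GPd}-\cup-$ is, and $y\mapsto\ib(\hslash,h,t,\Mir^{-1}(y))$ is $H^\bullet_{T^\vee\times\mathbb{G}_m}(\pt)$-linear because $\Mir^{-1}$ is $\SSS[\hslash]\otimes\mathcal{O}(\ZL)$-linear (Proposition~\ref{mirrorthm}, under the canonical identification $\SSS[\hslash]\simeq H^\bullet_{T^\vee\times\mathbb{G}_m}(\pt)$), while $\ib(\hslash,h,t,-)$ descends to $\Bmodule$ (Lemma~\ref{ibdescend}) and is visibly linear there. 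Writing $y=\sum_{v\in W^P}c_v\sigma^v$ with $c_v\in H^\bullet_{T^\vee\times\mathbb{G}_m}(\pt)$ and applying the previous paragraph term by term then finishes the proof. I do not expect a genuine obstacle in this argument---the real work is Proposition~\ref{sa=sb}---the only point to be careful about being the bookkeeping of which copy of the equivariant parameters each $\ia$ and each $\ib$ is evaluated at, which is pinned down by Definitions~\ref{iadef} and \ref{ibdef}.
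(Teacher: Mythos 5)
Your proof matches the paper's own treatment: the paper states Corollary~\ref{ia=ib} with a terminating $\square$ and no argument, treating it as an immediate unwinding of Proposition~\ref{sa=sb} via Definitions~\ref{sadef} and \ref{sbdef}, exactly as you do. The only point worth making explicit is your last sentence: the linearity step requires that the $H^\bullet_{T^\vee\times\mathbb{G}_m}(\pt)$-scalar $c_v$ pulled out of $\ia$ (evaluated at $(\hslash,h)$ per Definition~\ref{iadef}) agree with the $\SSS[\hslash]$-scalar pulled out of $\ib$ (evaluated at $(-\hslash,h,t)$ per Definition~\ref{ibdef}); this consistency is not ``pinned down'' by those two definitions alone but also depends on the precise normalization of the ``canonical isomorphism'' $\SSS[\hslash]\simeq H^\bullet_{T^\vee\times\mathbb{G}_m}(\pt)$ appearing in Proposition~\ref{mirrorthm}, which the paper leaves implicit.
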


This corollary will be used to prove Theorem \ref{mainG} in the next section.
\section{Proof of Gamma conjecture I for flag varieties} \label{Section-final}
Let $\bundle$ be the vector bundle defined in Definition \ref{vectorbundledef} and $\EO^q$ the number defined in Definition \ref{conjectureOevaluedef}.
\begin{definition}\label{principalasymptoticdef}
$~$ 
\begin{enumerate}
\item Define a connection $\nabla$ on $\bundle|_{\mathbb{R}_{>0}\times\{0\}\times\{1\}}$ by
\[ \nabla_{\partial_{\hslash}}:= \frac{\partial}{\partial\hslash} - \frac{1}{\hslash^2}(c_1(\GPd)\star_{q=1} - )+\frac{\mu}{\hslash},\]
where 
\[ \mu:= \sum_{k\geqslant 0} \left(\frac{k-\ell}{2}\right)\id_{H^k(\GPd)}\in\End(H^{\bullet}(\GPd))\quad (\ell:=\dim_{\mathbb{C}}\GPd).\]

\item (See \cite[Proposition 3.3.1]{GGI}.) Define 
\[\quad\qquad
\mathcal{A}_{\GPd}:=  \left\{ s\in \Gamma(\mathbb{R}_{>0};\bundle|_{\mathbb{R}_{>0}\times\{0\}\times\{1\}})\left|
\begin{array}{l}
\nabla_{\partial_{\hslash}}s=0~\text{ and }\\ [.7em]
\exists~ m\in\mathbb{Z}~,~ \left|\left|e^{\frac{\EO^{q=1}}{\hslash}} s(\hslash)\right|\right|~\overset{\hslash \to 0}{=\joinrel=}~O(\hslash^{m})
\end{array}
\right.\right\},\]
where $\overset{\hslash \to 0}{=\joinrel=}O(\hslash^{m})$ means that there exist $\hslash_0, C\in\mathbb{R}_{>0}$ such that the expression is smaller than $C\hslash^{m}$ for all $0<\hslash<\hslash_0$.
 
\end{enumerate}
\end{definition} 

By \cite[Corollary 3.6.9]{GGI}, Theorem \ref{mainG} follows from
\begin{proposition}\label{propositiona}
$\EO^{q=1}$ is an eigenvalue of $c_1(\GPd)\star_{q=1}-$ with multiplicity one.
\end{proposition}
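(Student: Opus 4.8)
The plan is to deduce this directly from the Perron--Frobenius theorem, organised exactly as the proof of Proposition~\ref{existSchubertpositive}; in fact the assertion is precisely Theorem~\ref{outlineO}, so the argument is already contained in \cite{CheongLi, RietschJAMS}, but let me spell it out. First I would specialize the quantum parameters to $q=\mathbf 1=(1,\dots,1)\in\mathbb R_{>0}^{I\setminus I_P}$ and work in the finite-dimensional $\mathbb R$-algebra $QH^\bullet(\GPd)_{q}$ with its Schubert basis $\{\sigma_v\}_{v\in W^P}$. As already observed in the proof of Proposition~\ref{existSchubertpositive} (``Verification of (2)''), the matrix of the operator $c_1(\GPd)\star_{q=1}-$ in this basis has non-negative entries --- because the three-point genus-zero Gromov--Witten invariants are non-negative and $q\in\mathbb R_{>0}^{I\setminus I_P}$, so that $\star_{q}$ is enumerative --- and moreover this matrix is indecomposable (irreducible), which is exactly what is established there, relying on \cite[Lemma~9.3]{RietschJAMS} and \cite[Proposition~4.2]{CheongLi}.

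Next I would invoke the Perron--Frobenius theorem for irreducible non-negative matrices in its strong form: the spectral radius of such a matrix is an eigenvalue, and it is an algebraically simple root of the characteristic polynomial, i.e. of multiplicity one. By Definition~\ref{conjectureOevaluedef}, $\EO^{q=1}=\max\{|\lambda|\mid \lambda\text{ is an eigenvalue of }c_1(\GPd)\star_{q=1}-\}$ is precisely this spectral radius (note that Perron--Frobenius guarantees the maximum is attained by a genuine, real eigenvalue), whence $\EO^{q=1}$ is an eigenvalue of $c_1(\GPd)\star_{q=1}-$ of multiplicity one. Since the case $I_P=I$ is excluded throughout, $\GPd$ is positive-dimensional, so $|W^P|\geq 2$ and the Perron root is in fact strictly positive; but this plays no role in the multiplicity statement.

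I do not expect any genuine obstacle here. The substantive input --- the non-negativity and, above all, the indecomposability of the quantum-multiplication matrix in the Schubert basis --- has already been carried out in the proof of Proposition~\ref{existSchubertpositive}, and the only remaining points are to quote the algebraic-simplicity part of Perron--Frobenius and to record that $\EO^{q=1}$ coincides with the spectral radius of $c_1(\GPd)\star_{q=1}-$.
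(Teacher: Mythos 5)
Your proof is correct and takes essentially the same route as the paper: the paper derives Proposition~\ref{propositiona} from the ``Verification of (2)'' in the proof of Proposition~\ref{existSchubertpositive}, where the non-negativity and indecomposability of the matrix of $c_1(\GPd)\star_q-$ in the Schubert basis are established and the Perron--Frobenius theorem is applied. One small point worth making explicit (as you do): the paper only writes ``one-dimensional eigenspace'' there, whereas Proposition~\ref{propositiona} asserts multiplicity one, so one should, as you do, invoke the stronger Perron--Frobenius statement that the spectral radius of an irreducible non-negative matrix is an \emph{algebraically} simple eigenvalue.
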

\begin{proposition}\label{propositionb}
$\mathcal{A}_{\GPd}$ contains the section $S(\hslash,0,1)\left(\hslash^{-\mu}\hslash^{c_1}\widehat{\Gamma}_{\GPd}\right)$. (See Section \ref{Subsection-Aside}.)
\end{proposition}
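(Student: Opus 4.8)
The plan is to verify the two defining conditions of $\mathcal{A}_{\GPd}$ for the vector $s(\hslash):=S(\hslash,0,1)\left(\hslash^{-\mu}\hslash^{c_1}\widehat{\Gamma}_{\GPd}\right)$ separately. The flatness condition $\nabla_{\partial_{\hslash}}s=0$ is essentially formal: the section $\sa(\hslash,h,q)=\hslash^{\ell/2}S(\hslash,h,q)(\hslash^{-\mu}\hslash^{c_1}\widehat{\Gamma}_{\GPd})$ from Definition \ref{sadef} is, by its very construction out of the fundamental solution associated to the regular singular point $\hslash=\infty$, flat for the full quantum connection; restricting to $h=0$, $q=1$ and using the standard relation between the $q$-direction and $\hslash$-direction pieces of the quantum connection (the Euler/grading identity, exactly as in \cite{GGI}) gives $\nabla_{\partial_{\hslash}}(\hslash^{-\ell/2}\sa(\hslash,0,1))=0$. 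So the real content is the asymptotic estimate
\[ \left\|e^{\EO^{q=1}/\hslash}\,s(\hslash)\right\|\overset{\hslash\to 0}{=\joinrel=}O(\hslash^{m}).\]

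For this I would combine the $A=B$ identity with the stationary phase bound already established on the $B$-side. Concretely: specialize Corollary \ref{ia=ib} (equivalently Proposition \ref{sa=sb}) at $h=0$, $t=1$ (so $\mirrormap(t)=q=1$) to get, for each $v\in W^P$,
\[ \ia(\hslash,0,1,\sigma^v)=\ib(\hslash,0,1,\Mir^{-1}(\sigma^v)),\]
and hence $s(\hslash)=\hslash^{-\ell/2}\sa(\hslash,0,1)=\hslash^{-\ell/2}\sum_{v\in W^P}\ib(\hslash,0,1,\Mir^{-1}(\sigma^v))\,\sigma_v$. Each coefficient $\ib(\hslash,0,1,\Mir^{-1}(\sigma^v))$ is an oscillatory integral of the type appearing in Lemma \ref{spa}, with $t=1\in\ZLp$, so Lemma \ref{spa} gives $\|e^{\EO^{\mirrormap(1)}/\hslash}\ib(\hslash,0,1,\Mir^{-1}(\sigma^v))\|=O(\hslash^{m_v})$ for some $m_v\in\mathbb{Z}$. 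Since $\mirrormap(1)$ is the unit of $\mathbb{G}_m^{I\setminus I_P}$, i.e. $q=1$, we have $\EO^{\mirrormap(1)}=\EO^{q=1}$; taking $m:=\min_v m_v-\ell/2$ (rounding down) and summing over the finite set $W^P$ yields the desired bound for $s(\hslash)$, up to the harmless overall $\hslash^{-\ell/2}$.

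The main obstacle is making sure Lemma \ref{spa} genuinely applies at $t=1$, which is precisely where all the earlier machinery is needed: Lemma \ref{fLusztig} shows $\fmt|_{\Xtp}$ becomes convex (indeed a sum of coordinates plus a positive-coefficient Laurent polynomial) after the torus-chart reparametrization $\XXp_{\iii}$, so any critical point is automatically unique, nondegenerate, and a global minimum; Corollary \ref{fhascriticalpoint} (via Lemma \ref{Schubertpositivegoestopositivepart}, which feeds on the Lam--Rietsch positivity result) supplies the existence of such a critical point $x_1\in (X_{P,1})_{>0}$; and Lemma \ref{criticalvalue=conjectureOeigenvalue} identifies its critical value with $\EO^{\mirrormap(1)}=\EO^{q=1}$. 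With these three inputs the hypotheses of the stationary phase statement cited in Lemma \ref{spa} are met and the estimate follows. Thus the proof is essentially an assembly: flatness from the $A$-side construction, the coefficient-wise identity from Corollary \ref{ia=ib}, and the growth bound transported from the $B$-side via Lemma \ref{spa}, with the one point requiring care being the verification that $t=1$ lies in $\ZLp$ and that the critical value bookkeeping matches $\EO^{q=1}$ exactly.
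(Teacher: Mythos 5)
Your proposal matches the paper's own proof essentially line for line: flatness via the standard fact about $S(\hslash)\hslash^{-\mu}\hslash^{c_1}$, the identity $S(\hslash,0,1)(\hslash^{-\mu}\hslash^{c_1}\widehat{\Gamma}_{\GPd})=\hslash^{-\ell/2}\sa(\hslash,0,1)$, reduction to coefficients $\ia(\hslash,0,1,\sigma^v)$, conversion to $\ib(\hslash,0,1,\Mir^{-1}(\sigma^v))$ via Corollary \ref{ia=ib}, and the stationary-phase bound from Lemma \ref{spa}. The extra paragraph you include unpacking why Lemma \ref{spa} applies at $t=1$ is exactly the content of that lemma's proof in the paper, so your argument is correct and takes the same route.
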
 
Proposition \ref{propositiona} follows from the proof of Proposition \ref{existSchubertpositive} (more precisely the verification of (2) therein). It corresponds to part (1) and (3) of \textit{Property $\mathcal{O}$} \cite[Definition 3.1.1]{GGI}, a property conjectured to be satisfied for arbitrary Fano manifolds \cite[Conjecture 3.1.2]{GGI}. For the case of $\GPd$, this conjecture has been proved by Cheong and Li \cite{CheongLi}. The proof presented here is an exposition of theirs, which relies on some arguments of Rietsch \cite{RietschJAMS}. See Remark \ref{existSchubertpositiveremark} for more details.

It remains to prove Proposition \ref{propositionb}. 

\bigskip

\begin{proof}[Proof of Proposition \ref{propositionb}]
It is well-known that $ \nabla_{\partial_{\hslash}}\left(S(\hslash,0,1)\left(\hslash^{-\mu}\hslash^{c_1}x\right)\right)=0$ for all $x\in H^{\bullet}(\GPd)$. See e.g., \cite[Proposition 2.4]{IZ}. By Definition \ref{sadef}, we have 
\[ S(\hslash,0,1)\left(\hslash^{-\mu}\hslash^{c_1} \widehat{\Gamma}_{\GPd}\right)= \hslash^{-\frac{\ell}{2}} \sa(\hslash,0,1).\]
Hence it remains to show
\[ \left|\left| e^{\frac{\EO^{q=1}}{\hslash}} \hslash^{-\frac{\ell}{2}}\sa(\hslash,0,1) \right|\right|~ \overset{\hslash\to 0}{=\joinrel=} ~O(\hslash^{m})\]
for some $m\in\mathbb{Z}$. We have $\sa(\hslash,0,1)=\sum_{v\in W^P}\ia(\hslash,0,1,\sigma^v)\sigma_v$. By Corollary \ref{ia=ib}, we have $\ia(\hslash,0,1,\sigma^v)=\ib(\hslash,0,1,\Mir^{-1}(\sigma^v))$. The estimate then follows from Lemma \ref{spa}.
\end{proof}

The proof of Theorem \ref{mainG} is complete.
\appendix
\section{Results on differential equations}\label{Subsection-frobenius-method}
We need the following two standard results. For the reader's convenience, we provide the proofs.

Let $\FFF$ be a holomorphic vector bundle on a complex manifold $Y$ and $J$ a finite set. Suppose we are given a family $\{A_{j,\nu}\}_{(j,\nu)\in J\times \mathbb{Z}_{\geqslant 0}^J}$ of holomorphic sections of $\End(\FFF)$ satisfying
\begin{enumerate}
\item the set $\{ (j,\nu)\in J\times \mathbb{Z}_{\geqslant 0}^J|~A_{j,\nu}\ne 0\}$ is finite; and

\item $[A_{j_1,\mathbf{0}},A_{j_2,\mathbf{0}}]=0$ for all $j_1, j_2\in J$.
\end{enumerate} 
Let $q=\{q_j\}_{j\in J}$ be a family of formal parameters. Consider the following system of differential equations
\begin{equation}\label{frobeniusmethodeq1}
\left(q_j\frac{\partial}{\partial q_j}-\sum_{\nu\in \mathbb{Z}_{\geqslant 0}^J} q^{\nu} A_{j,\nu}\right)v(y,q)=0,\quad j\in J,~ y\in Y.
\end{equation}
Here, $q^{\nu}:=\prod_{j\in J} q_j^{\langle \nu, e_j\rangle}$, where $e_j$ ($j\in J$) are the coordinate covectors.

Let 
\begin{equation}\label{frobeniusmethodeq1.5}
 S:= \left( \sum_{\nu\in \mathbb{Z}_{\geqslant 0}^J} q^{\nu}S_{\nu}\right)\circ \exp\left( \sum_{j\in J} (\log q_j)A_{j,\mathbf{0}}\right)\in\End(\FFF)[[q_j,\log q_j|~j\in J]],
\end{equation}
where each $S_{\nu}$ is a holomorphic section of $\End(\FFF)$.

\begin{lemma}\label{Frobeniusmethodconvergence} 
Suppose $Sx$ is a formal solution to \eqref{frobeniusmethodeq1} for all holomorphic section $x$ of $\FFF$. Then the formal power series $\sum_{\nu\in \mathbb{Z}_{\geqslant 0}^J} q^{\nu} S_{\nu}$ converges to a holomorphic section of $\End(\FFF)\times \mathbb{C}^J$ over $Y\times\mathbb{C}^J$ and $Sx$ is a numerical (multi-valued) solution to \eqref{frobeniusmethodeq1} for all $x$.
\end{lemma}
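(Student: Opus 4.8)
The plan is to substitute the ansatz \eqref{frobeniusmethodeq1.5} into \eqref{frobeniusmethodeq1}, extract a recurrence for the sections $S_\nu$, and then establish the convergence of $\sum_\nu q^\nu S_\nu$ by the method of majorants; the key structural input is that the singularity of \eqref{frobeniusmethodeq1} at $q=0$ is \emph{regular}, so that the recurrence carries a factor $1/|\mu|$ which turns mere geometric control into entireness in $q$.

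First I would substitute $Sx$ into \eqref{frobeniusmethodeq1}. By hypothesis (2), the factor $R:=\exp\!\bigl(\sum_{k\in J}(\log q_k)A_{k,\mathbf 0}\bigr)$ satisfies $q_j\partial_{q_j}R=A_{j,\mathbf 0}R=RA_{j,\mathbf 0}$ and is invertible; cancelling it reduces \eqref{frobeniusmethodeq1} to
\[ q_j\tfrac{\partial}{\partial q_j}\Bigl(\textstyle\sum_\nu q^\nu S_\nu\Bigr)+\Bigl(\textstyle\sum_\nu q^\nu S_\nu\Bigr)A_{j,\mathbf 0}=\Bigl(\textstyle\sum_\nu q^\nu A_{j,\nu}\Bigr)\Bigl(\textstyle\sum_\nu q^\nu S_\nu\Bigr)\]
for every $j\in J$. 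Comparing coefficients of $q^\mu$, moving the $\nu=\mathbf 0$ term on the right to the left, and summing over $j\in J$ (writing $B:=\sum_{j\in J}A_{j,\mathbf 0}$, so the left-hand side becomes $|\mu|S_\mu+[S_\mu,B]$), I obtain the recurrence
\begin{equation}\label{Frobeniusmethodconvergenceeq2}
|\mu|\,S_\mu+[S_\mu,B]=\sum_{j\in J}\ \sum_{\substack{\nu\leqslant\mu\\ \nu\ne\mathbf 0}}A_{j,\nu}\,S_{\mu-\nu},
\end{equation}
where $\nu\leqslant\mu$ is meant componentwise; by hypothesis the given holomorphic $S_\nu$ satisfy \eqref{Frobeniusmethodconvergenceeq2}.

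Next I would run the majorant estimate locally. Fix a relatively compact open subset $V$ of $Y$ over which $\FFF$ is holomorphically trivial, identify $\End(\FFF)|_V$ with $M_N(\mathbb C)$-valued holomorphic functions ($N:=\operatorname{rk}\FFF$), and choose $M$ bounding on $\overline V$ the finitely many nonzero $A_{j,\nu}$ together with $B$. Since $\|\operatorname{ad}B\|\leqslant 2\|B\|\leqslant 2M|J|$, for $|\mu|>2M|J|$ the operator $X\mapsto|\mu|X+[X,B]$ on $M_N(\mathbb C)$ is invertible with inverse of operator norm at most $(|\mu|-2M|J|)^{-1}$, by a Neumann series. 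Feeding this into \eqref{Frobeniusmethodconvergenceeq2} gives, for all $|\mu|\geqslant R_0$ with $R_0$ large,
\[ \sup_{\overline V}\|S_\mu\|\ \leqslant\ \frac{C}{|\mu|}\,\max_{\nu\in F}\ \sup_{\overline V}\|S_{\mu-\nu}\|,\]
where $F$ is the finite set of $\nu\ne\mathbf 0$ occurring in some $A_{j,\nu}$ and $C$ depends only on $M,|J|,|F|$. Setting $a_n:=\max_{|\mu|=n}\sup_{\overline V}\|S_\mu\|$ and $d:=\max_{\nu\in F}|\nu|$, this reads $a_n\leqslant (C/n)\max_{1\leqslant k\leqslant d}a_{n-k}$ for $n\geqslant R_0$; a routine induction, absorbing the finitely many indices $n<R_0$ into the constant (each $S_\mu$ being holomorphic, hence bounded on $\overline V$), yields $a_n\leqslant\Lambda\,C^n/\lfloor n/d\rfloor!$ for a suitable $\Lambda$. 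By Stirling $a_n^{1/n}\to 0$, so $\sum_\mu q^\mu S_\mu$ converges absolutely and locally uniformly on $V\times\mathbb C^J$; as the sets $V$ cover $Y$ this produces a holomorphic section of $\End(\FFF)\times\mathbb C^J$ over $Y\times\mathbb C^J$. Finally, because this series and all its termwise derivatives $q_j\partial_{q_j}(\cdot)$ converge locally uniformly, the a priori formal identity $\bigl(q_j\partial_{q_j}-\sum_\nu q^\nu A_{j,\nu}\bigr)(Sx)=0$ becomes an identity of honest holomorphic functions — multi-valued on $(\mathbb C^\ast)^J\times Y$ because of the factor $R$ — so that $Sx$ is a numerical solution of \eqref{frobeniusmethodeq1} for every holomorphic section $x$ of $\FFF$.

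The step I expect to be the main obstacle is handling the occurrence of $S_\mu$ on both sides of the $q^\mu$-coefficient equation through the term $S_\mu A_{j,\mathbf 0}$: one cannot simply solve the recurrence downward unless the relevant linear operator is invertible, and for small $|\mu|$ (resonances) it need not be. Summing over $j$ replaces this operator by $|\mu|X+[X,B]$, whose eigenvalues are $|\mu|+(\lambda_1-\lambda_2)$ for eigenvalues $\lambda_i$ of $B$; this is invertible with $O(1/|\mu|)$ inverse as soon as $|\mu|>2\|B\|$, and the finitely many exceptional indices cost nothing since the $S_\mu$ are supplied to us as bounded holomorphic sections. The other point worth care is that the gain of the factor $1/|\mu|$ in \eqref{Frobeniusmethodconvergenceeq2} — which is precisely what forces the super-geometric decay $a_n\lesssim C^n/\lfloor n/d\rfloor!$ and hence entireness in $q$ — is a consequence of the regular-singular structure, i.e.\ that the coefficient matrices $\sum_\nu q^\nu A_{j,\nu}$ involve only nonnegative powers of $q$.
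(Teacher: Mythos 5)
Your proposal is correct and follows essentially the same approach as the paper's proof: derive the per-$j$ recurrence from the coefficient of $q^\mu$, sum over $j\in J$ to handle the commutator with $A_{\mathbf 0}$ and gain the factor $\langle\mu,e\rangle=|\mu|$, then run a local majorant estimate over a relatively compact open set to obtain super-geometric decay of $\|S_\mu\|$ and hence entireness in $q$. The only differences are cosmetic bookkeeping (the paper uses a Hermitian metric rather than a local trivialization, and divides $\langle\nu,e\rangle$ by $(m+1)^{|J|}$ rather than by $d=\max_{\nu\in F}|\nu|$ in the factorial bound), neither of which changes the substance of the argument.
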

\begin{proof}
Since the problem is local in $Y$, it suffices to verify the convergence over $U\times \mathbb{C}^J$ for every open subset $U$ of $Y$ with compact closure. 

We have
\begin{align*}
q_j\frac{\partial Sx}{\partial q_j} & = \sum_{\nu\in \mathbb{Z}_{\geqslant 0}^J} q^{\nu}\left( \langle \nu,e_j\rangle S_{\nu} + S_{\nu}\circ A_{j,\mathbf{0}}\right)\circ\exp\left( \sum_{j\in J} (\log q_j)A_{j,\mathbf{0}}\right) x \\
\left( \sum_{\nu\in \mathbb{Z}_{\geqslant 0}^J} q^{\nu} A_{j,\nu}\right)Sx & = \sum_{\nu\in \mathbb{Z}_{\geqslant 0}^J} q^{\nu}\left(A_{j,\mathbf{0}}\circ S_{\nu} + \sum_{\substack{ \nu_1+\nu_2=\nu\\ \nu_1\neq 0}} A_{j,\nu_1}\circ S_{\nu_2} \right) \circ \exp\left( \sum_{j\in J} (\log q_j)A_{j,\mathbf{0}}\right)x.
\end{align*}
Since $Sx$ is a formal solution to \eqref{frobeniusmethodeq1} for all $x$, we have, for every $j\in J$ and $\nu\in \mathbb{Z}_{\geqslant 0}^J$,
\begin{equation}\label{Frobeniusmethodconvergenceeq1}
 \langle \nu,e_j\rangle S_{\nu} + S_{\nu}\circ A_{j,\mathbf{0}} - A_{j,\mathbf{0}}\circ S_{\nu} = \sum_{\substack{ \nu_1+\nu_2=\nu\\ \nu_1\neq 0}} A_{j,\nu_1}\circ S_{\nu_2} .
\end{equation}
Summing these equalities over $j$, we get 
\begin{equation}\label{Frobeniusmethodconvergenceeq2}
 \langle \nu,e\rangle S_{\nu} + S_{\nu}\circ A_{\mathbf{0}} - A_{\mathbf{0}}\circ S_{\nu} = \sum_{\substack{ \nu_1+\nu_2=\nu\\ \nu_1\neq 0}} A_{\nu_1}\circ S_{\nu_2},
\end{equation}
where $e:=\sum_{j\in J}e_j$ and $A_{\nu}:=\sum_{j\in J}A_{j,\nu}$. 

Now give $\FFF$ a Hermitian metric. Take an integer $N$ greater than the norm of the operator $X\mapsto X\circ A_{\mathbf{0}}(y) -A_{\mathbf{0}}(y)\circ X$ on $\End(\FFF_y)$ for all $y\in U$, and an integer $m\geqslant 0$ satisfying $A_{\nu}\neq 0\implies \nu\in [0,m]^J$. Put $M:=1+\sup_{\nu\neq \mathbf{0}, y\in U} ||A_{\nu}(y)||$, which is finite because $U$ has compact closure. Equation \eqref{Frobeniusmethodconvergenceeq2} implies 
\[ ||S_{\nu}(y)||\leqslant \frac{M\left(\sum_{\nu_1\in [0,m]^J\setminus \{\mathbf{0}\}}|| S_{\nu-\nu_1}(y)|| \right)}{\langle \nu,e\rangle -N}\]
for all $y\in U$ and $\nu\in\mathbb{Z}_{\geqslant 0}^J$ satisfying $\langle \nu,e\rangle > N$. By induction, there exists $C>0$ such that 
\[ ||S_{\nu}(y)|| \leqslant C\frac{M^{\langle \nu,e\rangle}}{ \left(\left\lfloor \frac{\langle \nu,e\rangle -N}{(m+1)^{|J|}}\right\rfloor\right)!}\]
whenever $y\in U$ and $\langle \nu,e\rangle >N$. It follows that there exists a polynomial $f$ of degree $|J|-1$ such that for every $R>1$ and $\{q_j\}_{j\in J}\in \mathbb{C}^J$ satisfying $|q_j|\leqslant R$ for all $j$, the series $\sum_{\langle \nu,e\rangle >N} |q^{\nu}|\sup_{y\in U}||S_{\nu}(y)||$ is bounded by $\sum_{k=0}^{\infty}\frac{f(k)}{k!} (MR)^{N+(m+1)^{|J|}(k+1)}$, which is finite. This verifies the convergence of the series $\sum_{\nu\in\mathbb{Z}_{\geqslant 0}^{J}}q^{\nu}S_{\nu}$.

Finally, that $Sx$ is a numerical solution follows from \eqref{Frobeniusmethodconvergenceeq1}. The proof is complete.
\end{proof}
Now we restrict ourselves to the case $J=\{j_0\}$. Put $s:= q_{j_0}$ and $A_k:=A_{j_0,(k)}$ for $k\in\mathbb{Z}_{\geqslant 0}$. The system \eqref{frobeniusmethodeq1} becomes the differential equation 
\begin{equation}\label{frobeniusmethodeq3}
\left(s\frac{\partial}{\partial s}-\sum_{k\geqslant 0} s^kA_k\right)v(y,s) =0,\quad y\in Y.
\end{equation}
Suppose $\FFF$ has a global frame $\{v_{0,1},\ldots,v_{0,N}\}$ such that for every $i=1,\ldots,N$,
\begin{equation}\nonumber\label{frobeniusmethodeq0}
A_0v_{0,i} = \lambda_i v_{0,i}
\end{equation}
for some holomorphic function $\lambda_i$ on $Y$. 
\begin{lemma}\label{Frobeniusmethodexistence} 
Suppose $\lambda_{i_1}(y)-\lambda_{i_2}(y)\not\in \mathbb{Z}_{>0}$ for all $y\in Y$ and $1\leqslant i_1,i_2\leqslant N$. Then for every $i$, there exists a section $v_i$ of $\FFF\times\mathbb{R}_{>0}$ over $Y\times \mathbb{R}_{>0}$ satisfying
\begin{enumerate}
\item it is holomorphic in $y\in Y$ and smooth in $s\in \mathbb{R}_{>0}$;

\item it is a numerical solution to \eqref{frobeniusmethodeq3}; and

\item it has an expansion
\begin{equation}\label{frobeniusmethodeq2}
 v_i(y,s) = s^{\lambda_i(y)}(v_{0,i}(y)+v_{1,i}(y)s+v_{2,i}(y)s^2+\cdots )\qquad y\in Y, s\in \mathbb{R}_{>0}
\end{equation}
for some holomorphic sections $v_{1,i}, v_{2,i},\ldots $ on $Y$.
\end{enumerate}
Moreover, $\{v_1,\ldots,v_N\}$ forms a basis of the space of solutions to \eqref{frobeniusmethodeq3} over the ring of holomorphic functions on $Y$.
\end{lemma}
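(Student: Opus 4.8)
The plan is to construct the sections $v_i$ by the classical Frobenius method: substitute the ansatz \eqref{frobeniusmethodeq2} into \eqref{frobeniusmethodeq3}, read off a recursion for the coefficient sections $v_{k,i}$, use the non-resonance hypothesis to solve the recursion with holomorphic sections, prove convergence of the resulting series, and finally deduce the basis statement by a Wronskian-type argument near $s=0^+$.

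First I would substitute $v_i(y,s)=s^{\lambda_i(y)}\sum_{k\geq 0}v_{k,i}(y)s^k$ into \eqref{frobeniusmethodeq3}. Using $s\partial_s\bigl(s^{\lambda_i}s^k v_{k,i}\bigr)=(\lambda_i+k)s^{\lambda_i}s^k v_{k,i}$ and collecting powers of $s$, one obtains for every $k\geq 0$ the equation
\[ \bigl((\lambda_i+k)\id-A_0\bigr)v_{k,i}=\sum_{m=1}^{k}A_m v_{k-m,i}, \]
the $k=0$ case being $(\lambda_i\id-A_0)v_{0,i}=0$, which holds by hypothesis. For $k\geq 1$, in the global frame $\{v_{0,1},\ldots,v_{0,N}\}$ the endomorphism $A_0$ is diagonal with entries $\lambda_1,\ldots,\lambda_N$, so $(\lambda_i+k)\id-A_0$ is diagonal with entries $\lambda_i(y)+k-\lambda_j(y)$, none of which vanishes precisely because $\lambda_j(y)-\lambda_i(y)\notin\mathbb{Z}_{>0}$. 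Hence $(\lambda_i+k)\id-A_0$ is invertible on every fiber, its inverse is a holomorphic section of $\End(\FFF)$, and since only finitely many $A_m$ are nonzero the recursion determines holomorphic sections $v_{k,i}$ uniquely.

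Next I would prove that $\sum_{k\geq 0}v_{k,i}(y)s^k$ converges to a holomorphic section of $\FFF\times\mathbb{C}$ over $Y\times\mathbb{C}$. This is the most technical step, but it is entirely parallel to the estimate in the proof of Lemma \ref{Frobeniusmethodconvergence}: since $\|\bigl((\lambda_i+k)\id-A_0\bigr)^{-1}\|$ is $O(1/k)$ locally uniformly in $y$ and $A_m=0$ for $m$ larger than some $m_0$, induction on $k$ yields a bound of the shape $\|v_{k,i}(y)\|\leq C\,M^{k}/\lfloor k/m_0\rfloor!$ on a neighbourhood of any point of $Y$, forcing convergence for all $s\in\mathbb{C}$; because a solution of \eqref{frobeniusmethodeq3} with leading term $s^{\lambda_i}v_{0,i}$ is unique, the local constructions patch to a global section. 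Then $v_i(y,s):=s^{\lambda_i(y)}\sum_{k\geq 0}v_{k,i}(y)s^k$, with $s^{\lambda_i(y)}=e^{\lambda_i(y)\log s}$ for $s\in\mathbb{R}_{>0}$ and $\log$ the real logarithm, satisfies properties (1)--(3) by construction.

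Finally, for the basis statement I would consider the matrix $V(y,s)$ whose columns are $v_1(y,s),\ldots,v_N(y,s)$; it factors as $V(y,s)=M(y,s)\,\mathrm{diag}\bigl(s^{\lambda_1(y)},\ldots,s^{\lambda_N(y)}\bigr)$, where $M(y,s)\to[v_{0,1}(y)\,|\cdots|\,v_{0,N}(y)]$ as $s\to 0^+$. Since $\{v_{0,i}\}$ is a frame, $M(y,s)$ is invertible for small $s>0$, hence so is $V(y,s)$; as the $v_i(y,\cdot)$ are $N$ solutions of an $N$-dimensional first-order linear system, for each $y$ they form a $\mathbb{C}$-basis of its solution space. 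Consequently they are linearly independent over the ring of holomorphic functions on $Y$, and any solution $w$ holomorphic in $y$ can be written locally as $\sum_i c_i(y)v_i(y,s)$ with $c(y)=V(y,s_0)^{-1}w(y,s_0)$ for a small fixed $s_0$; these coefficients $c_i$ are holomorphic in $y$ and independent of $s_0$, so they glue to holomorphic functions on $Y$, which proves the claim.
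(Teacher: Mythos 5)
Your proof is correct and follows the classical Frobenius method, which is exactly what the paper does. The only organizational difference is that you derive a \emph{vector}-valued recursion $((\lambda_i+k)\id-A_0)v_{k,i}=\sum_{m\geq 1}A_m v_{k-m,i}$ for each $i$ separately and then redo the convergence estimate in parallel with Lemma \ref{Frobeniusmethodconvergence}, whereas the paper solves the \emph{operator}-valued recursion \eqref{Frobeniusmethodconvergenceeq1} once (its solvability uses that $X\mapsto kX+X\circ A_0-A_0\circ X$ is invertible), sets $v_i=Sv_{0,i}$, and then cites Lemma \ref{Frobeniusmethodconvergence} directly to get convergence and properties (1)--(3) without repeating the estimate. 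Since $v_{k,i}=S_k(v_{0,i})$, the two constructions produce the same sections; your version is a touch more elementary while the paper's reuses the preceding lemma more economically. Your argument for the basis statement (factoring the fundamental matrix as $M(y,s)\cdot\mathrm{diag}(s^{\lambda_i})$ and letting $s\to 0^+$) coincides with the paper's argument via $\overline{v}_i(s):=s^{-\lambda_i(y)}v_i(y,s)$, and your extra remarks on why the coefficients $c_i$ are holomorphic and patch are a welcome elaboration of a point the paper leaves terse.
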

\begin{proof}
The condition on $\lambda_i$ implies that the operator $X\mapsto k X+ X\circ A_{0}-A_{0}\circ X$ is invertible for all positive integer $k$. It follows that we can solve the recurrence relation \eqref{Frobeniusmethodconvergenceeq1} to get $S$, starting with $S_0=\id$. The desired solution $v_i$ is just the restriction of $Sv_{0,i}$ to $Y\times \mathbb{R}_{>0}$. Properties (1), (2) and (3) follow immediately from Lemma \ref{Frobeniusmethodconvergence}.

It remains to verify that $\{v_1,\ldots,v_N\}$ is a basis of the space of solutions to \eqref{frobeniusmethodeq3}. Let $y\in Y$. By the uniqueness result in ODE theory, it suffices to show that $\{v_1(y,s_0),\ldots,v_N(y,s_0)\}$ is a basis of $\FFF_y$ for some $s_0\in\mathbb{R}_{>0}$. Let $\ol{v}_i(s):=s^{-\lambda_i(y)}v_i(y,s)$. Then $\ol{v}_i(0):=\lim_{s\to 0^+} \ol{v}_i(s)=v_{0,i}(y)$. It follows that $\{\ol{v}_1(0),\ldots,\ol{v}_N(0)\}$ is a basis of $\FFF_y$, and hence the same is true for $\{\ol{v}_1(s),\ldots,\ol{v}_N(s)\}$ whenever $s\in \mathbb{R}_{>0}$ is small enough. Since $v_i(y,s)$ is a non-zero scalar multiple of $\ol{v}_i(s)$, the result follows. 
\end{proof}
\section{Proofs from Section \ref{Section-bmodel}}\label{Subsection-proofs-from-bmodel}
\begin{proof}[Proof of Lemma \ref{gammaLusztig}]
Following \cite{BZ}, we write $x=[x]_+[x]_0[x]_-$ for $x\in UTU^-$ with $[x]_+\in U$, $[x]_0\in T$ and $[x]_-\in U^-$. By recalling the definitions of $\XXp_{\iii}$, $\eta^{w_P}$ and $\ttp_{\iii}$ (Definition \ref{toruschartforgeometriccrystaldef}, Definition \ref{twistmapdef} and Definition \ref{toruschartforBruhatdef}(2)), we see that 
\begin{align*}
(\gammam\circ\XXp_{\iii})(t,a_1,\ldots,a_{\ell})  & = t \cdot[\ol{w_P}~ \iota(\ttp_{\iii}(a_1,\ldots,a_{\ell}))]_0 \\
& = t\cdot [\ol{w_P}~ x_{i_{\ell}}(a_{\ell})\cdots x_{i_1}(a_1)]_0 .
\end{align*}

It suffices to prove the following more general result: For every reduced decomposition $\mathbf{j}=(j_m,\ldots,j_1)$ of an element $w\in W$ (note the unusual ordering) and $b_1,\ldots,b_m\in \mathbb{G}_m$, we have
\[ [\ol{w}~ x_{j_1}(b_1)\cdots x_{j_m}(b_m)]_0 = \prod_{k=1}^m \gamma_k^{\vee}(b_k),\]
where $\gamma_k^{\vee}:= - s_{j_m}\cdots s_{j_{k+1}}(\alpha_{j_k}^{\vee})$\footnote{$\ol{w}~ x_{j_1}(b_1)\cdots x_{j_m}(b_m)\in UTU^-$ because $x_{j_1}(b_1)\cdots x_{j_m}(b_m)\in B^-w^{-1}B^-$.}.

We prove it by induction on $m=\ell(w)$. Write $w=s_{j_m}w_1$. Then $(j_{m-1},\ldots,j_1)$ is a reduced decomposition of $w_1$. Put $z:=\ol{w_1}~ x_{j_1}(b_1)\cdots x_{j_{m-1}}(b_{m-1})$ and write $[z]_-= y_{j_m}(c)u$, where $c:=\liecharacter_{j_m}([z]_-)$. Then
\begin{align*}
&\ol{w}~ x_{j_1}(b_1)\cdots x_{j_m}(b_m) \\
=~&  \ol{s_{j_m}}~ z~ x_{j_m}(b_m)\\
=~ & \ol{s_{j_m}}~ [z]_+[z]_0[z]_- x_{j_m}(b_m)\\
=~& \left( \ol{s_{j_m}}~[z]_+~ \ol{s_{j_m}}^{-1}\right) s_{j_m}([z]_0)\left( \ol{s_{j_m}} ~y_{j_m}(c)~
x_{j_m}(b_m)\right) \left(x_{j_m}(b_m)^{-1} ~u~ x_{j_m}(b_m)\right).
\end{align*}
Observe that $x_{j_1}(b_1)\cdots x_{j_{m-1}}(b_{m-1})\in B^-w_1^{-1}B^-$, and hence $z\in w_1B^-w_1^{-1}B^-=U(w_1)B^-$, where $U(w_1):= U\cap w_1U^-w_1^{-1}$. Since $\ell(w)=\ell(w_1)+1$, we have $\character_{j_m}|_{U(w_1)}\equiv 0$, and hence $\ol{s_{j_m}}~[z]_+~ \ol{s_{j_m}}^{-1}\in U$. It is clear that $x_{j_m}(b_m)^{-1} ~u~ x_{j_m}(b_m)\in U^-$ by the definition of $u$. By playing with $2\times 2$ matrices, we see that $[\ol{s_{j_m}} ~y_{j_m}(c)~
x_{j_m}(b_m)]_0 = \alpha_{j_m}^{\vee}(b_m^{-1})$. Therefore, by induction,
\begin{align*}
[\ol{w}~ x_{j_1}(b_1)\cdots x_{j_m}(b_m)]_0 &~= ~s_{j_m}([z]_0)\alpha_{j_m}^{\vee}(b_m^{-1}) \\
&~=~\left( \prod_{k=1}^{m-1} s_{j_m}\left((- s_{j_{m-1}}\cdots s_{j_{k+1}}(\alpha_{j_k}^{\vee}))(b_k)\right) \right) \cdot (-\alpha_{j_m}^{\vee})(b_m) \\
&~= ~\prod_{k=1}^m \gamma_k^{\vee}(b_k)
\end{align*}
as desired. 
\end{proof}
\bigskip
\begin{proof}[Proof of Lemma \ref{omegaLusztig}]
By the definitions of $\vol$ and $\XXp_{\iii}$, it suffices to show that the pull-back of $\omega_{\UU}$ (see Definition \ref{volumeformdef}) by the composite morphism
\[ \mathbb{G}_m^{\ell} \xhookrightarrow{~\ttp_{\iii}~}  U^{w_P}\xrightarrow[\simeq]{~\eta^{w_P}~}  B^-_{w_P} \xrightarrow[\simeq]{\zeta~:~ x~\mapsto~ x^{-1}P}   \UU\]
is equal to a non-zero scalar multiple of $\frac{da_1\wedge\cdots\wedge da_{\ell}}{a_1\cdots a_{\ell}}$.

The following arguments are due to Lam \cite[Proposition 2.11]{Lam}, and we provide the details for the reader's convenience. By \cite[Proposition 7.2]{Rietsch}, there exists a volume form $\omega_{U^{w_P}}$ on $U^{w_P}$ such that 
\begin{equation}\label{omegaLusztigeq1}
(\ttp_{\iii'})^*\omega_{U^{w_P}} = \pm \frac{da_1\wedge\cdots\wedge da_{\ell}}{a_1\cdots a_{\ell}}
\end{equation}
for all reduced decomposition $\iii'$ of $w_P$. Hence it suffices to show that $\omega':= \left((\eta^{w_P})^{-1}\circ \zeta^{-1}\right)^*\omega_{U^{w_P}}$ is a non-zero scalar multiple of $\omega_{\UU}$. By \cite[Lemma 2.10]{Lam}, \eqref{omegaLusztigeq1} implies that $\omega'$ has at worst simple pole along every irreducible component of the boundary divisor $(G/P)\setminus \UU$. It follows that the rational function $\omega'/\omega_{\UU} $ on $G/P$ has no poles and hence must be a non-zero constant. The result follows.
\end{proof}

\bigskip
\begin{proof}[Proof of Lemma \ref{gmactionwelldefined}]
Let $c\in\mathbb{G}_m$ and $x\in\X\setminus\{\varphi_i=0\}$. Write $x=u\gammam(x)$ with $u\in U^-$, and then $u=u'y_i(\liecharacter_i(u))=u'y_i(\varphi_i(x))$. We have 
\begin{align*}
e_i^c(x) =~& \left( x_i\left(\frac{c-1}{\varphi_i(x)}\right)\cdot u'\cdot  x_i\left(\frac{-c+1}{\varphi_i(x)}\right) \cdot y_i(c^{-1}\varphi_i(x)) \right)\times \\
& \left( y_i(-c^{-1}\varphi_i(x)) \cdot x_i\left(\frac{c-1}{\varphi_i(x)}\right)\cdot y_i(\varphi_i(x))\cdot \gammam(x)  \cdot x_i\left(\frac{c^{-1}-1}{\varepsilon_i(x)}\right)\right).
\end{align*}
It is straightforward to see that the first factor in the last expression lies in $U^-$ and the second factor is equal to $\alpha^{\vee}_i(c)\cdot\gammam(x)\in T$. Observe that $\liecharacter_i$ vanishes at $x_i\left(\frac{c-1}{\varphi_i(x)}\right)\cdot u'\cdot  x_i\left(\frac{-c+1}{\varphi_i(x)}\right)$. It follows that 
\[ \varphi_i\circ e_i^c = c^{-1}\cdot \varphi_i,\quad \gammam\circ e_i^c = \alpha^{\vee}_i(c)\cdot\gammam\]
and 
\[ \varepsilon_i\circ e_i^c = (\varphi_i\circ e_i^c)\cdot (\alpha_i\circ\gammam\circ e_i^c)= c\cdot \varepsilon_i. \]
This shows that $e_i$ is regular on $\mathbb{G}_m\times(\X\setminus\{\varphi_i=0\})$ and takes values in $\X\setminus\{\varphi_i=0\}$.

It remains to show that $c\mapsto e^c_i$ defines a $\mathbb{G}_m$-action. Let $c_1,c_2\in \mathbb{G}_m$ and $x\in \X\setminus\{\varphi_i=0\}$. We have
\begin{align*}
(e_i^{c_1}\circ e_i^{c_2})(x)=~& x_i\left(\frac{c_1-1}{\varphi_i(e_i^{c_2}(x))}\right) \cdot x_i\left(\frac{c_2-1}{\varphi_i(x)}\right)\cdot x\cdot x_i\left(\frac{c_2^{-1}-1}{\varepsilon_i(x)}\right)\cdot x_i\left(\frac{c_1^{-1}-1}{\varepsilon_i(e_i^{c_2}(x))}\right)  \\
=~& x_i\left(\frac{\frac{c_1-1}{c_2^{-1}} +c_2-1}{\varphi_i(x)}\right)\cdot x\cdot x_i\left(\frac{c_2^{-1}-1+\frac{c_1^{-1}-1}{c_2}}{\varepsilon_i(x)}\right)\\
=~& x_i\left(\frac{c_1c_2-1}{\varphi_i(x)}\right)\cdot x\cdot x_i\left(\frac{(c_1c_2)^{-1}-1}{\varepsilon_i(x)}\right)\\
=~& e_i^{c_1c_2}(x). 
\end{align*}
The proof is complete.
\end{proof}
\bigskip

\begin{proof}[Proof of Lemma \ref{Wpreserveomega}]
It suffices to show that for every $i\in I$, $s_i^*\vol=-\vol$ holds fiberwise. Let $t\in\ZL$. Put $\Xt:=\pim^{-1}(t)$, $\gammamt:=\gammam|_{\Xt}$, $e_{i,t}^c:=e_i^c|_{\Xt}$ and $s_{i,t}:=s_i|_{\Xt}$. Denote by $\omega_t\in\Omega^{top}(\Xt)$ the restriction of $\vol$ to $\Xt$. We have an isomorphism $\zeta_t:\Xt\xrightarrow{\sim}\UU$ defined by $x\mapsto x^{-1}P$, and by definition, we have $\omega_t=\zeta_t^*\omega_{\UU}$, where $\UU$ and $\omega_{\UU}$ come from Definition \ref{volumeformdef}.

\begin{lemma}\label{omegaisweightvector}
$\omega_t$ is a weight vector with respect to the $\mathbb{G}_m$-action $c\mapsto e^c_{i,t}$.
\end{lemma}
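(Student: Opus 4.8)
The plan is to reduce the statement to the standard description of invertible regular functions on a product of integral varieties. Since $\X$ is smooth and $\pim\colon\X\to\ZL$ is smooth, $\Xt=\pim^{-1}(t)$ is a smooth variety; by Lemma \ref{geometriccrystaltrivial} it is isomorphic to $B^-_{w_P}$, which is irreducible (it contains a dense torus by Lemma \ref{toruschartforBruhatlemma}). We may assume $\varphi_i$ is not identically zero on $\Xt$, since otherwise there is no $\mathbb{G}_m$-action $c\mapsto e^c_{i,t}$ to speak of. Then $Y:=\{x\in\Xt\mid\varphi_i(x)\neq 0\}$ is a dense open, hence irreducible, subvariety of $\Xt$ on which, by Lemma \ref{gmactionwelldefined}, the rational map $c\mapsto e^c_{i,t}$ restricts to a genuine $\mathbb{G}_m$-action. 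I would also record at the outset that $\omega_t$ is a nowhere-vanishing regular top form on all of $\Xt$: by its construction in Definition \ref{volumeformdef}, $\omega_{\UU}$ is a nowhere-vanishing regular top form on $\UU$ (its only poles are the simple poles along the components of the boundary divisor $D=(G/P)\setminus\UU$, which is disjoint from $\UU$), and $\omega_t=\zeta_t^{*}\omega_{\UU}$ via the isomorphism $\zeta_t\colon\Xt\xrightarrow{\sim}\UU$.

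With these preliminaries in place, the pullbacks $(e^c_{i,t})^{*}\omega_t$ and $\omega_t$ are both nowhere-vanishing regular top forms on $Y$, so their ratio defines a morphism $\Psi\colon\mathbb{G}_m\times Y\to\mathbb{G}_m$ with $(e^c_{i,t})^{*}\omega_t=\Psi(c,-)\cdot\omega_t$; in particular $\Psi\in\mathcal{O}^{\times}(\mathbb{G}_m\times Y)$. Since $\mathbb{G}_m$ and $Y$ are integral varieties over the algebraically closed field $\mathbb{C}$, every invertible regular function on $\mathbb{G}_m\times Y$ is, up to a reciprocal pair of scalars, a product of one pulled back from $\mathbb{G}_m$ and one pulled back from $Y$; as $\mathcal{O}^{\times}(\mathbb{G}_m)=\{a c^{m}\mid a\in\mathbb{C}^{\times},\ m\in\mathbb{Z}\}$, this yields $\Psi(c,x)=a\,c^{m}\,h(x)$ for some $a\in\mathbb{C}^{\times}$, $m\in\mathbb{Z}$ and $h\in\mathcal{O}^{\times}(Y)$. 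Evaluating at $c=1$ and using $e^1_{i,t}=\id_{\Xt}$ gives $1=\Psi(1,x)=a\,h(x)$ for all $x\in Y$, so $h\equiv a^{-1}$ and hence $\Psi(c,x)=c^{m}$. Thus $(e^c_{i,t})^{*}\omega_t=c^{m}\omega_t$ on $Y$; since $Y$ is dense in $\Xt$ and $\omega_t$ is regular there, this identity of (rational) top forms holds on all of $\Xt$, exhibiting $\omega_t$ as a weight vector of weight $m\in\mathbb{Z}$ for the action $c\mapsto e^c_{i,t}$.

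I do not expect a genuine obstacle here: the argument is soft once the two inputs feeding it are secured, namely that $\omega_t$ extends to a \emph{global} nowhere-vanishing regular top form on $\Xt$ (so that $\Psi$ is a bona fide unit and the final extension from $Y$ to $\Xt$ is legitimate) and that $e_{i,t}$ is an honest group action on a dense open subvariety of the irreducible variety $\Xt$. If one prefers to avoid the product formula, the same conclusion follows from Rosenlicht's unit theorem applied to $Y$ alone: the group $\mathcal{O}^{\times}(Y)/\mathbb{C}^{\times}$ is finitely generated and torsion-free and carries the trivial action of the connected group $\mathbb{G}_m$, so the cocycle $c\mapsto[(e^c_{i,t})^{*}\omega_t/\omega_t]$ is a homomorphism out of the divisible group $\mathbb{C}^{\times}$ into a torsion-free group, hence vanishes; therefore $(e^c_{i,t})^{*}\omega_t/\omega_t$ is a scalar for every $c$, and being a regular homomorphism $\mathbb{G}_m\to\mathbb{G}_m$ it equals $c^{m}$.
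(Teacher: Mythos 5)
Your proof is correct, and it takes a genuinely different route from the paper's. The paper restricts to $\Xti:=\Xt\setminus\{\varphi_i=0\}$, considers the induced linear $\mathbb{G}_m$-action on $V:=\Omega^{top}(\Xti)$, and shows that the orbit of $\omega_t|_{\Xti}$ lies in a single line by proving that every finite-dimensional $\mathbb{G}_m$-submodule $V_n\subseteq V$ meets the set $S$ of nowhere-vanishing forms in only a finite union of lines. That step is carried out by hand: $\mathcal{O}(\Xti)\simeq \mathbb{C}[x_1,\dots,x_N][f^{-1}]$ (because $\Xt\simeq\UU$ is the complement of a divisor in a Schubert cell), the units of this ring lying in the polynomial ring are monomials in the irreducible factors of $f$ up to scalar, and a finite-dimensional subspace of $A[f^{-1}]$ contains only boundedly many exponent vectors, hence finitely many non-homothetic units. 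You instead observe directly that $\Psi(c,x):=(e^c_{i,t})^*\omega_t/\omega_t$ is a unit on $\mathbb{G}_m\times Y$ and invoke Rosenlicht's unit theorem (in either the product-factorization form or the $\mathcal{O}^\times(Y)/\mathbb{C}^\times$ form) to force $\Psi$ to be a character of $\mathbb{G}_m$. Both proofs rely on the same underlying geometric input — that $\omega_t$ is a global nowhere-vanishing top form on $\Xt$, pulled back from $\omega_{\UU}$ — and on the same algebraic principle that units of an integral variety over $\mathbb{C}$ are finitely generated modulo scalars; the paper essentially reproves that principle in the special case at hand, while you cite it. Your version is shorter and cleaner; the paper's is more self-contained and avoids quoting Rosenlicht. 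One small remark: you note "we may assume $\varphi_i$ is not identically zero on $\Xt$," which is guaranteed here by Lemma \ref{Wpreservepositivepart} (the totally positive part is non-empty and $\varphi_i>0$ there, cf. Remark \ref{gmactionwelldefinedremark0}), so this is not a real hypothesis.
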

\begin{proof}
Put $\Xti:=\Xt\setminus \{\varphi_{i}=0\}$ and $V:=\Omega^{top}(\Xti)$. Define $S$ to be the set of $\omega\in V$ that are nowhere vanishing. Note $\omega_{t}|_{\Xti}\in S$. We are going to prove that every element of $S$ is a weight vector. The $\mathbb{G}_m$-action $c\mapsto e^c_{i,t}$ induces a linear $\mathbb{G}_m$-action on $V$ preserving $S$. There exists a sequence of sub-$\mathbb{G}_m$-modules
\[ 0=V_0\subseteq V_1\subseteq V_2\subseteq \cdots \subseteq V\]
such that each $V_n$ is finite dimensional and $V=\bigcup_{n=0}^{\infty}V_n$. We are done if we can show that for every $n$, $V_n\cap S$ is contained in a finite union of one-dimensional vector subspaces.

Consider the map $\mathcal{O}(\Xti)\rightarrow V$ defined by $\varphi\mapsto \varphi\omega_{t}|_{\Xti}$. It is an isomorphism of vector spaces (a priori not necessarily of $\mathbb{G}_m$-modules) sending $\mathcal{O}(\Xti)^{\times}$ to $S$. Since $\Xt$ is isomorphic to $\UU$, which is a divisor complement of a Schubert cell, $\mathcal{O}(\Xti)$ is isomorphic to the localization of a polynomial algebra $A:=\mathbb{C}[x_1,\ldots,x_N]$ by a non-zero polynomial $f$. Our goal becomes showing that every finite dimensional vector subspace $W$ of $A[f^{-1}]$ contains only finitely many non-homothetic units. By multiplying a power of $f$, we may assume $W\subseteq A$. Observe that every unit of $A[f^{-1}]$ lying in $A$ is of the form $c f_1^{e_1}\cdots f_k^{e_k}$, where $c\in \mathbb{C}^{\times}$, $f_1,\ldots, f_k$ are the irreducible divisors of $f$ and $e_1,\ldots,e_k\in\mathbb{Z}_{\geqslant 0}$. Up to homothety, there are only finitely many of them that lie in $W$, since the exponents $e_i$ are bounded by $\sup_{g\in W}\deg g$, which is finite. We are done.
\end{proof}
 
\begin{lemma}\label{gammanonconst}
$\alpha_i\circ \gammamt$ is non-constant.
\end{lemma}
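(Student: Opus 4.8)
The plan is to trivialise the picture over a torus chart and reduce the claim to an elementary fact about the root system. Fix a reduced decomposition $\iii=(i_1,\ldots,i_\ell)$ of $w_P$ and consider the open immersion $\XXp_{\iii}\colon \ZL\times\mathbb{G}_m^\ell\hookrightarrow\X$ from Definition \ref{toruschartforgeometriccrystaldef}; by Lemma \ref{geometriccrystaltrivial} it is a morphism over $\ZL$, so its restriction to $\{t\}\times\mathbb{G}_m^\ell$ lands in the fibre $\Xt$ and is non-empty. Hence it suffices to show that $\alpha_i\circ\gammamt$ is non-constant there. By Lemma \ref{gammaLusztig},
\[
(\alpha_i\circ\gammam\circ\XXp_{\iii})(t,a_1,\ldots,a_\ell)=\alpha_i(t)\prod_{k=1}^{\ell}a_k^{\langle\alpha_i,\beta^\vee_k\rangle},
\]
so $\alpha_i\circ\gammamt$ can be constant only if $\langle\alpha_i,\beta^\vee_k\rangle=0$ for every $k$. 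By Lemma \ref{betadescription}, $\{\beta^\vee_1,\ldots,\beta^\vee_\ell\}=\{\alpha^\vee\mid\alpha\in -(R^+\setminus R^+_P)\}$, and since passing to $-\alpha$ only flips the sign of $\alpha^\vee$, the proof is reduced to ruling out the possibility that $\langle\alpha_i,\alpha^\vee\rangle=0$ for all $\alpha\in R^+\setminus R^+_P$.

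To finish I would show that the coroots $\{\alpha^\vee\mid\alpha\in R^+\setminus R^+_P\}$ span $\co(T)\otimes\mathbb{R}$; since $\alpha_i\neq 0$, this contradicts the displayed vanishing. Let $V$ be their $\mathbb{R}$-span. For $k\in I\setminus I_P$ we have $\alpha_k\in R^+\setminus R^+_P$, so $\alpha_k^\vee\in V$. For $k\in I_P$, choose a path $k=p_0,p_1,\ldots,p_m$ in the (connected, because $G$ is simple) Dynkin diagram with $p_m\in I\setminus I_P$ and $p_1,\ldots,p_{m-1}\in I_P$; then $\alpha:=\alpha_{p_0}+\cdots+\alpha_{p_m}$ and $\mu:=\alpha_{p_1}+\cdots+\alpha_{p_m}$ are roots (each being a sum of simple roots over a connected subdiagram), both lie in $R^+\setminus R^+_P$ since they involve $\alpha_{p_m}$, and $\alpha=\alpha_k+\mu$. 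Comparing with the identity $\alpha^\vee=\tfrac{(\alpha_k,\alpha_k)}{(\alpha,\alpha)}\alpha_k^\vee+\tfrac{(\mu,\mu)}{(\alpha,\alpha)}\mu^\vee$ (for a $W$-invariant form $(\cdot,\cdot)$) and solving for $\alpha_k^\vee$ exhibits $\alpha_k^\vee$ as an $\mathbb{R}$-linear combination of $\alpha^\vee,\mu^\vee\in V$, so $\alpha_k^\vee\in V$. Hence $V$ contains every simple coroot, so $V=\co(T)\otimes\mathbb{R}$, which completes the argument.

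I do not anticipate a genuine obstacle; the one slightly non-routine ingredient is the standard fact that the sum of the simple roots indexed by a connected subdiagram of the Dynkin diagram is again a root (proved by induction, peeling off a leaf), and the only things to keep track of are the root/coroot pairing conventions and the verification that the torus chart really meets each $\Xt$, so that non-constancy there is equivalent to non-constancy on all of $\Xt$. (Alternatively one can avoid the root-theoretic input: the rational $\mathbb{G}_m$-action $e_i$ preserves each fibre $\Xt$ — immediate from the formula for $e_i$, whose extra factors lie in $U$ — and satisfies $\gammam\circ e_i^c=\alpha^\vee_i(c)\cdot\gammam$ by the proof of Lemma \ref{gmactionwelldefined}, so along an $e_i$-orbit in $\Xt$ the value of $\alpha_i\circ\gammamt$ gets multiplied by $\alpha_i(\alpha^\vee_i(c))=c^2$; this forces non-constancy as soon as one checks that $\varphi_i$ is not identically zero on $\Xt$.)
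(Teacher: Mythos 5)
Your proof is correct and, in its first half (invoking Lemma \ref{geometriccrystaltrivial}, Lemma \ref{gammaLusztig} and Lemma \ref{betadescription} to reduce to the impossibility of $\alpha^{\vee}(\alpha_i)=0$ for all $\alpha\in R^+\setminus R^+_P$), it matches the paper's argument exactly. The second half differs: the paper runs a propagation argument through the Dynkin diagram, showing that if $\alpha_i$ were orthogonal to every coroot in $R\setminus R_P$ then the set $J'$ of all such $j$ would grow to all of $I$, forcing $I=I_P$ (excluded by hypothesis); you instead prove directly that the coroots $\{\alpha^{\vee}\mid\alpha\in R^+\setminus R^+_P\}$ span $\mathfrak{t}_{\mathbb{R}}$, by taking for each $k\in I_P$ a simple path in the (connected) Dynkin diagram to $I\setminus I_P$, observing that both $\alpha=\alpha_{p_0}+\cdots+\alpha_{p_m}$ and $\mu=\alpha_{p_1}+\cdots+\alpha_{p_m}$ are roots in $R^+\setminus R^+_P$, and solving the coroot identity $\alpha^{\vee}=\tfrac{(\alpha_k,\alpha_k)}{(\alpha,\alpha)}\alpha_k^{\vee}+\tfrac{(\mu,\mu)}{(\alpha,\alpha)}\mu^{\vee}$ for $\alpha_k^{\vee}$. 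Both routes rest on the same two inputs (simplicity of $G$, i.e.\ connectedness of the Dynkin diagram, and the exclusion $I_P\neq I$); yours is a bit more hands-on with the root system, the paper's a bit more combinatorial. One cosmetic point: you should say \emph{simple} (non-repeating) path so that $\{p_0,\ldots,p_m\}$ is genuinely a connected subdiagram and the sums are actually roots. Your parenthetical alternative via the $e_i$-action and $\gammam\circ e_i^c=\alpha_i^{\vee}(c)\cdot\gammam$ is also sound, but as you note it shifts the burden to showing $\varphi_i\not\equiv 0$ on $\Xt$, which in the paper's arrangement is essentially the content of Lemma \ref{Wpreservepositivepartlemma} and so is not a free lunch.
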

\begin{proof}
Suppose $\alpha_i\circ \gammamt$ is constant. By Lemma \ref{gammaLusztig}, we have $\beta_k^{\vee}(\alpha_i)=0$ for all $1\leqslant k\leqslant\ell$. By Lemma \ref{betadescription}, which says $\{\beta_k^{\vee}\}_{k=1}^{\ell}=\{\alpha^{\vee}\}_{\alpha\in -(R^+\setminus R^+_P)}$, we have $\alpha^{\vee}(\alpha_i)=0$ for all $\alpha\in R\setminus R_P$.

Define 
\[ J':=\{j\in I|~\alpha^{\vee}(\alpha_j)=0~\text{ for all }\alpha\in R\setminus R_P\}\]
and $J_0\subseteq J_1\subseteq J_2\subseteq \cdots \subseteq I$ inductively by $J_0:=\{i\}$ and 
\[ J_{r+1}:=\{j\in I|~\alpha_j^{\vee}(\alpha_{j'})\neq 0~\text{ for some }j'\in J_r\}\qquad r\geqslant 0.\]
Notice that $J'\subseteq I_P$. Since $G$ is simple, we have $J_r=I$ for sufficiently large $r$. We are done if we can show that for every $r\geqslant 0$,
\[ J_r\subseteq J'\Longrightarrow J_{r+1}\subseteq J'\]
because this will force $I=J'$, and hence $I=I_P$ which we have excluded at the beginning (Section \ref{Subsection-Bmodelnotation}).

Suppose $J_r\subseteq J'$ for some $r\geqslant 0$. Let $j\in J_{r+1}$. Suppose $j\not\in J'$. Then there exists $\alpha\in R\setminus R_P$ such that $\alpha^{\vee}(\alpha_j)\ne 0$, and hence there exists $\beta\in R$ such that $\beta^{\vee}$ is equal to $\alpha^{\vee}+\alpha^{\vee}_j$ or $\alpha^{\vee}-\alpha^{\vee}_j$. On the other hand, since $j$ belongs to $J_{r+1}$, there exists $j'\in J_r$ such that $\alpha^{\vee}_j(\alpha_{j'})\neq 0$. By the assumption $J_r\subseteq J'$, we have $j'\in J'$, and hence $\alpha^{\vee}(\alpha_{j'})=0$. It follows that $\beta^{\vee}(\alpha_{j'})\neq 0$, and hence $\beta\in R_P$ (by $j'\in J'$). Thus, we must have $\alpha_j\in R\setminus R_P$, which implies $\alpha^{\vee}_j(\alpha_{j'})=0$ (by $j'\in J'$ again), a contradiction.
\end{proof}

Let $k\in\mathbb{Z}$ be the weight of $\omega_{t}$ (Lemma \ref{omegaisweightvector}). By the equality $\gammam\circ e^c_{i} =\alpha^{\vee}_i(c)\cdot \gammam$ (see the proof of Lemma \ref{gmactionwelldefined}), Lemma \ref{gammanonconst} and a straightforward computation, we have $s_{i,t}^*\omega_{t}= -(\alpha_i\circ\gammamt)^{-k} \omega_{t}$. Hence it remains to prove $k=0$. By the proof of Lemma \ref{omegaLusztig}, $\omega_{\UU}$ is \textit{dlog}, i.e., there exist rational functions $\varphi_1,\ldots,\varphi_{\ell}$ on $\UU$ such that $\omega_{\UU}=\frac{d\varphi_1\wedge\cdots \wedge d\varphi_{\ell}}{\varphi_1\cdots \varphi_{\ell}}$. Since $s_{i,t}$ is a birational equivalence on $\Xt$, $\omega'_{\UU}:= (\zeta_t\circ s_{i,t}\circ\zeta_t^{-1})^*\omega_{\UU}$ is also dlog. By \cite[Lemma 2.10]{Lam}, $\omega'_{\UU}$ has at worst simple pole along every irreducible component of the boundary divisor $(G/P)\setminus \UU$. It follows that $-(\alpha_i\circ\gammamt\circ\zeta_t^{-1})^{-k}=\omega'_{\UU}/\omega_{\UU}$ has no poles along these irreducible components. Since $\gammamt$ is regular on $\Xt$, $(\alpha_i\circ\gammamt)^{-k}$ must be constant. By Lemma \ref{gammanonconst}, we conclude $k=0$ as desired.
\end{proof}
\bigskip 
\begin{proof}[Proof of Lemma \ref{Wpreservepositivepart}]
It suffices to show that for every $i\in I$, $\Xp$ lies in the domain of definition of $s_i$ and is preserved by it. This will follow if we can verify the statement with $s_i$ replaced by $e_i^c$ for $c\in \mathbb{R}_{>0}$, since $(\alpha_i\circ\gammam)(\Xp)\subseteq \mathbb{R}_{>0}$ by Lemma \ref{gammapreservepositivepart}.

Let $x\in \Xp$. Take a reduced decomposition $\iii=(i_1,\ldots,i_{\ell})$ of $w_P$. By Lemma \ref{toruschartforBruhatpositive} and the definition of $\Xp$ (Definition \ref{positivepartdef}), there exist $t\in\ZLp$ and $a_1,\ldots,a_{\ell}\in\mathbb{R}_{>0}$ such that 
\[ x=t\cdot \ttm_{\iii}(a_1,\ldots,a_{\ell}) = t\cdot x_{-i_1}(a_1)\cdots x_{-i_{\ell}}(a_{\ell}).\]
The last expression is equal to $y_{i_1}(a'_1)\cdots y_{i_{\ell}}(a'_{\ell})\cdot t'$ for some $t'\in T_{>0}$ and $a'_1,\ldots, a'_{\ell}\in\mathbb{R}_{>0}$.

Let $i\in I$. Define $K:=\{1,\ldots,\ell\}$ and $K_i:=\{k\in K|~i_k=i\}$. 
\begin{lemma}\label{Wpreservepositivepartlemma}
$K_i\neq \emptyset$.
\end{lemma}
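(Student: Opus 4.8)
The plan is to show that $K_i \neq \emptyset$, i.e. that the simple reflection $s_i$ appears in every reduced decomposition $\iii = (i_1,\ldots,i_\ell)$ of $w_P$. I would first reduce this to a statement about the support of $w_P$: the set $\{i_1,\ldots,i_\ell\}$ of simple reflections occurring in a reduced word for $w_P$ is independent of the chosen reduced word (this is a standard fact about Coxeter groups), so it suffices to prove that this support is all of $I$. Equivalently, I need to show that $w_P = w_0^P w_0$ is not contained in any proper parabolic subgroup $W_J$ with $J \subsetneq I$.

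The key step is the following observation about lengths. Since $\ell(w_P) = |R^+ \setminus R_P^+|$ (recalled in Section~\ref{Subsection-Bmodelnotation}), the inversion set of $w_P$ is exactly $R^+ \setminus R_P^+$ (one checks this directly: $w_0^P w_0$ sends $R_P^+$ to $R_P^+$ and $R^+ \setminus R_P^+$ to $-(R^+ \setminus R_P^+)$, using that $w_0^P$ is the longest element of $W_P$ and $w_0$ the longest element of $W$). Now suppose for contradiction that the support of $w_P$ is contained in some $J \subsetneq I$. Then $w_P \in W_J$, so all inversions of $w_P$ lie in the root subsystem spanned by $\{\alpha_j^\vee : j \in J\}$; in particular $R^+ \setminus R_P^+$ is contained in this subsystem. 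Pick $i \in I \setminus J$. Then the simple coroot $\alpha_i^\vee$, viewed as a positive coroot, would have to lie in $R_P^+$ (since it is positive and not an inversion of $w_P$ means... more carefully: $\alpha_i^\vee \in R^+$; if $\alpha_i \notin R_P^+$ then $\alpha_i^\vee \in R^+ \setminus R_P^+$, which we just said lies in the span of $J$, impossible since $i \notin J$; hence $\alpha_i \in R_P^+$, forcing $i \in I_P$). This would show $I \setminus J \subseteq I_P$, i.e. $I = J \cup I_P$.

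The remaining point — and the one requiring a little care — is to rule out $I = J \cup I_P$ with $J \subsetneq I$. Here I would invoke the connectedness of the Dynkin diagram (since $G$ is simple), using an argument in the same spirit as Lemma~\ref{gammanonconst}: if $I = J \cup I_P$ with $I \setminus J \subseteq I_P$ nonempty and $J \subsetneq I$ (so $I \setminus J$ is a nonempty proper subset of $I$), connectedness of the diagram gives an edge between some $i \in I \setminus J \subseteq I_P$ and some $j \notin I_P$; but then $s_j$ appears in any reduced word for $w_P$ because $w_P$ has an inversion involving $\alpha_j$ — concretely, since $\alpha_i^\vee(\alpha_j) \neq 0$ with $i \in I_P$, $j \notin I_P$, one produces a coroot $\beta^\vee \in R^+ \setminus R_P^+$ with nonzero $\alpha_j$-coordinate, contradicting that $R^+ \setminus R_P^+$ is spanned by $J \not\ni j$. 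Tracing through, this contradicts $w_P \in W_J$. Since $i$ was an arbitrary element of $I$, we conclude $K_i \neq \emptyset$ for all $i$.

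The main obstacle is organizing the combinatorics cleanly: one must be careful about the distinction between roots and coroots (the paper's conventions swap them between $G$ and $G^\vee$), and about which "longest element" relation pins down the inversion set of $w_P = w_0^P w_0$. An alternative, possibly slicker route avoiding the diagram-connectedness argument altogether: directly show $\ell(s_i w_P) < \ell(w_P)$ or $\ell(w_P s_i) < \ell(w_P)$ for each $i$ would not suffice (that only shows $s_i$ \emph{can} start/end some reduced word, not that it appears in \emph{every} one — though by the support-invariance fact, appearing in one reduced word is enough). So actually the cleanest version is: exhibit, for each $i \in I$, a reduced word for $w_P$ containing $s_i$, which by support-invariance yields $K_i \neq \emptyset$ for the given $\iii$; and the existence of such a word follows once we know $s_i$ lies in the support, which is the parabolic-closure argument above.
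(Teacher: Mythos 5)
Your approach is genuinely different from the paper's. The paper argues geometrically: if $K_i=\emptyset$ then $w_P\omega_i=\omega_i$ (each $s_{i_k}$ fixes $\omega_i$), forcing the Weyl chamber $w_0^P\cdot\Lambda$ to have both $\omega_j$ (for any $j\in I\setminus I_P$, which is nonempty) and $w_0\omega_i=-\omega_{i^\star}$ as edge generators; this contradicts the fact that any two edges of a Weyl chamber of an irreducible root system subtend an acute angle. Your route through the support $J$ of $w_P$ and its inversion set $R^+\setminus R^+_P$ is a legitimate combinatorial alternative, and the reduction to $J=I$, the identification of the inversion set of $w_P$ with $R^+\setminus R^+_P$, and the deduction $I\setminus J\subseteq I_P$ (equivalently $I=J\cup I_P$) are all sound.

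However, the final connectedness step has a genuine gap. Connectedness of the Dynkin diagram only guarantees an edge between the nonempty disjoint sets $I\setminus J$ and $J$; the endpoint in $J$ may well lie in $J\cap I_P$, so the asserted edge \emph{between some $i\in I\setminus J$ and some $j\notin I_P$} is not justified by connectedness alone. Moreover the concluding contradiction is misstated: once $I=J\cup I_P$ we have $I\setminus I_P\subseteq J$, so $j\notin I_P$ implies $j\in J$, and ``$J\not\ni j$'' is simply false. What you actually need is an element of $R^+\setminus R^+_P$ whose support contains $i\notin J$, and a single edge from $i$ does not supply one when the neighbor lies in $I_P$. A clean repair: take the highest coroot $\theta\in R^+$; simplicity of $G$ makes its support all of $I$, so $\theta\in R^+\setminus R^+_P$ (using $I_P\neq I$) yet $\theta$ is not supported on any $J\subsetneq I$, contradicting that every inversion of $w_P\in W_J$ is supported on $J$. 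Alternatively one can iterate the edge argument along a path from $i$ to $I\setminus I_P$, in the spirit of the proof of Lemma~\ref{gammanonconst}, but the one-edge version as written does not close.
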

\begin{proof}
Let $\{\omega_j\}_{j\in I}$ be the dual basis of $\{\alpha^{\vee}_j\}_{j\in I}$. Recall the rays generated by these vectors are the edges of the dominant Weyl chamber $\Lambda$. Suppose $K_i=\emptyset$. Then we have $w_P\omega_i=\omega_i$, or equivalently
\begin{equation}\label{Wpreservepositivepartlemmaeq1}
w_0^P\omega_i=w_0\omega_i.
\end{equation}
There exists $i^{\star}\in I$ such that $w_0\omega_i=-\omega_{i^{\star}}$. By assumption, we have $I_P\neq I$ (see Section \ref{Subsection-Bmodelnotation}). Pick an element $j\in I\setminus I_P$. Equality \eqref{Wpreservepositivepartlemmaeq1} implies that $\omega_j=w_0^P\omega_j$ and $-\omega_{i^{\star}}=w_0^P\omega_i$ generate two edges of the Weyl chamber $w_0^P\cdot \Lambda$. But this contradicts the well-known fact that the angle between any two edges of a Weyl chamber in an irreducible root system must be acute.
\end{proof}
 
Now let $c\in\mathbb{R}_{>0}$. Observe that $\varphi_i(x)=\liecharacter_i(y_{i_1}(a'_1)\cdots y_{i_{\ell}}(a'_{\ell}))=\sum_{k\in K_i}a'_k$ and $\varepsilon_i(x)= \alpha_i(t')\varphi_i(x)=\alpha_i(t')\sum_{k\in K_i}a'_k$. It follows that, by Lemma \ref{Wpreservepositivepartlemma}, $\varphi_i(x), \varepsilon_i(x)>0$, and hence $e^c_i(x)$ is well-defined. Using the identities
\[ x_i(A)\cdot y_i(B) = y_i\left(\frac{B}{1+AB}\right)\cdot \alpha_i^{\vee}(1+AB)\cdot x_i\left(\frac{A}{1+AB}\right)\]
and 
\[ x_i(A)\cdot y_j(B) = y_j(B)\cdot x_i(A)\qquad i\ne j,\]
we obtain
\begin{align}\label{Wpreservepositiveparteq1}
e_i^c(x) =~& x_i\left(\frac{c-1}{\varphi_i(x)}\right)\cdot x \cdot x_i\left(\frac{c^{-1}-1}{\varepsilon_i(x)}\right)\nonumber \\ 
=~& x_i\left(\frac{c-1}{\varphi_i(x)}\right)\cdot y_{i_1}(a'_1)\cdots y_{i_{\ell}}(a'_{\ell})\cdot t'\cdot x_i\left(\frac{c^{-1}-1}{\varepsilon_i(x)}\right) \\
=~& \left(\prod_{k\in K} y_{i_k}(a''_k)\alpha_i^{\vee}(b_k)\right)\cdot t'\cdot x_i\left(\frac{c_{\ell}}{\alpha_i(t')}+\frac{c^{-1}-1}{\alpha_i(t')\sum_{k\in K_i}a'_k}\right),
\end{align}
where 
\[ a''_k:= 
\left\{
\begin{array}{cc}
\dfrac{a'_k}{1+a'_k c_{k-1}} & k\in K_i\\ [1.5em]
a'_k&\text{otherwise}
\end{array}
\right.
,\quad
b_k:= 
\left\{
\begin{array}{cc}
1+a'_k c_{k-1} & k\in K_i\\ [.8em]
1&\text{otherwise}
\end{array}
\right.~,\]
\[ c_0:=\frac{c-1}{\sum_{k\in K_i}a'_k}\quad\text{ and }\quad c_k:=\left\{
\begin{array}{cc}
\dfrac{c_{k-1}}{1+a'_k c_{k-1}} & k\in K_i\\ [1.2em]
c_{k-1}&\text{otherwise}
\end{array}
\right. .\]

For $k\in K$, define $A_{\star k}:=\sum_{s\in K_i, s\star k} a'_s$ for $\star\in\{ <,\leqslant,>,\geqslant \}$. By induction, we have
\[ c_k = \frac{c-1}{cA_{\leqslant k}+A_{>k}}\qquad k\in K,\]
and hence
\[ a''_k = \frac{a'_k (cA_{<k}+A_{\geqslant k})}{c A_{\leqslant k}+A_{>k}}>0\quad\text{ and }\quad  b_k = \frac{c A_{\leqslant k} +A_{>k}}{cA_{<k}+A_{\geqslant k}} > 0\qquad k\in K_i.\]
In particular, we have $c_{\ell}=\frac{c-1}{c\sum_{k\in K_i}a'_k}$, and hence $\frac{c_{\ell}}{\alpha_i(t')}+\frac{c^{-1}-1}{\alpha_i(t')\sum_{k\in K_i}a'_k}=0$. Therefore, by \eqref{Wpreservepositiveparteq1}, we can write $e_i^c(x)=t''\cdot x_{-i_1}(a'''_1)\cdots x_{-i_{\ell}}(a'''_{\ell})$ for some $t''\in T_{>0}$ and $a'''_1,\ldots,a'''_{\ell}\in\mathbb{R}_{>0}$. But since $\pim\circ e^c_i=\pim$ (obvious), we have $t''=t\in\ZLp$, and hence $e_i^c(x)\in\Xp$, by Lemma \ref{toruschartforBruhatpositive}, as desired.
\end{proof}
\section{Proofs from Section \ref{Section-flatsections}}\label{Subsection-proofs-from-flatsections}
\begin{proof}[Proof of Lemma \ref{gammahatlemma}]
The following proof works for any reasonable $T^{\vee}$-varieties, $T^{\vee}$-equivariant vector bundles and formal power series whose radius of convergence is positive.

Define 
\[ g(x):=\log \Gamma(1+x)=b_1x+b_2x^2+\cdots .\]
Note that it has positive radius of convergence because both $\log(1+y)$ and $\Gamma(1+x)$ do. In particular, there exists $\rho>0$ such that 
\begin{equation}\label{gammahatlemmaproofeq1}
\lim_{k\to \infty}|b_k|\rho^k = 0.
\end{equation}
Introduce formal variables $x_1,\ldots,x_{\ell}$. Then
\[ \Gamma(1+x_1)\cdots \Gamma(1+x_{\ell})=\exp\left(\sum_{k=1}^{\infty}b_k\left(\sum_{i=1}^{\ell}x_i^k\right)\right)\in \mathbb{C}[[x_1,\ldots,x_{\ell}]].\]
Hence it suffices to show that the formal power series $\sum_{k=1}^{\infty}b_k\left(\sum_{i=1}^{\ell}\delta_i^k\right)$ defines a holomorphic section of $\bundlez$ on an open neighbourhood of $0\in\mathfrak{t}^{\vee}$. (Recall $\delta_1,\ldots,\delta_{\ell}$ are the $T^{\vee}$-equivariant Chern roots of the tangent bundle of $\GPd$.)

For $\nu=(\nu_i)_{i=1}^{\ell}\in\mathbb{Z}_{\geqslant 0}^{\ell}$, define $|\nu|_1:=\sum_{i=1}^{\ell}\nu_i$ and $|\nu|_2:=\sum_{i=1}^{\ell}i\nu_i$. We can write $\sum_{i=1}^{\ell}x_i^k=\sum_{\nu\in N_k}c_{\nu}s_1^{\nu_1}\cdots s_{\ell}^{\nu_{\ell}}$, where 
\begin{itemize}
\item $N_k:=\{\nu\in\mathbb{Z}_{\geqslant 0}^{\ell}|~|\nu|_2=k\}$; and

\item $s_j$ is the $j$-th elementary symmetric polynomial in $x_1,\ldots,x_{\ell}$.
\end{itemize}
It is known that $c_{\nu}=(-1)^{k+|\nu|_1}\frac{k}{|\nu|_1}\cdot \frac{(|\nu|_1)!}{(\nu_1)!\cdots (\nu_{\ell})!}$. Observe that $\frac{(|\nu|_1)!}{(\nu_1)!\cdots (\nu_{\ell})!}\leqslant (~\!\underbrace{1+\cdots+1}_{\ell~\text{ 1's}}~\!)^{\nu_1+\cdots+\nu_{\ell}}=\ell^{|\nu|_1}$, and hence
\begin{equation}\label{gammahatlemmaproofeq2}
|c_{\nu}|\leqslant \frac{k\cdot \ell^{|\nu|_1}}{|\nu|_1}.
\end{equation}

Let $\nu\in \mathbb{Z}_{\geqslant 0}^{\ell}$ and $y\in H^{\bullet}_{T^{\vee}}(\GPd)$ be a homogeneous element. Define
\[ \mathcal{I}_{\nu,y}:=\int_{\GPd}c_1^{\nu_1}\cup\cdots\cup c_{\ell}^{\nu_{\ell}}\cup y \in H_{T^{\vee}}^{2|\nu|_2+\deg y -2\ell}(\pt), \]
where $c_j:=c_j^{T^{\vee}}(\mathcal{T}_{\GPd})$. Put $d(y):=\frac{1}{2}\deg y-\ell$. Write $\mathcal{I}_{\nu,y}=\sum_{\eta\in H_{|\nu|_2+d(y)}} d^{\eta}_{\nu,y}h_1^{\eta_1}\cdots h_r^{\eta_r}$, where $H_m:= \left\{\eta=(\eta_j)_{j=1}^r\in\mathbb{Z}_{\geqslant 0}^r\left|~ |\eta|_1:=\sum_{j=1}^r\eta_j=m\right.\right\}$ and $h_1,\ldots,h_r$ are the equivariant parameters. Then $d^{\eta}_{\nu,y}=\frac{\partial_{h_1}^{\eta_1}\cdots \partial_{h_r}^{\eta_r}\mathcal{I}_{\nu,y}}{(\eta_1)!\cdots (\eta_r)!}$. Note that the RHS of the last equality is a constant polynomial, and so we can compute it by applying the localization formula and evaluating the resulting expression at a generic point of $\mathfrak{t}^{\vee}$ which depends only on the $T^{\vee}$-equivariant geometry of $\GPd$. It is then straightforward to see that $|d^{\eta}_{\nu,y}|\leqslant \frac{C\cdot (|\nu|_1+a)^{|\eta|_1}}{(\eta_1)!\cdots (\eta_r)!} R^{|\nu|_1}$ for some constants $a,C,R>1$ that are independent of $\nu$ and $\eta$. Using $\frac{x^m}{m!}<e^x$ for $x>0$, we get
\begin{equation}\label{gammahatlemmaproofeq3}
|d^{\eta}_{\nu,y}|\leqslant C(e^r R)^{|\nu|_1+a}.
\end{equation}

Let us go back to the power series $\sum_{k=1}^{\infty}b_k\left(\sum_{i=1}^{\ell}\delta_i^k\right)$. We have
\[ \int_{\GPd}\sum_{k=1}^{\infty}b_k\left(\sum_{i=1}^{\ell}\delta_i^k\right)\cup y=\sum_{k=1}^{\infty}\sum_{\nu\in N_k}\sum_{\eta\in H_{|\nu|_2+d(y)}} b_kc_{\nu}d^{\eta}_{\nu,y} h_1^{\eta_1}\cdots h_r^{\eta_r}.\]
Using $\frac{|\nu|_2}{\ell}\leqslant |\nu|_1\leqslant |\nu|_2$ and the estimates \eqref{gammahatlemmaproofeq2} and \eqref{gammahatlemmaproofeq3}, we have, for every $h_1,\ldots,h_r\in\mathbb{C}$ satisfying $|h_j|<\epsilon:= \frac{1}{2}\rho (e^r\ell R)^{-1}$ (where $\rho$ satisfies \eqref{gammahatlemmaproofeq1}),
\begin{align*}
&~ \sum_{k=1}^{\infty}\sum_{\nu\in N_k}\sum_{\eta\in H_{|\nu|_2+d(y)}} |b_kc_{\nu}d^{\eta}_{\nu,y} h_1^{\eta_1}\cdots h_r^{\eta_r}|\\
\leqslant &~ C(e^rR)^a\ell\epsilon^{d(y)}\sum_{k=1}^{\infty}|b_k| (e^r\ell R\epsilon)^k\left(\sum_{\nu\in N_k}\sum_{\eta\in H_{|\nu|_2+d(y)}} 1\right).
\end{align*}
Observe that $\sum_{\nu\in N_k}\sum_{\eta\in H_{|\nu|_2+d(y)}}1$ is equal to $|N_k|\cdot|H_{k+d(y)}|$, which is bounded by a polynomial in $k$, and hence the RHS of the last inequality is finite by \eqref{gammahatlemmaproofeq1}. We are done. 
\end{proof}

\bigskip
\begin{proof}[Proof of Lemma \ref{convexhull}]
This is well-known. We provide the details for the reader's convenience. 

First notice that $|\log x|<x+\frac{1}{x}$ for all $x\in\mathbb{R}_{>0}$ so we may assume $c_1=\cdots=c_{\ell}=0$.

Define $g:\mathbb{R}^{\ell}\rightarrow\mathbb{R}$ by 
\[ g(\mathbf{x}):= f(e^{x_1},\ldots,e^{x_{\ell}})=\sum_{\mathbf{v}\in S}f_{\mathbf{v}}e^{\langle \mathbf{x},\mathbf{v}\rangle}\qquad \mathbf{x}=(x_1,\ldots,x_{\ell})\in\mathbb{R}^{\ell}.\]
We claim that the interior of the convex hull $\conv(S)$ of $S$ contains the origin. Suppose not. Then there exists $\mathbf{x}_0\in\mathbb{R}^{\ell}\setminus\{\mathbf{0}\}$ such that $\langle\mathbf{x}_0,\mathbf{v}\rangle\leqslant 0$ for all $\mathbf{v}\in S$. It follows that $\lim_{s\to +\infty} g(s\mathbf{x}_0)$ exists. But by our assumptions, $g$ is convex and has a critical point, and hence it is unbounded at infinity, a contradiction.

Now, by taking the normal fan of $\conv(S)$, we can cover $\mathbb{R}^{\ell}$ with finitely many polyhedral cones such that for each of these cones $C$, there is $\mathbf{v}\in S$ such that the linear function $\mathbf{x}\mapsto\langle\mathbf{x},\mathbf{v}\rangle$ is positive on $C\setminus\{\mathbf{0}\}$. It follows that there exists $M\in\mathbb{R}$ such that 
\[ g(\mathbf{x})\geqslant M+\sum_{k=1}^{\ell}\left( x_k^2+(d_k+1)x_k\right)\qquad \mathbf{x}\in\mathbb{R}^{\ell}.\]
Therefore, 
\begin{align*}
&\int_{\mathbb{R}_{>0}^{\ell}} e^{-f(\mathbf{a})}a_1^{d_1}\cdots a_{\ell}^{d_{\ell}}da_1\cdots da_{\ell}\\
=~ &\int_{\mathbb{R}^{\ell}} e^{-g(\mathbf{x})+\sum_{k=1}^{\ell}(d_k+1)x_k}dx_1\cdots dx_{\ell}\\
\leqslant ~& \int_{\mathbb{R}^{\ell}} e^{-M-|\mathbf{x}|^2}dx_1\cdots dx_{\ell} \\
<~&+\infty.
\end{align*}
\end{proof}
\begin{remark}
    The critical point in question is called the \textit{conifold point} of $f$ \cite{Conifold}.
\end{remark}
\section{Exposition of Lam-Rietsch's theorem}\label{Subsection-LamRietschexposition}
We give an exposition of a result of Lam and Rietsch \cite[Proposition 11.3]{LamRietsch}, which is used in the proof of Lemma \ref{Schubertpositivegoestopositivepart}. Recall Yun-Zhu's isomorphism $\Phi^0_{YZ}$ and Peterson-Lam-Shimozono's homomorphism $\Phi^0_{PLS}$ introduced in Section \ref{Subsection-descriptionofmirrorisom}. Define $U_{\geqslant 0}^-$ to be the submonoid of $U^-$ with unit generated by $y_i(a)$ for $i\in I$ and $a\in\mathbb{R}_{>0}$ ($y_i(a)$ is defined in Section \ref{Subsection-Bmodelnotation}).

\begin{theorem}\label{LamRietschthm}(\cite[Proposition 11.3]{LamRietsch}) Let $q\in \mathbb{R}_{>0}^{I\setminus I_P}$. Suppose $z_q^P$ is an $\mathbb{R}$-point in the scheme $\spec QH^{\bullet}(\GPd)_q$ that is \textit{Schubert positive} in the sense that $\sigma_v(z_q^P)>0$ for all $v\in W^P$. Then $\spec(\Phi^0_{PLS}\circ\Phi^0_{YZ})$ sends $z_q^P$ to a point in $U_{\geqslant 0}^-$.  
\end{theorem}

Before the proof, let us do some preparation.

Let $\widetilde{G}$ denote the universal cover of $G$. Objects associated with $G$ have analogs for $\widetilde{G}$, and we denote them in the obvious way. Define $\BFt:=\{b\in\widetilde{B}^-|~b\cdot F=F\}$. (We may also define $\BF$ in the same way but it is just $\UF$ because $G$ is of adjoint type.) Define 
\[ U^-_{>0}:= U^-_{\geqslant 0}\cap Bw_0B\quad\text{ and }\quad \widetilde{U}^-_{>0}:= \widetilde{U}^-_{\geqslant 0}\cap \widetilde{B}w_0\widetilde{B} .\]
Let $\{\omega_i\}_{i\in I}$ be the set of fundamental weights. Define $\Gamma :=W\cdot \{\omega_i\}_{i\in I}$, regarded as a subset of the character lattice of $\widetilde{T}$. Define a collection $\{\Delta^{\gamma}\}_{\gamma\in\Gamma}$ of regular functions on $\widetilde{G}$ as follows. For $i\in I$, denote by $V(\omega_i)$ the $i$-th fundamental representation of $\widetilde{G}$. Pick a non-zero highest weight vector $v_i\in V(\omega_i)$, and let $v_i^*\in V(\omega_i)^*$ be the unique element such that $\langle v_i^*,v_i\rangle = 1$ and $v_i^*$ vanishes on other weight vectors. Define $\Delta^{\omega_i}\in\mathcal{O}(\widetilde{G})$ by $\Delta^{\omega_i}(g):=\langle v_i^*,g^T\cdot v_i\rangle$, where $g\mapsto g^T$ is the transpose of $\widetilde{G}$, i.e., the unique anti-automorphism of $\widetilde{G}$ characterized by 
\[ x_j(a)^T=y_j(a),\quad t^T=t\quad \text{and}\quad y_j(a)^T=x_j(a)\]
for $j\in I$, $a\in \Ga$ and $t\in \widetilde{T}$. Now for arbitrary $\gamma\in\Gamma$, we can find $w\in W$ such that $w^{-1}\gamma=\omega_i$ for some $i\in I$. Note that $i$ is unique. Define $\Delta^{\gamma}\in\mathcal{O}(\widetilde{G})$ by $\Delta^{\gamma}(g):=\langle v_i^*, g^T\cdot v_{\gamma}\rangle$, where $v_{\gamma}$ is an element of the $\gamma$-weight space $V(\omega_i)_{\gamma}$ that we will specify in Remark \ref{LamRietschthmremark_finish_def} below.

Let $G^{\vee}_{ad}$ be the quotient of $G^{\vee}$ by its center and $\ag_{ad}$ the corresponding affine Grassmannian. It is known that the Pontryagin ring $H_{-\bullet}(\ag_{ad})$ has an additive basis $\{\xi_{\alpha}\}_{\alpha\in A_{ad}}$ consisting of \textit{affine Schubert classes}. There is a subset $A\subseteq A_{ad}$ such that $\{\xi_{\alpha}\}_{\alpha\in A}$ is a basis of $H_{-\bullet}(\ag)$, and there is a distinguished element $0\in A$ such that $\xi_{0}=1$.

There are canonical isomorphisms of rings 
\begin{equation}\label{LamRietschthmeq1}
\mathcal{O}(\BFt)\simeq \mathcal{O}(\UF)\otimes \mathcal{O}(Z(\widetilde{G}))\quad\text{ and }\quad H_{-\bullet}(\ag_{ad})\simeq H_{-\bullet}(\ag)\otimes  H_0(\ag_{ad}).
\end{equation}
Note that $G^{\vee}_{ad}$ is Langlands dual to $\widetilde{G}$, and hence we have the corresponding Yun-Zhu's isomorphism
\[ \widetilde{\Phi}^0_{YZ}:\mathcal{O}(\BFt) \xrightarrow{\sim} H_{-\bullet}(\ag_{ad}).\]
We collect below some facts about $\Phi^0_{YZ}$, $\widetilde{\Phi}^0_{YZ}$ and $\Phi^0_{PLS}$, as well as some others that we will need for the proof of Theorem \ref{LamRietschthm}.

\begin{enumerate}
\item (\cite[Theorem 1.1]{YunZhu}) $\Phi^0_{YZ}$ and $\widetilde{\Phi}^0_{YZ}$ are graded Hopf algebra isomorphisms, where the gradings on the sources are induced by the conjugation of the cocharacter $-2\rho^{\vee}:=-\sum_{\alpha\in R^+}\alpha^{\vee}$, and the coalgebra structures on the sources and targets are induced by the group multiplications and the homology coproducts respectively. 

\item (\cite[Proposition 3.3]{YunZhu}) After composing the isomorphisms from \eqref{LamRietschthmeq1}, we have
\[ \widetilde{\Phi}^0_{YZ} = \Phi^0_{YZ}\otimes\phi,\]
where $\phi:\mathcal{O}(Z(\widetilde{G}))\xrightarrow{\sim} H_0(\ag_{ad})$ is the canonical isomorphism. (Both group schemes $Z(\widetilde{G})$ and $\spec H_0(\ag_{ad})$ are canonically isomorphic to the quotient of the coweight lattice by the coroot lattice.)

\item (Remark \ref{LamRietschthmremark_Fact3} below) $\widetilde{\Phi}^0_{YZ}$ sends each $\Delta^{\gamma}|_{\BFt}$ to the fundamental class of a closed irreducible subvariety of $\ag_{ad}$.

\item (\cite[Theorem 10.21]{LamShimozono}) $\Phi^0_{PLS}$ is graded and sends every affine Schubert class $\xi_{\alpha}$ to either zero or $\left(\prod_{i\in I\setminus I_P} q_i^{d_i}\right)\sigma_v$ for some $(d_i)\in\mathbb{Z}^{I\setminus I_P}$ and $v\in W^P$.

\item (\cite[Theorem 9.2]{LamShimozono}) $\Phi^0_{PLS}$ is injective for $P=B$.

\item (\cite[Proposition 4.2]{Lusztig_positivity}) $U^-_{\geqslant 0}$ is closed in $U^-$ in the classical topology.

\item (\cite[Theorem 1.5]{BZSchubert}\footnote{More precisely, the result stated here follows from the cited one by putting $w=w_0$ and applying the transpose $g\mapsto g^T$.}) An element $x\in \widetilde{U}^-$ lies in $ \widetilde{U}^-_{>0}$ if and only if $\Delta^{\gamma}(x)>0$ for all $\gamma\in\Gamma$\footnote{In the statement of \cite[Theorem 1.5]{BZSchubert}, an additional assumption $x\in\widetilde{B}w^{-1}\widetilde{B}$ is imposed but in our case $w=w_0$, it follows automatically from the positivity condition.}.

\item (\cite[Proposition 5 \& Lemma 9]{KumarNori}) The fundamental class of any closed irreducible subvariety of $\ag_{ad}$ (resp. $\ag$) is equal to a non-zero linear combination of $\{\xi_{\alpha}\}_{\alpha\in A_{ad}}$ (resp. $\{\xi_{\alpha}\}_{\alpha\in A}$) with positive coefficients.

\item (Fact 8 applied to $\ag_{ad}\times\ag_{ad}$) The homology coproduct
\[\Delta:H_{-\bullet}(\ag_{ad})\rightarrow H_{-\bullet}(\ag_{ad})\otimes H_{-\bullet}(\ag_{ad})\]
satisfies 
\[ \Delta(\xi_{\alpha})= \xi_{\alpha}\otimes 1 + 1\otimes \xi_{\alpha} + \sum_{\substack{(\alpha_1,\alpha_2)\\ \ne (0,\alpha), (\alpha,0) }} c_{\alpha_1,\alpha_2}^{\alpha} \xi_{\alpha_1}\otimes \xi_{\alpha_2} \]
for all $\alpha\in A_{ad}$, where each $c_{\alpha_1,\alpha_2}^{\alpha}$ is non-negative.
\end{enumerate} 

\bigskip
\begin{proof}[Proof of Theorem \ref{LamRietschthm}]
Define $y_q^P:=(\spec(\Phi^0_{PLS}))(z_q^P)$ and $x_q^P:=(\spec(\Phi^0_{YZ}))(y_q^P)$. We have to show $x_q^P\in U_{\geqslant 0}^-$. Pick a point $q_0\in \mathbb{R}_{>0}^I$. By Proposition \ref{existSchubertpositive}, we have a Schubert positive $\mathbb{R}$-point $z_{q_0}^B\in\spec QH^{\bullet}(\GBd)_{q_0}$. Consider the $\mathbb{G}_m$-action on $\spec QH^{\bullet}(\GBd)$ induced by the grading introduced in Section \ref{Subsection-Amodelflagvariety}. Since every Schubert class is homogeneous, we obtain, by applying the action $s\cdot -$, a Schubert positive $\mathbb{R}$-point $z_{s\cdot q_0}^B\in \spec QH^{\bullet}(\GBd)_{s\cdot q_0}$ for each $s\in\mathbb{R}_{>0}$. (Notice that $s\cdot q_0$ is obtained from $q_0$ by multiplying each component by $s^{-4}$.) Define $y_{s\cdot q_0}^B$ and $x_{s\cdot q_0}^B$ similarly. Since $\Phi^0_{YZ}$ (resp. $\Phi^0_{PLS}$) is graded by Fact 1 (resp. Fact 4), we have $x_{s\cdot q_0}^B=(2\rho^{\vee})(s^{-1})x_{q_0}^B(2\rho^{\vee})(s)$, and hence $\lim_{s\to 0^+}x_{s\cdot q_0}^B=e$. Since $U^-_{\geqslant 0}$ is closed in $U^-$ (Fact 6), the proof will be complete if we can show $x(s):=x_{s\cdot q_0}^B\cdot x_q^P\in U^-_{>0}$ for all $s\in\mathbb{R}_{>0}$.

For a point $x\in \UF$ (resp. $y\in\spec H_{-\bullet}(\ag)$), define $\widetilde{x}:=(x,e)\in\BFt$ (resp. $\widetilde{y}:=(y,e)\in\spec H_{-\bullet}(\ag_{ad})$) using the first (resp. second) isomorphism from \eqref{LamRietschthmeq1}. We have $(\spec(\widetilde{\Phi}^0_{YZ}))(\widetilde{y})=\widetilde{x}$ whenever $\left(\spec\left(\Phi^0_{YZ}\right)\right)(y)=x$, by Fact 2. Observe that the projection $\widetilde{G}\rightarrow G$ restricts to an isomorphism $\widetilde{U}^-_{>0}\xrightarrow{\sim} U^-_{>0}$. Hence, it suffices to show $\widetilde{x(s)}\in\widetilde{U}^-_{>0}$. Clearly, we have $\widetilde{x(s)}=\widetilde{x_{s\cdot q_0}^B}\cdot\widetilde{x_q^P}$. By Fact 7, it suffices to show $\Delta^{\gamma}\left(\widetilde{x_{s\cdot q_0}^B}\cdot\widetilde{x_q^P}\right)>0$ for all $\gamma\in \Gamma$. Since $\widetilde{\Phi}^0_{YZ}$ preserves the coalgebra structures (Fact 1), we have 
\[ \Delta^{\gamma}\left(\widetilde{x_{s\cdot q_0}^B}\cdot\widetilde{x_q^P}\right) = \left(\Delta\left(\widetilde{\Phi}^0_{YZ}(\Delta^{\gamma}|_{\BFt})\right)\right)\left(\widetilde{y_{s\cdot q_0}^B},\widetilde{y_q^P}\right), \] 
where $\Delta:H_{-\bullet}(\ag_{ad}) \rightarrow H_{-\bullet}(\ag_{ad})\otimes H_{-\bullet}(\ag_{ad})$ is the homology coproduct. By Fact 3, $\widetilde{\Phi}^0_{YZ}(\Delta^{\gamma}|_{\BFt})=[C]$ for some closed irreducible subvariety $C$ of $\ag_{ad}$. By Fact 8, $[C]$ is equal to a non-zero linear combination of affine Schubert classes $\xi_{\alpha}$ with positive coefficients. It follows that, by Fact 9, we are done if we can show that for every $\alpha\in A_{ad}$,
\[ \xi_{\alpha}\left(\widetilde{y_{s\cdot q_0}^B}\right)>0\quad\text{ and }\quad \xi_{\alpha}\left(\widetilde{y_q^P}\right)\geqslant 0.\]

Observe that for every $\alpha\in A_{ad}$ and $y\in\spec H_{-\bullet}(\ag)$, there exists $p\in\ag_{ad}$ such that $[p]^{-1}\bulletsmall \xi_{\alpha}\in H_{-\bullet}(\ag)$ and $\xi_{\alpha}\left(\widetilde{y}\right)=\left([p]^{-1}\bulletsmall \xi_{\alpha}\right)(y)$. Notice that $[p]^{-1}\bulletsmall \xi_{\alpha}$ is the fundamental class of a closed irreducible subvariety of $\ag$. It follows that, by Fact 8, it suffices to show that for every $\alpha\in A$,
\[ \xi_{\alpha}\left(y_{s\cdot q_0}^B\right)>0\quad\text{ and }\quad \xi_{\alpha}\left(y_q^P\right)\geqslant 0.\]
By definition, these numbers are equal to $\left(\Phi^0_{PLS}(\xi_{\alpha})\right)(z_{s\cdot q_0}^B)$ and $\left(\Phi^0_{PLS}(\xi_{\alpha})\right)(z_q^P)$ respectively. The inequalities $\geqslant 0$ follow from Fact 4 and the Schubert positivity assumptions on $z_{s\cdot q_0}^B$ and $z_q^P$, and the strict inequality $>0$ for $y_{s\cdot q_0}^B$ follows in addition from Fact 5. The proof is complete.
\end{proof}

\begin{remark}\label{LamRietschthmremark_finish_def}
Let us finish the definition of $\Delta^{\gamma}$ ($\gamma=w\omega_i$) by specifying the weight vector $v_{\gamma}\in V(\omega_i)_{\gamma}$. Take a reduced decomposition $\mathbf{j}=(j_1,\ldots,j_m)$ of $w^{-1}$. For $k=1,\ldots, m$, define $b_k:=\langle \omega_i, s_{j_1}\cdots s_{j_{k-1}}(\alpha^{\vee}_{j_k})\rangle$. Then $v_{\gamma}$ is defined to be the unique vector satisfying
\[ \frac{e_{j_1}^{b_1}\cdots e_{j_m}^{b_m}}{(j_1)!\cdots (j_m)!}\cdot v_{\gamma} = v_i\]
(recall $e_j$ is fixed in Section \ref{Subsection-Bmodelnotation}). 
\end{remark}
\begin{remark}\label{LamRietschthmremark_Fact3}
We prove Fact 3 as follows. Write $\gamma=w\omega_i$. By the geometric Satake equivalence \cite{Ginzburg, Lusztig, MV}, $V(\omega_i)$ has a basis, called \textit{MV basis}, consisting of weight vectors that are indexed by a collection of closed irreducible subvarieties of $\ag_{ad}$, called \textit{MV cycles}. Rescale $v_i$ and $v_i^*$ simultaneously such that $\langle v_i^*,v_i\rangle =1$ continues to hold (so that $\Delta^{\gamma}$ is unchanged) and $v_i$ becomes an element of the MV basis. Let $v'_{\gamma}\in V(\omega_i)_{\gamma}$ be the unique element belonging to the MV basis. By \cite[Lemma 10.5]{Acta}, $\widetilde{\Phi}^0_{YZ}$ sends the regular function $b\mapsto \langle v_i^*, b^T\cdot v'_{\gamma}\rangle$ (defined on $\BFt$) to the fundamental class of an MV cycle. Hence it suffices to show $v'_{\gamma}=v_{\gamma}$ (see Remark \ref{LamRietschthmremark_finish_def} for the definition of $v_{\gamma}$), or equivalently
\[ \frac{e_{j_1}^{b_1}\cdots e_{j_m}^{b_m}}{(j_1)!\cdots (j_m)!}\cdot v'_{\gamma} = v_i.\]
The last equality follows from a main result of \cite{Acta} that the MV basis is \textit{perfect}. See Theorem 5.2 therein or specifically Theorem 5.4 and Proposition 5.5.
\end{remark}

\end{document}